\definecolor{rouge}{rgb}{0.7,0.00,0.00}
\definecolor{vert}{rgb}{0.00,0.5,0.00}
\definecolor{bleu}{rgb}{0.00,0.00,0.8}
\newtheorem{theorem}{Theorem}[section]
\newtheorem*{theorem*}{Theorem}
\newtheorem{lemma}[theorem]{Lemma}
\newtheorem{proposition}[theorem]{Proposition}
\newtheorem{condition}{Condition}
\newtheorem{conditionA}{A\kern-0.1mm}
\renewcommand\dots{\hbox to 1em{.\hss.\hss.}}
\theoremstyle{definition}
\def \eref#1{\hbox{(\ref{#1})}}
\numberwithin{equation}{section}
\def\bb#1{\mathbb{#1}}
\def\scr#1{\mathscr{#1}}
\def\geq{\geqslant}
\def\leq{\leqslant}
\newcommand{\ee}{\varepsilon}
\DeclareMathOperator{\supp}{supp}
\DeclarePairedDelimiter\floor{\lfloor}{\rfloor}
\begin{document}

\title[Limit theorems for products of random matrices]
{Limit theorems for the coefficients of random walks on the general linear group}


\author{Hui Xiao}
\author{Ion Grama}
\author{Quansheng Liu}

\curraddr[Xiao, H.]{ Universit\'{e} de Bretagne-Sud, LMBA UMR CNRS 6205, Vannes, France}
\email{hui.xiao@univ-ubs.fr}
\curraddr[Grama, I.]{ Universit\'{e} de Bretagne-Sud, LMBA UMR CNRS 6205, Vannes, France}
\email{ion.grama@univ-ubs.fr}
\curraddr[Liu, Q.]{ Universit\'{e} de Bretagne-Sud, LMBA UMR CNRS 6205, Vannes, France}
\email{quansheng.liu@univ-ubs.fr}


\begin{abstract}
Let $(g_n)_{n\geq 1}$ be a sequence of independent and identically distributed 
random elements with law $\mu$ on the general linear group $\textup{GL}(V)$, where $V=\bb R^d$. 
Consider the random walk $G_n : = g_n \ldots g_1$, $n \geq 1$, and the coefficients $\langle f, G_n v \rangle$, 
where $v \in V$ and $f \in V^*$. 
Under suitable moment assumptions on $\mu$, 
we prove the strong and  weak laws of large numbers and the central limit theorem for $\langle f, G_n v \rangle$, 
which improve the previous results established under the exponential moment condition on $\mu$.
We further demonstrate the Berry-Esseen bound, the Edgeworth expansion,
the Cram\'{e}r type moderate deviation expansion
and the local limit theorem with moderate deviations for $\langle f, G_n v \rangle$ under the exponential moment condition. 
Under a subexponential moment condition on $\mu$, 
we also show a Berry-Esseen type bound
and the moderate deviation principle for $\langle f, G_n v \rangle$. 
Our approach is based on various versions of the H\"older regularity of 
the invariant measure of the Markov chain $G_n \!\cdot \! x = \bb R G_n v$ on the projective space
of $V$ 
with the starting point $x = \bb R v$. 
\end{abstract}

\date{\today}
\subjclass[2010]{Primary 60F05, 60F15, 60F10; Secondary 37A30, 60B20}
\keywords{Random walks on groups; 
 law of large numbers; central limit theorem; Edgeworth expansion; 
moderate deviations; local limit theorem.}

\maketitle

\tableofcontents


\section{Introduction} 
\subsection{Background and main objectives}
Since the pioneering work of Furstenberg and Kesten \cite{FK60}, 
the theory of random walks on linear groups (also called products of random matrices)
has attracted a great deal of attention of many mathematicians for several decades, 
see for instance the influential work of Le Page \cite{LeP82}, 
Guivarc'h and Raugi \cite{GR85},  Bougerol and Lacroix \cite{BL85}, Goldsheid and Margulis \cite{GM89},
Benoist and Quint \cite{BQ16b}, and the references therein. 
This theory has important applications in a number of research areas such as 
spectral theory \cite{BL85, CL90, BG12},  
geometric measure theory \cite{PT08, HS17, GK21}, 
statistical physics \cite{CPV93},   
homogeneous dynamics \cite{BFLM11, BQ13},
stochastic recursions and smoothing transforms \cite{Kes73, GL16, Men16},
and branching processes in random environment \cite{LPP18, GLP20a}.
Of particular interest is 
the study of asymptotic properties of the random walk
\begin{align*}
G_n : = g_n \ldots g_1,  \quad  n \geq 1, 
\end{align*}
where $(g_n)_{n \geq 1}$ is a sequence of independent and identically distributed
(i.i.d.) random elements with law $\mu$ on the general linear group $\textup{GL}(V)$ with $V = \bb R^d$. 
One natural and important way to describe the random walk $(G_n)_{n\geq 1}$ is to  
investigate the growth rate  
of the coefficients $\langle f, G_n v \rangle$, 
where $v \in V$ and $f \in V^*$, 
and $\langle \cdot, \cdot \rangle$ is the duality bracket: $\langle f, v \rangle = f(v)$. 
Bellman \cite{Bel54} conjectured that the classical central limit theorem 
should hold true for $\langle f, G_n v \rangle$ 
in the case when the matrices $(g_n)$ are positive. 
This conjecture was proved by Furstenberg and Kesten \cite{FK60},
who established the strong law of large numbers and central limit theorem 
under the condition that the matrices $g_n$ are strictly positive and that all the coefficients of $g_n$ are comparable. 
This condition was then relaxed 
by Kingman \cite{Kin73}, 
Cohn, Nerman and Peligrad \cite{CNP93} and Hennion \cite{Hen97}.

As noticed  by Furstenberg \cite{Fur63}, 
the analysis developed in \cite{FK60} for positive matrices breaks down for invertible matrices.  
It turns out that the situation of invertible matrices is much more complicated and delicate. 
Guivarc'h and Raugi \cite{GR85} established the strong law of large numbers for the coefficients of products 
of invertible matrices
under an exponential moment condition: 
for any $v \in V \setminus \{0\}$ and $f \in V^* \setminus \{0\}$,  a.s.
\begin{align}\label{Ch7_SLLN_Entry0a}
\lim_{n\to\infty} \frac{1}{n} \log | \langle f, G_n v \rangle | = \lambda_1,  
\end{align}
where $\lambda_1 \in \bb R$ is a constant (independent of $f$ and $v$) called the first Lyapunov exponent of $\mu$.  
It is worth mentioning that the result \eqref{Ch7_SLLN_Entry0a} does not follow from
the classical subadditive ergodic theorem of Kingman \cite{Kin73}, nor from the recent version by Gou\"ezel and Karlsson \cite{GK20}.  
The central limit theorem for the coefficients has also been established in \cite{GR85}  under the exponential moment condition: 
for any $t \in \bb R$, 
\begin{align}\label{Ch7_CLT_Entry0a}
\lim_{n \to \infty} \bb{P} \left( \frac{ \log | \langle f, G_n v \rangle | 
 - n \lambda_1}{ \sigma \sqrt{n} } \leq t  \right)  =  \Phi(t), 
\end{align}
where $\Phi$ is the standard normal distribution function on $\bb R$
and $\sigma^2 > 0$ is the asymptotic variance of $ \frac{1}{\sqrt n} \log | \langle f, G_n v \rangle |.$  
Recently, Benoist and Quint \cite{BQ16b} have extended \eqref{Ch7_SLLN_Entry0a} and \eqref{Ch7_CLT_Entry0a}
to the framework of the general linear group $\textup{GL}(V)$ with $V = \bb K^d$,
where $\bb{K}$ is a local field. 
Moreover, they also established 
the law of iterated logarithm and the large deviations bounds.


The first objective of this paper is to prove the weak law of large numbers  
under the first moment condition: we prove that, if 
$\int_{ \textup{GL}(V) } \log N(g)  \mu(dg) < \infty$ with 
$N(g) = \max \{ \|g\|, \| g^{-1} \| \}$, then \eqref{Ch7_SLLN_Entry0a}
holds in probability.
Moreover, under the second moment condition that 
$\int_{ \textup{GL}(V) } \log^{2} N(g)  \mu(dg) < \infty$, 
we prove the strong law of large numbers, namely that \eqref{Ch7_SLLN_Entry0a} holds a.s. 
Under the same second moment condition we also prove
the central limit theorem: we show that the Gaussian approximation \eqref{Ch7_CLT_Entry0a} holds. 

Our second objective is to investigate the rate of convergence in the central limit theorem 
\eqref{Ch7_CLT_Entry0a}.
To this end, we first establish a Berry-Esseen bound 
under the exponential moment condition: we prove that if 
 $\int_{ \textup{GL}(V) }  N(g)^{\ee} \mu(dg) < \infty$ for some $\ee > 0$, 
then there exists a constant $c > 0$ such that for all $n \geq 1$, $t \in \bb R$, 
$v \in V$ and $f \in V^*$  with $\|v\| = \|f\| =1$, 
 and any $\gamma$-H\"older continuous function $\varphi$ on $\bb P(V)$, 
\begin{align}\label{BerryEsseen_Coeffaa-Intro}
\left|  \bb{E} \left[ \varphi(G_n \!\cdot\! x) 
\mathds{1}_{ \big\{ \frac{\log |\langle f, G_n v \rangle| - n \lambda_1 }{ \sigma \sqrt{n} } \leq t  \big\}  }   \right]
- \nu (\varphi) \Phi(t)  \right|  \leq  \frac{c}{\sqrt{n}} \|\varphi\|_{\gamma}, 
\end{align}
where $\|\varphi\|_{\gamma}$ is the $\gamma$-H\"older norm of the function $\varphi$. 
Our result \eqref{BerryEsseen_Coeffaa-Intro} improves a very recent Berry-Esseen bound with $\varphi = 1$ in \cite{DKW21},
where a different approach is applied. 
In fact we will establish a much stronger result, that is, the first-order Edgeworth expansion 
(cf.\ Theorem \ref{Thm-Edge-Expan-Coeff001}).
Moreover, under a subexponential moment condition, we prove that a weaker version of the Berry-Esseen bound 
holds true. Namely, the bound \eqref{BerryEsseen_Coeffaa-Intro} holds with the rate $\frac{c \log^{\beta} n }{\sqrt{n}}$
(for some $\beta > 0$)
instead of $\frac{c}{\sqrt{n}}$.


We next establish the  moderate deviation principle under the subexponential moment condition
that $\int_{ \textup{GL}(V) } e^{\log^{\alpha} N(g)} \,  \mu(dg)  < \infty$
for some constant $\alpha \in (0,1)$:
for any Borel set $B \subseteq \bb{R}$ and any sequence $(b_n)_{n\geq 1}$ of positive real numbers satisfying
$\frac{ b_n }{ n } \to 0$ and $b_n = o(n^{\frac{1}{2 - \alpha}})$ as $n \to \infty$, 
we have
\begin{align}\label{MDP-Coeff00a}
- \inf_{t \in B^{\circ}} \frac{t^2}{2 \sigma^2} & \leq
\liminf_{n\to \infty} \frac{n}{b_n^{2}}
\log \bb{P} \left(  \frac{ \log | \langle f, G_n v \rangle | - n \lambda_1 }{b_n} \in B  \right)  \nonumber\\
 & \leq   \limsup_{n\to \infty}\frac{n}{b_n^{2}}
\log  \bb{P}  \left( \frac{ \log | \langle f, G_n v \rangle | - n \lambda_1 }{b_n} \in B  \right) 
\leq - \inf_{t \in \bar{B}} \frac{t^2}{2\sigma^2},
\end{align}
where $B^{\circ}$ and $\bar{B}$ are respectively the interior and the closure of $B$.
%

We then reinforce the moderate deviation principle \eqref{MDP-Coeff00a} to the Cram\'er type moderate deviation expansion 
under the exponential moment condition: we prove that if  
$\int_{ \textup{GL}(V) }  N(g)^{\ee} \mu(dg) < \infty$ for some $\ee>0$, then  
uniformly in $t \in [0, o(\sqrt{n} )]$, $v \in V$ and $f \in V^*$ with $\|v\| = \|f\| =1$, as $n \to \infty$,  
\begin{align}
\frac{\bb{P} \Big( \frac{\log | \langle f, G_n v \rangle | 
- n\lambda_1}{ \sigma \sqrt{n} } \geq t \Big)} {1-\Phi(t)}
& =  e^{ \frac{t^3}{\sqrt{n}} \zeta ( \frac{t}{\sqrt{n}} ) } \big[ 1 +  o(1) \big], \label{Ch7_Cramer_Entry_0a}
\end{align}
where $\zeta$ is the Cram\'{e}r series (cf.\ \eqref{Ch7Def-CramSeri}). A similar expansion for the lower tail is also obtained.
More generally, we prove the Cram\'{e}r type moderate deviation expansion for the couple 
$(G_n \!\cdot\! x, \log |\langle f, G_n v \rangle|)$ with a target function $\varphi$ on the Markov chain $(G_n \!\cdot\! x)$
on the projective space $\bb P(V)$; see Theorem \ref{Thm-Cram-Entry_bb}. 
We mention that Bahadur-Rao type and Petrov type large deviation asymptotics 
for the coefficients $\langle f, G_n v \rangle$ have been recently established in \cite{XGL19d},
which give precise estimation of the rate of convergence 
in the weak law of large numbers \eqref{Ch7_SLLN_Entry0a}.

Our third objective is to establish the  
local limit theorem with moderate deviations for the coefficients $\langle f, G_n v \rangle$: 
for any real numbers $a_1 < a_2$, 
we have, as $n \to \infty$, uniformly in $|t| = o(\sqrt{n})$, 
\begin{align}\label{LLT-MD-Intro}
&\mathbb{P} \Big( \log| \langle f, G_n v \rangle | - n\lambda_1 \in [a_1, a_2] + \sqrt{n}\sigma t \Big) \notag\\
&= \frac{a_2 - a_1}{ \sigma \sqrt{2 \pi n} } 
 e^{ - \frac{t^2}{2} + \frac{t^3}{\sqrt{n}}\zeta(\frac{t}{\sqrt{n}} ) } [1 + o(1)]. 
\end{align}
See Theorem \ref{ThmLocal02} for a more general statement 
where a local limit theorem with moderate deviations for the couple $(G_n \!\cdot\! x, \log |\langle f, G_n v \rangle|)$
with target functions is given. 
We would like to mention that using the approach developed in this paper,
it is possible to establish some new and interesting limit theorems
for the Gromov product of random walks on hyperbolic groups;
we refer to Gou\"{e}zel \cite{Gou09, Gou14} on this topic. 

Finally we would like to mention that all the results of the paper remain valid when $V$ is $\bb C^d$ or $\bb K^d$, where 
$\bb K$ is any local field.

\subsection{Proof strategy} 
Our starting point is the following decomposition
which relates the coefficients $\langle f, G_n v \rangle$ to the cocycles $\sigma (G_n, x)$: 
for any $x = \bb R v \in \bb P(V)$ and $y = \bb R f \in \bb P(V^*)$ with $\|f\|=1$, 
\begin{align}\label{Ch7_Intro_Decom0a}
\log |\langle f, G_n v \rangle| = \sigma (G_n, x) +  \log \delta(y, G_n \!\cdot\! x),  
\end{align}
where $\delta(y, x) = \frac{|\langle f, v \rangle|}{\|f\| \|v\|}$. 
The proof of \eqref{Ch7_SLLN_Entry0a} 
given in \cite{GR85, BQ16b}
is based on the 
exponential H\"older regularity of the invariant measure $\nu$ of the Markov chain $(G_n\kern-0.05em \cdot\kern-0.05em x)$
on the projective space $\bb P(V)$, which requires the exponential moment condition. 
We refer to Bougerol and Lacroix \cite{BL85} for a comprehensive account of the 
proof strategy developed in \cite{GR85}.  
In order to relax the exponential moment condition,
we will establish a weaker form of the H\"older regularity of $\nu$:
we prove that for any $\ee >0$,
\begin{align}\label{Regularity-Firstmom}
\lim_{n \to \infty} \bb P \left( \delta (y, G_n \!\cdot\! x) \leq e^{- \ee n} \right) = 0,  
\end{align}
provided that the first moment condition holds. See Proposition \ref{Prop-rates-delta01}.  
The weak law of large numbers for the coefficients 
follows from this and the law of large numbers of Furstenberg \cite{Fur63}
 for the norm cocycle $\sigma (G_n, x)$. 

To prove the strong law of large numbers, 
we show the following stronger regularity of $\nu$: if the second moment condition holds,
then for any $\ee >0$ there exists a sequence of  positive numbers $(a_k)$ such that $\sum_{k=1}^{\infty} a_k < \infty$ 
and that for all $n \geq k \geq 1$, 
\begin{align}\label{logRegularity-Secondmom}
\bb{P} \left( \delta(y, G_n \!\cdot\! x) \leq e^{- \ee k} \right) \leq  a_k. 
\end{align}
See Lemma \ref{Lem-ScaNorm_second}. 
This permits us to conclude the strong law of large numbers for the coefficients, using that of Furstenberg \cite{Fur63} for the norm cocycle. 

Using again \eqref{logRegularity-Secondmom}, 
together with the central limit theorem for the norm cocycle $\sigma(G_n, x)$ due to Benoist and Quint \cite{BQ16},
allows us to prove the Gaussian approximation \eqref{Ch7_CLT_Entry0a} under the optimal second moment condition.

For the proof of the Edgeworth expansion and the Berry-Esseen bound \eqref{BerryEsseen_Coeffaa-Intro}, 
we first use a partition $(\chi_{n,k}^y)_{k \geq 1}$ of the unity  to discretise the component $\log \delta(y, G_n \!\cdot\! x)$ in \eqref{Ch7_Intro_Decom0a}.
This allows to reduce the study of the coefficients to that of the norm cocycle $\sigma(G_n, x)$ 
jointly with the target function $\chi_{n,k}^y(G_n \!\cdot\! x)$. 
Then our strategy is to
make use of the Edgeworth expansion for the couple $(G_n \!\cdot\! x, \sigma(G_n, x))$
with target functions on the Markov chain $(G_n \!\cdot\! x)$.  
Finally, the result \eqref{BerryEsseen_Coeffaa-Intro} is obtained by patching up these expansions 
by means of the exponential H\"older regularity of the invariant measure $\nu$. 
For the proof of the Berry-Esseen type bound (Theorem \ref{Thm_BerryEsseen}) under a subexponential moment condition, 
we establish a subexponential regularity of the invariant measure $\nu$ (Theorem \ref{Thm-Regularity-Subex})
and make use of the previous result in \cite{CDMP21} 
on Berry-Esseen bounds for the norm cocycle $\sigma(G_n, x)$. 

To prove the moderate deviation principle \eqref{MDP-Coeff00a}, we use 
again the subexponential regularity of the invariant measure $\nu$ together with 
the moderate deviation principle for the norm cocycle $\sigma(G_n, x)$ established by in \cite{CDM17}. 

To establish the Cram\'er type moderate deviation expansion \eqref{Ch7_Cramer_Entry_0a}, 
our approach is different from the standard one which is based on 
performing a change of measure and proving 
a Berry-Esseen bound under the changed measure; see for example Cram\'{e}r \cite{Cra38} and Petrov \cite{Pet75}. 
However, even with a Berry-Esseen bound under the changed measure at hands,
we do not know how to obtain \eqref{Ch7_Cramer_Entry_0a} 
using this strategy. 
Our approach is to use again a partition  of the unity
and for each piece to pass to the Fourier transforms under the changed measure,
and then to establish exact asymptotic expansions.  
We then patch up these expansions by using the exponential H\"older regularity of the invariant measure $\nu$. 

The proof of the local limit theorem with moderate deviations \eqref{LLT-MD-Intro}
follows the same lines as that of \eqref{Ch7_Cramer_Entry_0a}, 
together with the uniform version of the exponential H\"older regularity of the invariant measure of the Markov chain $(G_n \!\cdot\! x)$
under the changed measure.

\section{Main results}

\subsection{Notation and conditions}
For any integer $d \geq 1$, denote by $V = \bb R^d$ the $d$-dimensional Euclidean space.
We fix a basis $e_1, \ldots, e_d$ of $V$ and the associated norm on $V$ is defined by $\|v\|^2 = \sum_{i=1}^d |v_i|^2$ for $v = \sum_{i=1}^d v_i e_i \in V$. 
Let $V^*$ be the dual vector space of $V$ and its dual basis is denoted by $e_1^*, \ldots, e_d^*$ 
so that $e_i^*(e_j)= 1$ if $i=j$ and  $e_i^*(e_j)= 0$ if $i\neq j$. 
For any integer $2 \leq p \leq d$, let $\wedge^p V$ be the $p$-th exterior product of $V$
and we use the same symbol $\| \cdot \|$ for the norms induced on $\wedge^p V$ and $V^*$.  
We equip $\bb P (V)$ with the angular distance 
\begin{align}\label{Angular-distance}
d(x, x') = \frac{\| v \wedge v' \|}{ \|v\| \|v'\| }  \quad \mbox{for} \  x= \bb R v \in \bb P(V), \  x' = \bb R v' \in \bb P(V). 
\end{align}
We use the symbol $\langle \cdot, \cdot \rangle$ to denote the dual bracket defined by $\langle f, v \rangle = f(v)$ for any $f \in V^*$ and $v \in V$. 
Set 
\begin{align*}
\delta(x,y) = \frac{| \langle f, v \rangle |}{\|f\| \|v\| }  \quad \mbox{for} \  x= \bb R v \in \bb P(V),  \  y = \bb R f \in \bb P(V^*).
\end{align*}
Denote by $\mathscr{C}(\bb{P}(V) )$ the space of complex-valued continuous functions on $\bb{P}(V)$, 
equipped with the norm $\|\varphi\|_{\infty}: =  \sup_{x\in \bb{P}(V) } |\varphi(x)|$ for $\varphi \in \mathscr{C}(\bb{P}(V) )$.
Let $\gamma>0$ be a fixed small enough constant and set
\begin{align*}
\|\varphi\|_{\gamma}: =  \|\varphi\|_{\infty} + 
  \sup_{x, x' \in \bb{P}(V): x \neq x'} \frac{|\varphi(x)-\varphi(x')|}{ d(x, x')^{\gamma} }.  
\end{align*}
Consider the Banach space 
\begin{align*}
\mathscr{B}_{\gamma}:= \left\{ \varphi\in \mathscr{C}(\bb P(V)): \|\varphi\|_{\gamma}< \infty \right\},   
\end{align*}
which consists of complex-valued $\gamma$-H\"older continuous functions on $\bb P(V)$. 
Denote by $\mathscr{L(B_{\gamma},B_{\gamma})}$ 
the set of all bounded linear operators from $\mathscr{B}_{\gamma}$ to $\mathscr{B}_{\gamma}$
equipped with the operator norm
$\left\| \cdot \right\|_{\mathscr{B}_{\gamma} \to \mathscr{B}_{\gamma}}$. 
The topological dual of $\mathscr B_\gamma$ endowed with the induced norm 
is denoted by $\mathscr B'_\gamma$.
Let $\mathscr B_\gamma^*$ be the Banach space of $\gamma$-H\"older continuous functions
on $\mathbb P(V^*)$ endowed with the norm
\begin{align*} 
\| \varphi \|_{\mathscr B_\gamma^*} = 
 \sup_{y\in \mathbb P(V^*) } |\varphi(x)| 
  + \sup_{  y,y'\in \bb P(V^*): \, y \neq y' } \frac{ |\varphi(y)-\varphi(y')| }{ d(y,y')^{\gamma} }, 
\end{align*}
where $d(y,y') = \frac{\| f \wedge f' \|}{ \|f\| \|f'\| }$ for $y= \bb R f \in \bb P(V^*)$ and $y' = \bb R f' \in \bb P(V^*)$.


Let $\textup{GL}(V)$ be the general linear group of the vector space $V$.
The action of $g \in \textup{GL}(V)$ on a vector $v \in V$ is denoted by $gv$, 
and the action of $g \in \textup{GL}(V)$ on a projective line $x = \bb R v \in \bb P(V)$ is denoted by $g \cdot x = \bb R gv$. 
For any $g \in \textup{GL}(V)$, let $\| g \| = \sup_{v \in V \setminus \{0\} } \frac{\| g v \|}{\|v\|}$
and denote $N(g) = \max \{ \|g\|, \| g^{-1} \| \}$. 
Let $\mu$ be a Borel probability measure on $\textup{GL}(V)$. 
We shall use the following exponential moment condition.




\begin{conditionA}[Exponential moment condition]\label{Ch7Condi-Moment} 
There exists a constant $\ee >0$ such that $\int_{ \textup{GL}(V) } N(g)^{\ee} \mu(dg) < \infty$. 
\end{conditionA}


Let $\Gamma_{\mu}$ be the smallest closed subsemigroup
generated by the support of the measure $\mu$. 
An endomorphism $g$ of $V$ is said to be proximal 
if it has an eigenvalue $\lambda$ with multiplicity one and all other eigenvalues of $g$ have modulus strcitly less than $|\lambda|$.
We shall need the following strong irreducibility and proximality condition. 

\begin{conditionA}\label{Ch7Condi-IP}
{\rm (i)(Strong irreducibility)} 
No finite union of proper subspaces of $V$ is $\Gamma_{\mu}$-invariant.

{\rm (ii)(Proximality)}
$\Gamma_{\mu}$ contains a proximal endomorphism. 
\end{conditionA}

%
%
%

Define the norm cocycle $\sigma: \textup{GL}(V) \times \bb P(V) \to \bb R$ as follows: 
\begin{align*}
\sigma (g, x) = \log \frac{\|gv\|}{\|v\|}, \quad   \mbox{for any} \  g \in \textup{GL}(V)  \  \mbox{and}  \   x = \bb R v \in \bb P(V). 
\end{align*}
By \cite[Proposition 3.15]{XGL19b}, under \ref{Ch7Condi-Moment} and \ref{Ch7Condi-IP},
 the following limit exists and is independent of $x \in \bb P(V)$:   
\begin{align}\label{Def-sigma}
\sigma^2: = \lim_{n \to \infty} \frac{1}{n} \bb E \left[ (\sigma (G_n, x) - n \lambda_1)^2 \right] \in (0, \infty).  
\end{align}
For any $s \in (-s_0, s_0)$ with $s_0 >0$ small enough, 
we define the transfer operator $P_s$ as follows: for any bounded measurable function $\varphi$ on $\bb P(V)$, 
\begin{align}\label{Def_Ps001}
P_s \varphi(x) = \int_{ \textup{GL}(V) } e^{s \sigma(g, x)} \varphi(g \!\cdot\! x) \mu(dg),  
\quad  x \in \bb P(V). 
\end{align}
It will be shown in Lemma \ref{Ch7transfer operator} that 
there exists a constant $s_0 >0$ such that for any $s \in (-s_0, s_0)$,
the operator $P_s \in \mathscr{L(B_{\gamma},B_{\gamma})}$ 
has 
a unique dominant eigenvalue $\kappa(s)$ 
with $\kappa(0) = 1$ and the mapping $s \mapsto \kappa(s)$ being analytic.

We denote $\Lambda = \log \kappa$.
Set $\gamma_m = \Lambda^{(m)}(0)$ for any $m \geq 1$.
In particular, $\gamma_1 = \lambda_1$ and $\gamma_2 = \sigma^2$. 
Throughout the paper, we write $\zeta$ for the Cram\'{e}r series \cite{Pet75}: 
\begin{align}\label{Ch7Def-CramSeri}
\zeta(t)=\frac{\gamma_3}{6\gamma_2^{3/2} } + \frac{\gamma_4\gamma_2-3\gamma_3^2}{24\gamma_2^3}t
+ \frac{\gamma_5\gamma_2^2-10\gamma_4\gamma_3\gamma_2 + 15\gamma_3^3}{120\gamma_2^{9/2}}t^2 + \cdots, 
\end{align}
which converges for  $|t|$ small enough.

Under \ref{Ch7Condi-Moment} and \ref{Ch7Condi-IP},
the Markov chain $(G_n \!\cdot\! x)_{n \geq 0}$ has a unique invariant probability measure 
$\nu$ on $\bb P(V)$ such that for any bounded measurable function $\varphi$ on $\bb P(V)$,
\begin{align} \label{Ch7mu station meas}
\int_{\bb P(V)} \int_{\textup{GL}(V)} \varphi(g \!\cdot\! x) \mu(dg) \nu(dx) 
 = \int_{ \bb P(V) } \varphi(x) \nu(dx)
= : \nu(\varphi).
 \end{align}



\subsection{Law of large numbers and central limit theorem}
In this subsection we present the law of large numbers and the central limit theorem
for the coefficients $\langle f, G_n v \rangle$.

We first present the law of large numbers for $\langle f, G_n v \rangle$: namely
we state a weak law of large numbers under the existence of the first moment and a 
strong law of large numbers under the second moment assumption. 
Denote by $\lambda_1$ and $\lambda_2$ the first and second Lyapunov exponents of $\mu$:
\begin{align} \label{def-lambda2}
\lambda_1 : = \lim_{n \to \infty} \frac{1}{n} \bb E \log \|G_n\| , \quad 
\lambda_2 : = \lim_{n \to \infty} \frac{1}{n} \bb E \log \frac{\|\wedge^2 G_n\|}{\| G_n \|}. 
\end{align}
We say that $\Gamma_{\mu}$ is irreducible if no proper subspace of $V$ is $\Gamma_{\mu}$-invariant.

\begin{theorem}\label{Thm_LLN}
Assume that $\int_{ \textup{GL}(V) } \log N(g)  \mu(dg) < \infty.$
Assume also that $\lambda_1 > \lambda_2$ and that $\Gamma_{\mu}$ is irreducible. 
Then, we have, as $n \to \infty$, uniformly in $v \in V$ and $f \in V^*$ with $\|v\| = \|f\| =1$,   
\begin{align}\label{Weak_LLN_Coeff}
\frac{1}{n}  \log |\langle f, G_n v \rangle|  \to \lambda_1   \quad  \mbox{in probability and in  }  L^1. 
\end{align}
Moreover, if 
$\int_{ \textup{GL}(V) } \log^2 N(g) \,  \mu(dg) < \infty$
and  
condition \ref{Ch7Condi-IP} holds, then  
the convergence in probability in \eqref{Weak_LLN_Coeff} can be improved to the a.s.\ convergence.
\end{theorem}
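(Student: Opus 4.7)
The plan is to exploit the decomposition \eqref{Ch7_Intro_Decom0a},
$$\log|\langle f, G_n v\rangle| = \sigma(G_n, x) + \log\delta(y, G_n \!\cdot\! x),$$
with $x = \bb R v$ and $y = \bb R f$, and to handle the two summands separately. The norm-cocycle term is controlled by Furstenberg's classical law of large numbers, and the main work goes into showing that $\frac{1}{n}\log\delta(y, G_n \!\cdot\! x) \to 0$ in an appropriate mode of convergence, uniformly in $x$ and $y$.

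For the weak law, Furstenberg's theorem, applicable under the first moment condition together with the irreducibility of $\Gamma_{\mu}$ and the Lyapunov gap $\lambda_1 > \lambda_2$, yields $\sigma(G_n, x)/n \to \lambda_1$ almost surely, uniformly in $x$. It then suffices to prove the weak H\"older regularity of the invariant measure alluded to in \eqref{Regularity-Firstmom}: for every $\ee > 0$,
$$\sup_{x,y}\bb P\bigl(\delta(y, G_n \!\cdot\! x) \leq e^{-\ee n}\bigr) \to 0 \quad \text{as } n \to \infty.$$
Since $\delta \leq 1$, this immediately yields $\frac{1}{n}\log\delta(y, G_n \!\cdot\! x) \to 0$ in probability, uniformly in $x, y$, and combined with Furstenberg this gives the convergence in probability statement. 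Upgrading to $L^1$ requires uniform integrability: the positive part of $\frac{1}{n}\log|\langle f, G_n v\rangle|$ is dominated by $\frac{1}{n}\log\|G_n\| \leq \frac{1}{n}\sum_{i=1}^n \log N(g_i)$, which is UI by the first moment hypothesis; the negative part is controlled by extracting from the weak regularity a tail bound of the form $\bb E|\log\delta(y,G_n\!\cdot\! x)| = o(n)$ via the layer-cake identity.

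For the strong law, one strengthens the above regularity into the summable form \eqref{logRegularity-Secondmom} specialized to $k = n$: under the second moment condition and \ref{Ch7Condi-IP}, there exist $(a_n)$ with $\sum_n a_n < \infty$ and
$$\sup_{x,y}\bb P\bigl(\delta(y, G_n \!\cdot\! x) \leq e^{-\ee n}\bigr) \leq a_n.$$
The Borel-Cantelli lemma then gives $\frac{1}{n}\log\delta(y, G_n \!\cdot\! x) \to 0$ almost surely, uniformly in $x$ and $y$; combined with Furstenberg's strong law for $\sigma(G_n, x)$ (valid already under the first moment), the a.s.\ convergence in \eqref{Weak_LLN_Coeff} follows.

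The main obstacle is establishing the two regularity estimates for $\delta(y, G_n\!\cdot\! x)$ under merely first, respectively second, moment conditions. The classical spectral-gap route of Guivarc'h-Raugi relies on the analyticity of the transfer operator $P_s$ on $\mathscr B_\gamma$ for complex $s$ near the origin, which requires an exponential moment and is therefore unavailable. I would instead couple Furstenberg's positivity identity $\lambda_1 - \lambda_2 = -\int \log\delta(y,x)\,d\nu^*(y)\, d\nu(x) > 0$ with a Markov-chain argument on $\bb P(V)\times \bb P(V^*)$, using a truncation of $N(g_i)$ calibrated to the moment hypothesis so that the exceptional event has vanishing probability under the first moment, and summable probability under the second. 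It is at this truncation step, combined with a dyadic decomposition in the second moment case to produce summability of $(a_n)$, that the precise moment hypothesis is consumed.
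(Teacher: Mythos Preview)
Your high-level strategy is exactly the paper's: decompose via \eqref{Ch7_Intro_Decom0a}, use Furstenberg's law for the cocycle, and reduce to the regularity statements \eqref{Regularity-Firstmom} and \eqref{logRegularity-Secondmom} for $\delta(y, G_n\!\cdot\! x)$, finishing the strong law by Borel--Cantelli. The substantive divergence is in how those regularity statements are obtained.

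For the first-moment weak regularity, the paper does \emph{not} use any truncation or abstract Markov-chain argument. It proceeds geometrically via the Cartan decomposition and the density point $x_{G_n}^M$ (Proposition~\ref{Prop-rates-delta01}, built on Lemma~\ref{Lem_delta_d} from \cite{BQ16}): one shows separately that $\bb P\big(d(G_n\!\cdot\! x, x_{G_n}^M) \geq e^{-(\lambda_1-\lambda_2-\ee)n}\big) \to 0$ and $\bb P\big(\delta(x_{G_n}^M, y) \leq e^{-\ee n}\big) \to 0$, both as direct consequences of the almost-sure laws of large numbers for $\|G_n v\|$, $\|G_n\|$, $\|G_n^* f\|$ and $\|\wedge^2 G_n\|$ (available under the first moment via Furstenberg and Kingman), and then combines them by a triangle-type inequality $\delta(G_n\!\cdot\! x, y) \geq \delta(x_{G_n}^M, y) - d(G_n\!\cdot\! x, x_{G_n}^M)$. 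The gap $\lambda_1 > \lambda_2$ enters transparently through $\|\wedge^2 G_n\|/\|G_n\|^2 \approx e^{(\lambda_2-\lambda_1)n}$. Your proposed route via ``Furstenberg's positivity identity'' plus truncation of $N(g_i)$ is vaguer; the identity you quote for $\lambda_1 - \lambda_2$ is not the standard one, and it is not clear how the sketch would produce the uniform-in-$(x,y)$ bound without in effect redoing the density-point geometry. For the second-moment summable regularity, the paper argues nothing new: it imports \cite[Lemma~4.8]{BQ16} directly (Lemma~\ref{Lem-ScaNorm_second}), so no dyadic construction is needed.

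Two minor corrections. Furstenberg's strong law for the cocycle (Lemma~\ref{Lem_LLN}) needs only irreducibility and the first moment; the hypothesis $\lambda_1 > \lambda_2$ is consumed solely in the regularity step, not in the cocycle LLN. And on the $L^1$ upgrade: the paper, like you, simply asserts uniform integrability; but note that your layer-cake argument for the negative part needs more than \eqref{Regularity-Firstmom}, which gives only pointwise-in-$\ee$ decay and not the integrated tail control required for $\bb E|\log\delta(y,G_n\!\cdot\! x)| = o(n)$.
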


Notice that, by a theorem of Guivarc'h \cite{Gui80}, 
condition \ref{Ch7Condi-IP} implies that $\lambda_1 > \lambda_2$ and that $\Gamma_{\mu}$ is irreducible. 
Under the exponential moment condition \ref{Ch7Condi-Moment} 
 together with the strong irreducibility  and proximality  condition \ref{Ch7Condi-IP}, 
Guivarc'h and Raugi \cite{GR85}  proved the a.s. convergence in \eqref{Weak_LLN_Coeff} 
 for the special linear group $\textrm{SL} (d, \mathbb R)  $ (the set of $d \times d$ matrices with determinant $1$);     
 Benoist and Quint \cite{BQ16b}  extended  it to the general linear group $\textup{GL}(V)$. 
Our Theorem \ref{Thm_LLN} states  the weak law of large numbers 
for $\textup{GL}(V)$ under the first moment condition, and the strong 
law  of large numbers under the second moment condition. 
The question remains open whether the strong law of large numbers 
holds true under the first moment condition. 

It is worth mentioning that Aoun and Sert \cite{AS19} proved the weak law of large numbers for the spectral radius of $G_n$
under the first moment condition, and the strong law of large numbers
under the second moment condition.


In the next theorem we formulate the central limit theorem for the coefficients $\langle f, G_n v \rangle$ under the second moment condition. 
Recall that 
 $$\Phi(t) = \frac{1}{\sqrt{2 \pi}} \int_{- \infty}^t e^{- u^2/2} du, \quad  t \in \bb R$$
 is the standard normal distribution function, and $\sigma^2$ is the asymptotic variation given by \eqref{Def-sigma}.

\begin{theorem}\label{Thm_CLT}
If $\int_{ \textup{GL}(V) } \log^2 N(g) \,  \mu(dg) < \infty$
and  condition \ref{Ch7Condi-IP} holds,
then for any $t \in \bb R$, uniformly in $v \in V$ and $f \in V^*$ with $\|v\| = \|f\| =1$, 
\begin{align}\label{CLT_Coeff}
\lim_{n \to \infty}
\bb{P} \left( \frac{\log |\langle f, G_n v \rangle| - n \lambda_1}{ \sigma \sqrt{n} } \leq t \right) 
= \Phi(t). 
\end{align}
\end{theorem}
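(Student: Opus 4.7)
The plan is to exploit the decomposition \eqref{Ch7_Intro_Decom0a}: writing $x = \mathbb R v$ and $y = \mathbb R f$ with $\|v\| = \|f\| = 1$, we have
\begin{align*}
\frac{\log |\langle f, G_n v \rangle| - n \lambda_1}{\sigma \sqrt{n}}
= \frac{\sigma(G_n, x) - n \lambda_1}{\sigma \sqrt{n}}
+ \frac{\log \delta(y, G_n \!\cdot\! x)}{\sigma \sqrt{n}},
\end{align*}
and then to control the two summands separately. The first term should be handled by invoking the central limit theorem for the norm cocycle $\sigma(G_n, x)$ due to Benoist and Quint \cite{BQ16}, which holds uniformly in the starting point $x \in \mathbb P(V)$ under \ref{Ch7Condi-IP} and the second moment condition, and hence uniformly in $v$ (since $x$ depends only on $v$).

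For the second term I would prove that $\frac{\log \delta(y, G_n \!\cdot\! x)}{\sigma \sqrt{n}} \to 0$ in probability, uniformly in $(v, f)$. Since $\delta(y, \cdot) \leq 1$ this quantity is non-positive, so it suffices to establish that, for every $\eta > 0$,
\begin{align*}
\sup_{x \in \mathbb P(V),\, y \in \mathbb P(V^*)}
\mathbb P \bigl( \delta(y, G_n \!\cdot\! x) \leq e^{-\eta \sigma \sqrt{n}} \bigr) \xrightarrow[n \to \infty]{} 0.
\end{align*}
The key input here is the strengthened regularity estimate \eqref{logRegularity-Secondmom} (available as Lemma \ref{Lem-ScaNorm_second} under the second moment hypothesis): fixing some $\varepsilon > 0$, there is a summable sequence $(a_k)$ such that $\mathbb P(\delta(y, G_n \!\cdot\! x) \leq e^{-\varepsilon k}) \leq a_k$ for all $n \geq k \geq 1$, uniformly in $(x, y)$. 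Taking $k = k_n := \lceil \eta \sigma \sqrt{n}/\varepsilon \rceil$ yields the bound $a_{k_n}$, which tends to $0$ because $a_k \to 0$.

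Combining the two ingredients via Slutsky's lemma then gives \eqref{CLT_Coeff} with the announced uniformity in $v$ and $f$. The main obstacle is not in this concluding argument but rather in upgrading the classical exponential-moment regularity estimates of \cite{GR85, BQ16b} to the almost sure-summable bound \eqref{logRegularity-Secondmom} under the mere second moment condition; once that regularity statement is in hand, the CLT follows essentially mechanically from the cocycle CLT via the decomposition above.
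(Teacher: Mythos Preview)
Your proposal is correct and follows essentially the same route as the paper: decompose via \eqref{Ch7_Intro_Decom0a}, invoke the Benoist--Quint CLT for the cocycle (Lemma \ref{Lem_CLT}), use Lemma \ref{Lem-ScaNorm_second} with $k$ of order $\sqrt{n}$ to show $\frac{1}{\sigma\sqrt{n}}\log\delta(y, G_n\!\cdot\! x)\to 0$ in probability uniformly in $(x,y)$, and conclude by Slutsky. The paper's proof differs only cosmetically (it takes $k=\lfloor\sqrt{n}\rfloor$ rather than $k_n=\lceil \eta\sigma\sqrt{n}/\varepsilon\rceil$), and as you note the real work lies in Lemma \ref{Lem-ScaNorm_second}, which is quoted from \cite{BQ16}.
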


Theorem \ref{Thm_CLT} improves the results of Guivarc'h and Raugi \cite{GR85} on $\textrm{SL} (d, \mathbb R)$ 
and those of Benoist and Quint \cite{BQ16b} on $\textup{GL}(V)$
by relaxing the exponential moment condition \ref{Ch7Condi-Moment}  
to the optimal second moment condition. 

We mention that the central limit theorem for the spectral radius of $G_n$ has been obtained by Aoun \cite{Aou21}. 

\subsection{Edgeworth expansion and Berry-Esseen bound}
In many applications it is of primary interest to give an estimation of the rate of convergence in the Gaussian approximation \eqref{CLT_Coeff}.  
In this direction we prove a first-order Edgeworth expansion.
For any $\varphi \in \mathscr{B}_{\gamma}$, define the functions
\begin{align} \label{drift-b001}
b_{\varphi}(x): = \lim_{n \to \infty}
   \mathbb{E} \big[ ( \sigma(G_n, x) - n \Lambda'(s) ) \varphi(G_n \!\cdot\! x) \big], 
\quad   x \in \bb P(V) 
\end{align}
and
\begin{align} \label{drift-d001}
d_{\varphi}(y) :=  \int_{\bb P(V)} \varphi(x) \log \delta(y,x)\nu(dx), \quad  y \in \bb P(V^*). 
\end{align}
It is shown in Lemmas \ref{Lem-Bs} and \ref{Lem-ds} that
both functions $b_{\varphi}$ and $d_{\varphi}$ are well-defined and $b_{\varphi} \in \scr B_{\gamma}$, 
$d_{\varphi} \in \scr B_{\gamma}^*$.
 
\begin{theorem}\label{Thm-Edge-Expan-Coeff001}
Assume \ref{Ch7Condi-Moment} and \ref{Ch7Condi-IP}. 
Then, for any $\ee > 0$, uniformly in 
 $t \in \bb R$, $x=\bb R v \in \bb P(V)$, $ y = \bb R f \in \bb P(V^*)$ with $\|v\| = \|f\| =1$, and
  $\varphi \in \mathscr{B}_{\gamma}$,  as $n\to \infty$,
\begin{align}\label{EdgeworthExpan}
&  \mathbb{E}
   \Big[  \varphi(G_n \!\cdot\! x) \mathds{1}_{ \big\{ \frac{\log |\langle f, G_n v \rangle| - n \lambda_1  }{\sigma \sqrt{n}} \leq t \big\} } \Big]
   \notag \\
 &  \qquad  = \nu(\varphi) \Big[  \Phi(t) + \frac{\Lambda'''(0)}{ 6 \sigma^3 \sqrt{n}} (1-t^2) \phi(t) \Big]
  - \frac{ b_{\varphi}(x) + d_{\varphi}(y) }{ \sigma \sqrt{n} } \phi(t)    \notag\\
& \qquad\quad  +  \nu(\varphi)  o \Big( \frac{ 1 }{\sqrt{n}} \Big)  +  \lVert \varphi \rVert_{\gamma} O \Big( \frac{ 1 }{n^{1 - \ee} } \Big).     
\end{align}
\end{theorem}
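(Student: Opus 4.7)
The plan is to build the Edgeworth expansion for the coefficients from the corresponding expansion for the norm cocycle $\sigma(G_n,x)$, transferring the contribution of the remainder $\log\delta(y, G_n\!\cdot\! x)$ into an explicit drift correction $d_{\varphi}(y)$. Concretely, starting from the decomposition
\begin{align*}
\log|\langle f, G_n v\rangle| = \sigma(G_n,x) + \log\delta(y, G_n\!\cdot\! x),
\end{align*}
the event in question becomes $\{\sigma(G_n,x) - n\lambda_1 \leq \sigma\sqrt{n}\,t - \log\delta(y,G_n\!\cdot\! x)\}$. Since $\log\delta(y,\cdot)$ is unbounded near the hyperplane $\{x:\langle f,v\rangle=0\}$, I would first truncate: split according to whether $\delta(y,G_n\!\cdot\! x)\geq e^{-A\log n}$ or not, for some large $A$. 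On the bad set the exponential H\"older regularity of $\nu$ (and the corresponding non-asymptotic control of its regularity under $\mathbb{P}$, which is already used in the introduction to prove \eqref{BerryEsseen_Coeffaa-Intro}) yields a contribution of order $O(n^{-1+\varepsilon})\|\varphi\|_\gamma$, which is absorbed in the error term.

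On the good set, introduce a smooth partition of unity $(\chi_{n,k}^y)_{k}$ of $\mathbb{P}(V)$ adapted to the level sets $\{\log\delta(y,\cdot)\approx -k\}$ for $1\leq k\leq A\log n$, with the $\chi_{n,k}^y$ uniformly $\gamma$-H\"older in $x$ (with a moderate loss in the norm). On each piece, write
\begin{align*}
\mathbb{E}\bigl[\varphi \chi_{n,k}^y (G_n\!\cdot\! x)\, \mathds{1}_{\{\sigma(G_n,x) - n\lambda_1 \leq \sigma\sqrt{n}\,t - \ell_k\}}\bigr],
\end{align*}
where $\ell_k\approx -k$ denotes a representative value of $\log\delta(y,\cdot)$ on the support of $\chi_{n,k}^y$. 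Then apply the first-order Edgeworth expansion for the couple $(G_n\!\cdot\! x,\sigma(G_n,x))$ with the $\gamma$-H\"older target function $\varphi\chi_{n,k}^y$ (this expansion, at the level of the norm cocycle, is available via the spectral gap of $P_s$ mentioned in the excerpt). This produces, at level $t-\ell_k/(\sigma\sqrt{n})$,
\begin{align*}
\nu(\varphi\chi_{n,k}^y)\Bigl[\Phi\!\bigl(t-\tfrac{\ell_k}{\sigma\sqrt{n}}\bigr) + \tfrac{\Lambda'''(0)}{6\sigma^3\sqrt{n}}(1-t^2)\phi(t)\Bigr] - \tfrac{b_{\varphi\chi_{n,k}^y}(x)}{\sigma\sqrt{n}}\phi(t) + \text{error}.
\end{align*}

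Now I would Taylor expand $\Phi(t-\ell_k/(\sigma\sqrt{n}))=\Phi(t)-\frac{\ell_k}{\sigma\sqrt{n}}\phi(t)+O(\ell_k^2/n)$ and sum over $k$. The two nontrivial series to identify are:
\begin{align*}
\sum_k \nu(\varphi\chi_{n,k}^y)\,\ell_k \longrightarrow \int_{\mathbb{P}(V)}\varphi(x)\log\delta(y,x)\,\nu(dx)=d_{\varphi}(y),
\end{align*}
which supplies the $d_\varphi(y)/(\sigma\sqrt n)$ term, and $\sum_k b_{\varphi\chi_{n,k}^y}(x)\to b_\varphi(x)$, using the linearity/continuity of $b$ with respect to its argument and the fact that $\sum_k \chi_{n,k}^y\to 1$ except on a $\nu$-null set. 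The remaining leading Gaussian and $\Lambda'''$ terms collapse into $\nu(\varphi)[\Phi(t)+\frac{\Lambda'''(0)}{6\sigma^3\sqrt n}(1-t^2)\phi(t)]$ by $\sum_k \nu(\varphi\chi_{n,k}^y)=\nu(\varphi)+O(n^{-A\gamma})$.

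The main obstacle is the bookkeeping: I must justify that the partition pieces near $y$ can be included (up to $k\sim A\log n$) without the Edgeworth error blowing up, because the H\"older norm of $\chi_{n,k}^y$ grows like $e^{\gamma k}$ and the Edgeworth remainder for a target of norm $\|\cdot\|_\gamma$ is $\|\cdot\|_\gamma\cdot O(n^{-1})$. A careful choice of the widths of $\chi_{n,k}^y$ (balancing H\"older loss against the truncation level) gives a total error $\|\varphi\|_\gamma \, O(n^{-1+\varepsilon})$, as claimed. The convergence $\sum_k \nu(\varphi\chi_{n,k}^y)\ell_k\to d_\varphi(y)$ relies on $\log\delta(y,\cdot)\in L^1(\nu)$, which itself follows from the exponential regularity of $\nu$ under \ref{Ch7Condi-Moment}. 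The identification of the drift $b_\varphi$ and the continuity $y\mapsto d_\varphi$ (of class $\mathscr B_\gamma^*$) are the pieces announced as Lemmas \ref{Lem-Bs} and \ref{Lem-ds}, which I would invoke at the end to wrap up.
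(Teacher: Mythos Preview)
Your approach is essentially the one the paper takes: the same decomposition, a H\"older partition of unity in $x$ adapted to level sets of $-\log\delta(y,\cdot)$, the norm-cocycle Edgeworth expansion (Theorem \ref{Thm-Edge-Expan}) applied piecewise, a Taylor expansion of $\Phi$ at the shifted level, and the identification of the summed shift with $d_\varphi(y)$.

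One point deserves sharpening. You set up pieces at levels $\{\log\delta\approx -k\}$, $1\leq k\leq A\log n$, i.e.\ width $\approx 1$, and then say a ``careful choice of widths'' is needed to balance H\"older loss against the truncation level. That balance is real, but it is not the binding constraint. The critical constraint comes from the Riemann-sum step
\[
\sum_k \nu(\varphi\chi_{n,k}^y)\,\ell_k \;\longrightarrow\; d_\varphi(y):
\]
on a piece of width $a$ the value $\ell_k$ differs from $\log\delta(y,\cdot)$ by $O(a)$, so the summed discrepancy is $O(a)\,\nu(\varphi)$, which enters the final expansion as $\frac{\phi(t)}{\sigma\sqrt n}\,O(a)\,\nu(\varphi)$. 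For this to be $\nu(\varphi)\,o(1/\sqrt n)$ you are forced to let the width $a=a_n\to 0$. The paper takes $a_n=1/\log n$ and correspondingly runs the partition out to $M_n=\lfloor A\log^2 n\rfloor$ pieces; then $\|\chi_{n,k}^y\|_\gamma\leq c\,a_n^{-\gamma}e^{\gamma k a_n}\leq c\,n^{\gamma A}\log^\gamma n$ uniformly in $k\leq M_n$, and choosing $\gamma$ so small that $\gamma A<\varepsilon/2$ yields the $\|\varphi\|_\gamma\,O(n^{-1+\varepsilon})$ error after summing the $M_n$ cocycle remainders. With width-$1$ pieces as in your initial description, the $d_\varphi$ identification would only be $O(1)$-accurate and the expansion would fail at order $1/\sqrt n$. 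A minor related remark: $\sum_k b_{\varphi\chi_{n,k}^y}=b_\varphi$ holds \emph{exactly} by linearity of $b_{\cdot}$ once the partition (including the tail piece) sums to $1$; no limiting argument is needed there.
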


As a consequence of Theorem \ref{Thm-Edge-Expan-Coeff001} we get the following Berry-Esseen bound 
with the optimal convergence rate, under the exponential moment condition.

\begin{theorem}\label{Thm-BerryEsseen-optimal}
Under \ref{Ch7Condi-Moment} and \ref{Ch7Condi-IP}, 
there exist constants $\gamma >0$ and $c > 0$  such that for any 
$n \geq 1$, $t \in \bb R$, $x=\bb R v \in \bb P(V)$ and $f \in V^*$ with $\|v\| = \|f\| =1$, 
 and $\varphi \in \mathscr{B}_{\gamma}$,  
\begin{align}\label{BerryEsseen_Coeffbb}
\left|  \bb{E} \left[ \varphi(G_n \!\cdot\! x) 
\mathds{1}_{ \big\{ \frac{\log |\langle f, G_n v \rangle| - n \lambda_1 }{ \sigma \sqrt{n} } \leq t  \big\}  }   \right]
- \nu (\varphi) \Phi(t)  \right|  
\leq  \frac{c}{\sqrt{n}} \|\varphi\|_{\gamma}. 
\end{align}
\end{theorem}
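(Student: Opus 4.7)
The plan is to deduce Theorem \ref{Thm-BerryEsseen-optimal} directly from the first-order Edgeworth expansion \eqref{EdgeworthExpan}. The key observation is that, on the right-hand side of \eqref{EdgeworthExpan}, every term other than $\nu(\varphi)\Phi(t)$ is already of size at most $\|\varphi\|_\gamma/\sqrt n$ times a bounded factor, so moving $\nu(\varphi)\Phi(t)$ to the left yields the desired Berry--Esseen bound essentially by inspection. The main task is thus purely a bookkeeping argument: extract uniform $L^\infty$ bounds in $t$ for each of the four terms on the right-hand side of \eqref{EdgeworthExpan} and compare them against $\|\varphi\|_\gamma/\sqrt n$.

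Concretely, I would first apply Theorem \ref{Thm-Edge-Expan-Coeff001} with a fixed $\varepsilon \in (0, 1/2)$, so that the error $\|\varphi\|_\gamma O(n^{-(1-\varepsilon)})$ is dominated by $C\|\varphi\|_\gamma/\sqrt n$. Next, the Edgeworth correction $\nu(\varphi)\frac{\Lambda'''(0)}{6\sigma^3\sqrt n}(1-t^2)\phi(t)$ is bounded by $C\|\varphi\|_\infty/\sqrt n$ because the function $t\mapsto(1-t^2)\phi(t)$ is bounded on $\bb R$. The drift term $\frac{b_\varphi(x)+d_\varphi(y)}{\sigma\sqrt n}\phi(t)$ is bounded by $C(\|b_\varphi\|_\infty+\|d_\varphi\|_\infty)/\sqrt n$, and Lemmas \ref{Lem-Bs} and \ref{Lem-ds} provide $b_\varphi\in \scr B_\gamma$ and $d_\varphi\in \scr B_\gamma^*$ with norms controlled by $C\|\varphi\|_\gamma$. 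Finally, the $\nu(\varphi)\, o(n^{-1/2})$ remainder is at most $C\|\varphi\|_\infty/\sqrt n$ for all $n$ large enough. Combining these four estimates and using $|\nu(\varphi)|\leq\|\varphi\|_\infty\leq\|\varphi\|_\gamma$ yields \eqref{BerryEsseen_Coeffbb} for $n$ larger than some threshold $n_0$; the finitely many cases $1\leq n < n_0$ are absorbed by enlarging the constant $c$.

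The only subtlety is the requirement that every implicit constant, as well as the $o$- and $O$-terms in \eqref{EdgeworthExpan}, be uniform in $t\in\bb R$, $x\in\bb P(V)$, $y\in\bb P(V^*)$ and $\varphi\in\scr B_\gamma$; but this uniformity is already built into the statement of Theorem \ref{Thm-Edge-Expan-Coeff001}. Consequently there is no real obstacle in this corollary itself: all the genuine difficulty lies upstream, in the proof of the Edgeworth expansion, which relies on the decomposition \eqref{Ch7_Intro_Decom0a}, a partition-of-unity discretisation of $\log\delta(y,G_n\cdot x)$, the Edgeworth expansion for the norm cocycle $\sigma(G_n,x)$ with H\"older target functions on $\bb P(V)$, and the exponential H\"older regularity of the invariant measure $\nu$. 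Theorem \ref{Thm-BerryEsseen-optimal} is simply the uniform-in-$t$ consequence read off from the remainder in that expansion.
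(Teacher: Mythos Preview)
Your proposal is correct and matches the paper's approach exactly: the paper states Theorem \ref{Thm-BerryEsseen-optimal} ``as a consequence of Theorem \ref{Thm-Edge-Expan-Coeff001}'' without a separate proof, and your term-by-term bookkeeping is precisely how one reads off the Berry--Esseen bound from the Edgeworth expansion. Your handling of the uniformity issues and the small-$n$ cases is also correct.
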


Under the same conditions, our result \eqref{BerryEsseen_Coeffbb}
improves on two very recent results in \cite{DKW21} and \cite{CDMP21b}:
it is proved in \cite{DKW21} that \eqref{BerryEsseen_Coeffbb} holds for the particular case when $\varphi = 1$, 
and in \cite{CDMP21b} the authors also consider the case where $\varphi = 1$ but the convergence rate $\frac{1}{\sqrt{n}}$ in \eqref{BerryEsseen_Coeffbb} is replaced by $\frac{\log n}{\sqrt{n}}$.

It is an open question how to relax the exponential moment condition \ref{Ch7Condi-Moment}
 in the Edgeworth expansion and in the Berry-Esseen bound.
For positive matrices, the Edgeworth expansion \eqref{EdgeworthExpan}
and the Berry-Esseen bound \eqref{BerryEsseen_Coeffbb} have been recently obtained
using a different approach in a forthcoming paper \cite{XGL21b}
under optimal moment conditions.  
In the following theorem we get a Berry-Esseen type bound for invertible matrices under the sub-exponential moment condition,
with an extra $\log^{\frac{1}{\alpha}} n$ factor. 
\begin{theorem}\label{Thm_BerryEsseen}
Assume \ref{Ch7Condi-IP} and that there exists a constant $\alpha \in (0, 1)$ 
such that $\mu(\log N(g) > u) \leq \exp \{- u^{\alpha} a(u) \}$ for any $u >0$
and for some function $a(u) >0$ satisfying $a(u) \to \infty$ as $u \to \infty$. 
Then, there exists a constant $c > 0$ such that for any $n \geq 2$, $t \in \bb R$, 
 $v \in V$ and $f \in V^*$ with $\|v\| = \|f\| =1$, 
\begin{align}\label{BerryEsseen_Coeffaa}
\left|  \bb{P} \left( \frac{\log |\langle f, G_n v \rangle| - n \lambda_1 }{ \sigma \sqrt{n} } \leq t \right) 
- \Phi(t)  \right|  \leq  \frac{c \log^{\frac{1}{\alpha}} n }{\sqrt{n}}. 
\end{align}
\end{theorem}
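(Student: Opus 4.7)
The plan is to combine the decomposition \eqref{Ch7_Intro_Decom0a}, namely
\begin{align*}
\log | \langle f, G_n v \rangle | = \sigma(G_n, x) + \log \delta(y, G_n \!\cdot\! x),
\qquad x = \bb{R} v, \ y = \bb{R} f,
\end{align*}
with two available ingredients. The first is the Berry-Esseen bound for the norm cocycle $\sigma(G_n,x)$ under the subexponential moment condition, established in \cite{CDMP21}, which gives a constant $c>0$ with
\begin{align*}
\sup_{ t \in \bb{R}, \, x \in \bb{P}(V) } \left| \bb{P}\left( \frac{\sigma(G_n, x) - n \lambda_1}{\sigma \sqrt{n}} \leq t \right) - \Phi(t) \right| \leq c \, \frac{\log^{1/\alpha} n}{\sqrt{n}}.
\end{align*}
The second is the subexponential H\"older regularity of the invariant measure $\nu$ furnished by Theorem \ref{Thm-Regularity-Subex}, which, under the hypothesis on $\mu$, yields an estimate of the form
\begin{align*}
\sup_{ x \in \bb{P}(V), \, y \in \bb{P}(V^*) } \bb{P}\bigl( \delta(y, G_n \!\cdot\! x) \leq e^{-u} \bigr) \leq C \, e^{- c \, u^{\alpha}}
\end{align*}
uniformly in $u$ in an appropriate range. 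The rest of the proof is a truncation argument glueing these two ingredients.

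Since $\delta(y, x) \leq 1$, one has $\log \delta(y, G_n \!\cdot\! x) \leq 0$, hence $\log | \langle f, G_n v \rangle | - n \lambda_1 \leq \sigma(G_n, x) - n \lambda_1$. Combined with the Berry-Esseen bound for the cocycle, this immediately gives the lower estimate
\begin{align*}
\bb{P}\left( \frac{\log | \langle f, G_n v \rangle | - n \lambda_1}{\sigma \sqrt n} \leq t \right) \geq \Phi(t) - c \, \frac{\log^{1/\alpha} n}{\sqrt n}.
\end{align*}
For the matching upper estimate, I would introduce a threshold $\ee_n := \beta \log^{1/\alpha} n$ with $\beta > 0$ to be chosen, and split
\begin{align*}
\bb{P}\left( \frac{\log | \langle f, G_n v \rangle | - n \lambda_1}{\sigma \sqrt n} \leq t \right)
&\leq \bb{P}\left( \frac{\sigma(G_n, x) - n\lambda_1}{\sigma \sqrt n} \leq t + \frac{\ee_n}{\sigma \sqrt n} \right) \\
&\quad + \bb{P}\bigl( \delta(y, G_n \!\cdot\! x) < e^{- \ee_n} \bigr),
\end{align*}
using that on the event $\{\log \delta(y, G_n \!\cdot\! x) \geq -\ee_n\}$ one has $\sigma(G_n,x) - n\lambda_1 \leq \log | \langle f, G_n v \rangle | - n\lambda_1 + \ee_n$. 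The first summand is controlled by the cocycle Berry-Esseen bound together with the mean-value estimate $\Phi(t + \ee_n/(\sigma\sqrt n)) - \Phi(t) \leq \|\phi\|_\infty \, \ee_n/(\sigma\sqrt n) = O(\log^{1/\alpha} n / \sqrt n)$, while the second is controlled by the regularity estimate for $\nu$: choosing $\beta$ large enough so that $c \beta^{\alpha} \geq 1/2$ yields $\bb{P}(\delta(y, G_n \!\cdot\! x) < e^{-\ee_n}) \leq C n^{- c \beta^{\alpha}} \leq C n^{-1/2}$. Adding the two contributions and combining with the lower estimate produces \eqref{BerryEsseen_Coeffaa}.

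\textbf{Expected main obstacle.} The comparison argument above is elementary once both ingredients are in hand, so the entire probabilistic difficulty is concentrated in ingredient two, i.e.\ the subexponential regularity of $\nu$ under the refined tail assumption $\mu(\log N(g) > u) \leq \exp\{- u^{\alpha} a(u)\}$ with $a(u) \to \infty$. Establishing this uniformly in $y \in \bb{P}(V^*)$ with the precise exponent $\alpha$ matching the tail of $\mu$ is the delicate step; it requires a careful truncation of the norm cocycle at a scale dictated by the function $a$, and exploiting the spectral gap of the Markov chain $(G_n \!\cdot\! x)$ beyond the polynomial moment regime. The auxiliary Theorem \ref{Thm-Regularity-Subex} is precisely designed to deliver this, and the present theorem amounts to packaging that estimate with the cocycle Berry-Esseen bound via the truncation above.
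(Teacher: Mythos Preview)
Your proposal is correct and follows essentially the same route as the paper: lower bound by monotonicity $\log|\langle f, G_n v\rangle| \leq \sigma(G_n,x)$, upper bound by truncating at level $\ee_n \asymp \log^{1/\alpha} n$ and combining the Berry-Esseen bound for the cocycle with the subexponential regularity estimate of Theorem~\ref{Thm-Regularity-Subex}. One small overstatement: the cocycle Berry-Esseen rate from \cite{CDMP21} (Lemma~\ref{Lem_BerryE_NormPoly} in the paper) is in fact $c/\sqrt{n}$ under a fourth-moment assumption, not $c\log^{1/\alpha} n/\sqrt{n}$; the extra $\log^{1/\alpha} n$ in the final bound arises solely from the shift $\Phi(t+\ee_n/(\sigma\sqrt n))-\Phi(t)$ and from the regularity term, exactly as you indicate.
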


Note that the condition $\mu(\log N(g) > u) \leq \exp \{- u^{\alpha} a(u) \}$ 
holds true
 if $\int_{ \textup{GL}(V) } e^{\log^{(\alpha + \ee)} N(g)} \,  \mu(dg)  < \infty$ for some $\ee >0$. 
 
Recently in \cite{CDMP21b}, 
under the polynomial moment condition of order $p \geq 3$, that is, $\int_{ \textup{GL}(V) } \log^p N(g) \,  \mu(dg) < \infty$,
a Berry-Esseen type bound is  obtained with the convergence rate $n^{-\frac{p-1}{2p}}$. 

The proof of Theorem \ref{Thm_BerryEsseen} is based on the Berry-Esseen type bound recently established in \cite{CDMP21}
and on the subexponential H\"older regularity of the invariant measure $\nu$ which is the subject of Section \ref{Sect-Regularity}.

\subsection{Subexponential H\"older regularity}\label{Sect-Regularity}
We shall establish the following subexponential H\"older regularity of the invariant measure $\nu$ 
under only subexponential moments condition. 
This turns out to be an important step for proving the Berry-Esseen type bound \eqref{BerryEsseen_Coeffaa}
as well as the moderate deviation principle for the coefficients $\langle f, G_n v \rangle$ (see Theorem \ref{Thm-MDP} below). 

\begin{theorem}\label{Thm-Regularity-Subex}
Assume \ref{Ch7Condi-IP} and that there exists a constant $\alpha \in (0, 1)$ 
such that $\mu(\log N(g) > u) \leq \exp \{- u^{\alpha} a(u) \}$ for any $u >0$
and for some function $a(u) >0$ satisfying $a(u) \to \infty$ as $u \to \infty$. 
Then, there exist constants $c >0$ and $k_0 \in \bb{N}$ such that for all $n \geq k \geq k_0$, 
$x \in \bb P(V)$ and $y \in \bb P(V^*)$, 
\begin{align}\label{thmRegularity01}
 \bb P \left( \delta (G_n \!\cdot\! x, y) \leq  e^{  - k } \right) 
    \leq  \exp \left( -c k^{\alpha} \right).   
\end{align}
Moreover, there exists a constant $\eta >0$ such that 
\begin{align}\label{Regu-Subexponen01}
\sup_{y \in \bb P(V^*)} 
\int_{\bb P(V)} \exp \left( \eta |\log \delta (x,y)|^{ \alpha} \right)  \nu(dx) < + \infty. 
\end{align}
In particular, there exist constants $c>0$ and $\eta >0$ such that for any $y \in \bb P(V^*)$ and $r >0$,
\begin{align}\label{Regu-Subexponen02}
\nu (B(y,r)) \leq c  \exp \left( \eta |\log r|^{ \alpha} \right),
\end{align}
where $B(y,r) = \{ x \in \bb P(V): \delta(y,x) \leq r \}$. 
\end{theorem}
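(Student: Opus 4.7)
The statement comprises three inequalities, with \eqref{thmRegularity01} being the crux. Once the tail bound \eqref{thmRegularity01} is in hand, \eqref{Regu-Subexponen01} follows by taking $n = k$ in \eqref{thmRegularity01} and integrating in $x$ against $\nu$ (using that $\nu$ is $(G_n \!\cdot\! x)$-invariant) to obtain $\nu(\delta(\cdot,y) \leq e^{-k}) \leq e^{-ck^\alpha}$ uniformly in $y$, then applying the layer-cake formula to conclude that $\int e^{\eta|\log \delta(x,y)|^\alpha}\,\nu(dx)$ converges uniformly in $y$ for any $\eta < c$. The measure bound \eqref{Regu-Subexponen02} is then immediate from Markov's inequality applied to $e^{\eta' |\log \delta|^\alpha}$ with any $\eta' < \eta$.

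For \eqref{thmRegularity01}, the plan is a truncation argument that reduces matters to the classical exponential H\"older regularity of the invariant measure (as in Le Page and Guivarc'h--Raugi, cf.\ Bougerol--Lacroix), which holds under \ref{Ch7Condi-Moment} and \ref{Ch7Condi-IP} with explicit dependence of the constants on the size of the support of $\mu$. Choose a cutoff $R = R_k$ with $R^\alpha a(R)$ of order $k^\alpha$ (possible since $a(u) \to \infty$), set $\mathcal{E}_n = \{\max_{1 \leq i \leq n} \log N(g_i) \leq R\}$, and decompose
\[
\bb P\bigl(\delta(G_n \!\cdot\! x, y) \leq e^{-k}\bigr) \leq \bb P\bigl(\mathcal{E}_n \cap \{\delta(G_n \!\cdot\! x, y) \leq e^{-k}\}\bigr) + n\, \mu(\log N > R).
\]
On $\mathcal{E}_n$ the increments $g_i$ are supported in a compact subset of $\textup{GL}(V)$, so the conditioned measure $\tilde{\mu}_R$ has exponential moments for small $\varepsilon$ and, for $R$ large enough, still satisfies \ref{Ch7Condi-IP} by dominated convergence on the proximality and irreducibility conditions; the classical exponential regularity then yields a bound $C(R)\, e^{-c_0(R) k}$ for the first summand, while the second is bounded by $n e^{-R^\alpha a(R)}$.

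The main technical difficulty is the uniformity in $n$. A direct union bound over the $n$ steps spoils the estimate when $n$ is much larger than $e^{k^\alpha}$, and the constants $c_0(R)$ from the exponential regularity deteriorate as $R$ grows with $k$. I would resolve this by a two-scale bootstrap: in the moderate range $n \leq e^{\beta k^\alpha}$ (for suitable $\beta$) the truncation works directly; for larger $n$, one exploits the exponential spectral gap of the transfer operator $P_0$ on $\mathscr B_\gamma$ to replace the distribution of $G_n \!\cdot\! x$ by that at a time $m$ with $n - m$ large, plus a near-stationary error of size $e^{-\rho(n-m)}$, reducing the large-$n$ estimate to the moderate-$n$ one. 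A useful complementary device to disentangle $x$ and $y$ is the duality identity $\log \delta(G_n \!\cdot\! x, y) = \log \delta(x,\, {}^tG_n \!\cdot\! y) + \sigma^*(G_n, y) - \sigma(G_n, x)$, which transfers the question to the reverse walk on $\bb P(V^*)$, where $({}^tG_n)\!\cdot\! y$ converges almost surely and exponentially to a stationary limit, so that the tail event is essentially produced by a single stationary-type estimate together with controlled fluctuations of $\sigma^*(G_n, y) - \sigma(G_n, x)$ coming from the subexponential moment bound on $\sigma(g,\cdot)$.
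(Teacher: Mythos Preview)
Your derivations of \eqref{Regu-Subexponen01} and \eqref{Regu-Subexponen02} from \eqref{thmRegularity01} are correct and match the paper's argument.

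For \eqref{thmRegularity01} itself, however, your strategy is quite different from the paper's and has genuine gaps. The paper does \emph{not} truncate the matrix law to reduce to the exponential-moment case. Instead it proceeds via the moderate deviation principle for the norm cocycle under subexponential moments (Cuny--Dedecker--Merlev\`ede), combined with the Cartan-decomposition estimates of Benoist--Quint (Lemma~\ref{Lem_delta_d}): one controls $\delta(G_n\!\cdot\! x,y)$ from below by $\delta(x_{G_n}^M,y)-d(G_n\!\cdot\! x,x_{G_n}^M)$, and each of these is governed by ratios like $\|G_n^*f\|/\|G_n\|$ and $\|\wedge^2 G_n\|/\|G_n\|^2$, which in turn are handled by the moderate deviation bounds. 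This yields, for each $n$, $\bb P(\delta(G_n\!\cdot\! x,y)\le e^{-b_n})\le e^{-c b_n^2/n}$ uniformly in $x,y$; taking $b_k=k^{1/(2-\alpha)}$ gives the stated exponent $k^{\alpha}$. The uniformity over $n\ge k$ is then obtained by the elementary Markov-property trick: write $G_n=(g_n\cdots g_{n-k+1})(g_{n-k}\cdots g_1)$, condition on the first $n-k$ steps, and apply the time-$k$ bound uniformly in the starting point. No spectral gap is needed.

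Your proposal runs into trouble at two places. First, the union bound gives a term $n\,\mu(\log N>R)$, which forces $R$ to grow with $n$; but then the exponential-regularity constants $C(R),c_0(R)$ for the truncated law $\tilde\mu_R$ also vary with $n$, and you offer no quantitative control on them. Asserting that $\tilde\mu_R$ satisfies \ref{Ch7Condi-IP} for large $R$ is fine, but the issue is the \emph{rate}: one would need something like $c_0(R)$ bounded below and $C(R)$ growing at most subexponentially in $R$, and this is not a soft consequence of convergence $\tilde\mu_R\to\mu$. Second, your ``two-scale bootstrap'' for large $n$ invokes the spectral gap of $P_0$ on $\mathscr B_\gamma$, but that spectral gap (Lemma~\ref{Ch7transfer operator}) is established in the paper only under the exponential moment condition \ref{Ch7Condi-Moment}, which you do not have here. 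The duality identity you write is correct, but the fluctuation term $\sigma^*(G_n,y)-\sigma(G_n,x)$ is of order $\sqrt n$ and does not help. The fix for the uniformity in $n$ is much simpler than spectral gap: it is exactly the conditioning trick above (cf.\ the proof of Lemma~\ref{Lem-ScaNorm_second}), which you could graft onto any route that gives a bound at time $n=k$ uniform in $x$.
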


Notice that if \ref{Ch7Condi-IP} holds and that subexponential moment condition in Theorem \ref{Thm-Regularity-Subex}
is strengthed to the exponential moment condition \ref{Ch7Condi-Moment}, 
then by a theorem due to Guivarc'h \cite{Gui90}, 
the invariant measure $\nu$ satisfies a stronger regularity property: 
there exists a constant $\eta >0$ such that
\begin{align}\label{HolderRegu-InvarMea}
\sup_{y \in \bb P(V^*)} 
\int_{\bb P(V)}  \frac{1}{ \delta(x,y)^{\eta} }  \nu(dx) < + \infty. 
\end{align}
In this case, we say that the invariant measure $\nu$ is exponentially H\"older regular. 

Under the $p$-th moment condition that $\int_{ \textup{GL}(V) }  \log^p N(g) \, \mu(dg) < \infty$ for some $p>1$, 
Benoist and Quint \cite[Proposition 4.5]{BQ16} have recently established the log-regularity of $\nu$:
under \ref{Ch7Condi-IP}, 
\begin{align}\label{LogRegu-InvarMea}
\sup_{y \in \bb P(V^*)} 
\int_{\bb P(V)}  |\log \delta(x,y) |^{p-1} \nu(dx) < + \infty. 
\end{align}
This result is one of the crucial points for establishing the central limit theorem for the norm cocycle $\sigma(G_n, x)$ 
under the optimal second moment condition $\int_{ \textup{GL}(V) }  \log^2 N(g) \, \mu(dg) < \infty$, see \cite{BQ16}. 


\subsection{Moderate deviation principles and expansions}
In this subsection we state a moderate deviation principle and Cram\'{e}r type moderate deviation expansions for 
the coefficients $\langle f, G_n v \rangle$.

We first present the moderate deviation principle under a subexponential moment condition, 
whose proof is based upon the regularity of the invariant measure $\nu$ shown in Theorem \ref{Thm-Regularity-Subex}. 

\begin{theorem} \label{Thm-MDP}
Assume \ref{Ch7Condi-IP} and that $\int_{ \textup{GL}(V) } e^{\log^{\alpha} N(g)} \,  \mu(dg)  < \infty$
for some constant $\alpha \in (0,1)$. 
Then, for any Borel set $B \subseteq \mathbb{R}$
and  any sequence $(b_n)_{n\geq 1}$ of positive numbers satisfying 
$\frac{b_n}{\sqrt{n}} \to \infty$ and $b_n = o(n^{\frac{1}{2 - \alpha}})$ as $n \to \infty$,
 we have, uniformly in $v \in V$ and $f \in V^*$ with $\|v\| = \|f\| =1$, 
\begin{align*}
 -\inf_{y\in B^{\circ}} \frac{y^2}{2\sigma^2} & \leq 
\liminf_{n\to \infty} \frac{n}{b_n^{2}}
\log  \bb{P} \left(   \frac{\log |\langle f, G_n v \rangle| - n\lambda_1 }{b_n} \in B   \right)   \nonumber\\
 &  \leq  \limsup_{n\to \infty}\frac{n}{b_n^{2}}
\log  \bb{P}  \left(  \frac{\log |\langle f, G_n v \rangle| - n\lambda_1 }{b_n} \in B   \right)
\leq - \inf_{y\in \bar{B}} \frac{y^2}{2\sigma^2}, 
\end{align*}
where $B^{\circ}$ and $\bar{B}$ are respectively the interior and the closure of $B$. 
\end{theorem}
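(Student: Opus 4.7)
The strategy is to reduce the moderate deviation principle for $\log|\langle f, G_n v \rangle|$ to the known MDP for the norm cocycle $\sigma(G_n, x)$ established in \cite{CDM17}, by exploiting the fundamental decomposition \eqref{Ch7_Intro_Decom0a}
\begin{align*}
\log |\langle f, G_n v\rangle| - n\lambda_1 = \bigl(\sigma(G_n, x) - n\lambda_1\bigr) + \log\delta(y, G_n \!\cdot\! x) =: Y_n + Z_n,
\end{align*}
for $x = \bb R v$, $y = \bb R f$ with $\|v\| = \|f\| = 1$. The plan is to show that $Y_n$ drives the MDP at speed $b_n^2/n$ with rate $t^2/(2\sigma^2)$, while $Z_n$ is super-exponentially negligible on the scale $b_n$, precisely in the range $b_n = o(n^{1/(2-\alpha)})$. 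The uniformity in $(v,f)$ will come from the fact that both inputs are uniform in the base points on $\bb P(V)$ and $\bb P(V^*)$.

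First I would invoke \cite{CDM17} to obtain the MDP for $Y_n/b_n$, valid under the subexponential moment assumption and uniform in $x \in \bb P(V)$. Next, the subexponential H\"older regularity from Theorem \ref{Thm-Regularity-Subex} gives, uniformly in $x \in \bb P(V)$ and $y \in \bb P(V^*)$, for every $\epsilon > 0$ and all sufficiently large $n$,
\begin{align*}
\bb P\bigl(|Z_n| > \epsilon b_n\bigr)
 = \bb P\bigl(\delta(y, G_n\!\cdot\! x) \leq e^{-\epsilon b_n}\bigr)
 \leq \exp\bigl(-c(\epsilon b_n)^{\alpha}\bigr).
\end{align*}
The hypothesis $b_n = o(n^{1/(2-\alpha)})$ is exactly equivalent to $\tfrac{n}{b_n^2} \cdot b_n^{\alpha} \to \infty$, which yields
\begin{align*}
\lim_{n \to \infty} \frac{n}{b_n^2} \log \bb P\bigl(|Z_n| > \epsilon b_n\bigr) = -\infty.
\end{align*}
This sharp matching between the MDP speed $b_n^2/n$ and the subexponential tail rate $b_n^{\alpha}$ is precisely why the range of $b_n$ in the statement is natural for this approach.

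Finally, I would transfer the MDP from $Y_n$ to $X_n = Y_n + Z_n$ by the standard soft inclusion argument. For the upper bound on a closed set $B$, use
$\{X_n/b_n \in B\} \subseteq \{Y_n/b_n \in B_{\epsilon}\} \cup \{|Z_n|/b_n > \epsilon\}$,
where $B_\epsilon = \{t : d(t,B) \leq \epsilon\}$, combine the MDP upper bound for $Y_n$ with the super-exponential estimate above, and let $\epsilon \downarrow 0$ using continuity of $t \mapsto t^2/(2\sigma^2)$ to replace $B_\epsilon$ by $\bar B$. For the lower bound on an open set $B$, pick $t_0 \in B$ and $\epsilon > 0$ with $(t_0 - \epsilon, t_0 + \epsilon) \subseteq B$, and use
$\{Y_n/b_n \in (t_0 - \tfrac{\epsilon}{2}, t_0 + \tfrac{\epsilon}{2})\} \setminus \{|Z_n|/b_n > \tfrac{\epsilon}{2}\} \subseteq \{X_n/b_n \in B\}$,
together with $\bb P(A \setminus C) \geq \bb P(A) - \bb P(C)$ and the lower MDP bound for $Y_n$. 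The main point that requires care, rather than a genuine obstacle, is verifying that both the MDP of \cite{CDM17} and the regularity estimate hold uniformly in the base points $x, y$; this uniformity in Theorem \ref{Thm-Regularity-Subex} is built into its statement, and for $\sigma(G_n, x)$ it follows from the standard spectral-gap analysis of the transfer operator $P_s$.
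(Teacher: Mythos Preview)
Your proposal is correct and uses the same two ingredients as the paper: the MDP for the norm cocycle $\sigma(G_n,x)$ from \cite{CDM17} and the subexponential regularity bound from Theorem~\ref{Thm-Regularity-Subex}. The paper packages the transfer slightly differently---reducing first to half-line asymptotics via \cite[Lemma~4.4]{HL12} and then doing direct one-sided comparisons with a cutoff $k=\lfloor(c_2 b_n^2/n)^{1/\alpha}\rfloor$ rather than your exponential-equivalence argument with $k\approx\epsilon b_n$---but the content is the same and both computations pinpoint the range $b_n=o(n^{1/(2-\alpha)})$ in exactly the way you describe.
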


When the sub-exponential moment condition is strengthened to the exponential moment condition \ref{Ch7Condi-Moment},
we are able to establish the following Cram\'{e}r type moderate deviation expansions for 
the coefficients $\langle f, G_n v \rangle$, and more generally, 
for the couple $(G_n \!\cdot\! x, \log |\langle f, G_n v \rangle|)$ with a target function $\varphi$ 
on the Markov chain $(G_n \!\cdot\! x)_{n \geq 0}$. 

\begin{theorem}\label{Thm-Cram-Entry_bb}
Assume \ref{Ch7Condi-Moment} and \ref{Ch7Condi-IP}. Then, 
we have, as $n \to \infty$,  
uniformly in $v \in V$ and $f \in V^*$ with $\|v\| = \|f\| =1$, 
and  $t \in [0, o(\sqrt{n} )]$,  
\begin{align}
\frac{\bb{P} \left( \frac{\log |\langle f, G_n v \rangle| 
- n \lambda_1 }{ \sigma \sqrt{n} } \geq t \right)} {1-\Phi(t)}
& =  e^{ \frac{t^3}{\sqrt{n}}\zeta ( \frac{t}{\sqrt{n}} ) }  \big[ 1 +  o(1) \big],   \label{MD-Expan01}\\
\frac{\bb{P} \left( \frac{\log |\langle f, G_n v \rangle|
 - n \lambda_1 }{ \sigma \sqrt{n} }\leq -t \right)}{ \Phi(-t) }
& =  e^{ - \frac{t^3}{\sqrt{n}}\zeta (-\frac{t}{\sqrt{n}} ) }  \big[ 1 +  o(1) \big].     \label{MD-Expan02}
\end{align}
More generally, for any $\varphi \in \mathscr{B}_{\gamma}$ with $\gamma >0$ sufficiently small, 
we have, as $n \to \infty$, 
uniformly in 
$x = \bb R v \in \bb P(V)$ and $f \in V^*$ with $\|v\| = \|f\| =1$, and $t \in [0, o(\sqrt{n} )]$,
\begin{align}
\frac{\bb{E} \left[ \varphi(G_n \!\cdot\! x) \mathds{1}_{ 
\left\{ \log| \langle f,  G_n v \rangle | - n \lambda_1 \geq \sqrt{n} \sigma t \right\} } 
\right] }  { 1-\Phi(t) }    
& =  e^{ \frac{t^3}{\sqrt{n}} \zeta(\frac{t}{\sqrt{n}} ) } \big[ \nu(\varphi) +  o(1) \big],  
\label{LD general upper001} \\
\frac{\bb{E} \left[ \varphi(G_n \!\cdot\! x) \mathds{1}_{ 
\left\{ \log| \langle f,  G_n v \rangle | - n \lambda_1 \leq - \sqrt{n} \sigma t  \right\} } \right] } { \Phi(-t)  }   
& =  e^{ - \frac{t^3}{\sqrt{n}} \zeta(-\frac{t}{\sqrt{n}} ) } \big[ \nu(\varphi) +  o(1) \big]. 
\label{LD general lower001}  
\end{align}
\end{theorem}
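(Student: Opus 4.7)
The plan is to derive the Cram\'er expansion for $\log|\langle f, G_n v\rangle|$ by leveraging the basic decomposition
$$\log|\langle f, G_n v\rangle| = \sigma(G_n, x) + \log \delta(y, G_n \!\cdot\! x), \qquad x=\bb R v,\ y=\bb R f,$$
and then reducing to a Cram\'er expansion for the cocycle $\sigma(G_n,x)$ combined with a target function on the Markov chain $(G_n\!\cdot\!x)$. As the introduction suggests, I would avoid the classical route (change of measure $+$ Berry--Esseen under the changed measure) and instead work piece by piece via Fourier analysis under the tilted measure, patching the pieces with the exponential H\"older regularity \eqref{HolderRegu-InvarMea} of $\nu$.

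First, I would build a smooth dyadic partition of unity $(\chi_{n,k}^y)_{k\geq 0}$ on $\bb P(V)$ adapted to the distance $\delta(y,\cdot)$: roughly, $\chi_{n,k}^y$ is supported where $\log\delta(y,\cdot)\in [-(k+1),-k+1]$ for $1\leq k \leq K_n$ with $K_n\asymp \log n$, and $\chi_{n,0}^y$ covers $\{\delta(y,\cdot)\geq e^{-1}\}$, and a remainder $\chi_{n,\infty}^y$ captures $\{\delta(y,\cdot)\leq e^{-K_n}\}$. On the support of $\chi_{n,k}^y$ one has $\log\delta(y,G_n\!\cdot\!x)=-k+O(1)$, so the event on the left-hand side of \eqref{LD general upper001}, intersected with $\{G_n\!\cdot\!x\in\operatorname{supp}\chi_{n,k}^y\}$, is essentially $\{\sigma(G_n,x)\geq n\lambda_1+\sigma\sqrt n\,t+k\}$, shifting the threshold by $k$.

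Next, for each fixed $k\leq K_n$ I would apply the Cram\'er type moderate deviation expansion for the couple $(G_n\!\cdot\!x, \sigma(G_n,x))$ with target function $\varphi\cdot\chi_{n,k}^y$. This is the standard output of the spectral perturbation theory for the family $P_s$ in \eqref{Def_Ps001}, using the saddle-point $s_n$ defined by $\Lambda'(s_n)=\lambda_1+(\sigma t+k/\sqrt n)/\sqrt n$ and passing through the Fourier transform of $e^{-s_n\sigma(G_n,x)}$ under $\bb P$ (equivalently, working under the tilted measure with transfer operator $e^{-\Lambda(s_n)}P_{s_n}$). Here the analyticity of $s\mapsto\kappa(s)$ produces the Cram\'er series $\zeta$ in \eqref{Ch7Def-CramSeri}, and the leading coefficient is $\pi_{s_n}^*(\varphi\chi_{n,k}^y)$, where $\pi_s^*$ is the rank-one projector associated to $P_s$. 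Summing over $k\leq K_n$, the identity $\sum_k \chi_{n,k}^y\equiv 1$ on the relevant region makes the sum of leading coefficients telescope to $\pi_{s_n}^*(\varphi)=\nu(\varphi)+o(1)$ (the correction involving $d_\varphi(y)$ from \eqref{drift-d001} is absorbed into $o(1)$ because $k/\sqrt n\to 0$ uniformly, and because $s_n\to 0$ in the moderate range $t=o(\sqrt n)$). The tail $k>K_n$ contributes $O(\nu(B(y,e^{-K_n}))^{\text{const}})$ under the tilted measure, which is negligible by \eqref{HolderRegu-InvarMea} extended to $\nu_{s_n}$ via the continuity $s\mapsto\nu_s$.

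Finally, the lower-tail expansion \eqref{MD-Expan02} follows from \eqref{MD-Expan01} by duality: apply the same argument to the adjoint random walk $G_n^*=g_1^*\cdots g_n^*$ acting on $V^*$, using $|\langle f,G_n v\rangle|=|\langle G_n^* f,v\rangle|$ and the fact that \ref{Ch7Condi-Moment} and \ref{Ch7Condi-IP} transfer to the adjoint measure $\mu^*$. The scalar versions \eqref{MD-Expan01}--\eqref{MD-Expan02} are then the special case $\varphi\equiv 1$ of \eqref{LD general upper001}--\eqref{LD general lower001}.

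The main obstacle I expect is the uniformity of the per-piece asymptotics in $k$ up to $K_n\asymp\log n$, and simultaneously in $s$ in a neighbourhood of $0$. Concretely, I need: (i) uniform exponential H\"older regularity of the tilted invariant measures $\nu_s$ so that $\pi_s^*(\chi_{n,k}^y)$ is well controlled, with errors summable over $k$; (ii) uniform bounds on the remainders in the Fourier-side expansions that are insensitive to the $k$-dependent shift of the saddle point. Both reduce to a careful quantitative spectral-gap analysis of $P_s$ together with Guivarc'h's regularity \eqref{HolderRegu-InvarMea} extended uniformly to the perturbed family, and this is the technical heart of the proof.
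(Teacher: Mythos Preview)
Your overall strategy --- a partition of unity in the $\delta(y,\cdot)$-variable combined with a Cram\'er expansion for the cocycle under a tilted measure, patched together using exponential H\"older regularity --- is indeed the paper's approach. Two points deserve comment.

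First, the duality argument you propose for the lower tail is incorrect. The identity $|\langle f, G_n v\rangle| = |\langle G_n^* f, v\rangle|$ simply rewrites the \emph{same} quantity, so applying the upper-tail result to the adjoint walk reproduces \eqref{MD-Expan01}, not \eqref{MD-Expan02}. There is no involution sending $\log|\langle f, G_n v\rangle| - n\lambda_1$ to its negative. In the paper the lower tail is handled directly and is not symmetric to the upper tail: one takes a \emph{negative} saddle point $s<0$ solving $\Lambda'(s)-\Lambda'(0) = -\sigma t/\sqrt n$ and works with the function $\phi_s(w)=e^{-sw}\mathds 1_{\{w\le 0\}}$ in place of $\psi_s(w)=e^{-sw}\mathds 1_{\{w\ge 0\}}$. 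A new asymmetric difficulty appears: for $s<0$ the weight $e^{-sT_n^x}$ is no longer bounded on the event $\{T_n^x+Y_n^{x,y}\le 0\}$ restricted to a piece of the partition (since now $T_n^x$ can be large and positive when $-Y_n^{x,y}$ is large), so controlling the remainder piece $\overline\chi_{M_n+1}^y$ requires the bound $\bb E_{\bb Q_s^x}[\delta(y,G_n\!\cdot\!x)^{s}]<\infty$ uniformly in small $s$ (Lemma~\ref{Lem_Inte_Regu_a} in the paper), an ingredient absent from your sketch.

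Second, your plan uses a $k$-dependent saddle point and appears to rely on black-boxing the cocycle Cram\'er expansion (Lemma~\ref{Lem_Cramer_Cocy}) piece by piece. That route only reaches $t\in[0,o(n^{1/6})]$: the ratio $\big(1-\Phi(t+\tfrac{k}{\sigma\sqrt n})\big)/(1-\Phi(t))$ carries an error of order $t\cdot k/\sqrt n$ and, more restrictively, $t^3/\sqrt n$ once you compare the Cram\'er factors, which is $o(1)$ only in the normal range. The paper therefore splits the proof: Theorem~\ref{Thm-CramSca-02} covers $t\in[0,o(n^{1/6})]$ by your kind of argument (without a full partition --- a single cutoff at level $k\asymp\log n$ or $k\asymp t^2$ suffices), while Theorem~\ref{Thm-Cram-Scalar-tag} covers $t\in[n^\alpha,o(\sqrt n)]$ via a genuinely different computation: a \emph{single} $k$-independent saddle point $s$ solving $\Lambda'(s)-\Lambda'(0)=\sigma t/\sqrt n$, followed by a partition of unity under $\bb Q_s^x$, a smoothing inequality, and direct estimation of the Fourier integrals $\int e^{-iu\ell n}R_{s,iu}^n(\varphi_{s,k}^y)(x)\widehat\psi_{s,\varepsilon}^-(u)\widehat\rho_{\varepsilon^2}(u)\,du$ with $\ell=a(k+1)/n$ (Proposition~\ref{KeyPropo}). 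The $k$-shift is absorbed into the phase $e^{-iu\ell n}$ rather than into the saddle point, and the key estimate there produces an error of size $t/\sqrt n + 1/t^2$, which explains why $t\to\infty$ is needed for this half of the argument.
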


Note that the rate $o(1)$ in \eqref{MD-Expan01}, \eqref{MD-Expan02},
 \eqref{LD general upper001} and \eqref{LD general lower001} depends on 
the rate  $o(\sqrt{n})$ in $t \in [0, o(\sqrt{n} )]$.

Theorem \ref{Thm-Cram-Entry_bb} clearly implies the following moderate deviation principle
for the couple $(G_n \!\cdot\! x, \log |\langle f, G_n v \rangle|)$ 
with a target function $\varphi$ on the Markov chain $(G_n \!\cdot\! x)$:
under \ref{Ch7Condi-Moment} and \ref{Ch7Condi-IP}, 
for any sequence of positive numbers $(b_n)_{n\geq 1}$ satisfying
$\frac{b_n}{n} \to 0$ and $\frac{b_n}{\sqrt{n}} \to \infty$, 
any Borel set $B \subseteq \bb R$
and real-valued function $\varphi \in \mathscr{B}_{\gamma}$ satisfying $\nu(\varphi) > 0$,  
we have that 
uniformly in $x = \bb R v \in \bb P(V)$, $v \in V$ and $f \in V^*$ with $\|v\| = \|f\| =1$, 
\begin{align*} 
-\inf_{t \in B^{\circ}} \frac{t^2}{2\sigma^2} 
& \leq \liminf_{n\to \infty} \frac{n}{b_n^{2}}
\log  \bb{E} \Big[  \varphi( G_n \!\cdot\! x )  
\mathds{1}_{  \big\{ \frac{ \log |\langle f, G_n v \rangle| - n\lambda_1 }{b_n} \in B  \big\} }  \Big]
\nonumber\\
& \leq   \limsup_{n\to \infty}\frac{n}{b_n^{2}}
\log  \bb{E}  \Big[   \varphi( G_n \!\cdot\! x ) 
\mathds{1}_{ \big\{ \frac{ \log |\langle f, G_n v \rangle| - n\lambda_1 }{b_n} \in B  \big\} }   \Big] 
\leq - \inf_{t \in \bar{B}} \frac{t^2}{2\sigma^2},
\end{align*}
where $B^{\circ}$ and $\bar{B}$ are respectively the interior and the closure of $B$.
This moderate deviation principle is new even for $\varphi = 1$.

\subsection{Local limit theorem with moderate deviations}
In this subsection we state 
the local limit theorem with moderate deviations and target functions
for the coefficients $\langle f, G_n v \rangle$.

\begin{theorem}\label{ThmLocal02}
Assume \ref{Ch7Condi-Moment} and \ref{Ch7Condi-IP}.
Then, for any real numbers $-\infty < a_1 < a_2 < \infty$,  
we have, 
uniformly in $v \in V$ and $f \in V^*$ with $\|v\| = \|f\| =1$,  and $|t| = o(\sqrt{n})$, 
\begin{align} \label{ModerDevXX001}
\mathbb{P} \Big( \log| \langle f, G_n v \rangle | - n\lambda_1 \in [a_1, a_2] + \sqrt{n}\sigma t \Big)
= \frac{a_2 - a_1}{ \sigma \sqrt{2 \pi n} } 
 e^{ - \frac{t^2}{2} + \frac{t^3}{\sqrt{n}} \zeta(\frac{t}{\sqrt{n}} ) } [1 + o(1)]. 
\end{align}
More generally, for any $\varphi \in \mathscr{B}_{\gamma}$ with $\gamma >0$ sufficiently small, 
and any directly Riemann integrable function $\psi$ with compact support on $\bb R$, we have, as $n \to \infty$, 
uniformly in $x = \bb R v$, $v \in V$ and  $f \in V^*$ with $\|v\| = \|f\| =1$, 
and $|t| = o(\sqrt{n})$, 
\begin{align}\label{LLT-Moderate-01}
& \mathbb{E}  \Big[ \varphi(G_n \!\cdot\! x)
 \psi \Big( \log| \langle f, G_n v \rangle | - n\lambda_1 - \sqrt{n}\sigma t \Big) 
    \Big]   \nonumber\\ 
& \qquad\qquad = \frac{e^{ -\frac{t^2}{2} + \frac{t^3}{\sqrt{n}}
  \zeta(\frac{t}{\sqrt{n}} ) }}{\sigma \sqrt{2 \pi n}}  
  \left[ \nu(\varphi) \int_{\bb R} \psi(u) du  +  o(1) \right].
\end{align}
\end{theorem}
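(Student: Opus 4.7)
The plan is to adapt the Cram\'er-type moderate deviation strategy of Theorem \ref{Thm-Cram-Entry_bb} by upgrading its integral asymptotic into a local one via Fourier inversion. Starting from the decomposition \eqref{Ch7_Intro_Decom0a}, I would rewrite
\begin{align*}
\log|\langle f, G_n v\rangle| - n\lambda_1 - \sqrt{n}\sigma t
= \bigl(\sigma(G_n, x) - n\lambda_1 - \sqrt{n}\sigma t\bigr) + \log\delta(y, G_n\!\cdot\! x),
\end{align*}
so that the expectation in \eqref{LLT-Moderate-01} becomes
$\bb{E}\bigl[\varphi(G_n\!\cdot\! x)\,\psi\bigl(\sigma(G_n, x) - n\lambda_1 - \sqrt{n}\sigma t + \log\delta(y, G_n\!\cdot\! x)\bigr)\bigr]$. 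I then introduce a smooth partition of unity $\{\chi_k^y\}_{k \geq 0}$ on $\bb{P}(V)$ indexed by exponential scales of $\delta(y,\cdot)$, with $\chi_k^y$ approximately supported in the shell $\{x' : e^{-k-1} \leq \delta(y, x') < e^{-k}\}$, satisfying $\sum_k \chi_k^y \equiv 1$ and $\|\chi_k^y\|_\gamma \leq C\,e^{\gamma k}$; on $\supp(\chi_k^y)$ one has $\log \delta(y, \cdot) = -k + O(1)$.

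The key analytic input is a uniform local limit theorem with moderate deviations for the couple $(G_n\!\cdot\! x, \sigma(G_n, x))$ with H\"older target function: for every $\Phi \in \scr{B}_\gamma$, every directly Riemann integrable $\psi$ of compact support, and every $|s| = o(\sqrt{n})$,
\begin{align*}
\bb{E}\bigl[\Phi(G_n\!\cdot\! x)\,\psi(\sigma(G_n, x) - n\lambda_1 - \sqrt{n}\sigma s)\bigr]
= \frac{e^{-s^{2}/2 + \frac{s^{3}}{\sqrt{n}}\zeta(s/\sqrt{n})}}{\sigma\sqrt{2\pi n}} \Bigl[\nu(\Phi) \int_{\bb{R}}\psi(u)\,du + o(1)\Bigr].
\end{align*}
I would prove this by Cram\'er's change of measure: pick $s_n$ solving $\Lambda'(s_n) = \lambda_1 + \sigma s/\sqrt{n}$, rewrite the expectation as an average under the tilted law with Radon--Nikodym density proportional to $\kappa(s_n)^{-n} e^{s_n \sigma(G_n, x)}$, apply Fourier inversion to a smooth approximation of $\psi$, and expand $P_{s_n + i\tau}^n$ via its spectral decomposition. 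Uniform spectral gap and non-arithmeticity estimates on $P_{s_n + i\tau}$ yield the local density $\frac{1}{\sigma\sqrt{2\pi n}}$, while the Taylor expansion of $\Lambda$ at $0$ produces the Cram\'er factor.

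With this input in hand, I apply the identity above with $\Phi = \varphi\chi_k^y$, treating $\log\delta(y, \cdot) = -k + O(1)$ on $\supp(\chi_k^y)$ as a deterministic shift of the argument of $\psi$ by $-k$, and sum over $k$. Since $\sum_k \chi_k^y \equiv 1$, the sum $\sum_k \nu(\varphi \chi_k^y)$ collapses to $\nu(\varphi)$, and the shift $k/(\sigma\sqrt{n})$ inside the Cram\'er factor contributes a negligible $1+o(1)$ for all relevant $k$. Contributions from large $k$, where $\|\chi_k^y\|_\gamma$ grows like $e^{\gamma k}$, are controlled by the exponential H\"older regularity \eqref{HolderRegu-InvarMea} of $\nu$, which yields $\nu(B(y, e^{-k})) \lesssim e^{-\eta k}$; choosing $\gamma < \eta$ makes the remaining sum summable and produces the leading term in \eqref{LLT-Moderate-01}. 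The unweighted case \eqref{ModerDevXX001} follows by taking $\varphi = 1$ and approximating $\mathds{1}_{[a_1,a_2]}$ by directly Riemann integrable functions.

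The main obstacle is the uniform local limit theorem for the cocycle with H\"older target function stated above: it demands delicate control of the perturbed transfer operator $P_{s_n + i\tau}$ jointly in $s_n$ (tending to $0$) and $\tau$, uniform H\"older bounds on the dominant eigenprojection and eigenfunction of $P_{s_n}$, and crucially a uniform version of the exponential H\"older regularity of the invariant measure of the tilted Markov chain, which is what absorbs the $e^{\gamma k}$ growth of $\|\chi_k^y\|_\gamma$ when patching the pieces back together. A standard smoothing approximation of directly Riemann integrable $\psi$ by $C^\infty$ functions with compactly supported Fourier transforms is what enables the Fourier expansion in the first place.
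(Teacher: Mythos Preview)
Your proposal is correct and follows essentially the same strategy as the paper: Cram\'er change of measure, partition of unity in $\delta(y,\cdot)$, Fourier inversion via the spectral decomposition of the perturbed transfer operator, and exponential H\"older regularity to control the tail of the sum. The only organizational difference is the order of operations. You partition first under $\bb P$ and then apply a black-box cocycle LLT (itself proved by tilting) to each piece $\varphi\chi_k^y$; the paper instead performs the change of measure \emph{once} at the outset, pulling out the Cram\'er factor $e^{-t^2/2+\cdots}$, and only then partitions under the tilted measure $\bb Q_s^x$. The paper's order is a little cleaner because the tail piece $k>M_n$ then only needs to be $o(1)$ under $\bb Q_s^x$ (via Lemma~\ref{Lem_Regu_pi_s}), whereas in your order the crude tail bound under $\bb P$ is merely polynomial in $n$ and must be compared to the target $e^{-t^2/2}/\sqrt n$, which for large $t$ forces you to treat the tail as one more piece inside the cocycle LLT rather than bound it externally.

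One point you do gloss over: on $\supp(\chi_k^y)$ you only have $\log\delta(y,\cdot)=-k+O(1)$, not exactly $-k$, and for a general directly Riemann integrable $\psi$ (e.g.\ $\mathds 1_{[a_1,a_2]}$) an $O(1)$ shift of the argument is \emph{not} negligible. The paper handles this by taking the partition at scale $a\in(0,\ee)$ and sandwiching $\psi$ between the envelopes $\psi^{\pm}_\ee$ (Lemma~\ref{estimate u convo}), then letting $\ee\to 0$ at the end so that $\int\psi^{\pm}_\ee\to\int\psi$ by direct Riemann integrability. Your fixed-scale partition would need the same refinement.
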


The asymptotic \eqref{ModerDevXX001} improves the recent results obtained in \cite{GQX20} and  \cite{DKW21}: 
the result in \cite{GQX20} corresponds to the case when $t=0$, 
and that in \cite{DKW21} to the case when $t = o(1)$.

\section{Proofs of the laws of large numbers} 

The following strong law of large numbers for the norm cocycle $\sigma(G_n, x)$ is due to Furstenberg \cite{Fur63}. 
Recall that $\Gamma_{\mu}$ is irreducible if no proper subspace of $V$ is $\Gamma_{\mu}$-invariant.

\begin{lemma}[\cite{Fur63}]  \label{Lem_LLN}
Assume $\int_{ \textup{GL}(V) } \log N(g)  \mu(dg) < \infty$
and that $\Gamma_{\mu}$ is irreducible.
Then, uniformly in $x \in \bb P(V)$,   
\begin{align*}
\lim_{n \to \infty} \frac{ \sigma(G_n, x) }{n}  = \lambda_1  \quad a.s.,
\end{align*}
where $\lambda_1 \in \bb R$ is the first Lyapunov exponent of $\mu$. 
\end{lemma}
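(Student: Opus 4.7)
The plan is to combine the classical subadditive ergodic theorem with Furstenberg's formula for $\lambda_1$ via a $\mu$-stationary measure on $\mathbb{P}(V)$, and finally upgrade pointwise $\nu$-a.e. convergence to convergence uniform in $x$ by exploiting irreducibility.

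\textbf{Step 1 (upper bound).} The sequence $(\log \|G_n\|)_{n\geq 1}$ is subadditive in the Kingman sense: writing $G_{n+m} = (g_{n+m}\cdots g_{m+1})G_m$, the first factor is independent of $G_m$ with the same law as $G_n$, and $\log \|G_{n+m}\| \leq \log \|g_{n+m}\cdots g_{m+1}\| + \log \|G_m\|$. The first moment condition $\int \log N(g)\,\mu(dg) < \infty$ gives $\mathbb{E}\log^+\|G_1\| < \infty$, so Kingman's subadditive ergodic theorem yields $\frac{1}{n}\log\|G_n\| \to \lambda_1$ a.s. Since $\sigma(G_n, x) = \log \|G_n v\|/\|v\| \leq \log \|G_n\|$, we obtain $\limsup_{n} \frac{\sigma(G_n, x)}{n} \leq \lambda_1$ a.s., uniformly in $x \in \mathbb{P}(V)$.

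\textbf{Step 2 (lower bound via a stationary measure).} Compactness of $\mathbb{P}(V)$ and the Markov--Kakutani theorem provide a $\mu$-stationary probability measure $\nu$ on $\mathbb{P}(V)$. The cocycle identity
\begin{align*}
\sigma(G_n, x) = \sum_{k=1}^n \sigma(g_k, G_{k-1}\!\cdot\! x)
\end{align*}
expresses $\sigma(G_n, x)$ as a Birkhoff sum for the skew product $T(\omega, x) = (\theta\omega, g_1(\omega)\!\cdot\! x)$ on $\mathrm{GL}(V)^{\mathbb{N}} \times \mathbb{P}(V)$, which preserves $\mu^{\otimes \mathbb{N}} \otimes \nu$. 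Birkhoff's ergodic theorem (applied on each ergodic component, if necessary) then yields, for $\mu^{\otimes\mathbb{N}}\otimes\nu$-a.e.\ $(\omega, x)$, convergence of $\frac{1}{n}\sigma(G_n, x)$ to the $T$-invariant limit $L(x) := \int \sigma(g, x)\,\mu(dg) + \cdots$, whose integral against $\nu$ equals $\int\!\!\int \sigma(g,x)\,\mu(dg)\nu(dx)$. Combined with Step~1 and Fatou's lemma applied to $\log\|G_n\| - \sigma(G_n, x) \geq 0$, this forces the integral to equal $\lambda_1$ (Furstenberg's formula) and hence $L(x) = \lambda_1$ for $\nu$-a.e.\ $x$.

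\textbf{Step 3 (every $x$ and uniformity).} The hard step is passing from $\nu$-a.e.\ $x$ to every $x \in \mathbb{P}(V)$, with uniformity. The key input is irreducibility of $\Gamma_\mu$: no proper subspace is $\Gamma_\mu$-invariant, so no exceptional set of directions is preserved by the dynamics. Concretely, for any $x \in \mathbb{P}(V)$ and any $\varepsilon > 0$, irreducibility implies one can choose finitely many elements $h_1,\ldots,h_r \in \Gamma_\mu$ with $\min_j \sigma(h_j, x) \geq -C$, such that the images $h_j\cdot x$ together span $V$; this produces a representative of $x$ linearly comparable to directions on which Step~2 gives the lower bound $\lambda_1$. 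Equivalently, one may write $x$ as a bounded linear combination of points where convergence holds and use the elementary inequality $\log\|G_n(v_1 + v_2)\| \geq \max\{\log\|G_n v_1\|, \log\|G_n v_2\|\} - \log 2$. This yields $\liminf_n \frac{\sigma(G_n, x)}{n} \geq \lambda_1$ a.s.\ for every $x$. Uniformity in $x$ is then obtained by a compactness argument: $x \mapsto \sigma(G_n, x)$ has modulus of continuity controlled by $\log N(G_n) = O(n)$ (indeed bounded on suitable scales), and covering $\mathbb{P}(V)$ by finitely many balls of radius $e^{-n\varepsilon}$ combined with the a.s.\ pointwise convergence on the centers gives uniform convergence up to an error that vanishes as $n\to\infty$.

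\textbf{Main obstacle.} The delicate part is Step~3: turning the $\nu$-a.e.\ convergence from the ergodic theorem into pointwise convergence at every $x$, and doing so uniformly. This is precisely where irreducibility enters the argument in an essential way; without it, there could exist exceptional directions $x$ on which $\sigma(G_n, x)/n$ converges to a smaller Lyapunov exponent. All other steps are standard applications of Kingman's subadditive ergodic theorem and Birkhoff's theorem on the skew product.
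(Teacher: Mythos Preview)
The paper does not prove this lemma; it is quoted from Furstenberg \cite{Fur63} and used as a black box, so there is no ``paper's own proof'' to compare against. Your Steps~1 and~2 reproduce the standard strategy correctly, but Step~3 contains a genuine error.

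The displayed inequality
\[
\log\|G_n(v_1+v_2)\| \;\geq\; \max\bigl\{\log\|G_n v_1\|,\ \log\|G_n v_2\|\bigr\} - \log 2
\]
is false: if $G_n v_1$ and $G_n v_2$ are large and nearly opposite, $\|G_n(v_1+v_2)\|$ can be arbitrarily small compared with either summand. The only valid inequality here is the triangle inequality in the \emph{opposite} direction, $\|G_n(v_1+v_2)\| \leq \|G_n v_1\| + \|G_n v_2\|$, and this is precisely what Furstenberg's argument exploits: for a.e.\ $\omega$ the set of ``bad'' vectors $W_\omega=\{v : \limsup_n \tfrac{1}{n}\log\|G_n(\omega)v\| < \lambda_1\}\cup\{0\}$ is a vector subspace (by that triangle inequality), it satisfies the equivariance $g_1(\omega)W_\omega = W_{\theta\omega}$, and one then uses irreducibility to force $W_\omega$ to be a.s.\ trivial. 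Upgrading this to the full $\lim$ (not merely $\limsup$) for \emph{every} $x$ requires either the Oseledets filtration or a more careful measurability/equivariance analysis; see Bougerol--Lacroix \cite{BL85}, Chapter~III, or Benoist--Quint \cite{BQ16b}, Section~4.6. Your linear-combination shortcut cannot supply the lower bound because of the cancellation obstruction above.

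The uniformity argument is also not sound as written: a cover of $\mathbb{P}(V)$ by balls of radius $e^{-n\varepsilon}$ requires on the order of $e^{(d-1)n\varepsilon}$ balls, so the collection of centers changes with $n$ and you cannot simply appeal to a.s.\ pointwise convergence at a fixed finite set.
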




%


In the 
next lemma, we state two    laws  of large numbers  for $\|G_n\|$  and  $\| \wedge^2 G_n\|$.  The first law is due to 
 Furstenberg-Kesten \cite{FK60}, which can be proved by  Kingman's subadditive ergodic theorem \cite{Kin73};  
 the second one is also an easy  consequence  of Kingman's ergodic theorem \cite{Kin73} using 
 the definition of $\lambda_2$ given in  \eqref{def-lambda2}. 

\begin{lemma}[\cite{FK60, Kin73}] \label{Lem_LLN001}
Assume $\int_{ \textup{GL}(V) } \log N(g)  \mu(dg) < \infty$.
Then, 
\begin{align*}
\lim_{n \to \infty}  \frac{1}{n} \log \| G_n \|  = \lambda_1
\quad \mbox{and} \quad 
\lim_{n \to \infty}  \frac{1}{n} \log \| \wedge^2 G_n \|  =  \lambda_1 + \lambda_2   \quad  a.s.
\end{align*}
\end{lemma}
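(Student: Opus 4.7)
The plan is to apply Kingman's subadditive ergodic theorem to each of the two sequences $X_n = \log \|G_n\|$ and $Y_n = \log \|\wedge^2 G_n\|$, and then identify the resulting a.s.\ limits with the deterministic constants $\lambda_1$ and $\lambda_1 + \lambda_2$ respectively, using the definitions in \eqref{def-lambda2}.

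First I would verify subadditivity. Writing $G_{n+m} = (g_{n+m} \cdots g_{n+1}) G_n$ and using submultiplicativity of the operator norm gives $\|G_{n+m}\| \leq \|g_{n+m} \cdots g_{n+1}\| \cdot \|G_n\|$, hence $X_{n+m} \leq X_n + X'_{m,n}$ where $X'_{m,n} := \log\|g_{n+m} \cdots g_{n+1}\|$ is independent of $X_n$ and has the same law as $X_m$. Since $\wedge^2: \textup{GL}(V) \to \textup{GL}(\wedge^2 V)$ is a group homomorphism, the identical argument yields the subadditivity of $Y_n$.

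Next I would check the integrability hypothesis of Kingman's theorem. From $\|g\| \leq N(g)$ and $\|g^{-1}\| \leq N(g)$ one has $-\log N(g) \leq \log \|g\| \leq \log N(g)$, so $\lvert \log \|g\| \rvert \leq \log N(g)$ and hence $X_1 \in L^1(\mu)$ under the assumption $\int \log N(g)\,\mu(dg) < \infty$. For the exterior square, the standard bound $\|\wedge^2 g\| \leq \|g\|^2$ combined with $\|\wedge^2 g^{-1}\| \leq \|g^{-1}\|^2$ gives $\lvert \log \|\wedge^2 g\| \rvert \leq 2 \log N(g)$, so $Y_1 \in L^1(\mu)$ as well.

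With subadditivity and integrability in hand, Kingman's subadditive ergodic theorem yields $\frac{1}{n} X_n \to \ell_1$ and $\frac{1}{n} Y_n \to \ell_2$ almost surely (and in $L^1$), where $\ell_1 = \lim_n \frac{1}{n} \bb E[X_n]$ and $\ell_2 = \lim_n \frac{1}{n} \bb E[Y_n]$ are deterministic. By the definitions recalled in \eqref{def-lambda2}, $\ell_1 = \lambda_1$ immediately. For the second sequence, writing
\begin{align*}
\frac{1}{n} \bb E \log \|\wedge^2 G_n\|
= \frac{1}{n} \bb E \log \|G_n\| + \frac{1}{n} \bb E \log \frac{\|\wedge^2 G_n\|}{\|G_n\|}
\end{align*}
and letting $n \to \infty$ gives $\ell_2 = \lambda_1 + \lambda_2$, as desired. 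The only mildly delicate point is the integrability verification for $\wedge^2 G_n$, which boils down to the elementary matrix inequality $\|\wedge^2 g\| \leq \|g\|^2$; everything else is a direct invocation of Kingman's theorem.
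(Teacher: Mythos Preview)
Your proposal is correct and matches the paper's approach exactly: the paper does not give a written proof but simply attributes the result to Furstenberg--Kesten and remarks that both convergences follow from Kingman's subadditive ergodic theorem together with the definition of $\lambda_2$ in \eqref{def-lambda2}. You have filled in precisely those details---subadditivity via submultiplicativity of the norm (and the homomorphism property of $\wedge^2$), integrability via $|\log\|g\||\leq \log N(g)$ and $|\log\|\wedge^2 g\||\leq 2\log N(g)$, and identification of the limits with $\lambda_1$ and $\lambda_1+\lambda_2$---so nothing is missing.
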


Denote by $K$ the group of isometries of $(V, \|\cdot\|)$
and by $A^+$ the semigroup $A^+ = \{ \textrm{diag}(a_1, \ldots, a_d): a_1 \geq \ldots \geq a_d >0 \}$,
where $\textrm{diag}(a_1, \ldots, a_d)$ is a diagonal endomorphism under the basis $e_1, \ldots, e_d$ of $V$. 
The well known Cartan decomposition states that $\textup{GL}(V) = K A^+ K$. 
For every $g \in \textup{GL}(V)$, we choose a decomposition (which is not necessarily unique)
\begin{align*}
g = k_g a_g l_g,
\end{align*}
where $k_g, l_g \in K$  and $a_g \in A^+$. 
Recall that 
$(e_i^*)_{1 \leq i \leq d}$ is the dual basis in $V^*$. 
Set
\begin{align*}
x_g^M = \bb R k_g e_1  \quad  \mbox{and} \quad  y_g^m = \bb R l_g e_1^*. 
\end{align*}
Let $g^*$ denote the adjoint automorphism of $g \in \textup{GL}(V)$. 
Following \cite{BQ16, BQ16b}, 
$x_g^M \in \bb P(V)$ and $y_g^m \in \bb P(V^*)$ are called respectively the density point of $g$ and $g^*$.  

 The next result is taken from  \cite[Lemma 4.7]{BQ16}.

\begin{lemma}[\cite{BQ16}]  \label{Lem_delta_d}
For any $g \in \textup{GL}(V)$, $x = \bb R v$ and $y = \bb R f$ 
with $v \in V \setminus \{0\}$ and $f \in V^* \setminus \{0\}$, we have 
\begin{enumerate}
\item  
$\delta(x, y_g^m) \leq \frac{\|gv\|}{\|g\| \|v\|} \leq \delta(x, y_g^m) + \frac{\|\wedge^2 g\|}{\|g\|^2}$, 

\item
$\delta(x_g^M, y) \leq \frac{\|g^* f\|}{\|g\| \|f\|} \leq \delta(x_g^M, y) + \frac{\|\wedge^2 g\|}{\|g\|^2}$, 

\item
$d(g \!\cdot\! x, x_g^M) \delta(x, y_g^m) \leq \frac{\|\wedge^2 g\|}{\|g\|^2}$.  
\end{enumerate}
\end{lemma}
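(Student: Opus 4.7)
The plan is to work directly in the Cartan decomposition $g = k_g a_g l_g$, using that $k_g, l_g \in K$ are isometries and writing $a_g = \mathrm{diag}(a_1, \ldots, a_d)$ with $a_1 \geq \cdots \geq a_d > 0$. The two key identities are $\|g\| = a_1$ and $\|\wedge^2 g\| = a_1 a_2$, so the right-hand sides in (1)--(3) all reduce to $a_2/a_1$. Fixing $x = \mathbb{R}v$ with $\|v\|=1$ and writing $w = l_g v = \sum_i w_i e_i$, one has $\|gv\|^2 = \|a_g w\|^2 = \sum_i a_i^2 |w_i|^2$ since $k_g$ preserves norms. The representative $f_0$ of $y_g^m$ may be normalised so that $\langle f_0, v\rangle = \langle e_1^*, l_g v\rangle = w_1$, giving $\delta(x, y_g^m) = |w_1|$.

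For part (1), the lower bound is immediate from $\sqrt{\sum a_i^2 |w_i|^2} \geq a_1 |w_1|$. For the upper bound I would split the sum at $i = 1$ and bound $\sum_{i \geq 2} a_i^2 |w_i|^2 \leq a_2^2 \sum_{i \geq 2} |w_i|^2 \leq a_2^2$, then use $\sqrt{p^2 + q^2} \leq p + q$ for $p, q \geq 0$ to obtain $\|gv\| \leq a_1 |w_1| + a_2$; division by $a_1$ yields the claim.

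For part (2), I would apply part (1) to the adjoint $g^* = l_g^* a_g k_g^*$: since $l_g^*, k_g^*$ are isometries of $V^*$ and $g^*$ shares its singular values with $g$, we have $\|g^*\| = \|g\|$ and $\|\wedge^2 g^*\| = \|\wedge^2 g\|$, while the roles of $k_g$ and $l_g$ (and hence of $x_g^M$ and $y_g^m$) swap.

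For part (3), I would compute $(a_g w) \wedge e_1 = \sum_{i \geq 2} a_i w_i\, e_i \wedge e_1$, giving $\|(a_g w) \wedge e_1\| \leq a_2 \sqrt{\sum_{i \geq 2} |w_i|^2} \leq a_2$. Since $k_g$ preserves both norms and the wedge structure, $d(g \cdot x, x_g^M) = \|(a_g w) \wedge e_1\|/\|a_g w\| \leq a_2/\|a_g w\|$. Multiplying by $\delta(x, y_g^m) = |w_1|$ and using $\|a_g w\| \geq a_1 |w_1|$ collapses everything to the bound $a_2/a_1$. The only delicate point is the convention identifying $y_g^m$ with the functional reading off the first coordinate of $l_g v$; this is a matter of unwinding how the orthogonal factor $l_g$ transports $e_1^*$ through the duality $V \leftrightarrow V^*$, and presents no real obstacle.
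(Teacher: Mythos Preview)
Your proof is correct. The paper does not prove this lemma; it simply cites it as \cite[Lemma~4.7]{BQ16}. Your direct argument via the Cartan decomposition is exactly the natural one, and in fact the paper itself reproduces essentially your computation for part~(3) inside the proof of Proposition~\ref{Prop-rates-delta01}, where it derives $d(g\!\cdot\! x, x_g^M) \leq \frac{\|\wedge^2 g\|\,\|v\|}{\|g\|\,\|gv\|}$ from $\|a_g l_g v \wedge e_1\| \leq a_2 \|v\|$ --- the same estimate you obtain before multiplying by $\delta(x, y_g^m) = |w_1|$ and collapsing. So your approach not only matches the intended method but supplies the details the paper omits.

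One minor remark: in part~(3) you use $\|a_g w\| \geq a_1 |w_1|$ to cancel the $|w_1|$, which tacitly assumes $w_1 \neq 0$; when $w_1 = 0$ the left-hand side vanishes and the inequality is trivial, so there is no gap. The point you flag about the convention for $y_g^m = \bb R\, l_g e_1^*$ is indeed just notational: since $l_g$ is orthogonal, its action on $V^*$ via $(l_g^{-1})^*$ coincides with $l_g$ under the Euclidean identification $V \cong V^*$, and either way $\langle f_0, v\rangle = \langle e_1^*, l_g v\rangle = w_1$ as you claim.
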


Using Lemmas \ref{Lem_LLN}-\ref{Lem_delta_d}, we get the following: 

\begin{proposition}\label{Prop-rates-delta01}
Assume that $\int_{ \textup{GL}(V) } \log N(g)  \mu(dg) < \infty$.
Assume also that $\lambda_1 > \lambda_2$ and that $\Gamma_{\mu}$ is irreducible.   
Then, for any $\ee >0$, 
we have that uniformly in $x \in \bb P(V)$ and $y \in \bb P(V^*)$, 
\begin{align}
& \lim_{n \to \infty} 
   \bb P \left( d (G_n \!\cdot\! x, x_{G_n}^M) \geq e^{- (\lambda_1 - \lambda_2 - \ee) n} \right) = 0,  \label{d_Gn_xM} \\
& \lim_{n \to \infty} \bb P \left( \delta(x_{G_n}^M, y) \leq  e^{- \ee n}  \right) = 0,  \label{xGn_y}  \\
& \lim_{n \to \infty} \bb P \left( \delta (G_n \!\cdot\! x, y) \leq e^{- \ee n} \right) = 0.    \label{Gnx_y}
\end{align}
\end{proposition}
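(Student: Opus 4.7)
My plan is to derive all three estimates from Lemma \ref{Lem_delta_d} by controlling the ratios $\frac{\|G_n v\|}{\|G_n\|\|v\|}$, $\frac{\|G_n^* f\|}{\|G_n\|\|f\|}$, and $\frac{\|\wedge^2 G_n\|}{\|G_n\|^2}$. Lemmas \ref{Lem_LLN} and \ref{Lem_LLN001} give, as $n\to\infty$,
\[
\tfrac{1}{n}\sigma(G_n, x) \to \lambda_1 \ \text{a.s.\ uniformly in } x, \quad \tfrac{1}{n}\log\|G_n\| \to \lambda_1 \ \text{a.s.}, \quad \tfrac{1}{n}\log\tfrac{\|\wedge^2 G_n\|}{\|G_n\|^2} \to \lambda_2-\lambda_1<0 \ \text{a.s.},
\]
which are the basic inputs.

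To prove \eqref{d_Gn_xM}, I apply the lower bound in Lemma \ref{Lem_delta_d}(i):
\[
\delta(x, y_{G_n}^m) \geq \frac{\|G_n v\|}{\|G_n\|\|v\|} - \frac{\|\wedge^2 G_n\|}{\|G_n\|^2} = e^{\sigma(G_n,x) - \log\|G_n\|} - e^{-(\lambda_1-\lambda_2)n + o(n)}.
\]
The first summand equals $e^{o(n)}$ in probability uniformly in $x$, and the second is exponentially small a.s., so for any $\ee>0$, $\delta(x, y_{G_n}^m) \geq e^{-\ee n/2}$ with probability tending to one, uniformly in $x$. Plugging into Lemma \ref{Lem_delta_d}(iii) yields
\[
d(G_n\!\cdot\! x, x_{G_n}^M) \leq \frac{\|\wedge^2 G_n\|/\|G_n\|^2}{\delta(x, y_{G_n}^m)} \leq e^{-(\lambda_1-\lambda_2-\ee/2)n + o(n)},
\]
which is dominated by $e^{-(\lambda_1-\lambda_2-\ee)n}$ for large $n$, giving \eqref{d_Gn_xM}.

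For \eqref{xGn_y}, the key observation is that $G_n^* = g_1^* \cdots g_n^*$ has the same distribution as $h_n \cdots h_1$ where the $h_i$ are i.i.d.\ with the pushforward law $\mu^*$ on $\textup{GL}(V^*)$ (by reindexing of i.i.d.\ variables). This $\mu^*$ inherits all the relevant hypotheses: $\Gamma_{\mu^*}$ is irreducible since $\Gamma_\mu$ is, $N(g^*) = N(g)$ gives the first moment condition, and one checks $\lambda_1(\mu^*) = \lambda_1$, $\lambda_2(\mu^*) = \lambda_2$. Applying Lemma \ref{Lem_LLN} to this adjoint walk yields $\frac{1}{n}\log\|G_n^* f\| \to \lambda_1$ in probability, uniformly in $f \in V^* \setminus \{0\}$. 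Plugging into the lower bound in Lemma \ref{Lem_delta_d}(ii) exactly as above produces $\delta(x_{G_n}^M, y) \geq e^{-\ee n/2}$ with probability tending to one, uniformly in $y$, which is \eqref{xGn_y}. Finally, \eqref{Gnx_y} follows by combining \eqref{xGn_y} with \eqref{d_Gn_xM} (for a sufficiently small $\ee$) via the elementary Lipschitz bound $|\delta(x_1, y) - \delta(x_2, y)| \leq \sqrt{2}\, d(x_1, x_2)$: the $\delta$-lower bound on $\delta(x_{G_n}^M, y)$ absorbs the much smaller $d$-error between $G_n\!\cdot\! x$ and $x_{G_n}^M$.

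The main technical point will be verifying that $\mu^*$ satisfies all the hypotheses of the theorem, in particular the equality of Lyapunov exponents, and maintaining uniformity in $x$ and $y$ through each reduction; once this is in place, the exponential bookkeeping is routine.
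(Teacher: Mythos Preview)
Your proposal is correct and follows essentially the same strategy as the paper: control $d(G_n\!\cdot\! x, x_{G_n}^M)$ and $\delta(x_{G_n}^M, y)$ via Lemma~\ref{Lem_delta_d} and the laws of large numbers (Lemmas~\ref{Lem_LLN}, \ref{Lem_LLN001}), the latter applied to both $\mu$ and $\mu^*$, then combine them for \eqref{Gnx_y}. The only cosmetic differences are that for \eqref{d_Gn_xM} the paper bypasses your two-step use of Lemma~\ref{Lem_delta_d}(i),(iii) by deriving the cleaner bound $d(G_n\!\cdot\! x, x_{G_n}^M)\le \frac{\|\wedge^2 G_n\|\|v\|}{\|G_n\|\|G_n v\|}$ directly from the Cartan decomposition, and for \eqref{Gnx_y} it uses the identification $\delta(a,y)=d(a,y^\perp)$ and the triangle inequality rather than your Lipschitz bound; both variants yield the same conclusion.
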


\begin{proof}
We first prove \eqref{d_Gn_xM}. 
For every $g \in \textup{GL}(V)$, using the Cartan decomposition $g = k_g a_g l_g$ and the fact that $k_g$ is an isometry of $(V, \|\cdot\|)$,
we get that for any $x = \bb R v \in \bb P(V)$, 
\begin{align*}
d (g \!\cdot\! x, x_{g}^M) = \frac{\| gv \wedge k_g e_1 \|}{ \|gv\| \|k_g e_1\| }
=  \frac{\| k_g a_g l_g v \wedge k_g e_1 \|}{ \|gv\| }  
= \frac{\| a_g l_g v \wedge e_1 \|}{ \|gv\| }.
\end{align*}
Since $a_g = \textrm{diag}(a_1, \ldots, a_d)$, we have $\| a_g l_g v \wedge e_1 \| \leq a_2 \|v\|$.
This, together with the fact that $a_2 = \frac{\| \wedge^2 g \|}{\|g\|}$, implies that 
$d (g \!\cdot\! x, x_{g}^M) \leq \frac{\| \wedge^2 g \|  \|v\|}{\|g\|  \|g v\|}$. 
Hence, for any $x = \bb R v \in \bb P(V)$ and $n \geq 1$, 
\begin{align}\label{Inequ_First001}
d (G_n \!\cdot\! x, x_{G_n}^M) \leq  \frac{\| \wedge^2 G_n \|  \|v\|}{\|G_n\|  \|G_n v\|}. 
\end{align}
By Lemmas \ref{Lem_LLN} and \ref{Lem_LLN001},  we get that for any $\ee >0$,  
uniformly in $v \in V \setminus \{0\}$, 
\begin{align*}
\lim_{n \to \infty} \bb P 
\left( \left| \frac{1}{n} \log \frac{\| \wedge^2 G_n \|  \|v\|}{\|G_n\|  \|G_n v\|}
   - (\lambda_2 - \lambda_1) \right| \geq \ee \right) = 0,  
\end{align*}
which implies that uniformly in $v \in V \setminus \{0\}$, 
\begin{align*}
\lim_{n \to \infty} \bb P \left( \frac{\| \wedge^2 G_n \|  \|v\|}{\|G_n\|  \|G_n v\|} 
    \geq  e^{- (\lambda_1 - \lambda_2 - \ee) n}   \right)   =0. 
\end{align*}
Hence, using \eqref{Inequ_First001}, we obtain \eqref{d_Gn_xM}. 

We next prove \eqref{xGn_y}.  
By Lemma \ref{Lem_delta_d} (2), 
we have that for any $\ee' > 0$ and any $y = \bb R f$ with  $f \in V^* \setminus \{0\}$,
\begin{align}\label{Pf_LLN_Ine001}
\bb P \left( \delta(x_{G_n}^M, y) \leq  e^{- \ee' n}  \right)
\leq  \bb P \left(  \frac{ \|G_n^* f\|}{\|G_n^*\|  \|f\|} - \frac{\| \wedge^2 G_n \|}{\|G_n\|^2}  \leq  e^{- \ee'  n}  \right).  
\end{align}
Applying Lemma \ref{Lem_LLN001}  and Lemma \ref{Lem_LLN} to the measure $\mu^*$
($\mu^*$ is the image of the measure $\mu$ by the map $g \mapsto g^*$, 
where $g^*$ is the adjoint automorphism of $g \in \textup{GL}(V)$), 
we get that for any $\ee >0$, 
\begin{align}\label{Pf_LLN_Ine002}
\lim_{n \to \infty} 
\bb P \left(  \frac{\| \wedge^2 G_n \|}{\|G_n\|^2}   \geq  e^{- (\lambda_1 - \lambda_2 - \ee) n}  \right)  = 0, 
\end{align}
and that for any $y = \bb R f$ with  $f \in V^* \setminus \{0\}$,
\begin{align}\label{Pf_LLN_Ine003}
\lim_{n \to \infty} \bb P \left(  \frac{ \| G_n^* f\| }{\|G_n^*\|  \|f\|}   \leq  e^{- \ee n}  \right)  = 0.  
\end{align}
From \eqref{Pf_LLN_Ine002} we derive that as $n \to \infty$, 
\begin{align*}
& \bb P \left(  \frac{ \| G_n^* f\| }{\|G_n^*\|  \|f\|} - \frac{\| \wedge^2 G_n \|}{\|G_n\|^2}  \leq  e^{-\ee' n}  \right)  \notag\\
& \leq  \bb P \left(  \frac{ \|G_n^* f\| }{\|G_n^*\|  \|f\|} - \frac{\| \wedge^2 G_n \|}{\|G_n\|^2}  \leq  e^{- \ee' n},
    \frac{\| \wedge^2 G_n \|}{\|G_n\|^2}   <  e^{- (\lambda_1 - \lambda_2 - \ee) n}  \right)  + o(1)  \notag\\
 & \leq  \bb P \left(  \frac{ \|G_n^* f\| }{\|G_n^*\|  \|f\|}   \leq  e^{- \ee' n} + e^{- (\lambda_1 - \lambda_2 - \ee) n}  \right)  
   + o(1)  \notag\\
& =  o(1),
\end{align*}
where in the last line we used \eqref{Pf_LLN_Ine003}.  
This, together with \eqref{Pf_LLN_Ine001}, gives \eqref{xGn_y}.  

We finally prove \eqref{Gnx_y}.  
Since, for any $a\in \bb P(V)$ and $y\in \bb P(V^*)$ it holds that $\delta(a,y)= d(a,z)$, where $z=y^{\bot}$ is the element in $\bb P(V)$ orthogonal to $y$.
By triangular inequality, we have, for all $a,b \in \bb P(V)$ and $ y\in \bb P(V^*)$,
\begin{align*} 
\delta(a,y) = d(a,z) \leq  d(a,b) +  d(b,z)  
= d(a,b) + \delta(b,y).   
\end{align*} 
It follows that
\begin{align*}
\delta (G_n \!\cdot\! x, y) \geq  \delta(x_{G_n}^M, y) - d (G_n \!\cdot\!  x, x_{G_n}^M). 
\end{align*}
Therefore, using \eqref{d_Gn_xM}, we get that, as $n \to \infty$, 
\begin{align*}
& \bb P \left( \delta (G_n \!\cdot\! x, y) \leq e^{-\ee' n} \right)   \notag\\
& \leq   \bb P \left( \delta(x_{G_n}^M, y) - d (G_n \!\cdot\! x, x_{G_n}^M)  \leq e^{- \ee'  n} \right)   \notag\\
& \leq  \bb P \left( \delta(x_{G_n}^M, y) - d (G_n \!\cdot\! x, x_{G_n}^M)  \leq e^{- \ee' n},
       d (G_n \!\cdot\! x, x_{G_n}^M) < e^{- (\lambda_1 - \lambda_2 - \ee) n} \right) 
    + o(1) \notag\\
& \leq  \bb P \left( \delta(x_{G_n}^M, y)   \leq e^{- \ee' n} + e^{- (\lambda_1 - \lambda_2 - \ee) n}  \right) 
    + o(1)  \notag\\
& =  o(1), 
\end{align*}
where in the last line we used  \eqref{xGn_y}. 
This ends the proof of \eqref{Gnx_y}. 
\end{proof}

The following result is a direct consequence of \cite[Lemma 4.8]{BQ16}. 

%

\begin{lemma}\label{Lem-ScaNorm_second}
Assume that $\int_{ \textup{GL}(V) } \log^2 N(g)  \mu(dg) < \infty$
and that condition \ref{Ch7Condi-IP} holds. 
Then, for any $\ee > 0$, there exist constants $a_k > 0$ with $\sum_{k = 1}^{\infty} a_k < \infty$, 
such that for all $n \geq k \geq 1$, $x \in \bb P(V)$ and $y \in \bb P(V^*)$, 
\begin{align*} 
\bb{P} \left( \delta(y, G_n \!\cdot\! x) \leq e^{- \ee k} \right) \leq  a_k. 
\end{align*}
\end{lemma}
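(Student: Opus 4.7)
My plan is to obtain this as a direct corollary of \cite[Lemma 4.8]{BQ16}, which under \ref{Ch7Condi-IP} and the sharp second moment hypothesis $\int \log^2 N(g)\,\mu(dg) < \infty$ provides exactly the summable-tail bound required. That lemma asserts (after a routine translation between the original walk and its dual) the existence, for any prescribed $\ee>0$, of a sequence $(a_k)_{k\geq 1}$ with $\sum_k a_k < \infty$ such that
\[
\bb P\bigl(-\log \delta(G_n \cdot x, y) \geq \ee k\bigr) \leq a_k
\]
uniformly in $n \geq k$, $x \in \bb P(V)$ and $y \in \bb P(V^*)$. The only verification needed is that the second moment condition is preserved under $g \mapsto g^*$ and that \ref{Ch7Condi-IP} passes to $\mu^*$, both of which are standard.

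If one prefers a self-contained argument, I would reuse the geometric decomposition already exploited in the proof of Proposition \ref{Prop-rates-delta01}. From Lemma \ref{Lem_delta_d} one obtains the inclusion
\[
\{\delta(G_n \cdot x, y) \leq e^{-\ee k}\} \subset \{\delta(x_{G_n}^M, y) \leq 2 e^{-\ee k}\} \cup \{d(G_n \cdot x, x_{G_n}^M) \geq e^{-\ee k}\}.
\]
The second event is handled via Lemma \ref{Lem_delta_d}(3), which bounds $d(G_n \cdot x, x_{G_n}^M)$ by $\|\wedge^2 G_n\|/(\|G_n\|^2 \,\delta(x, y_{G_n}^m))$: the numerator decays exponentially at the strictly positive rate $\lambda_1 - \lambda_2$ (by Guivarc'h's theorem under \ref{Ch7Condi-IP}) with an $L^2$ fluctuation controlled by Kingman-type block inequalities under the second moment, while the denominator admits polynomial tails from the log-regularity \eqref{LogRegu-InvarMea} with $p=2$ applied in the dual picture. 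The first event is treated by the symmetric argument for the dual walk $G_n^*$ acting on $\bb P(V^*)$.

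\textbf{Main obstacle.} The delicate point is producing a \emph{summable} sequence $a_k$ rather than merely a vanishing one, under only a second moment. The log-regularity \eqref{LogRegu-InvarMea} with $p = 2$ yields only integrability of $|\log \delta|$ against $\nu$, which on its own is too weak; the extra summability comes from combining it with the Kolmogorov-type $L^2$ block estimate for the cocycle $\sigma(G_n, x) - n\lambda_1$ (whose fluctuations are $O(\sqrt n)$ under the second moment). Implementing or simply citing this bridge is the nontrivial content, and it is exactly what \cite[Lemma 4.8]{BQ16} packages; under an exponential moment the same summability would follow trivially from \eqref{HolderRegu-InvarMea} and Markov's inequality.
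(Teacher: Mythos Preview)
Your primary approach --- citing \cite[Lemma 4.8]{BQ16} directly --- is exactly what the paper does. The paper's proof adds one short explicit step you leave implicit: it writes the Markov decomposition
\[
\bb P\bigl(\delta(y, G_n\cdot x)\leq e^{-\ee k}\bigr)
= \int \bb P\bigl(\delta(y, G_{n,k}\cdot (g_{n-k}\cdots g_1\cdot x))\leq e^{-\ee k}\bigr)\,\mu(dg_{n-k})\cdots\mu(dg_1),
\]
with $G_{n,k}=g_n\cdots g_{n-k+1}$, to reduce the ``$n\geq k$'' uniformity to a bound for a length-$k$ walk from an arbitrary starting point, which is the form in which \cite{BQ16} is invoked. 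Your remark about passing to the dual is unnecessary here since the Benoist--Quint lemma is already stated for $\delta(y, G_n\cdot x)$; your alternative self-contained sketch is reasonable but not used in the paper.
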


\begin{proof}
Denote $G_{n, k} = g_n \ldots g_{n - k + 1}$ for $n \geq k$. 
Notice that 
\begin{align*}
& \bb{P} \left( \delta(y, G_n \!\cdot\! x) \leq e^{- \ee k} \right)  \nonumber\\
& = \int_{\textup{GL}(V)^{n-k}} \bb{P} \left( \delta(y, G_{n,k} g_{n-k} \ldots g_1 \!\cdot\! x) \leq e^{- \ee k} \right)
   \mu(dg_{n-k}) \ldots \mu(dg_1).  
\end{align*}
By \cite[Lemma 4.8]{BQ16}, for any $\ee > 0$, there exist constants $a_k > 0$ with $\sum_{k = 1}^{\infty} a_k < \infty$, 
such that for all $n \geq k \geq 1$, $x \in \bb P(V)$ and $y \in \bb P(V^*)$, 
\begin{align*} 
\bb{P} \left( \delta(y, G_n \!\cdot\! x) \leq e^{- \ee k} \right) \leq  a_k. 
\end{align*}
The desired result follows. 
\end{proof}

\begin{proof}[Proof of Theorem \ref{Thm_LLN}]
We first prove the assertion \eqref{Weak_LLN_Coeff}. 
Note that for any $x = \bb R v \in \bb P(V)$ and $y = \bb R f \in \bb P(V^*)$ with 
$\|v\| = \|f\| =1$,   
\begin{align}\label{Pf_LLN_Equality}
\log |\langle f, G_n v \rangle| = \sigma(G_n, x) + \log \delta(y, G_n \!\cdot\! x). 
\end{align}
Using Lemma \ref{Lem_LLN} and  \eqref{Gnx_y},
we get that, as $n \to \infty$,  $\frac{1}{n}  \log |\langle f, G_n v \rangle|$ converges to $\lambda_1$ in probability,
uniformly in $v \in V$ and $f \in V^*$ with $\|v\| = \|f\| =1$. 
This, together with the fact that the sequence $\{ \frac{1}{n}  \log |\langle f, G_n v \rangle| \}_{n\geq 1}$
is uniformly integrable, proves \eqref{Weak_LLN_Coeff}.

Now we prove the almost sure convergence based on Lemmas \ref{Lem_LLN} and \ref{Lem-ScaNorm_second}. 
By Lemma \ref{Lem-ScaNorm_second} and Borel-Cantelli's lemma, 
we get that for any $\ee >0$, 
\begin{align*}
\liminf_{n \to \infty} \frac{1}{n} \log \delta(y, G_n \!\cdot\! x) >  - \ee   \quad  \mbox{a.s.}  
\end{align*}
Together with Lemma \ref{Lem_LLN} and the fact that $\delta(y, G_n \!\cdot\! x) \leq 1$,
this yields that  $\lim_{n \to \infty} \frac{1}{n}  \log |\langle f, G_n v \rangle| = \lambda_1$, a.s.
\end{proof}

In the proof of Theorem \ref{Thm_CLT}, we will use the central limit theorem for the norm cocycle $\sigma(G_n, x)$. 
Under the exponential moment condition \ref{Ch7Condi-Moment}, this result is due to Le Page \cite{LeP82}. 
Recently, using the martingale approximation approach and the log-regularity of the invariant measure $\nu$, 
Benoist and Quint \cite{BQ16} relaxed condition \ref{Ch7Condi-Moment} 
to the optimal second moment condition. 

\begin{lemma}[\cite{BQ16}] \label{Lem_CLT}
Assume that $\int_{ \textup{GL}(V) } \log^2 N(g) \, \mu(dg) < \infty$
and that condition \ref{Ch7Condi-IP} holds. 
Then, for any $t \in \bb R$, it holds that uniformly in $x \in \bb P(V)$,   
\begin{align*}
\lim_{n \to \infty}
\bb{P} \left( \frac{\sigma(G_n, x) - n\lambda}{ \sigma \sqrt{n} } \leq t \right) 
= \Phi(t). 
\end{align*}
\end{lemma}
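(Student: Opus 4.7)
The plan is to implement a Gordin-type martingale approximation for the Markov chain $X_k := G_k \!\cdot\! x$ on $\bb P(V)$, crucially exploiting the log-regularity \eqref{LogRegu-InvarMea} of the invariant measure $\nu$ under the hypothesis $\int \log^2 N(g)\, \mu(dg) < \infty$. By the cocycle property,
\[
\sigma(G_n, x) - n \lambda_1 = \sum_{k=1}^{n} \phi(g_k, X_{k-1}),
\qquad
\phi(g, x) := \sigma(g, x) - \lambda_1,
\]
which reduces the CLT to an invariance principle for a Birkhoff sum of $\phi$ along $(X_k)$. Under \ref{Ch7Condi-IP}, the chain $(X_k)$ is uniquely ergodic with invariant measure $\nu$, and $\int \phi \, d(\mu \otimes \nu) = 0$ by the definition of $\lambda_1$; since $|\sigma(g,x)| \leq \log N(g)$, the second-moment hypothesis gives $\phi \in L^2(\mu \otimes \nu)$. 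The goal is to find $\psi \in L^2(\nu)$ such that, setting $\theta(g, x) := \phi(g, x) - \psi(g \!\cdot\! x) + \psi(x)$, one has $\int \theta(g, x) \mu(dg) = 0$ for $\nu$-a.e.\ $x$; equivalently, $(I - P)\psi = -\bar\phi$ with $P\varphi(x) := \int \varphi(g \!\cdot\! x) \mu(dg)$ and $\bar\phi(x) := \int \phi(g, x) \mu(dg)$. Then $M_n := \sum_{k=1}^n \theta(g_k, X_{k-1})$ is a martingale with respect to $\mathcal F_n := \sigma(g_1, \ldots, g_n)$, and $\sigma(G_n, x) - n\lambda_1 = M_n + \psi(X_n) - \psi(x)$.

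The main obstacle is the $L^2(\nu)$-solvability of the Poisson equation under only the second-moment hypothesis, since the spectral gap of $P$ on H\"older spaces available under the exponential moment condition \ref{Ch7Condi-Moment} is lacking here, and the Neumann series $-\sum_{k \geq 0} P^k \bar\phi$ need not converge in $L^2(\nu)$ directly because $\bar\phi$ is unbounded. I would handle this by truncation: set $\phi_N(g,x) := \phi(g,x) \mathds{1}_{\{\log N(g) \leq N\}}$, with associated drift $\bar\phi_N$ now bounded. For bounded H\"older data, $P$ retains its contraction on functions of zero $\nu$-average (a consequence of the strong irreducibility and proximality, which drive the geometric contraction on $\bb P(V)$), so $\psi_N := -\sum_{k \geq 0} P^k \bar\phi_N$ converges in H\"older norm and solves $(I - P)\psi_N = -\bar\phi_N$. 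To pass to the limit $N \to \infty$, one writes $\sigma(g,v) = \log \|g\| + \log (\|gv\|/\|g\|)$ and uses Lemma \ref{Lem_delta_d}(1) to express the unbounded part of $\phi$ in terms of $\log \delta(x, y_g^m)$. The log-regularity bound $\int |\log \delta(x,y)| \nu(dx) < \infty$ from \eqref{LogRegu-InvarMea} with $p=2$, combined with $\int \log^2 N(g)\, \mu(dg) < \infty$, then shows that $(\psi_N)_{N \geq 1}$ is Cauchy in $L^2(\nu)$, producing a limit $\psi \in L^2(\nu)$ that solves the Poisson equation in $L^2(\nu)$.

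With the martingale decomposition in hand, Brown's martingale central limit theorem applied to $M_n/\sqrt n$ under the stationary start $X_0 \sim \nu$ yields $M_n/\sqrt n \Rightarrow \mathcal N(0, \sigma^2)$ with $\sigma^2 := \int \theta^2 \, d(\mu \otimes \nu) \in (0, \infty)$: the conditional-variance condition follows from Birkhoff's ergodic theorem applied to the stationary ergodic chain, and the Lindeberg condition from $\theta \in L^2(\mu \otimes \nu)$; the nondegeneracy $\sigma^2 > 0$ is obtained by ruling out a coboundary representation of $\phi$, which would contradict \ref{Ch7Condi-IP}. The boundary term $\psi(X_n) - \psi(x)$ is $O_P(1)$ by stationarity (since $\psi \in L^2(\nu)$), hence $o_P(\sqrt n)$. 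Uniformity in the initial point $x \in \bb P(V)$ follows from the contraction property of $P$ on H\"older functions under \ref{Ch7Condi-IP}: $P^n \varphi(x) \to \nu(\varphi)$ uniformly in $x$ for H\"older $\varphi$, which allows a short coupling argument transferring the CLT from the stationary start to any fixed $x$. Finally, $\sigma^2$ is identified with the constant in \eqref{Def-sigma} by a direct computation using martingale orthogonality and the $o_P(\sqrt n)$ estimate on the boundary term.
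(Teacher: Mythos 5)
The paper does not prove this lemma: it is quoted verbatim from Benoist--Quint \cite{BQ16}, and the surrounding text only records that their proof uses ``the martingale approximation approach and the log-regularity of the invariant measure $\nu$''. Your overall strategy (Gordin-type martingale decomposition of the Birkhoff sum of $\sigma(g_k,\cdot)$ along the chain $G_k\!\cdot\!x$, with the log-regularity \eqref{LogRegu-InvarMea} as the key input) is therefore the right one in spirit. However, two steps in your implementation are genuinely gapped.

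First, the solvability of the Poisson equation. Your truncation scheme asserts that $(\psi_N)$ is Cauchy in $L^2(\nu)$, but no mechanism is given: the Neumann series for $\psi_N$ converges only in H\"older norm, and the H\"older norms of $\bar\phi_N$ blow up with $N$ (the modulus of continuity of $x\mapsto\sigma(g,x)$ grows with $N(g)$), so there is no uniform operator bound with which to control $\|\psi_N-\psi_M\|_{L^2(\nu)}$. Worse, \eqref{LogRegu-InvarMea} with $p=2$ gives only $\sup_y\int|\log\delta(x,y)|\,\nu(dx)<\infty$, a \emph{first}-moment bound on $\log\delta$, so even the putative limit would naturally lie in $L^1(\nu)$, not $L^2(\nu)$. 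The way Benoist--Quint circumvent this is not by a Neumann series at all: they exhibit an \emph{explicit} solution, essentially $\psi(x)=\int_{\bb P(V^*)}\log\delta(x,y)\,\nu^*(dy)$ with $\nu^*$ the stationary measure of the adjoint walk, and the log-regularity (applied to $\nu^*$) shows this $\psi$ is a \emph{bounded} function of $x$. With $\psi$ bounded, the martingale increments are automatically in $L^2(\mu\otimes\nu)$ (since $\sigma\in L^2$ by the second-moment hypothesis) and the boundary term $\psi(G_n\!\cdot\!x)-\psi(x)$ is trivially $o(\sqrt n)$; your $L^2$-solvability obstacle dissolves rather than being overcome by truncation.

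Second, you repeatedly invoke a ``contraction of $P$ on functions of zero $\nu$-average'' and on H\"older functions, both to sum the Neumann series and to get uniformity in $x$. Under only $\int\log^2N(g)\,\mu(dg)<\infty$ this is not available: for $\varphi\in\mathscr B_\gamma$ one has $d(g\!\cdot\!x,g\!\cdot\!x')\le N(g)^2d(x,x')$, so even the boundedness of $P$ on $\mathscr B_\gamma$ requires $\int N(g)^{2\gamma}\mu(dg)<\infty$, i.e.\ precisely the exponential moment condition \ref{Ch7Condi-Moment} that the lemma is designed to avoid (this is why Lemma \ref{Ch7transfer operator} assumes \ref{Ch7Condi-Moment}). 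The convergence $P^n\varphi\to\nu(\varphi)$ uniformly in $x$ for continuous $\varphi$, and hence the transfer of the CLT from the stationary start to an arbitrary starting point, must instead be obtained from unique ergodicity and equicontinuity-type arguments (as in \cite{BQ16}), not from a H\"older spectral gap.
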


Now we prove Theorem \ref{Thm_CLT} using Lemmas \ref{Lem-ScaNorm_second} and \ref{Lem_CLT}. 

\begin{proof}[Proof of Theorem \ref{Thm_CLT}]
By Lemma \ref{Lem_CLT} and Slutsky's theorem, 
it suffices to prove that $\frac{1}{\sigma \sqrt{n}} \log \delta(y, G_n \!\cdot\! x)$ converges to $0$ in probability, as $n \to \infty$.
Namely, we need to show that, for any $\ee >0$, uniformly in $x \in \bb P(V)$ and $y \in \bb P(V^*)$,
\begin{align*}
\lim_{n \to \infty} \bb{P} \left( \frac{\log \delta(y, G_n \!\cdot\! x)}{ \sigma \sqrt{n} } < - \ee \right)  = 0. 
\end{align*}
To prove this, taking $k = \floor[]{\sqrt{n}}$ in Lemma \ref{Lem-ScaNorm_second},
we get that uniformly in $x \in \bb P(V)$ and $y \in \bb P(V^*)$,
\begin{align*}
\bb{P} \left( \frac{\log \delta(y, G_n \!\cdot\! x)}{ \sigma \sqrt{n} } < - \ee \right)
=  \bb{P} \left(  \delta(y, G_n \!\cdot\! x)  < e^{- \ee \sigma \sqrt{n}} \right)
\leq a_{\floor[]{\sqrt{n}}}, 
\end{align*}
where $a_{\floor[]{\sqrt{n}}}$ converges to $0$ as $n \to \infty$, since $\sum_{k = 1}^{\infty} a_k < \infty$. 
\end{proof}

\section{Proof of the Edgeworth expansion} 

\subsection{Spectral gap properties and a change of measure}\label{subsec-Pz}

For any $z \in \bb{C}$ with $|\Re z|$ small enough, 
we define the complex transfer operator $P_z$ as follows: 
for any bounded measurable function $\varphi$ on $\bb P(V)$, 
\begin{align}\label{Def_Pz_Ch7}
P_z \varphi(x) = \int_{\textup{GL}(V)} e^{z \sigma(g, x)} \varphi(g \!\cdot\! x) \mu(dg),  
\quad  x \in \bb P(V). 
\end{align}
Throughout this paper let $B_{s_0}(0): = \{ z \in \bb{C}: |z| < s_0 \}$
be the open disc with center $0$ and radius $s_0 >0$ in the complex plane $\bb C$. 
The following result  
shows that the operator $P_z$ has spectral gap properties 
when $z \in B_\eta(0)$;  
we refer to \cite{LeP82, HH01, GL16, BQ16b, XGL19b} for the proof 
based on the perturbation theory of linear operators. 
Recall that $\mathscr{B}_{\gamma}'$ is the topological dual space of the Banach space $\mathscr{B}_{\gamma}$, 
and that $\mathscr{L(B_{\gamma},B_{\gamma})}$ 
is the set of all bounded linear operators from $\mathscr{B}_{\gamma}$ to $\mathscr{B}_{\gamma}$
equipped with the operator norm
$\left\| \cdot \right\|_{\mathscr{B}_{\gamma} \to \mathscr{B}_{\gamma}}$.


\begin{lemma}[\cite{BQ16b, XGL19b}]  \label{Ch7transfer operator}
Assume \ref{Ch7Condi-Moment} and \ref{Ch7Condi-IP}.  
Then, there exists a constant $s_0 >0$ such that for any $z \in B_{s_0}(0)$ and $n \geq 1$, 
\begin{align}\label{Ch7Pzn-decom}
P_z^n = \kappa^n(z) \nu_z \otimes r_z + L_z^n, 
\end{align}
where 
\begin{align*}
z \mapsto \kappa(z) \in \bb{C}, \quad   z \mapsto r_z \in \mathscr{B}_{\gamma} , 
\quad   z \mapsto \nu_z \in \mathscr{B}_{\gamma}' , 
\quad   z \mapsto  L_z \in \mathscr{L(B_{\gamma},B_{\gamma})}
\end{align*}
are analytic mappings which satisfy, for any $z \in B_{s_0}(0)$, 

\begin{itemize}
\item[{\rm(a)}]
    the operator $M_z: = \nu_z \otimes r_z$ is a rank one projection on $\mathscr{B}_{\gamma}$,
    i.e. $M_z \varphi = \nu_z(\varphi) r_z$ for any $\varphi \in \mathscr{B}_{\gamma}$; 

\item[{\rm(b)}]
 $M_z L_z = L_z M_z =0$,  $P_z r_z = \kappa(z) r_z$ with $\nu(r_z) = 1$, and  $\nu_z P_z = \kappa(z) \nu_z$;

\item[{\rm(c)}]
    $\kappa(0) = 1$, $r_0 = 1$, $\nu_0 = \nu$ with $\nu$ defined by \eqref{Ch7mu station meas}, and 
    $\kappa(z)$ and $r_z$ are strictly positive for real-valued $z \in (-s_0, s_0)$.    

\end{itemize}
\end{lemma}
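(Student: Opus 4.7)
The plan is to proceed in three stages: (i) establish analyticity of $z \mapsto P_z$ as an operator-valued function on a complex neighborhood of the origin; (ii) establish quasi-compactness and a spectral gap for $P_0$ on $\mathscr{B}_\gamma$; and (iii) propagate these properties to small perturbations by analytic perturbation theory, as developed in \cite{LeP82, HH01, BQ16b}.

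First I would verify that for $z$ in a small disc $B_{s_0}(0)$ with $s_0 < \varepsilon$, the operator $P_z$ belongs to $\mathscr{L}(\mathscr B_\gamma, \mathscr B_\gamma)$. The $\|\cdot\|_\infty$ bound is controlled by $\int e^{|z| |\sigma(g,x)|} \mu(dg) \leq \int N(g)^{|z|} \mu(dg)$, which is finite for $|z| < \varepsilon$ by hypothesis \ref{Ch7Condi-Moment}. The H\"older-seminorm bound uses the standard Lipschitz estimate $|\sigma(g,x) - \sigma(g,x')| \leq C \log N(g) \, d(x,x')^{\gamma}$ together with the same integrability; expanding $e^{z \sigma(g,x)}$ in a Taylor series then yields the analyticity of $z \mapsto P_z$ valued in $\mathscr{L}(\mathscr B_\gamma, \mathscr B_\gamma)$.

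Next I would establish a Doeblin--Fortet (Lasota--Yorke) type inequality, valid uniformly for $z$ in a small neighborhood of the origin, of the form
\[
\|P_z^n \varphi\|_\gamma \leq C \rho^n \|\varphi\|_\gamma + C_n \|\varphi\|_\infty,
\]
for some constants $C > 0$ and $\rho \in (0,1)$. The key contraction on the H\"older seminorm under iteration is a consequence of the exponential contraction of the projective random walk $G_n \!\cdot\! x$, which follows from the Lyapunov gap $\lambda_1 > \lambda_2$ furnished by hypothesis \ref{Ch7Condi-IP} (Guivarc'h's theorem). Combined with the Ionescu-Tulcea--Marinescu theorem, this yields quasi-compactness of $P_z$ on $\mathscr{B}_\gamma$. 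For $z=0$, the Markov operator $P_0$ admits the unique invariant probability $\nu$ (uniqueness follows from strong irreducibility), and $1$ is a simple eigenvalue with eigenfunction $r_0 = 1$ and eigenmeasure $\nu_0 = \nu$. Ruling out peripheral eigenvalues of modulus one other than $1$ uses strong irreducibility together with proximality to prevent any finite orbit structure on $\bb P(V)$.

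Finally I would invoke analytic perturbation theory (Kato): since $z \mapsto P_z$ is analytic and the eigenvalue $1$ of $P_0$ is isolated and simple, there exist $s_0 > 0$, a simple dominant eigenvalue $\kappa(z)$ analytic in $z \in B_{s_0}(0)$ with $\kappa(0) = 1$, together with an analytic rank-one spectral projection $M_z = \nu_z \otimes r_z$ and a complementary analytic operator $L_z = P_z(I - M_z)$ whose spectral radius on $\mathscr B_\gamma$ is strictly smaller than $|\kappa(z)|$; the normalization $\nu(r_z) = 1$ fixes the decomposition uniquely. The iterate formula \eqref{Ch7Pzn-decom} is then immediate from $P_z M_z = \kappa(z) M_z$, $M_z L_z = L_z M_z = 0$ and $M_z^2 = M_z$. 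Strict positivity of $\kappa(z)$ and $r_z$ for real $z \in (-s_0, s_0)$ follows from $P_z$ being a positive operator and from a Perron--Frobenius argument applied to its iterates acting on $\mathscr B_\gamma$. The main obstacle lies in step two, namely establishing the Doeblin--Fortet inequality and excluding all peripheral eigenvalues other than $1$; this is the heart of the argument and relies crucially on hypothesis \ref{Ch7Condi-IP}, while the exponential moment \ref{Ch7Condi-Moment} is used primarily to ensure the analyticity required in step one.
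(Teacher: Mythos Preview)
Your proposal is correct and follows essentially the standard route that the paper relies on: the paper does not supply its own proof of this lemma but simply refers to \cite{LeP82, HH01, GL16, BQ16b, XGL19b} for the argument based on perturbation theory of linear operators, and your three-step outline (analyticity of $z\mapsto P_z$ from the exponential moment, quasi-compactness via a Doeblin--Fortet inequality and the Ionescu-Tulcea--Marinescu theorem, then Kato perturbation) is precisely that approach.
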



Using Lemma \ref{Ch7transfer operator}, 
a change of measure can be performed below.
Specifically, for any $s \in (-s_0, s_0)$ with $s_0>0$ sufficiently small, 
 any $x \in \bb P(V)$ and $g \in \textup{GL}(V)$, denote
\begin{align*}
q_n^s(x, g) = \frac{ e^{s \sigma(g, x) } }{ \kappa^{n}(s) } \frac{ r_s(g \!\cdot\! x) }{ r_s(x) },
\quad  n \geq 1. 
\end{align*}
Since the eigenvalue $\kappa(s)$ and the eigenfunction $r_s$ are strictly positive for $s \in (-s_0, s_0)$, 
using $P_s r_s = \kappa(s) r_s$ we get that 
\begin{align*}
\bb Q_{s,n}^x (dg_1, \ldots, dg_n) = q_n^s(x, G_n) \mu(dg_1) \ldots \mu(dg_n),  \quad  n \geq 1, 
\end{align*}
are probability measures and 
form a projective system on $\textup{GL}(V)^{\bb{N}}$. 
By the Kolmogorov extension theorem, 
there is a unique probability measure  $\bb Q_s^x$ on $\textup{GL}(V)^{\bb{N}}$ with marginals $\bb Q_{s,n}^x$. 
We write $\bb{E}_{\bb Q_s^x}$ for the corresponding expectation 
and the change of measure formula holds: 
for any $s \in (-s_0, s_0)$, $x \in \bb P(V)$, 
$n\geq 1$ and bounded measurable function $h$ on $(\bb P(V) \times \bb R)^{n}$,  
\begin{align}\label{Ch7basic equ1}
&  \frac{1}{ \kappa^{n}(s) r_{s}(x) } \bb{E}  \Big[   
r_{s}(G_n \!\cdot\! x) e^{s \sigma(G_n, x) } h \Big( G_1 \!\cdot\! x, \sigma(G_1, x), \dots, G_n \!\cdot\! x, \sigma(G_n, x) \Big)
  \Big]   \nonumber\\
&  =   \bb{E}_{\bb{Q}_{s}^{x}} \Big[ h \Big( G_1 \!\cdot\! x, \sigma(G_1, x), \dots, G_n \!\cdot\! x, \sigma(G_n, x) \Big) \Big].
\end{align}
Under the changed measure $\bb Q_s^x$, the process $(G_n \!\cdot\! x)_{n \geq 0}$ is a Markov chain 
with the transition operator $Q_s$ given as follows: for any $\varphi \in \mathscr{C}(\bb P(V))$, 
\begin{align*}
Q_{s}\varphi(x) = \frac{1}{\kappa(s)r_{s}(x)}P_s(\varphi r_{s})(x),  \quad   x \in \bb P(V).
\end{align*}
%
Under \ref{Ch7Condi-Moment} and \ref{Ch7Condi-IP}, 
it was shown in \cite{XGL19b} that 
the Markov operator $Q_s$ has a unique invariant probability measure $\pi_s$ given by 
\begin{align}\label{ExpCon-Qs}
\pi_s(\varphi) = \frac{ \nu_s( \varphi r_s) }{ \nu_s(r_s) }  \quad  \mbox{for any } \varphi \in \mathscr{C}(\bb P(V)).
\end{align}
By \cite[Proposition 3.13]{XGL19b}, the following strong law of large numbers 
for the norm cocycle under the changed measure $\bb Q_s^x$ holds: under \ref{Ch7Condi-Moment} and \ref{Ch7Condi-IP}, 
for any $s\in (-s_0, s_0)$ and $x \in \bb P(V)$, 
\begin{align*}
\lim_{n \to \infty} \frac{ \sigma(G_n, x) }{n} = \Lambda'(s),  \quad  \bb Q_s^x\mbox{-a.s.}
\end{align*}
where $\Lambda(s) = \log \kappa(s)$. 
For any $s\in (-s_0, s_0)$ and $u \in \bb R$, 
we define the perturbed operator $R_{s, iu}$ as follows: for $\varphi \in \mathscr{C}(\bb P(V))$, 
\begin{align} \label{Def_Rsz_Ch7}
R_{s, iu} \varphi(x)
= \bb{E}_{\bb{Q}_{s}^{x}} \left[ e^{iu( \sigma(g, x) - \Lambda'(s) )} \varphi(g \!\cdot\! x) \right],  
\quad   x \in \bb P(V). 
\end{align}
It follows from the cocycle property of $\sigma(\cdot, \cdot)$ that for any $n \geq 1$, 
\begin{align*} 
R^{n}_{s, iu} \varphi(x)
= \bb{E}_{\bb{Q}_{s}^{x}} \left[ e^{iu( \sigma(G_n, x) - n\Lambda'(s) )} \varphi(G_n \!\cdot\! x) \right],
\quad    x \in \bb P(V).
\end{align*}
The next result gives spectral gap properties of the perturbed operator $R_{s, iu}$. 

\begin{lemma}[\cite{XGL19b}] \label{Ch7perturbation thm}
Assume \ref{Ch7Condi-Moment} and \ref{Ch7Condi-IP}.  
Then, there exist constants $s_0 >0$ and $\delta > 0$ 
such that for any $s \in (-s_0, s_0)$ and $u \in  (-\delta, \delta)$,
\begin{align} \label{Ch7perturb001}
R^{n}_{s, iu} = \lambda^{n}_{s, iu} \Pi_{s, iu} + N^{n}_{s, iu},  
\end{align}
where
\begin{align}\label{relationlamkappa001}
\lambda_{s, iu} = e^{ \Lambda(s + iu) - \Lambda(s) - iu \Lambda'(s)}, 
\end{align}
and for fixed $s \in (-s_0, s_0)$, the mappings $u \mapsto \Pi_{s, iu}: (-\delta, \delta) \to \mathscr{L(B_{\gamma},B_{\gamma})}$,
$u \mapsto N_{s, iu}: (-\delta, \delta) \to \mathscr{L(B_{\gamma},B_{\gamma})}$
and $u \mapsto \lambda_{s, iu}: (-\delta, \delta) \to \bb{R}$ are analytic. 
In addition, for fixed $s$ and $u$, the operator $\Pi_{s,iu}$ is a rank-one projection with 
$\Pi_{s, 0}(\varphi)(x) = \pi_{s}(\varphi)$ for any $\varphi \in \mathscr{B}_{\gamma}$ and $x\in \bb P(V)$,
and $\Pi_{s,iu} N_{s, iu} = N_{s,iu} \Pi_{s,iu} = 0$.  
  
Moreover,  for any $k \in  \bb{N}$,  there exist constants $c > 0$ and $0< a <1$ such that
\begin{align} \label{Ch7SpGapContrN}
\sup_{|s| < s_0} \sup_{|u| < \delta}
\Big\| \frac{d^{k}}{du^{k}} \Pi^{n}_{s, iu}  \Big\|_{\mathscr{B}_{\gamma} \to \mathscr{B}_{\gamma}} 
\leq c,  
\
\sup_{|s| < s_0} \sup_{|u| < \delta}
\Big\| \frac{d^{k}}{du^{k}}N^{n}_{s, iu}  \Big\|_{\mathscr{B}_{\gamma} \to \mathscr{B}_{\gamma}} 
\leq  c  a^n.    
\end{align}
\end{lemma}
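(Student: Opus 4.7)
The plan is to prove this by reducing the operator $R_{s,iu}$ to the transfer operator $P_{s+iu}$ via an explicit intertwining, and then transporting the spectral decomposition from Lemma \ref{Ch7transfer operator}. Starting from the definition \eqref{Def_Rsz_Ch7} and the density $q_n^s$ of the change of measure, a direct computation gives the key identity
\begin{align*}
R_{s,iu}\varphi(x) = \frac{e^{-iu\Lambda'(s)}}{\kappa(s)\,r_s(x)}\, P_{s+iu}(r_s\,\varphi)(x),
\end{align*}
and by iteration, together with the cocycle property of $\sigma$, one obtains
\begin{align*}
R_{s,iu}^n \varphi = \frac{e^{-inu\Lambda'(s)}}{\kappa(s)^n\,r_s}\, P_{s+iu}^n(r_s\,\varphi).
\end{align*}
Since $r_s$ is continuous and strictly positive on the compact space $\bb P(V)$, the multiplication operators by $r_s$ and $r_s^{-1}$ are bounded on $\mathscr B_\gamma$, so this formula really intertwines two operators on $\mathscr B_\gamma$.

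Next, for $s\in(-s_0,s_0)$ real and $u\in(-\delta,\delta)$ with $\delta$ small enough to ensure $|s+iu|<s_0$, I would plug in the decomposition \eqref{Ch7Pzn-decom} of $P_{s+iu}^n$ provided by Lemma \ref{Ch7transfer operator}. This yields the candidate decomposition $R_{s,iu}^n = \lambda_{s,iu}^n \Pi_{s,iu} + N_{s,iu}^n$ with
\begin{align*}
\lambda_{s,iu} = \frac{\kappa(s+iu)}{\kappa(s)}\,e^{-iu\Lambda'(s)}, \qquad
\Pi_{s,iu}(\varphi) = \nu_{s+iu}(r_s\,\varphi)\,\frac{r_{s+iu}}{r_s},
\end{align*}
and $N_{s,iu}^n\varphi = \kappa(s)^{-n}e^{-inu\Lambda'(s)}\,r_s^{-1}\,L_{s+iu}^n(r_s\,\varphi)$. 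Taking $\Lambda=\log\kappa$ immediately gives the stated formula \eqref{relationlamkappa001}. Using $\nu_{s+iu}(r_{s+iu})=1$ (a necessary normalization since $M_{s+iu}=\nu_{s+iu}\otimes r_{s+iu}$ is a projection in Lemma \ref{Ch7transfer operator}), one checks $\Pi_{s,iu}^2=\Pi_{s,iu}$, and the relations $M_{s+iu}L_{s+iu}=L_{s+iu}M_{s+iu}=0$ transport directly into $\Pi_{s,iu}N_{s,iu}=N_{s,iu}\Pi_{s,iu}=0$. At $u=0$ the formula collapses to $\Pi_{s,0}(\varphi)=\nu_s(r_s\varphi)=\pi_s(\varphi)$ by \eqref{ExpCon-Qs}.

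Analyticity of $u\mapsto\lambda_{s,iu}$, $u\mapsto\Pi_{s,iu}$, $u\mapsto N_{s,iu}$ is then inherited from the analyticity of $z\mapsto\kappa(z), r_z, \nu_z, L_z$ given by Lemma \ref{Ch7transfer operator}, composed with the affine map $u\mapsto s+iu$. The main technical step is the uniform operator-norm bound in \eqref{Ch7SpGapContrN}. For $\Pi_{s,iu}^n=\Pi_{s,iu}$, the estimate reduces, after possibly shrinking $s_0$ and $\delta$, to the continuity and hence boundedness of $(s,u)\mapsto\Pi_{s,iu}$ on the compact closure of the parameter set. The derivative bounds follow by the Cauchy integral formula applied on a slightly larger polydisc where analyticity still holds.

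The obstacle I expect to spend the most care on is the exponential decay of $N_{s,iu}^n$. For real $s\in(-s_0,s_0)$, Lemma \ref{Ch7transfer operator} ensures that $\kappa(s)$ is the unique dominant eigenvalue of $P_s$, so the spectral radius of $L_s$ is strictly less than $\kappa(s)$; by continuity of the spectral radius for analytic operator families, after shrinking $s_0$ and $\delta$ one obtains $\rho(L_{s+iu})\le a_0\kappa(s)$ for some $a_0<1$ uniformly in $(s,u)$. The spectral radius formula (together with a standard inequality to absorb any polynomial correction into an exponential) then gives $\|L_{s+iu}^n\|_{\mathscr B_\gamma\to\mathscr B_\gamma}\le c\,a^n\kappa(s)^n$ for some $a\in(a_0,1)$, from which $\|N_{s,iu}^n\|_{\mathscr B_\gamma\to\mathscr B_\gamma}\le c\,a^n$ follows immediately from the explicit formula above. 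The derivative estimates are again handled by Cauchy's formula on a slightly smaller polydisc, using the analyticity of $u\mapsto N_{s,iu}^n$ and the bound just obtained.
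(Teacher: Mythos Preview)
The paper does not actually prove this lemma; it is cited from \cite{XGL19b} and stated without argument. Your derivation via the intertwining identity
\[
R_{s,iu}^n\varphi=\frac{e^{-inu\Lambda'(s)}}{\kappa(s)^n r_s}\,P_{s+iu}^n(r_s\varphi)
\]
and transport of the decomposition \eqref{Ch7Pzn-decom} is correct and is the standard route to this result; it is almost certainly what \cite{XGL19b} does as well. The identifications of $\lambda_{s,iu}$, $\Pi_{s,iu}$, $N_{s,iu}$, the projection and annihilation properties, and the value $\Pi_{s,0}=\pi_s$ are all verified as you indicate.

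One point deserves more care than you give it. Lemma \ref{Ch7transfer operator} as stated in this paper records the decomposition $P_z^n=\kappa(z)^nM_z+L_z^n$ and the analyticity of the pieces, but it does not literally assert a uniform bound $\|L_z^n\|\le C\tau^n$ with $\tau<|\kappa(z)|$. You infer this from ``$\kappa(s)$ is the unique dominant eigenvalue'' together with ``continuity of the spectral radius for analytic operator families''. The spectral radius is only upper semicontinuous in general, so the cleaner argument is: for real $s$ the spectral gap $\rho(L_s)<\kappa(s)$ holds by definition of dominant eigenvalue; since $z\mapsto L_z$ is analytic and $\kappa(z)$ is an isolated simple eigenvalue separated from the rest of the spectrum by a circle of fixed radius (this is how the Riesz projection $M_z$ is constructed in the perturbation theory underlying Lemma \ref{Ch7transfer operator}), the spectrum of $L_z$ stays inside that circle for $z$ in a complex neighbourhood, giving the uniform bound $\|L_{s+iu}^n\|\le C(a\kappa(s))^n$ directly. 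With that in hand, your formula for $N_{s,iu}^n$ yields \eqref{Ch7SpGapContrN}, and the derivative bounds follow from Cauchy's formula exactly as you say.
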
 

We end this subsection by giving the non-arithmetic property of the perturbed operator $R_{s,iu}$. 

\begin{lemma}[\cite{XGL19b}] \label{Lem-St-NonLatt}
Assume \ref{Ch7Condi-Moment} and \ref{Ch7Condi-IP}.  
Then, for any compact set $K \subseteq \mathbb{R}\backslash\{0\}$,
there exist constants $s_0, c, C_{K}>0$ such that 
for any $n \geq 1$ and $\varphi\in \mathscr{B}_{\gamma}$,
\begin{align*} 
\sup_{s \in (-s_0, s_0)} \sup_{u\in K} \sup_{x\in \bb P(V)}
|R^{n}_{s, iu}\varphi(x)| \leq c e^{-nC_{K}} \|\varphi \|_{\gamma}.
\end{align*}
\end{lemma}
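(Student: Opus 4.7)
The plan is to reduce the assertion to a spectral radius bound for the complex transfer operator $P_{s+iu}$ and then exploit the non-arithmeticity of $\Gamma_\mu$ guaranteed by \ref{Ch7Condi-IP}. Iterating the definition \eqref{Def_Rsz_Ch7} of $R_{s,iu}$ and using the cocycle identity $\sigma(G_n,x) = \sum_{k=1}^n \sigma(g_k, G_{k-1}\!\cdot\! x)$ together with the change of measure \eqref{Ch7basic equ1}, I would first establish the conjugation formula
\begin{align*}
R_{s,iu}^n \varphi(x) = \frac{e^{-inu\Lambda'(s)}}{\kappa^n(s)\, r_s(x)}\, P_{s+iu}^n(\varphi r_s)(x), \qquad x \in \bb P(V).
\end{align*}
Since $r_s$ and $1/r_s$ lie in $\mathscr{B}_\gamma$ with norms bounded uniformly in $s \in (-s_0, s_0)$ by the analyticity in Lemma \ref{Ch7transfer operator}, the claim reduces to proving
\begin{align*}
\sup_{|s| < s_0} \sup_{u \in K} \|P_{s+iu}^n\|_{\mathscr{B}_\gamma \to \mathscr{B}_\gamma} \leq c\, \kappa^n(s)\, e^{-nC_K}.
\end{align*}

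Next, I would verify a Doeblin--Fortet (Lasota--Yorke) type inequality for $P_z$ on $\mathscr{B}_\gamma$, valid in a complex neighborhood of $[-s_0,s_0]$ by \ref{Ch7Condi-Moment}, which yields quasi-compactness of $P_{s+iu}$ with essential spectral radius strictly smaller than $\kappa(s)$. This reduces the required bound to excluding eigenvalues of $P_{s+iu}$ of modulus $\kappa(s)$ when $u \neq 0$. Suppose, towards a contradiction, that $P_{s+iu}\varphi = \lambda \varphi$ with $|\lambda| = \kappa(s)$ and $0 \neq \varphi \in \mathscr{B}_\gamma$. Passing to the changed measure and comparing $|\varphi|$ with $r_s$, one shows that $|\varphi|$ is proportional to $r_s$, and the eigenvalue equation reduces to the arithmetic identity $e^{iu\sigma(g,x)} h(g\!\cdot\! x) = e^{i\theta} h(x)$ for $\mu$-a.e.\ $g$, with $|h|\equiv 1$ and $h \in \mathscr{B}_\gamma$. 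Iterating this along $G_n$ and specializing to proximal elements in $\Gamma_\mu$ (which exist by \ref{Ch7Condi-IP}(ii)), the translation cocycle $u\sigma$ evaluated at such elements is constrained to lie in $2\pi\mathbb{Z}$, whereas strong irreducibility \ref{Ch7Condi-IP}(i) combined with proximality forces the additive group generated by these values to be dense in $\mathbb{R}$. This contradicts $u \neq 0$ and proves that the spectral radius of $P_{s+iu}$ is strictly less than $\kappa(s)$.

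Finally, I would upgrade this pointwise fact to a uniform bound on $(-s_0,s_0) \times K$ by a compactness argument. Since $(s,u) \mapsto P_{s+iu}$ is analytic into $\mathscr{L}(\mathscr{B}_\gamma,\mathscr{B}_\gamma)$ and the spectral radius is upper semicontinuous, for each $u_0 \in K$ there is a neighborhood $U_{u_0}$ of $(0,u_0)$ and $\rho_{u_0} < 1$ such that $\|P_{s+iu}^n\|_{\mathscr{B}_\gamma \to \mathscr{B}_\gamma} \leq c_{u_0}\, \kappa^n(s)\, \rho_{u_0}^n$ on $U_{u_0}$. A finite subcover of the compact set $\{0\} \times K$, after possibly shrinking $s_0$, yields uniform constants $c, C_K > 0$ and the announced bound, with the cruder inequality $|R_{s,iu}^n \varphi(x)| \leq \|R_{s,iu}^n \varphi\|_\infty \leq \|R_{s,iu}^n\|_{\mathscr{B}_\gamma \to \mathscr{B}_\gamma}\, \|\varphi\|_\gamma$ completing the passage to the pointwise statement. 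The main obstacle is the arithmetic contradiction in the second step: it is precisely here that both parts of \ref{Ch7Condi-IP} are essential, through the analysis of the Jordan projection of proximal elements of $\Gamma_\mu$, whereas the rest of the argument is soft analytic perturbation theory.
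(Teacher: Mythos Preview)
The paper does not prove this lemma: it is stated with attribution to \cite{XGL19b} and used as a black box. Your proposal outlines the standard strategy that underlies such results in the literature (Le Page \cite{LeP82}, Hennion--Herv\'e \cite{HH01}, Benoist--Quint \cite{BQ16b}, and \cite{XGL19b} itself): conjugate $R_{s,iu}$ to $P_{s+iu}$, establish quasi-compactness via a Lasota--Yorke inequality, rule out peripheral eigenvalues by the non-arithmeticity argument, and pass to uniform bounds by compactness. So your route is essentially the one taken in the cited reference, even though the present paper offers no proof to compare against.

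One point worth tightening in your sketch: in the final step, upper semicontinuity of the spectral radius alone does not directly yield a uniform bound on $\|P_{s+iu}^n\|_{\mathscr{B}_\gamma \to \mathscr{B}_\gamma}$ of the form $c\,\kappa^n(s)\rho^n$ on a neighborhood; one needs either the stability of the spectral decomposition under analytic perturbation (Kato-type arguments, as in \cite{HH01}) or a uniform Lasota--Yorke inequality together with a uniform bound on the resolvent away from the dominant eigenvalue. Similarly, the arithmetic contradiction is usually phrased via the fact that $e^{iu\sigma(g,x)}$ being a coboundary forces $u\log|\lambda_g| \in 2\pi\mathbb{Z}$ for the dominant eigenvalue $\lambda_g$ of every proximal $g \in \Gamma_\mu$, and the set of such $\log|\lambda_g|$ generates a dense subgroup of $\mathbb{R}$ under \ref{Ch7Condi-IP} (see \cite[Chap.~17]{BQ16b}); your formulation is correct in spirit but would benefit from invoking this explicitly.
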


\subsection{Exponential H\"older regularity of the invariant measure $\pi_s$}

We need the following exponential regularity of the invariant measure $\pi_s$ from \cite{GQX20}.
\begin{lemma}[\cite{GQX20}]  \label{Lem_Regu_pi_s00}
Assume \ref{Ch7Condi-Moment} and \ref{Ch7Condi-IP}. Then there exist constants $s_0 >0$ and $\eta > 0$ such that
\begin{align} \label{Regu_pi_s_00}
\sup_{ s\in (-s_0, s_0) } \sup_{y \in \bb P(V^*) } 
  \int_{\bb P(V) } \frac{1}{ \delta(y, x)^{\eta} } \pi_s(dx) < + \infty. 
\end{align}
\end{lemma}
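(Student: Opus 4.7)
The plan is to reduce the bound on $\pi_s$ to a ball-regularity estimate for $\nu_s$ that can be extracted from the eigenmeasure equation, Hölder's inequality, and Guivarc'h's uniform regularity of the underlying random walk. First, the spectral projection $M_s=\nu_s\otimes r_s$ of Lemma \ref{Ch7transfer operator} is idempotent, which forces $\nu_s(r_s)=1$; hence \eqref{ExpCon-Qs} yields $\pi_s(\varphi)=\nu_s(\varphi r_s)$. Since $s\mapsto r_s\in\mathscr B_\gamma$ is analytic with $r_0\equiv 1$, one may shrink $s_0$ so that $c_1\leq r_s(x)\leq c_2$ uniformly on $(-s_0,s_0)\times\bb P(V)$, thereby reducing \eqref{Regu_pi_s_00} to bounding $\sup_{s,y}\nu_s(\delta(y,\cdot)^{-\eta})$. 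Setting $B(y,r):=\{x\in\bb P(V):\delta(y,x)\leq r\}$, the layer-cake identity
\[
\nu_s(\delta(y,\cdot)^{-\eta})=\eta\int_0^\infty r^{-\eta-1}\nu_s(B(y,r))\,dr
\]
further reduces the task to proving $\nu_s(B(y,r))\leq c\,r^\beta$ for some $\beta>0$, uniformly in $s\in(-s_0,s_0)$, $y\in\bb P(V^*)$, $r\in(0,1]$; any $\eta\in(0,\beta)$ then suffices.

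To establish the ball estimate, fix an integer $n_0$ and iterate $\nu_sP_s=\kappa(s)\nu_s$ to get
\[
\kappa(s)^{n_0}\nu_s(B(y,r))=\int_{\bb P(V)} \bb E\!\left[e^{s\sigma(G_{n_0},x)}\mathds{1}_{\{G_{n_0}\!\cdot\!x\in B(y,r)\}}\right]\nu_s(dx).
\]
Apply Hölder's inequality with conjugate exponents $p,q>1$: the exponential factor is controlled by Lemma \ref{Ch7transfer operator} via $\bb E[e^{ps\sigma(G_{n_0},x)}]=P_{ps}^{n_0}1(x)\leq C\,\kappa(ps)^{n_0}$, while the indicator factor is controlled through Markov's inequality combined with the uniform estimate $\sup_{x,y}\bb E[\delta(y,G_{n_0}\!\cdot\!x)^{-\eta_0}]\leq C$, which holds for $n_0$ large enough under \ref{Ch7Condi-Moment} and \ref{Ch7Condi-IP} by Guivarc'h's arguments (cf.\ \cite{Gui90, BQ16}); this gives $\bb P(\delta(y,G_{n_0}\!\cdot\!x)\leq r)\leq C\,r^{\eta_0}$. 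Combining these bounds one arrives at
\[
\nu_s(B(y,r))\leq C'\,r^{\eta_0/q}\bigl(\kappa(ps)^{1/p}/\kappa(s)\bigr)^{n_0}.
\]

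The main obstacle is the remaining factor $\bigl(\kappa(ps)^{1/p}/\kappa(s)\bigr)^{n_0}$: convexity of $\Lambda=\log\kappa$ combined with $\Lambda(0)=0$ force this factor to be $\geq 1$, so a naive bound grows exponentially in $n_0$. The resolution exploits the Taylor expansion $\Lambda(ps)/p-\Lambda(s)=\tfrac12(p-1)\sigma^2 s^2+O(s^3)$: for any fixed $p>1$ and fixed $n_0$, this factor remains uniformly bounded on $(-s_0,s_0)$ provided $s_0$ is chosen small enough (depending on $n_0$ and $p$). One therefore first fixes $n_0$ large enough that the Guivarc'h bound with exponent $\eta_0>0$ applies, and then shrinks $s_0$ accordingly; this yields $\nu_s(B(y,r))\leq c\,r^{\eta_0/q}$ uniformly, and the layer-cake step gives the desired conclusion \eqref{Regu_pi_s_00} for any $0<\eta<\eta_0/q$.
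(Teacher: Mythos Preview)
The paper does not prove this lemma; it is simply quoted from \cite{GQX20}. Your argument is correct and follows a natural route: pass from $\pi_s$ to the eigenmeasure $\nu_s$ via $\pi_s=r_s\,\nu_s$ (using $\nu_s(r_s)=1$, which you correctly deduce from $M_s^2=M_s$), iterate $\nu_sP_s=\kappa(s)\nu_s$ a fixed number $n_0$ of times, split the weight $e^{s\sigma(G_{n_0},x)}$ from the hitting indicator by H\"older, and feed in Guivarc'h's estimate $\sup_{x,y}\bb E[\delta(y,G_{n_0}\!\cdot\!x)^{-\eta_0}]<\infty$.

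Two minor remarks. First, you tacitly use that for real $s$ the functional $\nu_s\in\mathscr B_\gamma'$ is a \emph{positive Borel measure} and that the eigenmeasure relation extends from $\mathscr B_\gamma$ to indicator functions; both facts are standard (positivity of $P_s$ for real $s$, density of $\mathscr B_\gamma$ in $C(\bb P(V))$, Riesz representation), but one sentence acknowledging them would be appropriate. Second, your Taylor-expansion treatment of $(\kappa(ps)^{1/p}/\kappa(s))^{n_0}$ is more than what is needed: since $n_0$ is \emph{fixed} from the outset, continuity of $\kappa$ and $\kappa(0)=1$ already bound this factor uniformly on any compact $s$-interval with $|ps|$ in the spectral-gap domain of Lemma~\ref{Ch7transfer operator}; no further shrinking of $s_0$ tied to $n_0$ is required beyond $|s|<s_0/p$.
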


We also need the following property: 

\begin{lemma}[\cite{GQX20}]  \label{Lem_Regu_pi_s}
Assume \ref{Ch7Condi-Moment} and \ref{Ch7Condi-IP}. 
Then, for any $\ee >0$, there exist constants $s_0 >0$ and $c, C >0$ such that
for all $s \in (-s_0, s_0)$, $n \geq k \geq 1$, $x \in \bb P(V)$ and $y \in \bb P(V^*)$, 
\begin{align}\label{Regu_pi_s}
\bb Q_s^x \Big( \log \delta(y, G_n \!\cdot\! x) \leq -\ee k  \Big) \leq C e^{- ck}. 
\end{align}
\end{lemma}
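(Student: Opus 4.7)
The plan is to combine the exponential H\"older regularity of the invariant measure $\pi_s$ (Lemma \ref{Lem_Regu_pi_s00}) with the uniform spectral gap of the Markov operator $Q_s$ on $\mathscr B_\gamma$, after replacing the indicator $\mathds{1}_{\{\delta(y,\cdot)\leq e^{-\ee k}\}}$ by a H\"older majorant. Concretely, I would fix a smooth cutoff $h:[0,\infty)\to[0,1]$ with $h\equiv 1$ on $[0,1]$ and $h\equiv 0$ on $[2,\infty)$, and set
\[
\varphi_{y,k}(x') := h\bigl(e^{\ee k}\delta(y,x')\bigr),
\]
so that $\mathds{1}_{\{\delta(y,\cdot)\leq e^{-\ee k}\}} \leq \varphi_{y,k} \leq \mathds{1}_{\{\delta(y,\cdot)\leq 2e^{-\ee k}\}}$. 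Since $\delta(y,\cdot)$ is uniformly Lipschitz with respect to the angular distance on $\bb P(V)$, a standard interpolation between $\|\varphi_{y,k}\|_\infty\leq 1$ and $\mathrm{Lip}(\varphi_{y,k})\leq Ce^{\ee k}$ yields
\[
\|\varphi_{y,k}\|_\gamma \leq C e^{\gamma\ee k} \quad \text{uniformly in } y\in\bb P(V^*).
\]

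Next I would invoke Lemma \ref{Ch7perturbation thm} at $u=0$: there $R_{s,0}=Q_s$, $\lambda_{s,0}=1$, and $\Pi_{s,0}\varphi(\cdot)=\pi_s(\varphi)$, so the perturbation result gives the uniform spectral gap
\[
|Q_s^n\varphi(x) - \pi_s(\varphi)| \leq c\,a^n\,\|\varphi\|_\gamma,
\]
with constants $c>0$ and $a\in(0,1)$ independent of $s\in(-s_0,s_0)$, $n\geq 1$, and $x\in\bb P(V)$. Since under $\bb Q_s^x$ the law of $G_n\!\cdot\!x$ is $Q_s^n(x,\cdot)$, applying this with $\varphi=\varphi_{y,k}$ gives
\[
\bb Q_s^x\bigl(\delta(y,G_n\!\cdot\!x)\leq e^{-\ee k}\bigr) \leq Q_s^n\varphi_{y,k}(x) \leq \pi_s(\varphi_{y,k}) + c\,a^n\,e^{\gamma\ee k}.
\]
The main term is controlled by Markov's inequality and Lemma \ref{Lem_Regu_pi_s00}: for the constant $\eta>0$ supplied there,
\[
\pi_s(\varphi_{y,k}) \leq \pi_s\bigl(B(y,2e^{-\ee k})\bigr) \leq (2e^{-\ee k})^\eta \sup_{s,y}\int_{\bb P(V)}\delta(y,x')^{-\eta}\pi_s(dx') \leq C e^{-\eta\ee k}.
\]

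For $n\geq k$ the remainder satisfies $a^n e^{\gamma\ee k}\leq e^{-k(|\log a|-\gamma\ee)}$, and the lemma follows with any $c\in\bigl(0,\tfrac12\min(\eta\ee,\,|\log a|-\gamma\ee)\bigr)$ once the H\"older exponent $\gamma$ is chosen small enough that $\gamma\ee<|\log a|$; this is compatible with the paper's convention of fixing $\gamma>0$ small. The only delicate point is the passage from the spectral gap of $P_s$ to that of $Q_s$ with constants uniform in $s$, which is already handled by Lemma \ref{Ch7perturbation thm} (encoding the analyticity and spectral-gap stability of $P_s$ directly for the Markov operator $Q_s$). A variant of the argument, replacing Markov's inequality on $\delta^{-\eta}$ by a direct change-of-measure estimate followed by a H\"older splitting, would avoid using $Q_s$ altogether but at the cost of a less transparent bookkeeping.
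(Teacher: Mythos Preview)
The paper does not give its own proof of this lemma; it is quoted verbatim from \cite{GQX20} with no argument supplied here. So there is nothing in the present paper to compare your proof against.

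That said, your argument is correct and self-contained using only tools already stated in the paper. The three ingredients --- the H\"older cutoff $\varphi_{y,k}$ with $\|\varphi_{y,k}\|_\gamma\leq Ce^{\gamma\ee k}$, the uniform spectral gap $|Q_s^n\varphi(x)-\pi_s(\varphi)|\leq ca^n\|\varphi\|_\gamma$ obtained from Lemma~\ref{Ch7perturbation thm} at $u=0$, and the Markov bound $\pi_s(B(y,r))\leq Cr^\eta$ from Lemma~\ref{Lem_Regu_pi_s00} --- combine exactly as you describe. One small point worth making explicit: you need $\gamma\ee<|\log a|$, and since $\gamma$ is fixed once and for all in the paper while $\ee$ is arbitrary, you should observe that by monotonicity in $\ee$ (the event $\{\log\delta\leq -\ee k\}$ shrinks as $\ee$ grows) it suffices to prove the bound for $\ee$ in a bounded range, say $\ee\in(0,1]$, after which a single choice of $\gamma<|\log a|$ works uniformly. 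With that clarification the proof is complete.
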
 

Note that \eqref{Regu_pi_s} is stronger than the exponential H\"{o}lder regularity of the invariant measure $\pi_s$
stated in Lemma  \ref{Lem_Regu_pi_s00}.


\subsection{Proof of Theorem \ref{Thm-Edge-Expan-Coeff001}} \label{sec-proof of Edgeworth exp}
We shall prove a more general version of 
Theorem \ref{Thm-Edge-Expan-Coeff001} 
under the changed measure $\mathbb{Q}_{s}^{x}$.
For any $s \in (-s_0, s_0)$ and $\varphi \in \mathscr{B}_{\gamma}$,   define 
\begin{align}\label{Def-bsvarphi}
b_{s, \varphi}(x): = \lim_{n \to \infty}
   \mathbb{E}_{\mathbb{Q}_{s}^{x}} \big[ ( \sigma(G_n, x) - n \Lambda'(s) ) \varphi(G_n \!\cdot\! x) \big],
\quad   x \in \bb P(V) 
\end{align}
and
\begin{align} \label{drift-d001bis}
d_{s,\varphi}(y)=  \int_{\bb P(V)} \varphi(x) \log \delta(y,x) \pi_s(dx), \quad  y \in \bb P(V^*). 
\end{align}
These functions are well-defined as shown in Lemmas \ref{Lem-Bs} and \ref{Lem-ds} below.
In particular, we have $b_{0,\varphi} = b_{\varphi}$ and $d_{0,\varphi} = d_{\varphi}$,
 where $b_{\varphi}$ and $d_{\varphi}$ are defined in \eqref{drift-b001} and \eqref{drift-d001}, respectively. 

Our goal of this subsection is to establish the following first-order Edgeworth expansion for the coefficients $\langle f, G_n v \rangle$
 under the changed measure $\mathbb{Q}_{s}^x$. 
 Note that $\sigma_s = \sqrt{\Lambda''(s)}$, which is strictly positive under \ref{Ch7Condi-Moment} and \ref{Ch7Condi-IP}. 

\begin{theorem}\label{Thm-Edge-Expan-Coeff001extended}
Assume \ref{Ch7Condi-Moment} and \ref{Ch7Condi-IP}. 
Then, 
for any $\ee > 0$,
 there exist constants $\gamma >0$ and $s_0 > 0$ such that uniformly in $s \in (-s_0, s_0)$, 
 $t \in \bb R$, $x=\bb R v \in \bb P(V)$, $ y = \bb R f \in \bb P(V^*)$ with $\|v\| = \|f\| =1$, and
  $\varphi \in \mathscr{B}_{\gamma}$,  as $n\to \infty$,
\begin{align*}
&  \mathbb{E}_{\mathbb{Q}_{s}^x}
   \Big[  \varphi(G_n \!\cdot\! x) \mathds{1}_{ \big\{ \frac{\log |\langle f, G_n v \rangle| - n \Lambda'(s)  }{\sigma_s \sqrt{n}} \leq t \big\} } \Big]
   \notag \\
 &  \qquad  = \pi_s(\varphi) \Big[  \Phi(t) + \frac{\Lambda'''(s)}{ 6 \sigma_s^3 \sqrt{n}} (1-t^2) \phi(t) \Big]
  - \frac{ b_{s,\varphi}(x) + d_{s,\varphi}(y) }{ \sigma_s \sqrt{n} } \phi(t)    \notag\\
& \qquad\quad  +  \pi_s(\varphi)  o \Big( \frac{ 1 }{\sqrt{n}} \Big)  +  \lVert \varphi \rVert_{\gamma} O \Big( \frac{ 1 }{n^{1-\ee} }\Big).     
\end{align*}
\end{theorem}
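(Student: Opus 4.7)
The plan is to leverage the basic decomposition
\begin{align*}
\log |\langle f, G_n v \rangle| = \sigma(G_n, x) + \log \delta(y, G_n \!\cdot\! x)
\end{align*}
and reduce the Edgeworth expansion for the coefficients to one for the norm cocycle $\sigma(G_n, x)$ coupled with a target function on $(G_n \!\cdot\! x)$, for which an analogue of the desired expansion under $\mathbb Q_s^x$ is expected to be available from the spectral setup of Subsection \ref{subsec-Pz} (via Fourier inversion applied to $R_{s,iu}^n$ using Lemmas \ref{Ch7perturbation thm}--\ref{Lem-St-NonLatt}). First I would fix a small scale $\eta_n \to 0$ (polynomially slow) and construct a $\gamma$-H\"older partition of unity $(\chi_{n,k}^y)_{k \geq 0}$ on $\mathbb P(V)$ adapted to the level sets of $x \mapsto -\log \delta(y,x)$, so that $\mathrm{supp}\, \chi_{n,k}^y \subseteq \{ x : -\log \delta(y,x) \in [(k-1)\eta_n, (k+1)\eta_n] \}$ with $\|\chi_{n,k}^y\|_\gamma \leq c \eta_n^{-\gamma}$. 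Then the event $\{ \log |\langle f, G_n v \rangle | - n\Lambda'(s) \leq \sigma_s \sqrt n\, t \}$ intersected with the support of $\chi_{n,k}^y(G_n \!\cdot\! x)$ is, up to error $O(\eta_n)$, the event $\{ \sigma(G_n,x) - n\Lambda'(s) \leq \sigma_s \sqrt n\, t + k \eta_n \}$.

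Next I would write
\begin{align*}
\mathbb E_{\mathbb Q_s^x}\Big[ \varphi(G_n \!\cdot\! x) \mathds{1}_{\{\ldots\}} \Big]
= \sum_{k \geq 0} \mathbb E_{\mathbb Q_s^x}\Big[ (\varphi \chi_{n,k}^y)(G_n \!\cdot\! x) \mathds{1}_{\{\ldots\}} \Big],
\end{align*}
truncate at $k \leq K_n := c \log n$, and bound the tail $k > K_n$ using the regularity estimate \eqref{Regu_pi_s} of Lemma \ref{Lem_Regu_pi_s}, which gives exponential decay in $k$ and hence a tail contribution of size $O(n^{-C})$. On each piece I apply the first-order Edgeworth expansion for $(\sigma(G_n,x), G_n \!\cdot\! x)$ under $\mathbb Q_s^x$ with target function $\varphi \chi_{n,k}^y$:
\begin{align*}
\mathbb E_{\mathbb Q_s^x}\Big[ (\varphi \chi_{n,k}^y)(G_n \!\cdot\! x) \mathds{1}_{\{\sigma(G_n,x) - n\Lambda'(s) \leq \sigma_s\sqrt n\, t_k\}} \Big]
&= \pi_s(\varphi \chi_{n,k}^y) \Phi(t_k) \\
&\quad + \frac{\Lambda'''(s)}{6\sigma_s^3 \sqrt n} \pi_s(\varphi \chi_{n,k}^y) (1-t_k^2)\phi(t_k) \\
&\quad - \frac{b_{s,\varphi \chi_{n,k}^y}(x)}{\sigma_s \sqrt n} \phi(t_k) + \text{error},
\end{align*}
where $t_k = t + k \eta_n / (\sigma_s \sqrt n)$. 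Taylor-expanding $\Phi(t_k) = \Phi(t) + \frac{k\eta_n}{\sigma_s\sqrt n}\phi(t) + O(k^2\eta_n^2/n)$, summing over $k$, and using $\sum_k \chi_{n,k}^y \equiv 1$, the leading terms produce $\pi_s(\varphi) \Phi(t)$, while
\begin{align*}
\frac{\phi(t)}{\sigma_s \sqrt n} \sum_k \pi_s(\varphi \chi_{n,k}^y)\, k\eta_n \; \longrightarrow \; -\frac{\phi(t)}{\sigma_s\sqrt n} \int_{\mathbb P(V)} \varphi(x) \log \delta(y,x)\, \pi_s(dx)
\end{align*}
recovers the $-d_{s,\varphi}(y)\phi(t)/(\sigma_s\sqrt n)$ contribution. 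The Edgeworth correction and the $b_{s,\varphi}(x)$ term are produced similarly, using linearity of $\varphi \mapsto b_{s,\varphi}(x)$ and the fact that $\sum_k b_{s,\varphi\chi_{n,k}^y}(x) = b_{s,\varphi}(x)$ (justified by uniform estimates from Lemma \ref{Ch7perturbation thm}).

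The main technical obstacle is balancing the scale $\eta_n$: the H\"older norm $\|\chi_{n,k}^y\|_\gamma \sim \eta_n^{-\gamma}$ multiplies the residual error in the underlying Edgeworth expansion, while the Taylor remainders and discretisation errors are of order $k\eta_n/\sqrt n$ and $k^2 \eta_n^2/n$; after summing over $k \leq K_n = c\log n$ the contributions must be $o(1/\sqrt n)$ for the $\pi_s(\varphi)$ part and $O(n^{-1+\epsilon})$ for the Hölder fluctuation part. A choice such as $\eta_n = n^{-\alpha}$ with $\alpha$ close to $1/2$ and $\gamma$ sufficiently small (so that $\gamma\alpha < \epsilon$) should close the estimate, provided the underlying Edgeworth expansion for the couple $(G_n \!\cdot\! x, \sigma(G_n,x))$ with target functions has errors of the split form $\pi_s(\psi) o(1/\sqrt n) + \|\psi\|_\gamma O(n^{-1+\epsilon})$. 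The secondary difficulty is the uniformity in $s \in (-s_0, s_0)$ and in the starting points $x, y$; this is handled by invoking the uniform spectral gap in Lemma \ref{Ch7perturbation thm} and the uniform version of Lemma \ref{Lem_Regu_pi_s}, both valid on a small interval of $s$.
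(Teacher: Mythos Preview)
Your plan matches the paper's proof: construct a H\"older partition of unity $(\chi_{n,k}^y)_k$ in the variable $-\log\delta(y,\cdot)$, apply the Edgeworth expansion for the couple $(G_n\!\cdot\! x,\sigma(G_n,x))$ under $\bb Q_s^x$ with target $\varphi\chi_{n,k}^y$ (this is Theorem~\ref{Thm-Edge-Expan}, whose error does split as $\pi_s(\psi)\,o(n^{-1/2})+\|\psi\|_\gamma\,O(n^{-1})$), Taylor-expand $\Phi$ at the shifted points, and sum; the linear Taylor term yields $-d_{s,\varphi}(y)\phi(t)/(\sigma_s\sqrt n)$ via the summation Lemma~\ref{new bound for delta020}.

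The gap is in your scales. Lemma~\ref{Lem_Regu_pi_s} gives exponential decay in the \emph{level} of $-\log\delta$, not in the partition index: one has $\bb Q_s^x\big(-\log\delta(y,G_n\!\cdot\! x)\geq L\big)\leq Ce^{-cL}$, so the cutoff in $-\log\delta$ must be at level $\gtrsim\log n$. With $\eta_n=n^{-\alpha}$ and $K_n=c\log n$ your truncation only covers $-\log\delta\leq K_n\eta_n=cn^{-\alpha}\log n\to 0$, so the ``tail'' piece has probability tending to $1$ and is uncontrolled. The paper instead takes $a_n=1/\log n$ and $M_n=\lfloor A\log^2 n\rfloor$, covering $-\log\delta\leq A\log n$ with tail $O(n^{-cA})$. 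Second, your H\"older bound misses a factor: since $x\mapsto\log\delta(y,x)$ has local Lipschitz constant $\sim\delta^{-1}\sim e^{ka_n}$ on $\supp\chi_{n,k}^y$, one has $\|\chi_{n,k}^y\|_\gamma\leq c\,e^{\gamma ka_n}a_n^{-\gamma}$ (Lemma~\ref{lemmaHolder property001}), hence $\leq c\,n^{\gamma A}\log^\gamma n$ for $k\leq M_n$. Summing $M_n\sim\log^2 n$ copies against the underlying $O(n^{-1})$ error then gives $O(n^{-1+\varepsilon})$ once $\gamma A<\varepsilon/2$. If you insist on $\eta_n=n^{-\alpha}$, the necessary $K_n\sim n^{\alpha}\log n$ makes the summed H\"older error of order $n^{\alpha-1+O(\gamma)}$, so $\alpha$ must be small, not close to $1/2$; the logarithmic scale $a_n=1/\log n$ is the correct choice.
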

Theorem \ref{Thm-Edge-Expan-Coeff001} 
follows from Theorem \ref{Thm-Edge-Expan-Coeff001extended} by taking $s=0$.

We begin with some properties of the function $b_{s, \varphi}$ (cf.\ \eqref{Def-bsvarphi}) proved recently in \cite{XGL19b}. 
\begin{lemma}[\cite{XGL19b}] \label{Lem-Bs}
Assume \ref{Ch7Condi-Moment} and \ref{Ch7Condi-IP}. 
Then, there exists $s_0 >0$ such that for any $s\in (-s_0, s_0)$, the function $b_{s,\varphi}$  
is well-defined and
\begin{align} \label{Def2-bs}
b_{s,\varphi}(x) = \frac{ d \Pi_{s,z} }{ dz } \Big|_{z=0} \varphi(x),  \quad  x \in \bb P(V). 
\end{align}
Moreover, there exist constants $\gamma>0$ and $c>0$ such that $b_{s,\varphi}  \in \mathscr{B}_{\gamma}$ 
and $\| b_{s,\varphi}\|_{\gamma}  \leq  c \| \varphi \| _{\gamma}$ for any $s\in (-s_0, s_0)$.
\end{lemma}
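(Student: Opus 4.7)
The plan is to realize $b_{s,\varphi}(x)$ as a derivative at $u=0$ of the Fourier-type operator $R^n_{s,iu}$ from Lemma \ref{Ch7perturbation thm} and then read off the conclusion from the spectral decomposition \eqref{Ch7perturb001}. First, under \ref{Ch7Condi-Moment} and the change of measure \eqref{Ch7basic equ1}, the random variable $\sigma(G_n,x)$ has finite exponential moments under $\mathbb{Q}_s^x$, so $u \mapsto R^n_{s,iu}\varphi(x) = \mathbb{E}_{\mathbb{Q}_s^x}[e^{iu(\sigma(G_n,x)-n\Lambda'(s))}\varphi(G_n\cdot x)]$ extends holomorphically to a complex neighborhood of $0$, and differentiation under the expectation sign is legitimate. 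This yields
\begin{align*}
\mathbb{E}_{\mathbb{Q}_s^x}\bigl[(\sigma(G_n,x)-n\Lambda'(s))\varphi(G_n\cdot x)\bigr]
= -i\,\frac{d}{du}R^n_{s,iu}\varphi(x)\Big|_{u=0}.
\end{align*}

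Next, I plug in the decomposition $R^n_{s,iu} = \lambda^n_{s,iu}\Pi_{s,iu}+N^n_{s,iu}$ from \eqref{Ch7perturb001} and apply the product/chain rule. The two crucial observations about the eigenvalue $\lambda_{s,iu}=e^{\Lambda(s+iu)-\Lambda(s)-iu\Lambda'(s)}$ are that $\lambda_{s,0}=1$ and that $\frac{d}{du}\lambda_{s,iu}|_{u=0}=\lambda_{s,0}\bigl[i\Lambda'(s)-i\Lambda'(s)\bigr]=0$; hence the $n$-dependent prefactor in the principal term drops out and
\begin{align*}
\frac{d}{du}\bigl[\lambda^n_{s,iu}\Pi_{s,iu}\bigr]\Big|_{u=0}
= \frac{d\Pi_{s,iu}}{du}\Big|_{u=0}.
\end{align*}
The contribution of the remainder is controlled by the second estimate in \eqref{Ch7SpGapContrN}, which gives $\bigl\|\frac{d}{du}N^n_{s,iu}\bigr\|_{\mathscr B_\gamma\to\mathscr B_\gamma}\leq c a^n\to 0$ uniformly in $s\in(-s_0,s_0)$. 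Passing to the limit $n\to\infty$ shows that $b_{s,\varphi}(x)$ exists and equals $-i\frac{d\Pi_{s,iu}}{du}|_{u=0}\varphi(x)$; converting the derivative in the real parameter $u$ back into one in the complex variable $z$ via $\frac{d}{du}\Pi_{s,iu}=i\frac{d\Pi_{s,z}}{dz}|_{z=iu}$ produces precisely \eqref{Def2-bs}.

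For the H\"older bound, I exploit the fact that $\Pi_{s,z}$ is a rank-one projection, so $\Pi^n_{s,z}=\Pi_{s,z}$ for every $n\geq 1$; the first estimate in \eqref{Ch7SpGapContrN} with $k=1$ therefore gives $\sup_{|s|<s_0}\bigl\|\frac{d\Pi_{s,z}}{dz}|_{z=0}\bigr\|_{\mathscr B_\gamma\to\mathscr B_\gamma}\leq c$, which immediately yields $b_{s,\varphi}\in\mathscr B_\gamma$ and $\|b_{s,\varphi}\|_\gamma\leq c\|\varphi\|_\gamma$ uniformly in $s\in(-s_0,s_0)$ (alternatively, a Cauchy integral of $\Pi_{s,z}$ over a small disc around $0$ bounds the derivative in terms of the uniform bound on $\Pi_{s,z}$ itself). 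The only real subtlety I expect is the legitimacy of the holomorphic extension and derivative-expectation interchange: this requires the exponential moment under $\mathbb{Q}_s^x$ which follows from \ref{Ch7Condi-Moment} together with the boundedness of $r_s$ and $1/r_s$ from Lemma \ref{Ch7transfer operator}; once that is in place, all remaining steps are essentially transcriptions of the spectral decomposition.
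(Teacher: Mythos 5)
Your argument is correct; note that the paper does not actually prove this lemma — it is imported from \cite{XGL19b} — so there is no in-paper proof to compare against. Your derivation (writing the expectation as $-i\,\partial_u R^n_{s,iu}\varphi(x)|_{u=0}$, killing the eigenvalue contribution via $\lambda_{s,0}=1$ and $\partial_u\lambda_{s,iu}|_{u=0}=0$, discarding $N^n_{s,iu}$ by the second bound in \eqref{Ch7SpGapContrN}, and reading the uniform H\"older bound off the first bound in \eqref{Ch7SpGapContrN} with $n=k=1$) is precisely the computation that the identity \eqref{Def2-bs} encodes, and your justification of the derivative–expectation interchange via the exponential moment under $\mathbb{Q}_s^x$ is the right one, so this matches the intended proof.
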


%
%
%
%

In addition to Lemma \ref{Lem-Bs}, we prove the following result on the function $d_{s,\varphi}$ defined in \eqref{drift-d001bis}. 
\begin{lemma} \label{Lem-ds}
Assume \ref{Ch7Condi-Moment} and \ref{Ch7Condi-IP}.
Then, there exists $s_0 >0$ such that for any $s\in (-s_0, s_0)$, the function 
$d_{s,\varphi}$ is well-defined. 
Moreover, there exist constants $\gamma>0$ and $c>0$ such that $d_{s,\varphi}  \in \mathscr{B}^*_{\gamma}$ 
and $\| d_{s,\varphi}\|_{\gamma}  \leq  c \| \varphi \|_{\infty}$ for any $s\in (-s_0, s_0)$.
\end{lemma}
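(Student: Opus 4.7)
The plan is to reduce both claims to two ingredients: the uniform exponential regularity of the invariant measure $\pi_s$ from Lemma \ref{Lem_Regu_pi_s00}, and the elementary Lipschitz estimate for the map $y \mapsto \delta(y,x)$. For well-definedness and the sup-norm bound, I would use $|\log t| \leq C_\eta t^{-\eta}$ valid for every $\eta > 0$ and $t \in (0,1]$ together with Lemma \ref{Lem_Regu_pi_s00} to get
$$\int_{\bb P(V)} |\log \delta(y,x)| \pi_s(dx) \leq C_\eta \int_{\bb P(V)} \delta(y,x)^{-\eta}\pi_s(dx) \leq C,$$
uniformly in $s \in (-s_0, s_0)$ and $y \in \bb P(V^*)$. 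This already yields $\|d_{s,\varphi}\|_\infty \leq c \|\varphi\|_\infty$ uniformly in $s$.

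For the H\"older continuity in $y$, fix $y, y' \in \bb P(V^*)$ and set $r = d(y,y')$, which I may assume is small. Choosing unit representatives $f, f'$ of $y, y'$ with $\langle f, f'\rangle \geq 0$, one has $\|f - f'\|^2 = 2(1 - \langle f,f'\rangle) \leq 2(1 - \langle f,f'\rangle^2) = 2\|f \wedge f'\|^2$, whence the pointwise Lipschitz bound $|\delta(y,x) - \delta(y',x)| \leq \sqrt 2\, r$ by the reverse triangle inequality. I would then split $\bb P(V) = A_1 \cup A_2$ with $A_1 = \{\delta(y,\cdot) \geq r^{1/2}\} \cap \{\delta(y',\cdot) \geq r^{1/2}\}$ and $A_2 = A_1^c$. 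On $A_1$, the inequality $|\log a - \log b| \leq |a-b|/\min(a,b)$ gives $|\log\delta(y,x) - \log\delta(y',x)| \leq C r^{1/2}$, so this part contributes $O(r^{1/2})$. On $A_2$, I would bound the integrand by $|\log\delta(y,x)| + |\log\delta(y',x)| \leq C_\eta(\delta(y,x)^{-\eta} + \delta(y',x)^{-\eta})$; by Cauchy--Schwarz,
$$\int_{A_2} \delta(y,x)^{-\eta} \pi_s(dx) \leq \pi_s(A_2)^{1/2} \Big( \int_{\bb P(V)} \delta(y,x)^{-2\eta} \pi_s(dx) \Big)^{1/2},$$
where the second factor is uniformly bounded by Lemma \ref{Lem_Regu_pi_s00} after shrinking $\eta$ if necessary, and Markov's inequality combined with the same lemma yields $\pi_s(A_2) \leq 2C r^{\eta/2}$. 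Combining, one finds $\int |\log \delta(y,x) - \log \delta(y',x)| \pi_s(dx) \leq C r^\gamma$ for some $\gamma>0$, from which $|d_{s,\varphi}(y) - d_{s,\varphi}(y')| \leq c \|\varphi\|_\infty d(y,y')^\gamma$ follows at once, giving $d_{s,\varphi} \in \scr B_\gamma^*$ with $\|d_{s,\varphi}\|_{\scr B_\gamma^*} \leq c \|\varphi\|_\infty$.

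The only real difficulty is the exceptional set $A_2$ on which $\log \delta$ is unbounded; the trick is to balance the polynomial decay $\pi_s(A_2) = O(r^{\eta/2})$ (coming from Markov applied to the exponential regularity) against the polynomial singularity $\delta^{-\eta}$, which forces the resulting H\"older exponent $\gamma$ to be a small positive power of $\eta$. Uniformity in $s \in (-s_0, s_0)$ is automatic throughout, since every constant arises from the supremum-type estimate of Lemma \ref{Lem_Regu_pi_s00}, which is already uniform in $s$ and $y$.
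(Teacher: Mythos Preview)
Your proof is correct and uses the same two key inputs as the paper: the uniform exponential regularity of $\pi_s$ (Lemma \ref{Lem_Regu_pi_s00}) and the Lipschitz bound $|\delta(y,x)-\delta(y',x)|\leq c\,d(y,y')$. The well-definedness and sup-norm bound are handled identically.

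For the H\"older modulus the two arguments diverge in their decomposition. You split the \emph{domain} $\bb P(V)$ into a good set $A_1$ where both $\delta(y,\cdot)$ and $\delta(y',\cdot)$ are $\geq r^{1/2}$ (so $\log$ is Lipschitz with controlled constant) and a small bad set $A_2$, then use Cauchy--Schwarz on $A_2$ to separate the $\delta^{-\eta}$ singularity from $\pi_s(A_2)^{1/2}$, the latter being estimated by Markov's inequality. The paper instead works pointwise: it splits according to whether the ratio $|\delta(y',x)-\delta(y'',x)|/\delta(y'',x)$ exceeds $\tfrac12$, and in each case manufactures a bound of the form $\big(\delta(y',x)^{-a}+\delta(y'',x)^{-b}\big)\,|\delta(y',x)-\delta(y'',x)|^{\gamma}$ by interpolating between the trivial bound $|\log\delta(y',x)|+|\log\delta(y'',x)|$ and the mean-value bound $|\log(1+a)|\leq c|a|$; it then integrates directly against Lemma \ref{Lem_Regu_pi_s00} without any Cauchy--Schwarz step. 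Your good-set/bad-set argument is perhaps more transparent and modular, while the paper's pointwise interpolation is slightly shorter since it avoids the auxiliary Markov/Cauchy--Schwarz machinery. Both yield a small H\"older exponent governed by the regularity exponent $\eta$, and uniformity in $s$ follows in either case directly from the uniformity in Lemma \ref{Lem_Regu_pi_s00}.
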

\begin{proof}
Since $a\leq e^{a}$ for  any $a\geq 0$, for any $\eta\in (0,1)$ we have 
$- \eta \log \delta(y,x) \leq \delta(y,x)^{-\eta}$, so that
\begin{align*} 
-d_{s,\varphi}(y) 
\leq 
\frac{\| \varphi \| _{\infty}}{\eta} \int_{\bb P(V)} \frac{1}{ \delta(y,x)^{\eta} } \pi_s(dx). 
\end{align*}
Choosing $\eta$ small enough,  by Lemma \ref{Lem_Regu_pi_s00}, the latter integral is bounded uniformly in $y\in \bb P(V^*)$
and $s \in (-s_0, s_0)$,
which proves that $d_{s,\varphi}$ is well defined and that $\|d_{s,\varphi}\|_{\infty} \leq c \| \varphi \|_{\infty}$.

Note that $|\log(1+a)| \leq  c |a|$ for any $|a| \leq \frac{1}{2}$.  Then, using this, for any 
$y'=\bb R f'\in \bb P(V^*)$, $y''=\bb R f''\in \bb P(V^*)$ and any $\gamma>0$, we deduce that
\begin{align*} 
&\left| \log \delta(y',x)-\log \delta(y'',x) \right| \\
&= \left| \log \delta(y',x)-\log \delta(y'',x) \right| 
     \mathds 1_{ \big\{ \big| \frac{\delta(y',x)-\delta(y'',x)}{\delta(y'',x)} \big| > \frac{1}{2} \big\} } \\
& \quad + \left| \log \delta(y',x)-\log \delta(y'',x) \right|
    \mathds 1_{ \big\{ \big|\frac{\delta(y',x)-\delta(y'',x)}{\delta(y'',x)} \big| \leq \frac{1}{2} \big\} }  \\
&\leq  2^{\gamma}\left( \left| \log \delta(y',x) \right| + \left| \log \delta(y'',x) \right| \right) 
    \left|\frac{\delta(y',x)-\delta(y'',x)}{\delta(y'',x)} \right|^{\gamma} \\
& \quad +  c^{\gamma} \left| \log \delta(y',x)-\log \delta(y'',x) \right|^{1-\gamma} 
   \left|\frac{\delta(y',x)-\delta(y'',x)}{\delta(y'',x)} \right|^{\gamma}.
\end{align*}
Taking into account the fact that   
$- \eta \log \delta(y,x) \leq \delta(y,x)^{-\eta}$,
from the previous bound, we obtain  
\begin{align*} 
&|\log \delta(y',x)-\log \delta(y'',x)| \\
&\leq c_{\eta} \left(\delta(y',x)^{-\eta} \delta(y'',x)^{-\gamma} + \delta(y'',x)^{-\eta-\gamma} \right) \left|\delta(y',x)-\delta(y'',x)\right|^ {\gamma}\\
&+ c_{\eta,\gamma} \left(\delta(y',x)^{-\eta (1-\gamma) } \delta(y'',x)^{-\gamma} + \delta(y'',x)^{-\eta (1-\gamma)-\gamma} \right) \left|\delta(y',x)-\delta(y'',x)\right|^ {\gamma}\\
&\leq c_{\eta,\gamma} \left(\delta(y',x)^{-\eta} \delta(y'',x)^{-\gamma} + \delta(y'',x)^{-\eta-\gamma} \right) \left|\delta(y',x)-\delta(y'',x)\right|^ {\gamma}\\
&\leq c_{\eta,\gamma} \left(\delta(y',x)^{-2\eta} +  \delta(y'',x)^{-2\gamma} + \delta(y'',x)^{-\eta-\gamma} \right) \left|\delta(y',x)-\delta(y'',x)\right|^ {\gamma}.
\end{align*}
Since $\| \frac{f'}{\| f' \|} - \frac{f''}{\| f \|} \| \leq  d(y',y'')$ where $d(y',y'')$ 
is the angular distance on $\bb P(V^*)$,
it follows that
$$ 
\left|\delta(y',x)-\delta(y'',x)\right|= \left|\frac{v(f')}{\|v\| \|f'\|} - \frac{v (f'')}{\|v\| \|f''\|}\right| 
\leq \Big\| \frac{f'}{\| f' \|} - \frac{f''}{\| f \|} \Big\| 
\leq d(y',y''). 
$$
By the definition of the function $d_{s,\varphi}$, using the above bounds, we obtain
\begin{align*} 
&\frac{| d_{s,\varphi}(y')-d_{s,\varphi}(y'')|}{d(y',y'')^{\gamma}} \\
&\leq c_{\eta,\gamma} \| \varphi \|_{\infty}  
\int_{\bb P(V)} \left(\delta(y',x)^{-2\eta} + \delta(y'',x)^{-2\gamma}  + \delta(y'',x)^{-\eta-\gamma} \right) \pi_s(dx).
\end{align*}
By choosing $\eta$ and $\gamma$ sufficiently small,
the last integral is bounded uniformly in $y',y''\in \bb P(V^*)$ and $s \in (-s_0, s_0)$ by Lemma \ref{Lem_Regu_pi_s00}. 
This proves that 
$ d_{s,\varphi} \in \scr B_{\gamma}^*$.
\end{proof}

In the proof of Theorem \ref{Thm-BerryEsseen-optimal} 
we make use of the following Edgeworth expansion for the couple $(G_n \cdot x, \sigma(G_n, x))$
with a target function on $G_n \cdot x$, which slightly improves \cite[Theorem 5.3]{XGL19b} 
by giving more accurate reminder terms. 
This improvement will be important for establishing Theorem \ref{Thm-BerryEsseen-optimal}. 

\begin{theorem}\label{Thm-Edge-Expan}
Assume \ref{Ch7Condi-Moment} and \ref{Ch7Condi-IP}. 
Then, there exists  $s_0 >0$ such that, 
as $n \to \infty$, uniformly in   $s \in (-s_0, s_0)$, 
$x \in \bb P(V)$, $t \in \bb R$ and $\varphi \in \mathscr{B}_{\gamma}$,  
\begin{align*}
&  \mathbb{E}_{\mathbb{Q}_s^x}
   \Big[  \varphi(G_n \cdot x) \mathds{1}_{ \big\{ \frac{\sigma(G_n, x) - n \Lambda'(s) }{\sigma_s \sqrt{n}} \leq t \big\} } \Big]
   \notag \\
 &  =  \pi_s(\varphi) \Big[  \Phi(t) + \frac{\Lambda'''(s)}{ 6 \sigma_s^3 \sqrt{n}} (1-t^2) \phi(t) \Big]
    -  \frac{ b_{s,\varphi}(x) }{ \sigma_s \sqrt{n} } \phi(t)     \notag\\
&  \quad  +  \pi_s(\varphi)  o \Big( \frac{ 1 }{\sqrt{n}} \Big)  +  \lVert \varphi \rVert_{\gamma} O \Big( \frac{ 1 }{n} \Big).     
\end{align*}
\end{theorem}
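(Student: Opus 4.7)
My plan is to combine Esseen's smoothing inequality with the spectral decomposition of the perturbed transfer operator $R_{s,iu}$ from Lemma \ref{Ch7perturbation thm}, reading the Fourier transform of the left-hand side as a spectral object. Writing $F_n(t)$ for the left-hand side of the claimed expansion and $G_n(t)$ for its explicit Edgeworth main term, the Fourier transform of $F_n$ in $t$ is exactly, with $v = u/(\sigma_s\sqrt n)$,
\[
\hat F_n(u) = \mathbb{E}_{\mathbb{Q}_s^x}\!\left[\varphi(G_n\!\cdot\! x)\, e^{iu(\sigma(G_n,x) - n\Lambda'(s))/(\sigma_s\sqrt n)}\right] = R^{n}_{s, iv}\varphi(x) = \lambda^{n}_{s,iv}\Pi_{s,iv}\varphi(x) + N^{n}_{s,iv}\varphi(x).
\]
Esseen's smoothing inequality with truncation $T \asymp \sqrt n$ reduces $\sup_t |F_n(t) - G_n(t)|$ to an integral of $|\hat F_n(u) - \hat G_n(u)|/|u|$ plus a boundary term $c/T$, and everything below is a careful estimation of $\hat F_n(u)$ on three frequency regimes.

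The core of the argument is the splitting $\varphi = \pi_s(\varphi)\mathbf 1 + \psi$ with $\pi_s(\psi) = 0$ and $\|\psi\|_\gamma \leq c \|\varphi\|_\gamma$, tracked separately. In the central zone $|u|\leq \varepsilon\sqrt n$, a third-order Taylor expansion of the exponent in \eqref{relationlamkappa001} gives
\[
\lambda^{n}_{s,iv} = e^{-u^2/2}\Big(1 - \frac{iu^3\Lambda'''(s)}{6\sigma_s^3\sqrt n} + O(u^4/n)\Big),
\]
uniformly on this range, while analyticity of $\Pi_{s,iv}$ together with Lemma \ref{Lem-Bs} yields
\[
\Pi_{s,iv}\varphi(x) = \pi_s(\varphi) - \frac{u\,b_{s,\varphi}(x)}{\sigma_s\sqrt n} + O(u^2/n)\,\|\varphi\|_\gamma.
\]
For the scalar part $\pi_s(\varphi)\mathbf 1$ this reduces to the classical first-order Edgeworth expansion for $(\sigma(G_n,x)-n\Lambda'(s))/(\sigma_s\sqrt n)$ under $\mathbb{Q}_s^x$, giving the explicit $\Phi(t)$ and $(1-t^2)\phi(t)$ terms with a reminder $\pi_s(\varphi)\,o(1/\sqrt n)$ after Fourier inversion. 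For the centred part $\psi$, the vanishing $\pi_s(\psi) = 0$ means $\Pi_{s,0}\psi = 0$, so the expansion of $\Pi_{s,iv}\psi(x)$ starts at order $1/\sqrt n$, not at order $1$; multiplying by the Gaussian factor produces the drift term $-b_{s,\psi}(x)\phi(t)/(\sigma_s\sqrt n)$ while all other products (including the cross term between the $u^3/\sqrt n$ correction of $\lambda^n_{s,iv}$ and the $u/\sqrt n$ correction of $\Pi_{s,iv}\psi$) are bounded by $\|\psi\|_\gamma\, e^{-u^2/4}\operatorname{poly}(u)/n$, contributing $\|\psi\|_\gamma\,O(1/n)$ after integration against $du/|u|$. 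Adding the two pieces and using the linearity $b_{s,\varphi} = \pi_s(\varphi)\,b_{s,\mathbf 1} + b_{s,\psi}$ assembles the claimed expansion.

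The remaining frequency zones are standard. In the intermediate zone $\varepsilon\sqrt n < |u| \leq \delta \sigma_s \sqrt n$ the factor $|\lambda^n_{s,iv}|\leq e^{-cn}$ dominates, making the whole contribution exponentially small in $n$. In the outer zone $|u|>\delta\sigma_s\sqrt n$, i.e.\ $|v| > \delta$, the spectral decomposition no longer applies, but the non-arithmeticity bound Lemma \ref{Lem-St-NonLatt} gives $\sup_x |R^n_{s,iv}\varphi(x)| \leq c e^{-c n} \|\varphi\|_\gamma$ uniformly on compact frequency sets, which combined with a routine uniform tail estimate at $|v|\to\infty$ from the exponential moment condition \ref{Ch7Condi-Moment} absorbs this zone into $\|\varphi\|_\gamma O(e^{-cn})$. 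The Esseen boundary term $c/T$ with $T\asymp\sqrt n$ is absorbed into $\|\varphi\|_\gamma O(1/n)$ upon bounding $\sup_t|G_n'(t)|$ by a constant.

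The main technical obstacle I foresee is the improved bookkeeping separating $\pi_s(\varphi)\, o(1/\sqrt n)$ from $\|\varphi\|_\gamma O(1/n)$, which is precisely what distinguishes this statement from \cite[Theorem 5.3]{XGL19b}. The mechanism is that $\pi_s(\psi) = 0$ removes the would-be $O(1)$ contribution of $\Pi_{s,iv}\psi$, so the next terms in the Taylor expansion of $\Pi_{s,iv}\psi$ and $\lambda^n_{s,iv}$ combine multiplicatively to land at order $1/n$ rather than at order $1/\sqrt n$; the dependence on $\|\varphi\|_\gamma$ enters only through $\|\psi\|_\gamma$, so the scalar part retains its clean $\pi_s(\varphi)\,o(1/\sqrt n)$ reminder from the classical one-dimensional theory, which is what the proof requires for the sharper splitting to be useful in the downstream application to the coefficients.
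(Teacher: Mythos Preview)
Your overall Fourier-analytic route via the spectral decomposition of $R_{s,iu}$ is the same as the paper's (which in turn quotes \cite[Theorem 5.3]{XGL19b} and sharpens one step). However, your handling of the Esseen boundary term contains an arithmetic error that breaks the claimed remainder splitting, and this is precisely the place where the improvement over \cite{XGL19b} lives.

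You write that with truncation $T \asymp \sqrt n$ the boundary term $c/T$ is absorbed into $\|\varphi\|_\gamma\, O(1/n)$ ``upon bounding $\sup_t|G_n'(t)|$ by a constant''. This does not work: $c/T \asymp 1/\sqrt n$ times a constant is $O(1/\sqrt n)$, not $O(1/n)$. The same issue hits the scalar part: with $T \asymp \sqrt n$ the classical Edgeworth remainder for $\pi_s(\varphi)\mathbf 1$ is only $\pi_s(\varphi)\,O(1/\sqrt n)$, not $\pi_s(\varphi)\,o(1/\sqrt n)$. To get $o(1/\sqrt n)$ one must take $T_n/\sqrt n \to \infty$, which non-arithmeticity (Lemma~\ref{Lem-St-NonLatt}) permits; this is implicit in the smoothing inequality the paper invokes (Proposition~4.1 of \cite{XGL19b}), which delivers the boundary term directly as $I_1 = o(1/\sqrt n)\sup_t|H'(t)|$.

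Once the boundary term has this form, the paper's mechanism for the sharper splitting is much simpler than your decomposition $\varphi = \pi_s(\varphi)\mathbf 1 + \psi$. The paper chooses as target
\[
H(t)=\mathbb{E}_{\mathbb{Q}_s^x}[\varphi(G_n\!\cdot\! x)]\Big[\Phi(t)+\tfrac{\Lambda'''(s)}{6\sigma_s^3\sqrt n}(1-t^2)\phi(t)\Big],
\]
with prefactor $\mathbb{E}_{\mathbb{Q}_s^x}[\varphi(G_n\!\cdot\! x)]$ rather than $\pi_s(\varphi)$, so that $\sup_t|H'(t)|\leq c\,\mathbb{E}_{\mathbb{Q}_s^x}[\varphi(G_n\!\cdot\! x)]$. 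The spectral gap gives $\mathbb{E}_{\mathbb{Q}_s^x}[\varphi(G_n\!\cdot\! x)]=\pi_s(\varphi)+O(e^{-cn})\|\varphi\|_\gamma$, whence $I_1=\pi_s(\varphi)\,o(1/\sqrt n)+\|\varphi\|_\gamma\,o(e^{-cn})$. The remaining Fourier-integral pieces $I_2,I_3,I_4$ were already bounded by $c\|\varphi\|_\gamma/n$ in \cite{XGL19b}, so no splitting of $\varphi$ is needed: the entire refinement is this one-line estimate of $\sup_t|H'(t)|$. Your $\psi$-centering argument is a valid alternative for the Fourier integral, but it does not touch the boundary term, which is the actual bottleneck.
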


\begin{proof}
For any $x \in \bb P(V)$, define
\begin{align*}
F(t) & = \mathbb{E}_{\mathbb{Q}_s^x}
\Big[  \varphi(G_n \!\cdot\! x) \mathds{1}_{ \big\{ \frac{\sigma(G_n, x) - n \Lambda'(s) }{\sigma_s \sqrt{n}} \leq t \big\} } \Big]
  +  \frac{ b_{s,\varphi}(x) }{ \sigma_s \sqrt{n} } \phi(t),  
  \quad t \in \mathbb{R},   \notag\\
H(t) & =   \mathbb{E}_{\mathbb{Q}_s^x} [ \varphi(G_n \!\cdot\! x) ]
 \Big[ \Phi(t) + \frac{\Lambda'''(s)}{ 6 \sigma_s^3 \sqrt{n}} (1-t^2) \phi(t) \Big],  \quad  t\in \mathbb{R}.
\end{align*}
Since $F(-\infty) = H(-\infty) = 0$ and $F(\infty) = H(\infty)$, 
applying Proposition 4.1 of \cite{XGL19b} we get that 
\begin{align}\label{BerryEsseen001}
  \sup_{t \in \mathbb{R}}  \big| F(t) - H(t)  \big|
\leq  \frac{1}{\pi }  ( I_1 + I_2 + I_3 + I_4),
\end{align}
where
\begin{align*}
I_1  & =   o \Big( \frac{ 1 }{\sqrt{n}} \Big) \sup_{t \in \mathbb{R}} |H'(t)|,  
\quad    I_2   \leq C e^{-cn} \|\varphi \|_{\gamma},  
\quad  I_3    \leq  \frac{c}{n} \|\varphi \|_{\gamma},   \quad 
I_4   \leq  \frac{c}{n} \|\varphi \|_{\gamma}. 
\end{align*}
Here the bounds for $I_2$, $I_3$ and $I_4$ are obtained in \cite{XGL19b}. 
It is easy to see that 
\begin{align*}
I_1 =   o \Big( \frac{ 1 }{\sqrt{n}} \Big)  \mathbb{E}_{\mathbb{Q}_s^x} \Big[  \varphi(G_n \!\cdot\! x)  \Big]. 
\end{align*}
This, together with the fact that 
\begin{align*}
\mathbb{E}_{\mathbb{Q}_s^x} \Big[  \varphi(G_n \!\cdot\! x) \Big] \leq  \pi_s(\varphi) +  C e^{-cn} \|\varphi \|_{\gamma}
\end{align*}
(cf.\ Lemma \ref{Ch7perturbation thm}), 
proves the theorem. 
\end{proof}

In the following we shall construct a partition $(\chi_{n,k}^y)_{k \geq 0}$ of the unity on the projective space $\bb P(V)$,
which is similar to the partitions in \cite{XGL19d, GQX20, DKW21}.  
In contrast to \cite{XGL19d, GQX20}, there is no escape of mass in our partition, 
which simplifies the proofs. 
Our partition becomes finer when $n \to \infty$, which allows us to obtain precise expressions for remainder terms
in the central limit theorem 
and thereby to establish the Edgeworth expansion for the coefficients. 

Let $U$ be the uniform distribution function on the interval $[0,1]$: $U(t)=t$ 
for $t\in [0,1]$, $U(t)=0$ for $t <0$ 
and $U(t)=1$ for $t > 1$. 
 Let $a_n=\frac{1}{\log n}$.
 Here and below we assume that $n \geq 18$ so that $a_n e^{a_n} \leq \frac{1}{2}$.
For any integer $k\geq 0$, define 
\begin{align*} 
U_{n,k}(t)= U\left(\frac{t-(k-1) a_n}{a_n}\right),  \qquad 
h_{n,k}(t)=U_{n,k}(t) - U_{n,k+1}(t),  \quad  t \in \bb R. 
\end{align*}
It is easy to see that $U_{n,m} = \sum_{k=m}^\infty h_{n,k}$ for any $m\geq 0$. 
Therefore, for any $t\geq 0$ and $m\geq0$, we have
\begin{align} \label{unity decomposition h-001}
\sum_{k=0}^{\infty} h_{n,k} (t) =1, \quad \sum_{k=0}^{m} h_{n,k} (t) + U_{n,m+1} (t) =1.
\end{align}
Note that, for any $k\geq 0$, 
\begin{align} \label{h_kLip001}
\sup_{s,t\geq 0: s \neq t} \frac{ | h_{n,k}(s) - h_{n,k}(t) |}{|s-t|} \leq \frac{1}{a_{n}}. 
\end{align}
For any $x=\bb R v \in \bb P(V)$ and $y=\bb R f \in \bb P(V^*)$, 
set 
\begin{align}\label{Def-chi-nk}
\chi_{n,k}^y(x)=h_{n,k}(-\log \delta(y, x))  \quad  \mbox{and}  \quad 
\overline \chi_{n,k}^y(x)= U_{n,k} ( -\log \delta(y, x) ), 
\end{align}
where we recall that $-\log\delta(y, x) \geq 0$ for any $x\in \bb P (V)$ and $y \in \bb P(V^*)$. 
From \eqref{unity decomposition h-001}
 we have the following partition of the unity  on $\bb P(V)$: for any $x\in \bb P (V)$, $y \in \bb P(V^*)$ and $m\geq 0$,
\begin{align} \label{Unit-partition001}
\sum_{k=0}^{\infty} \chi_{n,k}^y (x) =1, \quad 
\sum_{k=0}^{m} \chi_{n,k}^y (x) + \overline \chi_{n,m+1}^y (x) =1.
\end{align}
Denote by $\supp (\chi_{n,k}^y)$ the support of the function $\chi_{n,k}^y$.  
It is easy to see that for any $k\geq 0$ and $y\in \bb P(V^*)$,
\begin{align} \label{on the support on chi_k-001}
 -\log \delta(y, x) \in [a_n (k-1), a_n(k+1)] \quad \mbox{for any}\ x\in \supp (\chi_{n,k}^y). 
\end{align}

\begin{lemma} \label{lemmaHolder property001}
There exists a constant $c>0$ such that 
for any $\gamma\in(0,1]$, 
 $k\geq 0$ and $y\in \bb P(V^*)$, it holds
  $\chi_{n,k}^y\in \scr B_{\gamma}$ and, moreover, 
\begin{align} \label{Holder prop ohCHI_k-001}
\| \chi_{n,k}^y \|_{\gamma} \leq \frac{c e^{\gamma k a_n}}{a_{n}^\gamma}.
\end{align}
\end{lemma}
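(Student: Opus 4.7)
The plan is to bound the sup-norm and the Hölder seminorm of $\chi_{n,k}^y$ separately. Since $h_{n,k} = U_{n,k} - U_{n,k+1}$ is a difference of monotone functions with values in $[0,1]$, we have $\|\chi_{n,k}^y\|_\infty \leq 1$; together with $a_n \leq 1$ for $n\geq 18$, this is dominated by $c\, e^{\gamma k a_n}/a_n^\gamma$. The real work is the Hölder seminorm.

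For the seminorm I would start from two elementary bounds that are uniform in $x,x'\in\bb P(V)$:
\begin{equation*}
|\chi_{n,k}^y(x) - \chi_{n,k}^y(x')| \leq 2
\qquad\text{and}\qquad
|\chi_{n,k}^y(x) - \chi_{n,k}^y(x')| \leq \frac{1}{a_n}\,\bigl|\log \delta(y,x) - \log \delta(y,x')\bigr|,
\end{equation*}
the first one trivial and the second one coming from \eqref{h_kLip001}. If both $x,x'$ lie outside $\supp(\chi_{n,k}^y)$ the difference is zero, so by symmetry I may assume $x\in\supp(\chi_{n,k}^y)$. By \eqref{on the support on chi_k-001} this forces
\begin{equation*}
\delta(y,x) \geq e^{-(k+1)a_n}.
\end{equation*}
I also need the elementary geometric estimate $|\delta(y,x) - \delta(y,x')| \leq c_0\, d(x,x')$, which follows by reducing to unit representatives $v,v',f$ and using $\bigl||\langle f,v\rangle|-|\langle f,v'\rangle|\bigr|\leq \|v-v'\|$ together with the standard comparison $\|v-v'\|\leq \sqrt{2}\,d(x,x')$ after choosing the sign of $v'$ to minimise $\|v-v'\|$.

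Now I would split into two cases. In the near regime $d(x,x') \leq \delta(y,x)/(2c_0)$, we get $\delta(y,x')\geq \tfrac12\delta(y,x) \geq \tfrac12 e^{-(k+1)a_n}$, hence
\begin{equation*}
\bigl|\log\delta(y,x)-\log\delta(y,x')\bigr|
\leq \frac{|\delta(y,x)-\delta(y,x')|}{\min(\delta(y,x),\delta(y,x'))}
\leq 2c_0\, e^{(k+1)a_n}\, d(x,x'),
\end{equation*}
so the second bound becomes $|\chi_{n,k}^y(x) - \chi_{n,k}^y(x')| \leq L\, d(x,x')$ with $L = 2c_0\, e^{(k+1)a_n}/a_n$. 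Interpolating with the trivial bound $2$ via $\min(2, L r) \leq 2^{1-\gamma} L^{\gamma} r^{\gamma}$ (with $r=d(x,x')$) yields
\begin{equation*}
|\chi_{n,k}^y(x) - \chi_{n,k}^y(x')|
\leq 2^{1-\gamma}(2c_0)^\gamma\, \frac{e^{\gamma(k+1)a_n}}{a_n^\gamma}\, d(x,x')^\gamma.
\end{equation*}
In the far regime $d(x,x') > \delta(y,x)/(2c_0) \geq e^{-(k+1)a_n}/(2c_0)$, we have $1/d(x,x')^\gamma \leq (2c_0)^\gamma\, e^{\gamma(k+1)a_n}$, so the trivial bound gives $|\chi_{n,k}^y(x) - \chi_{n,k}^y(x')| \leq 2 \leq 2(2c_0)^\gamma\, e^{\gamma(k+1)a_n}\, d(x,x')^\gamma$, which together with $a_n\leq 1$ is again of the required form.

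Finally I would absorb the $e^{\gamma a_n}\leq e^\gamma$ factor from $e^{\gamma(k+1)a_n} = e^{\gamma k a_n}\cdot e^{\gamma a_n}$ into the multiplicative constant, which is independent of $n$ and $k$. The only mildly delicate point is bookkeeping: making sure the shift from $(k+1)a_n$ (forced by the support) down to $k a_n$ (as in the statement) only costs a uniform constant, which it does precisely because $a_n\leq 1$.
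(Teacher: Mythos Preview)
Your proof is correct and rests on the same core ingredients as the paper's: the Lipschitz bound \eqref{h_kLip001} on $h_{n,k}$, the Lipschitz bound $|\delta(y,x)-\delta(y,x')|\leq c_0\,d(x,x')$, and the lower bound $\delta(y,x)\geq e^{-(k+1)a_n}$ coming from the support condition \eqref{on the support on chi_k-001}. The packaging differs, though. The paper splits according to which subinterval $B_j=((j-1)a_n,ja_n]$ the values $-\log\delta(y,x')$ and $-\log\delta(y,x'')$ fall into: when both lie in the same $B_j$ it bounds $|\log u'-\log u''|$ directly (using that $|u'/u''-1|\leq e^{a_n}-1\leq 1/2$ on such an interval), and for mixed cases it inserts an intermediate point $x^*$ on the geodesic $[x',x'']$ with $-\log\delta(y,x^*)$ on the common boundary, then applies the triangle inequality. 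Your near/far dichotomy in $d(x,x')$ relative to $\delta(y,x)/(2c_0)$ replaces both the interval bookkeeping and the geodesic construction with a single threshold: in the near regime the mean-value estimate on $\log$ goes through uniformly, and in the far regime the trivial bound $2$ already beats the required $d(x,x')^\gamma$ decay. This is a cleaner organisation of the same argument; it avoids invoking the geodesic structure of $\bb P(V)$ and makes the interpolation $\min(2,Lr)\leq 2^{1-\gamma}L^\gamma r^\gamma$ explicit rather than implicit in the chain \eqref{bounddCHI-001}. The constants you obtain are comparable to the paper's, and your handling of the shift from $(k+1)a_n$ to $ka_n$ via $e^{\gamma a_n}\leq e$ is exactly right.
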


\begin{proof} 
Since $\|\chi_{n,k}^y\|_{\infty} \leq 1$,
it is enough to give a bound for the modulus of continuity: 
\begin{align*} 
[\chi_{n,k}^y]_{\gamma} = \sup_{x',x''\in \bb P(V): x' \neq x''}\frac{|\chi_{n,k}^y(x') - \chi_{n,k}^y(x'')|}{d(x',x'')^{\gamma}},
\end{align*}
where $d$ is the angular distance on $\bb P(V)$ defined by \eqref{Angular-distance}. 
Asume that $x'=\bb R v'\in \bb P(V)$ and $x''=\bb R v''\in \bb P(V)$ are such that $\|v'\|=\|v''\|=1$. 
We note that 
\begin{align} \label{angular dist-bound001}
\| v'-v''\| \leq \sqrt{2}d(x',x''). 
\end{align}
For short denote $B_k=((k-1)a_n,ka_n]$. Remark that the function $h_{n,k}$ is increasing on $B_k$ and 
decreasing on $B_{k+1}$. 
Set for brevity
$t'=-\log \delta(y,x')$ and $t''=-\log \delta(y,x'')$. 
First we consider the case when $t'$ and $t''$ are such that $t',t''\in B_{k}$. 
Then, using \eqref{Def-chi-nk} and \eqref{h_kLip001}, for any $\gamma \in (0,1]$,  we have 
\begin{align} \label{bounddCHI-001}
&|\chi_{n,k}^y(x')- \chi_{n,k}^y(x'')| = |h_{n,k}(t')- h_{n,k}(t'')|^{1-\gamma}  |h_{n,k}(t')- h_{n,k}(t'')|^{\gamma} \notag\\ 
&\leq |h_{n,k}(t')- h_{n,k}(t'')|^{\gamma} \leq \frac{|t'-t''|^{\gamma}}{a_{n}^{\gamma}} 
= \frac{1}{a_{n}^{\gamma}} |\log u'-\log u''|^{\gamma},
\end{align}
where we set for brevity $u'=\delta(y,x')$ and $u''= \delta(y,x'')$. 
Since $u'=e^{-t'}$, $u''=e^{-t''}$ and $t,t'\in B_{k}$, we have
$u''\geq e^{-k a_n}$ and $| u' -u''| \leq e^{-(k-1) a_n }-e^{-ka_n}$. 
Therefore, 
\begin{align*} 
\Big| \frac{u'}{u''}-1 \Big|= \Big| \frac{u'-u''}{u''} \Big| 
\leq \frac{ e^{-(k-1)a_n} -e^{-ka_n} }{e^{-ka_n}}
=e^{a_n}-1\leq a_n e^{a_n} 
\leq \frac{1}{2}, 
\end{align*}
which, together with the inequality $|\log(1+a)| \leq  2 |a|$ for any $|a| \leq \frac{1}{2}$, implies
\begin{align} \label{h_kLip002}
|\log u' - \log u''|= \left| \log\left(1+ \frac{u'}{u''}-1 \right) \right| \leq 2 \frac{|u'-u''|}{u''}.
\end{align}
Since $u''\geq e^{-k a_n}$, using the fact that $\|v'\|=\|v''\|=1$ and \eqref{angular dist-bound001}, we get
\begin{align} \label{Lipschitz-u}
\frac{|u'-u''|}{u''}
&\leq  e^{k a_n}  |\delta(y,x')- \delta(y,x'')| = e^{k a_n} \frac{|f(v') - f(v'') |}{\|f\|} \notag\\
& =  e^{k a_n} \frac{|f (v'-v'')|}{\| f \|} \leq \sqrt{2} e^{ ka_n} d(x',x'').
\end{align}
Therefore, from \eqref{bounddCHI-001}, \eqref{h_kLip002} and \eqref{Lipschitz-u}, it follows that for $\gamma \in (0,1]$, 
\begin{align} \label{bounddCHI-010}
|\chi_{n,k}^y(x')- \chi_{n,k}^y(x'')|  \leq  3 \frac{e^{\gamma k a_n}}{a_{n}^\gamma}d(x',x'')^\gamma.
\end{align}
The case $t',t''\in B_k$ is treated in the same way.


To conclude the proof we shall consider the case when
$t'=-\log\delta(y,x')\in B_{k-1}$ and $t''=-\log\delta(y,x'')\in B_{k}$; 
the other cases can be handled in the same way.
We shall reduce this case to the previous ones.
Let $x^*\in \bb P(V)$ be the point on the geodesic line $[x',x'']$ on $\bb P(V)$ 
such that $d(x',x'')=d(x',x^*)+d(x^*,x'')$ and $t^*=-\log\delta(y,x^*)=ka_n.$
Then 
\begin{align} \label{bounddCHI-011}
|\chi_{n,k}^y(x')- \chi_{n,k}^y(x'')| 
&\leq |\chi_{n,k}^y(x')- \chi_{n,k}^y(x^*)| + |\chi_{n,k}^y(x'')- \chi_{n,k}^y(x^*)| \notag\\
&\leq 
3 \frac{e^{\gamma k a_n}}{ a_{n}^\gamma }d(x',x^*)^{\gamma} 
+ 3 \frac{e^{\gamma k a_n}}{ a_{n}^\gamma }d(x'',x^*)^{\gamma} \notag\\
&\leq 
6 \frac{e^{\gamma k a_n}}{ a_{n}^\gamma }d(x',x'')^{\gamma}. 
\end{align}
From \eqref{bounddCHI-010} and \eqref{bounddCHI-011} we conclude that 
$[\chi_{n,k}^y]_{\gamma}\leq  6 \frac{e^{ \gamma k a_n }}{a_{n}^{\gamma}}$, which
shows \eqref{Holder prop ohCHI_k-001}.
\end{proof}

We need the following bounds.
Let $M_n=\floor{A\log^2 n}$, where $A>0$ is a constant and $n$ is large enough. 
It is convenient to denote 
\begin{align} \label{varphi-nk-001}
\varphi_{n,k}^y=\varphi \chi_{n,k}^y\quad\mbox{for}\quad  0 \leq k \leq M_n-1,\quad 
\varphi_{n,M_n}^y=\varphi \overline \chi_{n,M_n}^y.
\end{align}

\begin{lemma} \label{new bound for delta020} 
Assume \ref{Ch7Condi-Moment} and \ref{Ch7Condi-IP}. 
Then there exist constants $s_0 >0$ and $c >0$ such that for any $s \in (-s_0, s_0)$, 
$y\in \bb P(V^*)$ and any non-negative bounded measurable function $\varphi$ on $\bb P(V)$, 
\begin{align*} 
 \sum_{k=0}^{M_n} (k+1) a_n \pi_s(\varphi_{n,k}^y) \leq  -d_{s,\varphi}(y) + 2 a_n \pi_s(\varphi)
\end{align*}
and
\begin{align*} 
\sum_{k=0}^{M_n} (k-1) a_n \pi_s(\varphi_{n,k}^y) \geq -d_{s,\varphi}(y) - 2 a_n \pi_s(\varphi) -  c \frac{ \|\varphi \|_{\infty}}{n^2}.
\end{align*}
\end{lemma}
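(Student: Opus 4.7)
My plan is to establish pointwise identities relating the sums $\sum_{k=0}^{M_n}(k\pm 1) a_n \tilde\chi_{n,k}^y(x)$, with $\tilde\chi_{n,k}^y:=\chi_{n,k}^y$ for $k\le M_n-1$ and $\tilde\chi_{n,M_n}^y:=\overline\chi_{n,M_n}^y$, to the quantity $-\log\delta(y,x)$, and then integrate against $\varphi\,d\pi_s$. The starting observation is the barycentric identity $\sum_{k\ge 0}k\,h_{n,k}(t)=t/a_n$ for $t\ge 0$, which one checks on each interval $[ja_n,(j+1)a_n]$ where only $h_{n,j}$ and $h_{n,j+1}$ are non-zero and sum to $1$. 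Combined with the partition of unity $\sum_{k\ge 0}h_{n,k}=1$, this gives $\sum_{k=0}^{\infty}(k\pm 1)a_n\chi_{n,k}^y(x)=-\log\delta(y,x)\pm a_n$. Using the telescoping $U_{n,M_n}=\sum_{k\ge M_n}h_{n,k}$, so that $\overline\chi_{n,M_n}^y=\sum_{k\ge M_n}\chi_{n,k}^y$, I obtain the truncated pointwise formula
\[
\sum_{k=0}^{M_n}(k\pm 1) a_n \tilde\chi_{n,k}^y(x) = -\log \delta(y,x) \pm a_n - \Delta_n(x),
\]
where the \emph{truncation defect} is $\Delta_n(x):=\sum_{k\ge M_n}(k-M_n)a_n\chi_{n,k}^y(x)\ge 0$. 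A three-case check (for $-\log\delta(y,x)$ lying below $(M_n-1)a_n$, inside $[(M_n-1)a_n,M_n a_n]$, or above $M_n a_n$) yields the closed form $\Delta_n(x)=\bigl(-\log\delta(y,x)-M_n a_n\bigr)^+$; on the middle interval the cancellation between $(M_n-1)a_n\, h_{n,M_n-1}$ and $M_n a_n\, U_{n,M_n}$ is crucial.

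The upper bound is then immediate: multiplying the $(k+1)$-identity by $\varphi\ge 0$ and integrating against $\pi_s$, I drop the non-negative $\Delta_n$ to deduce $\sum_{k=0}^{M_n}(k+1)a_n\pi_s(\varphi_{n,k}^y)\le -d_{s,\varphi}(y)+a_n\pi_s(\varphi)\le -d_{s,\varphi}(y)+2 a_n\pi_s(\varphi)$. For the lower bound, integrating the $(k-1)$-identity gives the exact equality
\[
\sum_{k=0}^{M_n}(k-1)a_n\pi_s(\varphi_{n,k}^y)=-d_{s,\varphi}(y)-a_n\pi_s(\varphi)-\int \varphi\,\Delta_n\,d\pi_s,
\]
so the task reduces to showing $\int \varphi\,\Delta_n\,d\pi_s\le c\|\varphi\|_\infty/n^2$. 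Using $\Delta_n(x)\le \bigl(-\log\delta(y,x)\bigr)\mathds 1_{\{\delta(y,x)\le e^{-M_n a_n}\}}$, this is equivalent to bounding the tail integral $\int(-\log\delta(y,x))\,\mathds 1_{\{\delta(y,x)\le e^{-M_n a_n}\}}\,\pi_s(dx)$.

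This tail bound is the main technical step. Combining the elementary inequality $-\log u\le\frac{2}{\eta}u^{-\eta/2}$ on $(0,1]$ with the pointwise estimate $\delta^{-\eta/2}\le\delta^{-\eta}e^{-M_n a_n\eta/2}$ on $\{\delta\le e^{-M_n a_n}\}$, I upper-bound the tail integral by $\frac{2}{\eta}e^{-M_n a_n\eta/2}\sup_{y\in\bb P(V^*)}\int\delta(y,x)^{-\eta}\pi_s(dx)$. The exponential H\"older regularity of $\pi_s$ in Lemma \ref{Lem_Regu_pi_s00} ensures that, for some $\eta>0$, this supremum is finite uniformly in $s\in(-s_0,s_0)$. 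Since $M_n a_n\ge A\log n-1$, choosing the constant $A$ in the definition of $M_n$ so that $A\eta/2\ge 2$ yields $e^{-M_n a_n\eta/2}\le c/n^2$, completing the proof. The main subtlety is obtaining the explicit truncation-defect formula; once $\Delta_n=(-\log\delta-M_n a_n)^+$ is established, the regularity of $\pi_s$ need only be invoked on the exponentially rare tail $\{\delta(y,x)\le e^{-M_n a_n}\}$, which is why the remainder is as small as $n^{-2}$.
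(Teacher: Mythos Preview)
Your proof is correct. Both your argument and the paper's rest on the same two ingredients—the partition of unity $(\chi_{n,k}^y)$ and the exponential H\"older regularity of $\pi_s$ (Lemma~\ref{Lem_Regu_pi_s00}) for the tail—but they connect the sum $\sum_k (k\pm 1)a_n\pi_s(\varphi_{n,k}^y)$ to $-d_{s,\varphi}(y)$ differently. The paper simply uses the support information $-\log\delta(y,x)\in[(k-1)a_n,(k+1)a_n]$ on $\supp\chi_{n,k}^y$, which immediately gives the two-sided comparison with slack $2a_n$; for the lower bound it then bounds $-\int\varphi_{n,M_n}^y\log\delta\,d\pi_s$ by $c\|\varphi\|_\infty/n^2$ via the same regularity lemma. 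You instead exploit the piecewise-linear structure of the $h_{n,k}$ to derive the exact barycentric identity $\sum_{k\ge 0}k\,a_n h_{n,k}(t)=t$, which yields the \emph{pointwise equality} $\sum_{k=0}^{M_n}(k\pm 1)a_n\tilde\chi_{n,k}^y(x)=-\log\delta(y,x)\pm a_n-\Delta_n(x)$ with the explicit defect $\Delta_n=(-\log\delta-M_na_n)^+$. This is more work up front but is sharper (slack $a_n$ rather than $2a_n$) and localises the error entirely on the tail set $\{\delta\le e^{-M_na_n}\}$, making the role of the regularity lemma very transparent. The paper's route is quicker because it never needs the explicit closed form for $\Delta_n$; your route gives a cleaner identity-plus-remainder structure that could be reused if finer control were needed.
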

\begin{proof}
Recall that $d_{\varphi}(y)$ is defined in \eqref{drift-d001}.
Using \eqref{Unit-partition001} we deduce that
\begin{align*} 
-d_{s,\varphi}(y) 
& = - \sum_{k=0}^{M_n} \int_{\bb P (V)} \varphi_{n,k}^y(x) \log\delta(y,x) \pi_s(dx) \notag \\
&\geq \sum_{k=0}^{M_n} (k-1) a_n \pi_s(\varphi_{n,k}^y)   \notag\\
& = \sum_{k=0}^{M_n} (k+1) a_n \pi_s(\varphi_{n,k}^y) - 2 a_n \pi_s(\varphi),
\end{align*}
which proves the first assertion of the lemma.

Using the Markov inequality and the exponential H\"older regularity of the invariant measure $\pi_s$ (Lemma \ref{Lem_Regu_pi_s00}), 
 we get that there exists a small $\eta >0$ such that 
\begin{align*} 
&- \int_{\bb P(V)} \varphi_{n,M_n}^y(x)\log \delta(y,x) \pi_s(dx)  \notag\\
& \leq c \|\varphi \|_{\infty} \int_{\bb P(V)} \frac{e^{-\eta A\log n}}{\delta(y,x)^{\eta}} \delta(y,x)^{-\eta} \pi_s(dx) \notag \\
& = e^{-\eta A \log n} \|\varphi \|_{\infty} \int_{\bb P(V)}  \delta(y,x)^{-2\eta} \pi_s(dx) 
\leq  c \frac{ \|\varphi \|_{\infty}}{ n^2 },
\end{align*}
where in the last inequality we choose $A >0$ to be sufficiently large so that $\eta A \geq 2$. 
Therefore, 
\begin{align*} 
-d_{s,\varphi}(y) 
&= - \sum_{k=0}^{M_n} \int_{\bb P (V)} \varphi_{n,k}^y(x) \log\delta(y,x) \pi_s(dx) \notag \\
&\leq \sum_{k=0}^{M_n-1} (k+1) a_n \pi_s(\varphi_{n,k}^y) +  c \frac{ \|\varphi \|_{\infty}}{ n^2 } \notag  \\
& \leq \sum_{k=0}^{M_n-1} (k-1) a_n \pi_s(\varphi_{n,k}^y) + 2 a_n \pi_s(\varphi) +  c \frac{ \|\varphi \|_{\infty}}{ n^2 }  \notag\\
& \leq \sum_{k=0}^{M_n} (k-1) a_n \pi_s(\varphi_{n,k}^y) + 2 a_n \pi_s(\varphi) +  c \frac{ \|\varphi \|_{\infty}}{ n^2 }. 
\end{align*}
This proves the second assertion of the lemma.
\end{proof}

%
%
%
%

\begin{proof}[Proof of Theorem \ref{Thm-Edge-Expan-Coeff001extended}]
Without loss of generality, we assume that the target function $\varphi$ is non-negative. 
With the notation in \eqref{varphi-nk-001}, we have
\begin{align}\label{Initial decompos-001aa}
I_n(t) &: =\bb{E}_{\mathbb{Q}_s^x} \left[ \varphi(G_n \!\cdot\! x) 
\mathds{1}_{ \big\{ \frac{\log |\langle f, G_n v \rangle| - n\lambda_1 }{ \sigma_s \sqrt{n} } \leq t  \big\}  } \right] \notag \\ 
& =  \sum_{k=0}^{M_n} 
\bb{E}_{\mathbb{Q}_s^x} \left[ \varphi_{n,k}^y (G_n \!\cdot\! x) 
\mathds{1}_{ \big\{ \frac{\log |\langle f, G_n v \rangle| - n\lambda_1 }{ \sigma_s \sqrt{n} } \leq t  \big\}  }\right] 
=: \sum_{k=0}^{M_n}  F_{n,k}(t).
\end{align}
For $0\leq k\leq M_n - 1$, using \eqref{Ch7_Intro_Decom0a} 
and the fact that $-\log \delta(y, x) \leq (k+1)a_n$ when $x \in \supp \varphi_{n,k}^y$, we get
\begin{align} \label{boundFfbar-001}
F_{n,k}(t)
\leq
\bb{E}_{\mathbb{Q}_s^x} \left[ \varphi_{n,k}^y (G_n \!\cdot\! x) 
\mathds{1}_{ \big\{ \frac{\sigma(G_n, x) - n\lambda_1 }{ \sigma_s \sqrt{n} } \leq t + \frac{(k+1) a_n}{\sigma_s \sqrt{n}} \big\}  }   \right] 
=:  H_{n,k}(t).
\end{align}
For 
$k=M_n$, 
we have
\begin{align} \label{boundFfbar-002}
F_{n,M_n}(t)
 &\leq 
 \bb{E}_{\mathbb{Q}_s^x} \left[ \varphi_{n,M_n}^y (G_n \!\cdot\! x) 
\mathds{1}_{ \big\{ \frac{\log \sigma(G_n,x) - n\lambda_1 }{ \sigma_s \sqrt{n} } \leq t +\frac{(M_n+1) a_n}{\sigma_s \sqrt{n}}  \big\}  }\right] 
\notag \\
&\quad +\bb{E}_{\mathbb{Q}_s^x} \left[ \varphi_{n,M_n}^y (G_n \!\cdot\! x) 
\mathds{1}_{ \big\{ -\log \delta(y, G_n \cdot x)  \geq (M_n +1)a_n  \big\}  }\right] \notag \\
&=: H_{n,M_n}(t) +  W_{n}(t), 
\end{align}
where, by Lemma \ref{Lem_Regu_pi_s}, choosing $A$ large enough,
\begin{align} \label{W_n001}
W_{n}(t) &\leq  \|\varphi \|_{\infty}  \mathbb{Q}_s^x ( -\log \delta(y, G_n \cdot x)  \geq A \log n )  \notag\\
&\leq  \|\varphi \|_{\infty} \frac{c_0}{n^{c_1 A}}  
\leq  \|\varphi \|_{\infty} \frac{c_0}{n^{2}}.
\end{align}
Now we deal with $H_{n,k}(t)$ for $0 \leq k \leq M_n$.  
Denote for short $t_{n,k} = t +\frac{(k+1) a_n}{\sigma_s \sqrt{n}}$. 
Applying the Edgeworth expansion (Theorem \ref{Thm-Edge-Expan}) we obtain
that, uniformly in $s \in (-s_0, s_0)$, $x \in \bb P(V)$, 
$t \in \bb R$, $0 \leq k \leq M_n$ and 
$\varphi \in \mathscr{B}_{\gamma}$, as $n \to \infty$,
\begin{align*}
  H_{n,k}(t)
 &   =   \pi_s(\varphi_{n,k}^y) \Big[  \Phi(t_{n,k}) + \frac{\Lambda'''(s)}{ 6 \sigma_s^3 \sqrt{n}} (1-t_{n,k}^2) \phi(t_{n,k}) \Big]
 \notag\\ 
 & \quad - \frac{ b_{s, \varphi_{n,k}^y}(x) }{ \sigma_s \sqrt{n} } \phi(t_{n,k})    
     +  \pi_s(\varphi_{n,k}^y)  o \Big( \frac{ 1 }{\sqrt{n}} \Big)  
  +  \lVert \varphi_{n,k}^y \rVert_{\gamma} O \Big( \frac{ 1 }{n} \Big).   
\end{align*}
By the Taylor expansion we have, 
uniformly in $s \in (-s_0, s_0)$, $x \in \bb P(V)$,  $t \in \bb R$ and $0 \leq k \leq M_n$, 
\begin{align*} 
\Phi(t_{n,k}) 
& =\Phi(t) + \phi(t) \frac{(k+1) a_n}{\sigma_s \sqrt{n}} + O\Big(\frac{\log^2 n}{n} \Big)
\end{align*}
and 
\begin{align*} 
(1-t_{n,k}^2) \phi(t_{n,k}) 
&= (1-t^2) \phi(t) + O\left( \frac{\log n }{\sqrt{n}} \right).
\end{align*}
Moreover, using Lemma \ref{Lem-Bs},
\begin{align*} 
\frac{ b_{s, \varphi_{n,k}^y}(x) }{ \sigma_s \sqrt{n} } \phi(t_{n,k}) 
= \frac{ b_{s, \varphi_{n,k}^y}(x) }{ \sigma_s \sqrt{n} } \phi(t) 
+ \| \varphi_{n,k}^y \|_{\gamma} O \Big(\frac{\log n}{n}\Big). 
\end{align*}
Using these expansions and \eqref{boundFfbar-001}, \eqref{boundFfbar-002} and \eqref{W_n001}, we get
that there exists a sequence $(\beta_n)_{n \geq 1}$ of positive numbers
satisfying $\beta_n \to 0$ as $n \to \infty$, such that for any $0 \leq k \leq M_n$, 
\begin{align}\label{F_nkbound001}
   F_{n,k}(t)
 &   \leq   \pi_s(\varphi_{n,k}^y) \Big[  \Phi(t) + \frac{\Lambda'''(s)}{ 6 \sigma_s^3 \sqrt{n}} (1-t^2) \phi(t) \Big]\notag\\
  &\quad - \frac{ b_{s, \varphi_{n,k}^y}(x) }{ \sigma_s \sqrt{n} } \phi(t) 
  + \frac{\phi(t)}{\sigma_s \sqrt{n}} \pi_s(\varphi_{n,k}^y)(k+1) a_n   \notag\\
  & \quad  +  \pi_s(\varphi_{n,k}^y)  \frac{ \beta_n }{\sqrt{n}}  
  +   \lVert \varphi_{n,k}^y \rVert_{\gamma}  \frac{ c \log n }{n}.   
\end{align}
By Lemma \ref{lemmaHolder property001}, it holds that for any $\gamma \in (0, 1]$ and $0 \leq k \leq M_n$, 
\begin{align}\label{HolderNorm-varphik}
\lVert \varphi_{n,k}^y \rVert_{\gamma} 
\leq  c \lVert \varphi \rVert_{\infty} n^{\gamma A} \log^{\gamma} n  +  \lVert \varphi \rVert_{\gamma}. 
\end{align}
Summing up over $k$ in \eqref{F_nkbound001}, using \eqref{HolderNorm-varphik} 
and taking $\gamma >0$ to be sufficiently small such that $\gamma A < \ee/2$, 
we obtain
\begin{align}\label{Edgeworth-Fn}
 I_n(t) = \sum_{k= 0}^{M_n}  F_{n,k} & \leq    \pi_s(\varphi) 
   \Big[  \Phi(t) + \frac{\Lambda'''(s)}{ 6 \sigma_s^3 \sqrt{n}} (1-t^2) \phi(t) \Big]  \notag\\
& \quad -  \frac{ b_{s,\varphi}(x) }{ \sigma_s \sqrt{n} } \phi(t)
+ \frac{\phi(t)}{\sigma_s \sqrt{n}} \sum_{k = 0}^{M_n} \pi_s(\varphi_{n,k}^y)(k+1)  a_n \notag\\
&  \quad + \pi_s(\varphi)   \frac{ \beta_n }{\sqrt{n}}  +  
    \lVert \varphi \rVert_{\gamma}  \frac{ c }{n^{1 - \ee}}. 
\end{align}
Combining \eqref{Edgeworth-Fn} 
and Lemma \ref{new bound for delta020}, 
together with the fact that $a_n \to 0$ as $n \to \infty$, 
yields the desired upper bound.

The lower bound is established in the same way. 
Instead of \eqref{F_nkbound001} we use the following bound, which is obtained
using \eqref{Ch7_Intro_Decom0a} 
and the fact that $-\log \delta(y, x) \geq (k-1)a_n$ for $x \in \supp \varphi_{n,k}^y$ and $0\leq k\leq M_n$, 
\begin{align} \label{boundFfbar-003}
F_{n,k}(t)
\geq
\bb{E}_{\mathbb{Q}_s^x} \left[ \varphi_{n,k}^y (G_n \!\cdot\! x) 
\mathds{1}_{ \big\{ \frac{\sigma(G_n, x) - n\lambda_1 }{ \sigma_s \sqrt{n} } \leq t +\frac{(k-1) a_n}{\sigma_s\sqrt{n}} \big\}  }   \right]. 
\end{align}
Proceeding in the same way as in the proof of the upper bound, 
 using \eqref{boundFfbar-003}  
instead of \eqref{boundFfbar-001} and \eqref{boundFfbar-002},  we get
\begin{align*} 
 I_n(t) = \sum_{k= 0}^{M_n}  F_{n,k}  & \geq  \pi_s(\varphi) 
   \Big[  \Phi(t) + \frac{\Lambda'''(s)}{ 6 \sigma_s^3 \sqrt{n}} (1-t^2) \phi(t) \Big]  \notag\\
&\quad -    \frac{ b_{s, \varphi}(x) }{ \sigma_s \sqrt{n} } \phi(t)
+ \frac{\phi(t)}{\sigma_s \sqrt{n}} \sum_{k = 0}^{M_n}  \pi_s(\varphi_{n,k}^y)(k-1) a_n \notag\\
&  \quad + \pi_s(\varphi)  o \Big( \frac{ 1 }{\sqrt{n}} \Big)  +  
    \lVert \varphi \rVert_{\gamma} O \Big( \frac{1 }{n^{1 - \ee}} \Big). 
\end{align*}
The lower bound is obtained using again Lemma \ref{new bound for delta020}.
\end{proof}

\section{Proof of the subexponential regularity and applications}

In this section we first establish Theorem \ref{Thm-Regularity-Subex} on the regularity of the invariant measure $\nu$
under a sub-exponential moment condition.
Then we shall apply Theorem \ref{Thm-Regularity-Subex} to prove 
Theorem \ref{Thm_BerryEsseen} on the Berry-Esseen type bound for the coefficients $\langle f, G_n v \rangle$,
and Theorem \ref{Thm-MDP} on the moderate deviation principle for $\langle f, G_n v \rangle$ also under a sub-exponential moment condition.

\subsection{Proof of Theorem \ref{Thm-Regularity-Subex}}

The following moderate deviation principle is proved in \cite{CDM17} under a subexponential moment condition.  

\begin{lemma}[\cite{CDM17}]  \label{Lem_MDP_VecrorNorm}
Assume \ref{Ch7Condi-IP}. 
Assume also that $\int_{ \textup{GL}(V) } \log^2N(g)  \,  \mu(dg) < \infty$ and 
\begin{align}\label{Mom-Condi-MD}
\limsup_{n \to \infty} \frac{n}{b_n^2}  \log  \Big[ n \mu \big( \log N(g) > b_n \big) \Big]  = - \infty, 
\end{align}
where $(b_n)_{n \geq 1}$ is a sequence of positive numbers satisfying $\frac{b_n}{n} \to 0$ 
and $\frac{b_n}{\sqrt{n}} \to \infty$ as $n \to \infty$. 
Then, for any Borel set $B \subset \bb R$, 
\begin{align*}
- \inf_{t \in B^{\circ}} \frac{t^2}{2 \sigma^2}  
& \leq  \liminf_{n \to \infty} \frac{n}{b_n^2} 
   \log \inf_{x \in \bb P(V)} \bb P \left( \frac{\sigma(G_n, x) - n \lambda_1}{b_n}  \in B \right)  \notag\\
& \leq  \limsup_{n \to \infty} \frac{n}{b_n^2} 
   \log \sup_{x \in \bb P(V)} \bb P \left( \frac{\sigma(G_n, x) - n \lambda_1}{b_n}  \in B \right) 
 \leq  - \inf_{t \in \bar{B} } \frac{t^2}{2 \sigma^2},
\end{align*}
where $B^{\circ}$ and $\bar{B}$ are respectively the interior and the closure of $B$. 
\end{lemma}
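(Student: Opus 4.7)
The plan is to establish the uniform MDP by combining a Gordin type martingale approximation with a classical truncation argument for martingale differences, in the spirit of Puhalskii and Dembo--Zeitouni. First, I would write the telescoping decomposition
\begin{align*}
\sigma(G_n,x)-n\lambda_1=\sum_{k=1}^{n}\bigl[\sigma(g_k,G_{k-1}\!\cdot\!x)-\lambda_1\bigr]
\end{align*}
and introduce the drift function $\chi(x)=\int_{\textup{GL}(V)}\sigma(g,x)\mu(dg)-\lambda_1$, which is centered under $\nu$. Using Benoist--Quint's log-regularity \eqref{LogRegu-InvarMea} (valid since $\int\log^2 N(g)\,\mu(dg)<\infty$ and \ref{Ch7Condi-IP} holds) together with the spectral gap of the Markov operator $P$ on a suitable Sobolev/weighted Hölder space, the Poisson equation $(I-P)\psi=\chi$ admits a solution $\psi$ with $\log$-type tails under $\nu$. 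Setting $h(g,x)=\sigma(g,x)-\lambda_1+\psi(g\!\cdot\!x)-\psi(x)$ produces the decomposition
\begin{align*}
\sigma(G_n,x)-n\lambda_1=M_n+\bigl[\psi(x)-\psi(G_n\!\cdot\!x)\bigr],\qquad M_n=\sum_{k=1}^{n}h(g_k,G_{k-1}\!\cdot\!x),
\end{align*}
where $(M_n)$ is an $L^2$ martingale with stationary ergodic increments under $\nu$ and quadratic variation asymptotics governed by $\sigma^2$ of \eqref{Def-sigma}.

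Second, I would show that the coboundary remainder $\psi(x)-\psi(G_n\!\cdot\!x)$ is negligible on the MDP scale, i.e.\ $\limsup_n\frac{n}{b_n^2}\log\sup_{x}\mathbb{P}(|\psi(x)-\psi(G_n\!\cdot\!x)|>\varepsilon b_n)=-\infty$ for every $\varepsilon>0$. Using $\frac{b_n}{\sqrt n}\to\infty$ together with \eqref{LogRegu-InvarMea}, the marginal $\nu_n$ of $G_n\!\cdot\!x$ is close to $\nu$ at an exponential rate (by the spectral gap), so the problem reduces to a tail estimate on $|\psi|$, which is controlled by the Markov inequality using that $\exp(\eta|\psi|^\alpha)$ has $\nu$-moments for a suitable exponent under the sub-exponential hypothesis \eqref{Mom-Condi-MD}. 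The starting point dependence is absorbed in an initial transient step using the geometric ergodicity of $P$.

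Third, I would prove the MDP for the martingale part $M_n$ by Bernstein-type truncation: split each difference as $h_k=h_k\mathbf 1_{|h_k|\le b_n}+h_k\mathbf 1_{|h_k|>b_n}$. Hypothesis \eqref{Mom-Condi-MD} is exactly what makes
\begin{align*}
\frac{n}{b_n^2}\log\Bigl[n\,\mathbb{P}\bigl(|h_1|>b_n\bigr)\Bigr]\to-\infty,
\end{align*}
so the large-jump part is super-exponentially negligible on the scale $b_n^2/n$. For the truncated martingale, exponential moments $\mathbb{E}[\exp(\lambda D_k^{(b_n)}/b_n)]$ expand as $1+\frac{\lambda^2}{2}\frac{\mathbb{E}[(D_k^{(b_n)})^2]}{b_n^2}+o(\cdot)$, and the predictable quadratic variation converges in $L^1$ to $n\sigma^2$ by the ergodic theorem applied to $\mathbb{E}[h^2(g,\cdot)\mid \mathcal F_{k-1}]$. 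An application of a Puhalskii-type MDP for triangular arrays of bounded martingale differences (or equivalently a Laplace method on the truncated exponential martingale) then yields the desired Gaussian rate function $t^2/(2\sigma^2)$.

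Finally, for uniformity in $x\in\bb P(V)$: the upper bound is obtained by passing to the supremum via the worst-case starting point, using that the bounds in Steps 2 and 3 are already uniform in $x$ thanks to the uniform spectral gap of $P$ and the uniform log-regularity; the lower bound uses the same spectral gap together with an intermediate-step argument, replacing $x$ by $G_k\!\cdot\!x$ for $k=k(n)\to\infty$ slowly so that the law of $G_k\!\cdot\!x$ is sufficiently close to $\nu$ while $k/n\to 0$. The main obstacle is Step 3: controlling the truncated exponential moments uniformly in $x$ requires a precise estimate of $\mathbb{E}_x[\exp(\lambda M_n/b_n)]$ up to the normal scale, which is delicate because the martingale differences are not bounded and depend on the trajectory of the Markov chain; this is precisely where the sub-exponential tail bound \eqref{Mom-Condi-MD} is used in a sharp way.
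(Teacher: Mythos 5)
This lemma is not proved in the paper at all: it is imported verbatim from the reference \cite{CDM17}, so the only possible comparison is with the argument given there. Your overall architecture --- Gordin-type martingale--coboundary decomposition of $\sigma(G_n,x)-n\lambda_1$, negligibility of the coboundary at scale $b_n$, Bernstein truncation of the martingale differences using \eqref{Mom-Condi-MD}, and a Puhalskii-type MDP for triangular arrays of bounded martingale differences --- is indeed the strategy of that reference (which itself builds on the Benoist--Quint martingale approximation). So the skeleton is the right one.

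There is, however, a genuine gap in how you justify the first two steps. You solve the Poisson equation $(I-P)\psi=\chi$ by invoking ``the spectral gap of the Markov operator $P$ on a suitable Sobolev/weighted H\"older space'' and later appeal to ``geometric ergodicity'' and exponential closeness of the law of $G_n\!\cdot\! x$ to $\nu$. Under the hypotheses of the lemma --- only $\int \log^2 N(g)\,\mu(dg)<\infty$ plus \ref{Ch7Condi-IP} --- no such spectral gap is available; quasi-compactness of $P$ on H\"older spaces is exactly what requires the exponential moment condition, and the whole point of the Benoist--Quint approach (and of \cite{CDM17}) is to bypass it. The correct construction uses the log-regularity \eqref{LogRegu-InvarMea} of $\nu$ to show that the series $\sum_{n\ge 0}(P^n\chi)$ converges to a \emph{bounded continuous} function $\psi$ on the compact space $\bb P(V)$; consequently the coboundary $\psi(x)-\psi(G_n\!\cdot\! x)$ is uniformly $O(1)$ and your Step 2 is immediate --- no tail estimate on $\psi$, no appeal to \eqref{Mom-Condi-MD} there, and no mixing rate is needed (nor is one available). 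For the same reason, your lower-bound argument for uniformity in $x$, which replaces $x$ by $G_{k(n)}\!\cdot\! x$ so that its law is ``sufficiently close to $\nu$,'' cannot be run via exponential mixing; uniformity must instead come from the fact that the martingale-difference bounds and the boundedness of $\psi$ are already uniform in the starting point. With these corrections the remaining steps (truncation controlled by \eqref{Mom-Condi-MD}, convergence of the predictable quadratic variation to $n\sigma^2$ with $\sigma^2$ as in \eqref{Def-sigma}, and the martingale MDP) go through as you describe.
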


The following result is an easy consequence of Lemma \ref{Lem_MDP_VecrorNorm}. 

\begin{lemma}\label{Lem_Cor_MDBounds}
Assume the conditions of Lemma \ref{Lem_MDP_VecrorNorm}.
Then, there exist constants $c, c' >0$ and $n_0 \in \bb N$ such that for any
$n \geq n_0$ and  $v \in V \setminus \{0\}$, 
\begin{align}
& e^{- c' \frac{b_n^2}{n} }
  \leq  \bb P \left( \frac{ \|G_n v\|}{\|v\|}  \geq  e^{ \lambda_1  n  +  b_n}   \right)  
  \leq  e^{- c \frac{b_n^2}{n} },      \label{MDPCor001}  \\
& e^{- c' \frac{b_n^2}{n} }
\leq  \bb P \left( \frac{\|G_n v\|}{\|v\|}  \leq  e^{ \lambda_1  n  -  b_n}   \right)  
\leq  e^{- c \frac{b_n^2}{n} },    \label{MDPCor002}  \\
& e^{- c' \frac{b_n^2}{n} }
  \leq  \bb P \left( \|G_n  \|  \geq  e^{ \lambda_1  n  +  b_n}   \right)  
  \leq  e^{- c \frac{b_n^2}{n} },      \label{MDPCor003}  \\
& e^{- c' \frac{b_n^2}{n} }
\leq  \bb P \left( \| G_n \|  \leq  e^{ \lambda_1  n  -  b_n}   \right)  
\leq  e^{- c \frac{b_n^2}{n} }.     \label{MDPCor004}
\end{align}
\end{lemma}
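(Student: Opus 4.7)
The plan is to deduce all four two-sided bounds from the moderate deviation principle for $\sigma(G_n,x)$ provided by Lemma \ref{Lem_MDP_VecrorNorm}, combined with elementary comparisons between $\|G_n\|$, $\|G_n v\|$, and $\sigma(G_n,\bb R v)$, with the lower bound in \eqref{MDPCor004} as the only genuinely delicate point.

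For \eqref{MDPCor001} and \eqref{MDPCor002}, I would simply note that $\log(\|G_n v\|/\|v\|)=\sigma(G_n,x)$ where $x=\bb R v$, so each probability equals $\bb P((\sigma(G_n,x)-n\lambda_1)/b_n\in B)$ with $B=[1,\infty)$ or $B=(-\infty,-1]$. In both cases $\inf_{B^{\circ}}t^2/(2\sigma^2)=\inf_{\bar B}t^2/(2\sigma^2)=1/(2\sigma^2)$, so Lemma \ref{Lem_MDP_VecrorNorm} yields matching upper and lower exponential rates; choosing $c<1/(2\sigma^2)<c'$ and $n_0$ large enough gives the two bounds uniformly in $v$. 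The uniformity over $v$ is built into the MDP statement since it involves $\inf_x$ and $\sup_x$ over $\bb P(V)$.

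For \eqref{MDPCor003} I would use the sandwich $\|G_ne_1\|\leq\|G_n\|\leq\sqrt d\,\max_{1\leq i\leq d}\|G_ne_i\|$ (Frobenius bound). The lower probability bound follows from the lower bound of \eqref{MDPCor001} applied at $v=e_1$. The upper probability bound follows from a union bound over $i$ together with the upper bound in \eqref{MDPCor001}; the shift by $\log\sqrt d$ in the deviation threshold is absorbed since $b_n\to\infty$, so the exponent in $b_n^2/n$ is unchanged up to an arbitrarily small $\varepsilon$. The upper probability bound in \eqref{MDPCor004} is also immediate from the inclusion $\{\|G_n\|\leq e^{\lambda_1n-b_n}\}\subseteq\{\|G_ne_1\|\leq e^{\lambda_1n-b_n}\}$ and the upper bound of \eqref{MDPCor002}.

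The remaining lower bound $\bb P(\|G_n\|\leq e^{\lambda_1n-b_n})\geq e^{-c'b_n^2/n}$ is the real obstacle, because the inequality $\sigma(G_n,x)\leq\log\|G_n\|$ only gives the upper inclusion in the wrong direction. My plan is to invoke Lemma \ref{Lem_delta_d}(1), which yields the reverse control $\log\|G_n\|\leq\sigma(G_n,x)-\log\delta(x,y_{G_n}^m)$ for every $x=\bb R v$. Choosing $x=\bb R e_1$ and splitting,
\begin{align*}
\bb P(\log\|G_n\|\leq n\lambda_1-b_n)
\geq \bb P(\sigma(G_n,x)\leq n\lambda_1-2b_n)-\bb P(|\log\delta(x,y_{G_n}^m)|>b_n).
\end{align*}
The first term is $\geq e^{-c_0 b_n^2/n}$ by the MDP lower bound applied to $B=(-\infty,-2]$. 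The hard part is bounding the second term by $\tfrac12 e^{-c_0 b_n^2/n}$, which requires a quantitative regularity estimate for the distribution of $y_{G_n}^m$ on $\bb P(V^*)$. Under the subexponential tail of $\log N(g)$ that is built into the hypothesis \eqref{Mom-Condi-MD}, one obtains via the dual version of Theorem \ref{Thm-Regularity-Subex} a bound of the form $\bb P(|\log\delta(x,y_{G_n}^m)|>b_n)\leq\exp(-cb_n^{\alpha})$, which is indeed negligible compared to $e^{-c_0b_n^2/n}$ in the MDP range $b_n=o(n^{1/(2-\alpha)})$. The main subtlety is ensuring that this regularity bound holds uniformly in $n$ with constants independent of $n$, and that it is compatible with the precise moment hypothesis \eqref{Mom-Condi-MD} (rather than the strictly stronger assumption of Theorem \ref{Thm-Regularity-Subex}); this bookkeeping is where I expect the proof to spend most of its effort.
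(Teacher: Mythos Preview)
Your treatment of \eqref{MDPCor001}, \eqref{MDPCor002}, \eqref{MDPCor003}, and the upper bound in \eqref{MDPCor004} matches the paper's proof exactly: apply Lemma \ref{Lem_MDP_VecrorNorm} with $B=[1,\infty)$ or $(-\infty,-1]$, then pass to $\|G_n\|$ via $\|G_n e_1\|\leq \|G_n\|$ for one direction and a union bound over the basis for the other.

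You are also right to isolate the lower bound in \eqref{MDPCor004} as the only nontrivial step; the paper itself says merely that ``the proof of \eqref{MDPCor004} can be carried out in the same way as that of \eqref{MDPCor003}'' and does not explain how to lower-bound the probability of the intersection $\bigcap_i\{\|G_n e_i\|\leq t\}$. However, your proposed fix is circular as written: you appeal to (the dual of) Theorem \ref{Thm-Regularity-Subex} to control $\bb P\big(-\log\delta(x,y_{G_n}^m)>b_n\big)$, but in the paper Theorem \ref{Thm-Regularity-Subex} is deduced \emph{from} Lemma \ref{Lem_Cor_MDBounds} through Propositions \ref{Prop-Regularity} and \ref{Prop-Regularity02}. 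The loop can in fact be broken: inspecting the proof of \eqref{xGn_y_MDP} in Proposition \ref{Prop-Regularity}, one sees it relies only on \eqref{MDPCor002}, \eqref{MDPCor003}, and the \emph{upper} halves of \eqref{MDPCor004} and \eqref{MDPCor005}, all of which are already available by your argument; so a dual regularity estimate could be established first from these inputs and then fed back into the lower bound of \eqref{MDPCor004}. But this reordering is genuine extra work that neither you nor the paper carries out explicitly. It is also worth noting that the paper's downstream arguments appear to use only the upper halves of \eqref{MDPCor003} and \eqref{MDPCor004}, so the lower bound in \eqref{MDPCor004} may be stated for completeness rather than out of necessity.
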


\begin{proof}
Inequalities \eqref{MDPCor001} and \eqref{MDPCor002} 
are direct consequences of Lemma \ref{Lem_MDP_VecrorNorm}. 
By \eqref{MDPCor001} and the fact that $\|G_n  \| \geq \frac{\|G_n v\|}{\|v\|}$,
the first inequality in \eqref{MDPCor003} follows. 
To show the second inequality in \eqref{MDPCor003},  
since all matrix norms on $V$ are equivalent, 
we get 
\begin{align*}
\bb P \left( \|G_n  \|  \geq  e^{n \lambda_1 +  b_n}   \right)
&  \leq  \bb P \left( \max_{1 \leq i \leq d}  \| G_n e_i \|  \geq  e^{n \lambda_1 +  b_n}   \right)  \notag\\
&  \leq  \sum_{i=1}^d  \bb P \left(  \| G_n e_i \|  \geq  e^{n \lambda_1 +  b_n}   \right)
\leq  e^{- c \frac{b_n^2}{n} },
\end{align*}
where the last inequality holds due to \eqref{MDPCor001}.
This concludes the proof of \eqref{MDPCor003}.
Using \eqref{MDPCor002}, 
the proof of \eqref{MDPCor004} can be carried out in the same way as that of \eqref{MDPCor003}. 
\end{proof}

The following purely algebraic result is due to Chevalley \cite{Che51}; see also Bougerol and Lacroix \cite[page 125, Lemma 5.5]{BL85}. 

\begin{lemma}[\cite{Che51, BL85}] \label{LemChevalley}
Let $G$ be an irreducible subgroup of $\textup{GL}(V)$. 
Then, for any integer $1 \leq p \leq d$,
there exists a direct-sum decomposition of the $p$-th exterior power: 
$\wedge^p (V)  = V_1  \oplus \ldots  \oplus  V_k$
such that $(\wedge^p g) V_j = V_j$ for any $g \in G$ and $1 \leq j \leq k$. 
Moreover, $\wedge^p (G) := \{ \wedge^p g: g \in G \} $ is irreducible on each subspace $V_j, j=1, \cdots, k$. 
%
\end{lemma}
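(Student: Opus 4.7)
The conclusion is equivalent to the complete reducibility of $\wedge^p V$ as a $G$-module: once every $\wedge^p G$-invariant subspace of $\wedge^p V$ is shown to have a $\wedge^p G$-invariant complement, an easy induction on $\dim \wedge^p V$ produces the desired decomposition by repeatedly splitting off minimal invariant subspaces, which are automatically irreducible. My plan is therefore to reduce the statement to this complete reducibility and then establish it.

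The approach I would take is via the Zariski closure. Let $\bar G$ denote the Zariski closure of $G$ inside $\textup{GL}(V)$. Since every linear subspace of $V$ is Zariski-closed, the $G$-invariant and $\bar G$-invariant subspaces of $V$ coincide; in particular $\bar G$ acts irreducibly on $V$. I would next argue that $\bar G$ is reductive by ruling out a nontrivial unipotent radical: the fixed-point subspace $V^{R_u(\bar G)}$ is nonzero by the Lie--Kolchin theorem applied to the unipotent linear group $R_u(\bar G)$, and it is normalized by $\bar G$ since $R_u(\bar G)\triangleleft \bar G$, hence is $\bar G$-stable; irreducibility on $V$ then forces $V^{R_u(\bar G)} = V$, so $R_u(\bar G)$ acts trivially and is trivial as a subgroup of $\textup{GL}(V)$. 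This identifies $\bar G$ as a reductive linear algebraic group.

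Having established reductivity, I would invoke Weyl's complete reducibility theorem: every finite-dimensional rational representation of a reductive algebraic group in characteristic zero is completely reducible. Applied to the natural rational representation of $\bar G$ on $\wedge^p V$, this yields a decomposition $\wedge^p V = V_1 \oplus \cdots \oplus V_k$ into $\bar G$-stable, $\bar G$-irreducible summands, each of which is then automatically $G$-stable and $\wedge^p G$-irreducible. When $V = \bb R^d$, I would pass to the complexification $V_{\bb C}$, apply the theorem there, and descend by pairing complex summands with their complex conjugates to recover real $G$-invariant summands.

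The main obstacle is the appeal to Weyl's theorem, which itself rests either on the unitarian trick (averaging over a maximal compact subgroup of a complex reductive form) or on the algebraic-group machinery of Chevalley. A more hands-on alternative would be to use Burnside's density theorem to conclude that $\mathrm{span}_{\bb C} G = \mathrm{End}(V_{\bb C})$ over the complexification, and try to construct explicit $\wedge^p G$-invariant projectors on $\wedge^p V$; however, producing such projectors in a way that is uniform in $p$ seems technically more delicate than the Zariski-closure route. I would therefore organize the proof around the three milestones: reduction to complete reducibility, reductivity of $\bar G$, and Weyl's theorem, handling the real versus complex issue by a standard complexification-and-descent argument at the end.
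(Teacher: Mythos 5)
The paper does not prove this lemma at all: it is quoted verbatim from the literature, with the proof delegated to Chevalley \cite{Che51} and to Bougerol--Lacroix \cite[p.~125, Lemma 5.5]{BL85}. Your reconstruction (pass to the Zariski closure, kill the unipotent radical using irreducibility of the action on $V$ together with the Kolchin fixed-point theorem, then invoke linear reductivity in characteristic zero for the rational representation $\wedge^p V$, with a conjugation-averaging descent from $\bb C$ to $\bb R$) is correct and is essentially the classical argument underlying those references; the only cosmetic point is that the fixed-vector statement for a unipotent group is Kolchin's theorem rather than Lie--Kolchin proper, though the latter implies it over an algebraically closed field.
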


Using Lemma \ref{LemChevalley} and the strategies from \cite{BL85, XGL21}, 
we extend \eqref{MDPCor003} and \eqref{MDPCor004} to the exterior product $\wedge^p  G_n$.

\begin{lemma}
Assume the conditions of Lemma \ref{Lem_MDP_VecrorNorm}.
Let $1 \leq p \leq d$ be an integer. 
Then there exist constants $c, c' >0$ and $n_0 \in \bb N$ such that
for any $n \geq n_0$, 
\begin{align}
& e^{- c' \frac{b_n^2}{n} }
  \leq  \bb P \left( \| \wedge^p  G_n  \|  \geq  e^{ n \sum_{i = 1}^p \lambda_i  +  b_n}   \right)  
  \leq  e^{- c \frac{b_n^2}{n} },      \label{MDPCor005}  \\
& e^{- c' \frac{b_n^2}{n} }
\leq  \bb P \left( \| \wedge^p  G_n \|  \leq  e^{ n \sum_{i = 1}^p \lambda_i  -  b_n}   \right)  
\leq  e^{- c \frac{b_n^2}{n} }.       \label{MDPCor006}  
\end{align}
\end{lemma}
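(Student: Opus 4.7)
The plan is to apply Chevalley's Lemma \ref{LemChevalley} to decompose $\wedge^p V = V_1 \oplus \cdots \oplus V_k$ into $\wedge^p\Gamma_\mu$-invariant subspaces on each of which the induced action is irreducible, and then to reduce the estimates for $\|\wedge^p G_n\|$ to the already-proven estimates of Lemma \ref{Lem_Cor_MDBounds} applied to a suitably chosen summand. Let $W_n^{(j)} := \wedge^p G_n|_{V_j}$ and write $\lambda_1^{(j)}$ for its top Lyapunov exponent. A classical consequence of \ref{Ch7Condi-IP} (as developed in Bougerol--Lacroix, Chapter III, or Benoist--Quint) is that there is a unique index $j_0$ for which $\lambda_1^{(j_0)} = \sum_{i=1}^p \lambda_i$, for which the action of $\wedge^p\Gamma_\mu$ on $V_{j_0}$ is strongly irreducible and proximal, and for which $\lambda_1^{(j)} < \sum_{i=1}^p\lambda_i$ strictly for every $j \neq j_0$. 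Since $N(\wedge^p g) \leq N(g)^p$, the moment assumption \eqref{Mom-Condi-MD} transfers to the image law on $\textup{GL}(V_{j_0})$.

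Setting $W_n := W_n^{(j_0)}$, I would then apply Lemma \ref{Lem_Cor_MDBounds} to $W_n$ to obtain both the upper and lower $e^{\pm c b_n^2/n}$-type bounds on $\bb P(\|W_n\| \geq e^{n\sum_i\lambda_i + b_n})$ and on $\bb P(\|W_n\| \leq e^{n\sum_i\lambda_i - b_n})$. The transfer to $\|\wedge^p G_n\|$ uses the sandwich $\|W_n\| \leq \|\wedge^p G_n\| \leq C\max_j \|W_n^{(j)}\|$, where $C$ depends only on the fixed decomposition. The inequality $\|W_n\| \leq \|\wedge^p G_n\|$ immediately yields the lower bound of the upper-tail probability in \eqref{MDPCor005}, the lower bound of the lower-tail probability in \eqref{MDPCor006}, and the upper bound of the lower-tail probability in \eqref{MDPCor006} (combined in the last case with the upper bound \eqref{MDPCor004} applied to $W_n$).

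For the remaining upper bound in \eqref{MDPCor005} I would split by a union bound over the right-hand side of the sandwich: the $j = j_0$ contribution is controlled by \eqref{MDPCor003} applied to $W_n$, while for $j \neq j_0$ the gap $\sum_i\lambda_i - \lambda_1^{(j)} > 0$ forces the required deviation of $\|W_n^{(j)}\|$ from its mean to be of linear order in $n$, which, since $b_n = o(n)$, is handled by a large-deviation bound (or by applying \eqref{MDPCor003} on $V_j$ with an enlarged $b_n$ of order $n$), giving an exponentially small $e^{-cn}$ contribution that is negligible compared to $e^{-c b_n^2/n}$. The hardest step is the structural input above: identifying the dominant summand $V_{j_0}$ and verifying that the restriction to it inherits strong irreducibility and proximality, so that Lemma \ref{Lem_Cor_MDBounds} can be legitimately applied. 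Once this is in hand, the remaining arguments are routine union-bound manipulations exploiting the Lyapunov-exponent gap and the sub-linear growth of $b_n$.
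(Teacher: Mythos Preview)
Your overall strategy matches the paper's: Chevalley decomposition, identification of the dominant irreducible summand $V_{j_0}$ on which the induced action inherits strong irreducibility and proximality, application of Lemma \ref{Lem_Cor_MDBounds} there, and separate control of the remaining summands. The paper proceeds exactly this way (grouping all non-dominant summands into a single complement $V_1'$ with restriction denoted $G_n''$).

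The genuine gap is in your treatment of the non-dominant summands. Applying \eqref{MDPCor003} on $V_j$ for $j \neq j_0$ is not legitimate, for two reasons: Lemma \ref{Lem_Cor_MDBounds} inherits condition \ref{Ch7Condi-IP} from Lemma \ref{Lem_MDP_VecrorNorm}, and while Chevalley's lemma gives irreducibility on each $V_j$, there is no reason for strong irreducibility or proximality to hold on the non-dominant pieces; moreover, that lemma requires $b_n/n \to 0$, so it cannot be invoked with a sequence of order $n$. The vague appeal to ``a large-deviation bound'' is not enough either, since under the sub-exponential moment assumption no off-the-shelf large-deviation estimate for these restrictions is available. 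The paper fills this gap with a direct blocking argument: after normalizing so that the top Lyapunov exponent on the complement is strictly negative, fix $m$ with $\bb E \log \|G_m''\| < 0$, write $n = km + r$, bound $\log\|G_n''\|$ by a sum of $k$ i.i.d.\ copies of $\log\|G_m''\|$ plus a boundary term via submultiplicativity, and then combine the moderate-deviation bound for i.i.d.\ real sums with the tail assumption \eqref{Mom-Condi-MD} (applied to the boundary block of length $r < m$) to conclude $\bb P(\log\|G_n''\| \geq 0) \leq e^{-c\, b_n^2/n}$. This step uses only the Lyapunov gap and the moment condition, with no structural hypotheses on the non-dominant part.

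A smaller slip: the lower bound of the lower tail in \eqref{MDPCor006} does \emph{not} follow from $\|W_n\| \leq \|\wedge^p G_n\|$ (that inequality only yields $\bb P(\|\wedge^p G_n\| \leq a) \leq \bb P(\|W_n\| \leq a)$, i.e.\ the upper bound). It requires the other side of your sandwich together with the same control of the non-dominant summands described above.
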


\begin{proof}

We first prove \eqref{MDPCor005}. 
Since the Lyapunov exponents $(\lambda_p)_{1 \leq p \leq d}$ of $\mu$ are given by
$\lambda_1 > \lambda_2 \geq \ldots \geq \lambda_d,$
the two largest Lyapunov exponents of $\wedge^p G_n$
satisfy $\sum_{i = 1}^p \lambda_i >  \sum_{i = 2}^{p+1} \lambda_i$
(see \cite{BL85}), where we use the convention that $\lambda_{d+1} = \lambda_d$. 
Without loss of generality, we can assume that $\sum_{i = 1}^p \lambda_i = 0$;
otherwise we replace $\wedge^p  G_n$ by $e^{- \sum_{i = 1}^p \lambda_i } \wedge^p  G_n$. 

Since the action of $\wedge^p G_n$ on $\wedge^p V$ is in general not irreducible, 
we need to consider a decomposition of $\wedge^p V$. 
Applying Lemma \ref{LemChevalley} to $G = G_{\mu}$ 
(the smallest closed subgroup of $\textup{GL}(V)$ generated by the support of $\mu$), we get
the following direct-sum decomposition of the $p$-th exterior power $\wedge^p V$: 
$\wedge^p V = V_1 \oplus V_2 \oplus \ldots  \oplus  V_k$,  
where $V_j$ are subspaces of $\wedge^p V$
such that $(\wedge^p g) V_j = V_j$ for any $g \in G_{\mu}$ and $1 \leq j \leq k$.
Moreover, $\wedge^p (G_{\mu}) := \{ \wedge^p g: g \in G_{\mu} \}$  is irreducible on each subspace $V_j$. 
Since the set of all Lyapunov exponents of $\wedge^p G_n$ on the space $\wedge^p V$
coincides with the union of all the Lyapunov
exponents of $(\wedge^p G_n)$ restricted to each subspace $V_j$, $1 \leq j \leq k$, 
we can choose $V_1$ in such a way that
the restrictions of $\wedge^p G_n$ to $V_1$ and $V_1' : = V_2 \oplus \ldots  \oplus  V_k$, 
denoted  respectively by $G_n'$ and $G_n''$, 
  satisfy: 
\begin{align}
\| \wedge^p G_n \|   =  \max \{ \|G_n'\|, \|G_n''\| \},   \label{Pf_Ch6_GAB}
\end{align}
and a.s., 
\begin{align}
&  \lim_{n \to \infty} \frac{1}{n} \log \|G_n'\|  = \sum_{i = 1}^p \lambda_i = 0  \quad  \mbox{and}  \quad
  \lim_{n \to \infty} \frac{1}{n} \log \|G_n''\|    =  \sum_{i = 2}^{p+1} \lambda_i  < 0.   \label{Pf_Ch6_Lya_Bn}   
\end{align}
Here, $G_n'$ and $G_n''$ are products of i.i.d. random variables of the form 
$G_n'= g_n' \cdots g_1'$ and  $G_n''= g_n'' \cdots g_1''$.  
We denote by $\mu_1$ the law of the random variable $g_1$, by $d_1$ the dimension of the vector space $V_1$, 
and by $\Gamma_{\mu_1}$ the smallest closed subsemigroup of $\textup{GL}(V_1)$ generated by the support of $\mu_1$. 
Then, following the analogous argument used in the proof of the central limit theorem for $\|G_n\|$ 
(see \cite[Theorem V.5.4]{BL85}),
one can verify 
that the semigroup $\Gamma_{\mu_1}$ is strongly irreducible and proximal on $V_1$. 
Therefore, 
we can apply the moderate deviation bounds \eqref{MDPCor003} and \eqref{MDPCor004}
 (with $G_n$ replaced by $G_n'$) to get 
\begin{align}
& e^{- c' \frac{b_n^2}{n} }
  \leq  \bb P \left( \| G_n'  \|  \geq  e^{ b_n}   \right)  
  \leq  e^{- c \frac{b_n^2}{n} },   \quad  
 e^{- c' \frac{b_n^2}{n} }
\leq  \bb P \left( \| G_n' \|  \leq  e^{ -  b_n}   \right)  
\leq  e^{- c \frac{b_n^2}{n} }.       \label{MDPCor005_Inte} 
\end{align}
From \eqref{Pf_Ch6_GAB} and \eqref{MDPCor005_Inte}, the lower bound of \eqref{MDPCor005} easily follows: 
\begin{align}\label{Pf_LowBound_Gn}
e^{- c' \frac{b_n^2}{n} }
  \leq  \bb P \left( \| \wedge^p  G_n  \|  \geq  e^{ b_n}   \right).  
\end{align}

Now we prove the upper bound of \eqref{MDPCor005}. 
Since the first Lyapunov exponent of the sequence $(G_n'')_{n \geq 1}$ is strictly less than $0$
(see \eqref{Pf_Ch6_Lya_Bn}), 
we have $$\frac{1}{m} \bb{E} (\log \| G_m'' \|) < 0$$ for sufficiently large integer $m \geq 1$. 
If we write $n = km + r$ with $k \geq 1$ and $0 \leq r < m$, 
then we have the identity 
$$G_n'' =  [G_n'' (G_{km}'')^{-1} ]  \,  [G_{km}''  (G_{(k-1)m}'')^{-1}]  \ldots  [G_{2m}'' (G_m'')^{-1} ]  \,  G_m'',$$
and hence
\begin{align}\label{Ch6_Pf_Bn_aa}
\log \|G_n''\| \leq \log \| G_n'' (G_{km}'')^{-1} \| + \log \| G_{km}'' (G_{(k-1)m}'')^{-1} \|
+ \ldots + \log \|G_m''\|.  
\end{align}
For fixed integer $m \geq 1$, we denote $u_m : = - \bb{E} (\log \|G_m''\|) > 0$. 
Then,  
\begin{align}\label{Pf_MDP_lll}
& \bb P (\log \|G_n''\| \geq 0) 
  \leq \bb P \Big(\log \| G_n'' (G_{km}'')^{-1} \| \geq  k \frac{u_m}{2} \Big)  \nonumber\\
& \qquad  +  \bb P \Big( \log \| G_{km}'' (G_{(k-1)m}'')^{-1} \| 
     + \cdots + \log \|G_m''\| + k u_m \geq  k \frac{u_m}{2} \Big). 
\end{align}
For the first term, using \eqref{Pf_Ch6_GAB} and the inequality $\| \wedge^p g \| \leq \| g \|^p$ for any $g \in \Gamma_{\mu}$, 
we get
\begin{align}\label{Pf_MDP_bbb}
& \bb P \Big(\log \| G_n'' (G_{km}'')^{-1} \| \geq  k \frac{u_m}{2} \Big) 
  = \bb P \Big(\log \| G_r'' \| \geq  k \frac{u_m}{2} \Big)  \notag\\
  & \leq  \bb P \Big(\log \| \wedge^p G_r \| \geq  k \frac{u_m}{2} \Big)
   =   \bb P \Big( \sum_{i=1}^r \log \| \wedge^p  g_i \| \geq  k \frac{u_m}{2} \Big)  \notag\\
 & \leq  \sum_{i=1}^r \bb P \Big(  \log \| \wedge^p  g_1 \| \geq  k \frac{u_m}{2r} \Big) 
   \leq  \sum_{i=1}^r \bb P \Big(  \log \| g_1 \| \geq  k \frac{u_m}{2rp} \Big)  \notag\\
& \leq  \frac{c}{k} e^{- c \frac{b_k^2}{k} },
\end{align}
where in the last inequality we use condition \eqref{Mom-Condi-MD}. 
The second term on the right hand side of \eqref{Pf_MDP_lll} is dominated by $e^{- c b_k^2/k }$, 
by using the moderate deviation bounds for sums of i.i.d. real-valued random variables. 
This, together with \eqref{Pf_MDP_bbb}, \eqref{Pf_MDP_lll} and the fact that $k \geq n/(m+1)$, yields that
\begin{align}\label{Pf_MDP_Bn_kk}
\bb{P} ( \log \| G_n'' \| \geq  0 ) \leq  e^{- c \frac{b_n^2}{n} }. 
\end{align}
From \eqref{Pf_Ch6_GAB}, \eqref{MDPCor005_Inte} and \eqref{Pf_MDP_Bn_kk}, 
we derive that
\begin{align*}
 \bb P \left( \| \wedge^p  G_n  \|  \geq  e^{b_n}   \right)  
\leq  \bb P \left( \| G_n'  \|  \geq  e^{ b_n}   \right)  +  \bb P \left( \| G_n''  \|  \geq  e^{ b_n}   \right)  
\leq  2 e^{- c \frac{b_n^2}{n} }. 
\end{align*}
Combining this with \eqref{Pf_LowBound_Gn}, we get \eqref{MDPCor005}. 

The proof of \eqref{MDPCor006} can be carried out in a similar way. 
\end{proof}

Our next result is an analog of Proposition \ref{Prop-rates-delta01} under a subexponential  moment assumption.  

\begin{proposition}\label{Prop-Regularity}
Assume the conditions of Lemma \ref{Lem_MDP_VecrorNorm}.
Then, 
there exist constants $c>0$ and $n_0 \in \bb N$ such that 
for any $n \geq n_0$,  $x \in \bb P(V)$ and $y \in \bb P(V^*)$, 
\begin{align}
& \bb P \left( d (G_n \!\cdot\! x, x_{G_n}^M) \geq e^{- (\lambda_1 - \lambda_2 ) n + b_n } \right) 
   \leq  e^{-c \frac{b_n^2}{n}},      \label{d_Gn_xM_MDP} \\
&  \bb P \left( \delta(x_{G_n}^M, y) \leq  e^{  -  b_n }  \right) 
    \leq  e^{-c \frac{b_n^2}{n}},     \label{xGn_y_MDP}  \\
&  \bb P \left( \delta (G_n \!\cdot\! x, y) \leq  e^{  -  b_n } \right) 
    \leq  e^{-c \frac{b_n^2}{n}}.    \label{Gnx_y_MDP}
\end{align}
\end{proposition}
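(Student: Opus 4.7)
The plan is to mirror the proof of Proposition \ref{Prop-rates-delta01}, replacing each application of the laws of large numbers (Lemmas \ref{Lem_LLN} and \ref{Lem_LLN001}) by the corresponding moderate deviation upper bounds from Lemma \ref{Lem_Cor_MDBounds} together with the exterior-power estimates \eqref{MDPCor005}--\eqref{MDPCor006}.

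For \eqref{d_Gn_xM_MDP}, I would start from the pointwise inequality $d(G_n \!\cdot\! x, x_{G_n}^M) \leq \frac{\|\wedge^2 G_n\| \|v\|}{\|G_n\| \|G_n v\|}$ established in \eqref{Inequ_First001}. After taking logarithms, the event on the left of \eqref{d_Gn_xM_MDP} is contained in the union of the three ``bad'' events $\{\log \|\wedge^2 G_n\| \geq n(\lambda_1 + \lambda_2) + b_n/3\}$, $\{\log \|G_n\| \leq n \lambda_1 - b_n/3\}$, and $\{\log(\|G_n v\|/\|v\|) \leq n\lambda_1 - b_n/3\}$; each has probability at most $e^{-c b_n^2/n}$ by \eqref{MDPCor005}, \eqref{MDPCor004}, and \eqref{MDPCor002}, applied with $b_n$ replaced by $b_n/3$.

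For \eqref{xGn_y_MDP}, I would use Lemma \ref{Lem_delta_d}(2) to write $\delta(x_{G_n}^M, y) \geq A_n - B_n$ with $A_n := \|G_n^* f\|/(\|G_n\| \|f\|)$ and $B_n := \|\wedge^2 G_n\|/\|G_n\|^2$, and invoke the elementary inclusion $\{A_n - B_n \leq e^{-b_n}\} \subseteq \{A_n \leq 2 e^{-b_n}\} \cup \{B_n \geq e^{-b_n}\}$. The first event is handled by applying \eqref{MDPCor002} to the image measure $\mu^*$ (which still satisfies \ref{Ch7Condi-IP} and whose first Lyapunov exponent coincides with $\lambda_1$) combined with \eqref{MDPCor003} to control $\|G_n\|$ from above. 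For the second event, since $b_n = o(n)$, for $n$ sufficiently large the typical exponential decay rate $e^{(\lambda_2 - \lambda_1)n}$ of $B_n$ is far below $e^{-b_n}$, so combining \eqref{MDPCor005} and \eqref{MDPCor004} yields the required bound. The proof of \eqref{Gnx_y_MDP} then follows from the triangle inequality $\delta(G_n \!\cdot\! x, y) \geq \delta(x_{G_n}^M, y) - d(G_n \!\cdot\! x, x_{G_n}^M)$ already derived in Proposition \ref{Prop-rates-delta01}, together with the same splitting trick applied to \eqref{xGn_y_MDP} with $b_n$ replaced by $b_n - \log 2$, and to \eqref{d_Gn_xM_MDP} (using that $b_n = o(n)$ guarantees $e^{-(\lambda_1 - \lambda_2)n + b_n} \leq \tfrac{1}{2} e^{-b_n}$ for large $n$).

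The main obstacle is essentially bookkeeping: one must verify that after all the rescalings $b_n \mapsto b_n/3$, $b_n \mapsto b_n - \log 2$, etc., the resulting speeds still satisfy the hypothesis $\frac{b_n}{\sqrt{n}} \to \infty$ of Lemma \ref{Lem_MDP_VecrorNorm}, and that the exterior-power moderate deviation bounds \eqref{MDPCor005}--\eqref{MDPCor006} genuinely apply under the moment condition \eqref{Mom-Condi-MD}. Conceptually, no new ingredient is required beyond the template of Proposition \ref{Prop-rates-delta01}, with quantitative moderate deviation estimates replacing qualitative convergence in probability throughout.
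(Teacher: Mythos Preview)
Your proposal is correct and follows essentially the same approach as the paper: the same pointwise inequality \eqref{Inequ_First001}, the same use of Lemma \ref{Lem_delta_d}(2), the same triangle inequality for $\delta$, and the same moderate deviation inputs \eqref{MDPCor002}--\eqref{MDPCor006}. The only cosmetic difference is that you organize the argument as a direct union bound over ``bad'' events, whereas the paper proceeds by sequential conditioning (e.g.\ first restricting to $\{\|G_n v\|/\|v\| > e^{\lambda_1 n - b_n/3}\}$, then to $\{\|G_n\| > e^{\lambda_1 n - b_n/3}\}$, etc.); the two are equivalent, and your bookkeeping remarks about rescaled speeds $b_n/3$, $b_n - \log 2$ and the inequality $e^{-(\lambda_1-\lambda_2)n + b_n} \leq \tfrac12 e^{-b_n}$ for large $n$ are exactly what the paper uses implicitly.
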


\begin{proof}
We first prove \eqref{d_Gn_xM_MDP}. 
By \eqref{Inequ_First001}, we have 
\begin{align*}
\bb P \left( d (G_n \!\cdot\! x, x_{G_n}^M) \geq e^{- (\lambda_1 - \lambda_2 ) n  +  b_n }  \right)
\leq   \bb P \left( \frac{\| \wedge^2 G_n \| \|v\|}{\|G_n\|  \|G_n v\|} \geq  e^{- (\lambda_1 - \lambda_2 ) n  + b_n } \right) =: I_n.  
\end{align*}
Using \eqref{MDPCor002} gives
\begin{align*}
I_n & \leq  \bb P \left( \frac{\| \wedge^2 G_n \| \|v\|}{\|G_n\|  \|G_n v\|} \geq  e^{- (\lambda_1 - \lambda_2 ) n + b_n },
    \frac{\|G_n v\|}{\|v\|}  >  e^{ \lambda_1  n -  \frac{b_n}{3} }  \right)  +  e^{- c \frac{b_n^2}{n} }   \notag\\
& \leq  \bb P \left( \frac{\| \wedge^2 G_n \| }{\|G_n\| } \geq  e^{ \lambda_2  n + \frac{2}{3} b_n } \right)  
     +  e^{- c \frac{b_n^2}{n} }. 
\end{align*}
In the same way, from \eqref{MDPCor004} and \eqref{MDPCor005}  it follows that 
\begin{align}\label{Upper-Gn-2}
I_n  
& \leq  \bb P \left( \frac{\| \wedge^2 G_n \| }{\|G_n\| } \geq  e^{ \lambda_2  n + \frac{2}{3} b_n },
    \|G_n \|  >  e^{ \lambda_1  n - \frac{1}{3} b_n}     \right)  
     +  2 e^{- c \frac{b_n^2}{n} }   \notag\\
& \leq   \bb P \left(  \| \wedge^2 G_n \|  \geq  e^{(\lambda_1 + \lambda_2 ) n + \frac{1}{3} b_n }    \right)  
     +  2 e^{- c \frac{b_n^2}{n} }   \notag\\
& \leq  3 e^{- c \frac{b_n^2}{n} }, 
\end{align}
which concludes the proof of \eqref{d_Gn_xM_MDP}. 

We next prove \eqref{xGn_y_MDP}.  
By Lemma \ref{Lem_delta_d} (2), we have that for any $y = \bb R f$ 
with  $f \in V^* \setminus \{0\}$,
\begin{align}\label{Pf_LLN_Ine001_MDP}
\bb P \left( \delta(x_{G_n}^M, y) \leq  e^{  -  b_n }   \right)
\leq  \bb P \left(  \frac{ \|G_n^* f\| }{\|G_n^*\|  \|f\|} - \frac{\| \wedge^2 G_n \|}{\|G_n\|^2}  \leq  e^{ -  b_n }  \right) =: J_n.  
\end{align}
Following the proof of \eqref{Upper-Gn-2}, one has
\begin{align*}
\bb P \left(  \frac{\| \wedge^2 G_n \|}{\|G_n\|^2}   \geq  e^{- (\lambda_1 - \lambda_2 ) n + b_n}  \right)  
\leq  e^{- c \frac{b_n^2}{n} }. 
\end{align*}
Using this together with \eqref{MDPCor002} and \eqref{MDPCor003} applied to the measure $\mu^*$
($\mu^*$ is the image of the measure $\mu$ by the map $g \mapsto g^*$, 
where $g^*$ is the adjoint automorphism of $g \in \textup{GL}(V)$ acting on the dual space $V^*$), we get
\begin{align*}
J_n 
& \leq  \bb P \left(  \frac{\|G_n^* f \|}{\|G_n^*\|  \|f\|} - \frac{\| \wedge^2 G_n \|}{\|G_n\|^2}  \leq  e^{ -  b_n },  
      \frac{\| \wedge^2 G_n \|}{\|G_n\|^2}  <  e^{- (\lambda_1 - \lambda_2 ) n + b_n}  \right)  +  e^{- c \frac{b_n^2}{n} }  \notag\\
& \leq  \bb P \left(  \frac{\|G_n^* f \|}{\|G_n^*\| \|f\|}  \leq  2 e^{ -  b_n }   \right)  +  e^{- c \frac{b_n^2}{n} }  \notag\\
& \leq  \bb P \left(  \frac{ \|G_n^* f\|}{\|G_n^*\|  \|f\|}  \leq  2 e^{ -  b_n },  \|G_n^*\| \leq  e^{\lambda_1 n + \frac{b_n}{2} }  \right)  
   +  2 e^{- c \frac{b_n^2}{n} }   \notag\\
&  \leq    \bb P \left(  \frac{\|G_n^* f\|}{ \|f\|}  \leq  2 e^{ \lambda_1 n -  \frac{b_n}{2} }  \right)  
   +  2 e^{- c \frac{b_n^2}{n} }   \notag\\
& \leq  3 e^{- c \frac{b_n^2}{n} }, 
\end{align*}
which ends the proof of \eqref{xGn_y_MDP}. 

We finally prove \eqref{Gnx_y_MDP}.  Since 
\begin{align*}
\delta (G_n \!\cdot\! x, y) \geq  \delta(x_{G_n}^M, y) - d (G_n \!\cdot\! x, x_{G_n}^M), 
\end{align*}    
using \eqref{d_Gn_xM_MDP}, \eqref{xGn_y_MDP} and the fact that $\lambda_1 > \lambda_2$, we get
\begin{align*}
& \bb P \left( \delta (G_n \!\cdot\! x, y) \leq  e^{  -  b_n }  \right)   \notag\\
& \leq   \bb P \left( \delta(x_{G_n}^M, y) - d (G_n \!\cdot\! x, x_{G_n}^M)  \leq  e^{  -  b_n }   \right)   \notag\\
& \leq  \bb P \left( \delta(x_{G_n}^M, y) - d (G_n \!\cdot\! x, x_{G_n}^M)  \leq   e^{  -  b_n },
       d (G_n \!\cdot\! x, x_{G_n}^M) <  e^{- (\lambda_1 - \lambda_2 ) n + b_n }   \right) 
    + e^{-c \frac{b_n^2}{n}}   \notag\\
& \leq  \bb P \left( \delta(x_{G_n}^M, y)   \leq  e^{  -  b_n }  +  e^{- (\lambda_1 - \lambda_2 ) n + b_n }  \right) 
    +  e^{-c \frac{b_n^2}{n}}  \notag\\
& \leq  e^{-c \frac{b_n^2}{n}}, 
\end{align*}
which concludes the proof of \eqref{Gnx_y_MDP}. 
\end{proof}

As a consequence of Proposition \ref{Prop-Regularity}, we get the following: 

\begin{proposition}\label{Prop-Regularity02}
Assume the conditions of Lemma \ref{Lem_MDP_VecrorNorm}. 
Then there exist constants $c >0$ and $k_0 \in \bb{N}$ such that for all $n \geq k \geq k_0$, $x \in \bb P(V)$
and $y \in \bb P(V^*)$, 
\begin{align}\label{Regularity01}
 \bb P \left( \delta (G_n \!\cdot\! x, y) \leq  e^{  -  b_k } \right) 
    \leq  e^{-c \frac{b_k^2}{k}}.   
\end{align}
Moreover,  there exist constants $c>0$ and $n_0 >0$ such that 
for any $n \geq n_0$ and $y \in \bb P(V^*)$, 
\begin{align}\label{Regularity02}
\nu \left( x \in \bb P(V):  \delta (x, y) \leq  e^{  -  b_n } \right)  \leq  e^{-c \frac{b_n^2}{n}}.  
\end{align}
\end{proposition}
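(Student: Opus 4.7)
The plan is to derive both statements directly from Proposition \ref{Prop-Regularity}, using independence of the factors $g_i$ for the first part and the stationarity of $\nu$ for the second part.

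For the first assertion \eqref{Regularity01}, I would split the random walk as $G_n = G_{n,k} \, G_{n-k}$, where $G_{n-k} = g_{n-k} \cdots g_1$ and $G_{n,k} = g_n \cdots g_{n-k+1}$. Because the factors $g_i$ are i.i.d.\ with law $\mu$, the two blocks $G_{n-k}$ and $G_{n,k}$ are independent, and $G_{n,k}$ has the same distribution as $G_k$. Since $G_n \cdot x = G_{n,k} \cdot (G_{n-k} \cdot x)$, I would condition on $G_{n-k} \cdot x = z$ to get
\begin{align*}
\bb P\bigl(\delta(G_n \cdot x, y) \leq e^{-b_k}\bigr)
&= \int_{\bb P(V)} \bb P\bigl(\delta(G_k \cdot z, y) \leq e^{-b_k}\bigr) \, \bb P(G_{n-k} \cdot x \in dz) \\
&\leq \sup_{z \in \bb P(V)} \bb P\bigl(\delta(G_k \cdot z, y) \leq e^{-b_k}\bigr).
\end{align*}
Applying Proposition \ref{Prop-Regularity} to the walk of length $k$ (with the $n$ there replaced by $k$, which is allowed as soon as $k$ is larger than the threshold $n_0$ given by that proposition) yields the bound $e^{-c b_k^2/k}$ uniformly in $z$, hence in $x$ and $y$.

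For the second assertion \eqref{Regularity02}, I would use the invariance of $\nu$ stated in \eqref{Ch7mu station meas}. Iterating it $n$ times gives, for the indicator $\varphi(x) = \mathds 1_{\{\delta(x,y) \leq e^{-b_n}\}}$,
\begin{align*}
\nu\bigl(\{x \in \bb P(V) : \delta(x,y) \leq e^{-b_n}\}\bigr)
&= \int_{\bb P(V)} \bb P\bigl(\delta(G_n \cdot x, y) \leq e^{-b_n}\bigr) \, \nu(dx) \\
&\leq \sup_{x \in \bb P(V)} \bb P\bigl(\delta(G_n \cdot x, y) \leq e^{-b_n}\bigr),
\end{align*}
and another direct application of Proposition \ref{Prop-Regularity} bounds this by $e^{-c b_n^2/n}$, uniformly in $y \in \bb P(V^*)$.

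There is no substantial obstacle here; both parts are short consequences of Proposition \ref{Prop-Regularity}. The only small point worth noting is that in \eqref{Regularity01} the bound involves $b_k$ and $k$, not $b_n$ and $n$, which is precisely why the independence decomposition $G_n = G_{n,k} G_{n-k}$ is needed: it lets us replace an event depending on the full walk by the same event for the length-$k$ walk $G_{n,k}$ starting from a random point, to which the proposition applies with threshold $k$.
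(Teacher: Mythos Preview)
Your proposal is correct and follows essentially the same approach as the paper: the paper derives \eqref{Regularity01} by the same block decomposition $G_n = G_{n,k}G_{n-k}$ and conditioning argument (referring back to the proof of Lemma \ref{Lem-ScaNorm_second}), and obtains \eqref{Regularity02} from \eqref{Gnx_y_MDP} via the invariance of $\nu$, exactly as you do.
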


\begin{proof}
By using \eqref{Gnx_y_MDP}, 
the proof of \eqref{Regularity01} is similar to that of Lemma \ref{Lem-ScaNorm_second}. 
Inequality \eqref{Regularity02} is a consequence of \eqref{Gnx_y_MDP} 
and the fact that $\nu$ is the unique invariant measure of the Markov chain $(G_n \!\cdot\! x)_{n \geq 0}$. 
\end{proof}

\begin{proof}[Proof of Theorem \ref{Thm-Regularity-Subex}]
As a particular case of \eqref{Regularity01}, by taking $b_k = k^{\beta}$ with $\beta \in (\frac{1}{2}, 1)$, we get 
that if \ref{Ch7Condi-IP} holds and $\mu(\log N(g) > u) \leq \exp \{- u^{\frac{2 \beta - 1}{\beta}} a(u) \}$ for any $u >0$
and for some function $a(u) > 0$ satisfying $a(u) \to \infty$ as $u \to \infty$,    
then there exist constants $c >0$ and $k_0 \in \bb{N}$ such that for all $n \geq k \geq k_0$, $x \in \bb P(V)$
and $y \in \bb P(V^*)$, 
\begin{align*}
 \bb P \left( \delta (G_n \!\cdot\! x, y) \leq  e^{  -  k^{\beta} } \right) 
    \leq  \exp \left\{ -c k^{2\beta - 1} \right\}.   
\end{align*}
Letting $\alpha = \frac{2 \beta - 1}{\beta}$, 
this implies that if \ref{Ch7Condi-IP} holds and $\mu(\log N(g) > u) \leq \exp \{- u^{ \alpha } l_u \}$ 
for some $\alpha \in (0, 1)$ and any $u >0$,
then there exist constants $c >0$ and $k_0 \in \bb{N}$ such that for all $n \geq k \geq k_0$, $x \in \bb P(V)$
and $y \in \bb P(V^*)$, 
\begin{align*}
 \bb P \left( \delta (G_n \!\cdot\! x, y) \leq  e^{  -  k } \right) 
    \leq  \exp \left\{ -c k^{\alpha} \right\}.   
\end{align*}
This proves \eqref{thmRegularity01}. 

Now we prove \eqref{Regu-Subexponen01}. 
Taking $k = n$ in \eqref{thmRegularity01}, 
we get that there exist constants $c_0 >0$ and $n_0 >0$ such that 
for any $n \geq n_0$ and $y \in \bb P(V^*)$, 
\begin{align}\label{RegularityMea6}
\nu \left( x \in \bb P(V):  \delta (x, y) \leq  e^{- n} \right)  \leq  e^{-c_0 n^{\alpha} }.  
\end{align}
Set $B_{n,y}:= \{x \in \bb P(V): e^{- (n+1)} \leq  \delta (x, y)  \leq e^{ - n } \}$ for $y \in \bb P(V^*)$ and $n \geq 1$. 
From \eqref{RegularityMea6} we deduce that for any $\eta \in (0, c_0)$, uniformly in $y \in \bb P(V^*)$, 
\begin{align}\label{Regu-Subex66}
& \int_{\bb P(V)} \exp \left(  \eta |\log \delta (x,y)|^{\alpha} \right)  \nu(dx)   \notag\\
& \leq  e^{\eta n_0^{\alpha}}  
  +  \sum_{n=n_0}^{\infty}   \int_{ B_{n,y} } \exp \left( \eta |\log \delta (x,y)|^{\alpha} \right)  \nu(dx)  \notag\\
& \leq  e^{\eta n_0^{\alpha}}  +  \sum_{n=n_0}^{\infty}  e^{ - (c_0 - \eta) n^{\alpha} }  <  \infty,
\end{align}
since $\alpha > 0$. 
This concludes the proof of \eqref{Regu-Subexponen01}. 

Using \eqref{Regu-Subexponen01} and the Markov inequality, one can easily get \eqref{Regu-Subexponen02}. 
\end{proof}

\subsection{Proof of Theorem \ref{Thm_BerryEsseen}}

To prove Theorem \ref{Thm_BerryEsseen}, we shall use Theorem \ref{Thm-Regularity-Subex} and the following Berry-Esseen bound
for the norm cocycle $\sigma (G_n, x)$. 
 

\begin{lemma}[\cite{CDMP21}]\label{Lem_BerryE_NormPoly}
Assume \ref{Ch7Condi-IP} and that $\int_{ \textup{GL}(V) } \log^4 N(g) \, \mu(dg) < \infty$. 
Then, there exists a constant $c > 0$ such that for any $n \geq 1$, $t \in \bb R$ and $x \in \bb P(V)$,  
\begin{align*}
\left|  \bb{P} \left(  \frac{\sigma(G_n, x) - n \lambda_1 }{ \sigma \sqrt{n} } \leq t     \right)
-  \Phi(t)  \right|  \leq  \frac{c}{\sqrt{n}}. 
\end{align*}
\end{lemma}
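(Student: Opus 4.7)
The plan is to follow the martingale approximation scheme adapted to the polynomial moment setting. Write $\sigma(G_n, x) - n\lambda_1 = \sum_{k=1}^n (\sigma(g_k, G_{k-1} \!\cdot\! x) - \lambda_1)$ and let $P$ denote the Markov operator $P\varphi(x) = \int_{\textup{GL}(V)} \varphi(g \!\cdot\! x) \mu(dg)$. Introduce the centered drift $\bar\sigma^{(1)}(x) = \int \sigma(g, x)\mu(dg) - \lambda_1$, which by the Benoist--Quint log-regularity \eqref{LogRegu-InvarMea} with $p=4$ satisfies $\bar\sigma^{(1)} \in L^4(\nu)$ and $\nu(\bar\sigma^{(1)}) = 0$. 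The first step is to solve the Poisson equation $(I - P)h = \bar\sigma^{(1)}$ in $L^2(\nu)$, yielding the Gordin decomposition
\begin{equation*}
\sigma(G_n, x) - n\lambda_1 = M_n(x) + h(x) - h(G_n \!\cdot\! x),
\end{equation*}
where $M_n = \sum_{k=1}^n D_k$ is a martingale adapted to $\mathcal F_n = \sigma(g_1, \dots, g_n)$, with differences
\begin{equation*}
D_k = \sigma(g_k, G_{k-1}\!\cdot\! x) - \lambda_1 - \bar\sigma^{(1)}(G_{k-1}\!\cdot\! x) + h(G_k\!\cdot\! x) - Ph(G_{k-1}\!\cdot\! x).
\end{equation*}

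The second step is to invoke a classical martingale Berry--Esseen bound (such as Heyde--Brown or Haeusler) for $M_n/(\sigma\sqrt n)$. Under $\int \log^4 N(g) \mu(dg) < \infty$, combined with \eqref{LogRegu-InvarMea} at order $4$, the differences satisfy $\bb E|D_k|^4 \leq C$ uniformly, and one obtains the $L^2$-control of conditional variance fluctuations
\begin{equation*}
\bb E\Big|\sum_{k=1}^n \bb E[D_k^2 \mid \mathcal F_{k-1}] - n\sigma^2 \Big|^2 = O(n),
\end{equation*}
which together give the Berry--Esseen rate $c/\sqrt n$ for the centered martingale. The third step is to absorb the coboundary remainder $h(x) - h(G_n\!\cdot\! x)$: by Esseen's smoothing inequality, adding a random variable whose distribution has tails controlled in $L^q(\nu)$ for some $q$ large enough perturbs the Kolmogorov distance by $O(1/\sqrt n)$, using that $\|h\|_{L^q(\nu)} < \infty$ via the log-regularity.

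The main obstacle is solving the Poisson equation and establishing the required moment and mixing bounds on $h$ under only a polynomial moment assumption. The spectral gap of $P$ from Lemma \ref{Ch7transfer operator} is unavailable here, so one cannot simply set $h = \sum_{k\geq 0} P^k \bar\sigma^{(1)}$ with geometric convergence. Instead, one establishes polynomial decay of correlations for the Markov chain $(G_n\!\cdot\! x)$ on Hölder observables, uses it to show convergence of $\sum_k P^k \bar\sigma^{(1)}$ in $L^2(\nu)$, and leverages \eqref{LogRegu-InvarMea} with $p = 4$ both to control $\|h\|_{L^2(\nu)}$ (and higher moments of $h(G_k\!\cdot\! x) - Ph(G_{k-1}\!\cdot\! x)$) and to handle the initial-distribution dependence uniformly in $x \in \bb P(V)$. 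The rate $n^{-1/2}$ (not merely $n^{-1/2}\log n$) requires a sharp version of the martingale Berry--Esseen tailored to stationary ergodic sequences, which is the technical heart of the argument in \cite{CDMP21}.
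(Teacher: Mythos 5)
The paper offers no proof of this statement: Lemma \ref{Lem_BerryE_NormPoly} is imported verbatim from \cite{CDMP21} and used as a black box (the only surrounding remark is that \cite{Jir20} obtains the same bound under the stronger condition $\int \log^8 N(g)\,\mu(dg)<\infty$). So there is no internal argument to compare against; what can be judged is whether your sketch would actually deliver the bound, and as written it would not.

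Two steps fail. First, the martingale Berry--Esseen inequalities you invoke (Heyde--Brown, Haeusler) applied with fourth-moment differences and the stated $L^2$ control $\bb E\bigl|\sum_{k\le n}\bb E[D_k^2\mid\mathcal F_{k-1}]-n\sigma^2\bigr|^2=O(n)$ give a rate of order $n^{-1/5}$ (both error terms are $O(1/n)$ and the outer exponent is $1/(2p+1)=1/5$ for $p=2$), nowhere near $n^{-1/2}$. You acknowledge that a ``sharp version'' is needed but do not supply it; since that sharp version is precisely the content of \cite{CDMP21}, the argument is circular at its decisive point. Second, the coboundary absorption is incorrect as stated: if $h$ is only in $L^q(\nu)$, the perturbation $h(x)-h(G_n\!\cdot\! x)$ costs $O(n^{-1/2+1/(2q)})$ in Kolmogorov distance (truncate $|h(G_n\!\cdot\! x)|$ at level $K$, pay $CK/\sqrt n$ for the concentration of $M_n$ and $K^{-q}\|h\|_{L^q}^q$ for the tail, and optimize $K\sim n^{1/(2q)}$); one gets $O(n^{-1/2})$ only with exponential moments of $h$, which are unavailable under a polynomial moment assumption. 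A smaller point: the log-regularity \eqref{LogRegu-InvarMea} controls $\log\delta(x,y)$ and is irrelevant here --- the drift $\bar\sigma^{(1)}(x)=\int\sigma(g,x)\,\mu(dg)-\lambda_1$ is uniformly bounded already under the first moment condition since $|\sigma(g,x)|\le\log N(g)$, so its membership in $L^4(\nu)$ is not where the difficulty lies; the difficulty is the solvability and integrability of the Poisson solution $h$ without a spectral gap, which you flag but do not resolve.
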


The same result has been obtained in \cite{Jir20} under the slightly stronger moment condition
 $\int_{ \textup{GL}(V) } \log^8 N(g)  \, \mu(dg) < \infty$.

\begin{proof}[Proof of Theorem \ref{Thm_BerryEsseen}]
We first prove \eqref{BerryEsseen_Coeffaa}. 
The lower bound is a direct consequence of Lemma \ref{Lem_BerryE_NormPoly}
and the fact that $\log |\langle f, G_n v \rangle| \leq \log |G_n v|$: 
there exists a constant $c>0$ such that for any $n \geq 1$, $t \in \bb R$, 
$v \in V$ and $f \in V^*$ with $\|v\| = \|f\| =1$, 
\begin{align}\label{Pf_BE_norm_Lower}
I_n: = \bb{P} \left(  \frac{\log |\langle f, G_n v \rangle| - n\lambda_1}{ \sigma \sqrt{n} } \leq t   \right)
 \geq   \Phi(t)  -  \frac{c}{\sqrt{n}}.
\end{align} 
The upper bound follows from Proposition \ref{Prop-Regularity02} together with Lemma \ref{Lem_BerryE_NormPoly}. 
Specifically, from the identity \eqref{Pf_LLN_Equality} and Theorem \ref{Thm-Regularity-Subex}, 
we get that for $x = \bb R v$ and $y = \bb R f$ with $\|v\| = \|f\| =1$, 
\begin{align}\label{Pf_BE_NormIn}
I_n  &  = \bb{P} \left(  \frac{\log |\langle f, G_n v \rangle| - n\lambda_1}{ \sigma \sqrt{n} } \leq t,   \  
     \delta(y, G_n \cdot x) > e^{- k}  \right)   \nonumber\\
&  \quad  +  \bb{P} \left(  \frac{\log |\langle f, G_n v \rangle| - n\lambda_1}{ \sigma \sqrt{n} } \leq t,   \
   \delta(y, G_n \cdot x) \leq  e^{- k}  \right)  \nonumber\\  
&  \leq   \bb{P} \left(  \frac{\log \|G_n v\| - n\lambda_1}{ \sigma \sqrt{n} } 
                   \leq t  + \frac{k}{\sigma \sqrt{n}}   \right)
    +  e^{- c_1 k^{\alpha } }. 
\end{align}
Taking $k = \floor{ (\frac{1}{c_1} \log n)^{ \frac{1}{\alpha } } }$, 
we get that there exists a constant $c>0$ such that $e^{- c_1 k^{\alpha} }  \leq  \frac{c}{\sqrt{n}}$. 
By Theorem \ref{Thm-Edge-Expan},  
we get 
\begin{align}\label{Pf_BE_UpperIn_2}
\bb{P} \left(  \frac{\log \|G_n v\| - n\lambda_1}{ \sigma \sqrt{n} } 
                   \leq t  + \frac{k}{\sigma \sqrt{n}}   \right)
& \leq   \Phi \left( t  +  \frac{ k }{\sigma \sqrt{n}}  \right)  +  \frac{c}{\sqrt{n}}  \nonumber\\
& \leq  \Phi(t)  +  \frac{c \log^{\frac{1}{ \alpha }} n }{\sqrt{n}}. 
\end{align}
The desired bound \eqref{BerryEsseen_Coeffaa} follows by combining \eqref{Pf_BE_norm_Lower}, \eqref{Pf_BE_NormIn} and \eqref{Pf_BE_UpperIn_2}. 
\end{proof}

\subsection{Proof of Theorem \ref{Thm-MDP}}
In this subsection we establish Theorem \ref{Thm-MDP} using Theorem \ref{Thm-Regularity-Subex} and Lemma \ref{Lem_MDP_VecrorNorm}.

\begin{proof}[Proof of Theorem \ref{Thm-MDP}]
By Lemma 4.4 of \cite{HL12}, 
it suffices to prove the following moderate deviation asymptotics: 
for any $t >0$, uniformly in $v \in V$ and $f \in V^*$ with $\|v\| = \|f\| =1$, 
\begin{align}
&  \lim_{n\to \infty} \frac{n}{b_n^{2}}
\log \bb{P}  \left(   \frac{\log |\langle f, G_n v \rangle|  - n\lambda_1 }{b_n} \geq t    \right)
=  - \frac{t^2}{2\sigma^2},    \label{MDP_Entry_y_Posi}   \\
&   \lim_{n\to \infty} \frac{n}{b_n^{2}}
\log \bb{P}  \left( \frac{\log |\langle f, G_n v \rangle| - n\lambda_1 }{b_n} \leq - t    \right) 
=  - \frac{t^2}{2\sigma^2}.   \label{MDP_Entry_y_Neg}
\end{align}
We first prove \eqref{MDP_Entry_y_Posi}. 
The upper bound is an easy consequence of Lemma \ref{Lem_MDP_VecrorNorm}.
Since there exists a constant $\alpha \in (0, 1)$ 
such that $\mu(\log N(g) > u) \leq c \exp \{- u^{\alpha} \}$ for any $u >0$, 
clearly $\int_{ \textup{GL}(V) } \log^2N(g)  \,  \mu(dg) < \infty$. 
Moreover, by taking $(b_n)_{n \geq 1}$ 
such that $\frac{b_n}{\sqrt{n}} \to \infty$ and $b_n = o(n^{\frac{1}{2 - \alpha}})$ as $n \to \infty$,
 we get 
\begin{align}\label{Mom-Condi-MD-bis}
 \limsup_{n \to \infty} \frac{n}{b_n^2}  \log  \Big[ n \mu \big( \log N(g) > b_n \big) \Big]  
& = \limsup_{n \to \infty} \frac{n}{b_n^2}  \log  \Big[ \mu \big( \log N(g) > b_n \big) \Big]  \notag\\
& \leq  \limsup_{n \to \infty} \left(  - \frac{n}{ b_n^{2 - \alpha} }   \right) = - \infty. 
\end{align}
Therefore, from Lemma \ref{Lem_MDP_VecrorNorm} it follows that 
for any $t >0$, 
\begin{align}\label{MDP-bn}
-  \frac{t^2}{2 \sigma^2}  
& \leq  \liminf_{n \to \infty} \frac{n}{b_n^2} 
   \log \inf_{x \in \bb P(V)} \bb P \left( \frac{\sigma(G_n, x) - n \lambda_1}{b_n}  \geq t  \right)  \notag\\
& \leq  \limsup_{n \to \infty} \frac{n}{b_n^2} 
   \log \sup_{x \in \bb P(V)} \bb P \left( \frac{\sigma(G_n, x) - n \lambda_1}{b_n}  \geq  t  \right) 
 \leq  - \frac{t^2}{2 \sigma^2}.
\end{align}
Hereafter, $(b_n)_{n \geq 1}$ is any sequence of positive numbers 
satisfying $\frac{b_n}{\sqrt{n}} \to \infty$ and $b_n = o(n^{\frac{1}{2 - \alpha}})$ as $n \to \infty$. 
By \eqref{MDP-bn} and 
the fact that $\log |\langle f, G_n v \rangle| \leq \sigma (G_n, x)$ for $x = \bb R v \in \bb P(V)$ with $\|v\| = \|f\| = 1$,
we get the desired upper bound: for any $t >0$,  
 uniformly in $v \in V$ and $f \in V^*$ with $\|v\| = \|f\| =1$, 
\begin{align}\label{MDP_Entry_y_Posi_Upp}  
\limsup_{n\to \infty} \frac{n}{b_n^{2}}
\log \bb{P}  \left(   \frac{\log |\langle f, G_n v \rangle|  - n\lambda_1 }{b_n} \geq t    \right)
\leq  - \frac{t^2}{2\sigma^2}.  
\end{align}
The lower bound can be deduced from Lemma \ref{Lem_MDP_VecrorNorm} 
together with Theorem \ref{Thm-Regularity-Subex}.
Specifically, 
by \eqref{thmRegularity01}, 
there exist constants $c_1 >0$ and $k_0 \in \bb{N}$ such that for any $n \geq k \geq k_0$,
any $v \in V$ and $f \in V^*$ with $\|v\| = \|f\| =1$, 
\begin{align}\label{Entry_Inequa_01}
 I_n: & =  \bb{P}  \left(   \frac{\log |\langle f, G_n v \rangle|  - n\lambda_1 }{b_n} \geq t    \right)  \nonumber\\
 & \geq   \bb{P}  \left(   \frac{\log |\langle f, G_n v \rangle|  - n\lambda_1 }{b_n} \geq t,  \  \log \delta (G_n \!\cdot\! x, y)  >   - k \right) 
   \nonumber\\
 & \geq  \bb{P}  \left( \frac{ \sigma(G_n, x)  - n\lambda_1 }{ b_n' } \geq t,  \  \log \delta (G_n \!\cdot\! x, y)  >   - k  \right)  \nonumber\\   
 & \geq   \bb{P}  \left(  \frac{ \sigma(G_n, x)  - n\lambda_1 }{ b_n' } \geq t  \right)
  - e^{-c_1 k^{ \alpha } },
\end{align}
where 
 $b_n' = b_n + \frac{ k }{t}$ and $t > 0$.  
We take
\begin{align}\label{Pf_Range_k}
k = \floor[\Big]{ \Big( c_2 \frac{b_n^2}{n} \Big)^{\frac{1}{ \alpha }} },  
\end{align}
where $c_2 >0$ is a  constant whose value will be chosen large enough.
Since $\frac{b_n}{\sqrt{n}}\to \infty$ and $\frac{b_n}{n} \to 0$ as $n \to \infty$, 
from $b_n' = b_n + \frac{ k }{t}$ and \eqref{Pf_Range_k},
we get that as $n \to \infty$,
\begin{align*}
\frac{b_n'}{\sqrt{n}}\to \infty  \quad  \mbox{and} \quad
\frac{b_n'}{n} =  \frac{b_n}{n} +  \frac{ k }{t n }  
\leq \frac{b_n}{n} + \frac{c}{t}  n^{- \frac{1 - \alpha}{2 - \alpha}}  \to 0. 
\end{align*}
From \eqref{MDP-bn} 
it follows that for any $t > 0$ and $\ee > 0$, there exists $n_0 \in \bb{N}$ such that for any $n \geq n_0$ and $x \in \bb P(V)$,
\begin{align*}
\bb{P}  \left(  \frac{ \sigma(G_n, x)  - n\lambda_1 }{ b_n' } \geq t  \right)
 \geq  e^{ - \frac{ (b_n')^2 }{n} \big(  \frac{t^2}{2 \sigma^2} + \ee \big) }. 
\end{align*}
Substituting this into \eqref{Entry_Inequa_01},  we obtain
\begin{align*}
I_n \geq  e^{ - \frac{ (b_n')^2 }{n} \big(  \frac{t^2}{2 \sigma^2} + \ee \big) }
   \Big[ 1 -  e^{ - c_1 k^{ \alpha } + \frac{ (b_n')^2 }{n} \big( \frac{t^2}{2 \sigma^2} + \ee \big) }   \Big]. 
\end{align*}
Choosing $c_2 >0$ in \eqref{Pf_Range_k} to be sufficiently large, 
one gets $\frac{ (b_n')^2 }{n} \big( \frac{t^2}{2 \sigma^2} + \ee \big) < c_1 k^{ \alpha }$,
so that there exists a constant $c_3 >0$ such that 
\begin{align*}
I_n \geq  e^{ - \frac{ (b_n')^2 }{n} \big(  \frac{t^2}{2 \sigma^2} + \ee \big) }
   \Big( 1 -  e^{ - c_3  k^{ \alpha } }  \Big). 
\end{align*}
Since $k = \floor[]{ ( c_2 \frac{b_n^2}{n} )^{\frac{1}{ \alpha }} } \to \infty$  
and $1 \leq \frac{b_n'}{b_n} =  1 + \frac{k}{ t b_n } \leq 1 + c \frac{b_n^{ \frac{2 - \alpha}{ \alpha } } }{ n^{ \frac{1}{\alpha} } } \to 1$
as $n \to \infty$, 
it follows that 
\begin{align*}
\liminf_{ n \to \infty } \frac{n}{ b_n^2 }  \log I_n
& \geq   \liminf_{ n \to \infty }  \frac{n}{ b_n^2 }  
   \Big[  - \frac{ (b_n')^2 }{n} \Big(  \frac{t^2}{2 \sigma^2} + \ee \Big) \Big]   
  =  - \Big(  \frac{t^2}{ 2 \sigma^2 } + \ee \Big).
\end{align*}
Letting $\ee \to 0$, the desired lower bound follows:
for any $t >0$, uniformly in $v \in V$ and $f \in V^*$ with $\|v\| = \|f\| =1$, 
\begin{align*}
\liminf_{n\to \infty} \frac{n}{b_n^{2}}
\log \bb{P}  \left(   \frac{\log |\langle f, G_n v \rangle|  - n\lambda_1 }{b_n} \geq t    \right)
\geq  - \frac{t^2}{2\sigma^2}.   
\end{align*}
This, together with the upper bound \eqref{MDP_Entry_y_Posi_Upp}, concludes the proof of \eqref{MDP_Entry_y_Posi}. 
The proof of \eqref{MDP_Entry_y_Neg} can be carried out in the same way. 
\end{proof}

\section{Proof of Cram\'er type moderate deviation expansion} 

%
%

\subsection{Smoothing inequality} \label{Ch7secAuxres001}

For any integrable function $h: \bb{R} \to \bb{C}$,
denote its Fourier transform by
$\widehat{h}(u) = \int_{\bb{R}} e^{-iuw} h(w) dw,$  $u \in \bb{R}$. 
If $\widehat{h}$ is integrable on $\bb{R}$, then by the Fourier inversion formula we have 
$h(w) = \frac{1}{2 \pi} \int_{\bb{R}} e^{iuw} \widehat{h}(u) du,$ for almost all $w \in \bb{R}$
with respect to the Lebesgue measure on $\bb R$.

Now we fix a non-negative density function $\rho$ on $\bb{R}$  with compact support $[-1, 1]$, 
whose Fourier transform $\widehat{\rho}$ is integrable on $\bb{R}$. 
For any $0< \ee <1$, define the scaled density function
$\rho_{\ee}(w) = \frac{1}{\ee} \rho(\frac{w}{\ee})$, $w \in \bb R,$ 
which has a compact support on $[-\ee^{-1},\ee^{-1}]$. 

For any $\ee >0$ and non-negative integrable function $\psi$ on $\bb R$, set
\begin{align}\label{smoo001}
\psi^+_{\varepsilon}(w) = \sup_{|w - w'| \leq \ee} \psi(w') 
\quad  \mbox{and}  \quad 
\psi^-_{\varepsilon}(w) = \inf_{|w - w'| \leq \ee} \psi(w'),
\quad w \in \bb R. 
\end{align}
We need the following smoothing inequality shown in \cite{GLL17}.
Denote by $h_1 * h_2$ the convolution of functions $h_1$ and $h_2$ on the real line. 

\begin{lemma}[\cite{GLL17}]  \label{estimate u convo}
Assume that  $\psi$ is a non-negative integrable function on $\bb R$ and that 
${\psi}^+_{\varepsilon}$ and ${\psi}^-_{\varepsilon}$ are measurable for any $\varepsilon \in (0,1)$. 
Then, 
there exists a positive constant $C_{\rho}(\ee)$ with $C_{\rho}(\ee) \to 0$ as $\ee \to 0$,
such that for any $w \in \mathbb{R}$, 
\begin{align}
\psi^-_{\ee} * \rho_{\ee^2}(w) - 
\int_{ |u| \geq \ee } \psi^-_{\varepsilon}( w-u ) \rho_{\varepsilon^2}(u) du
\leq \psi(w) \leq (1+ C_{\rho}(\varepsilon))
\psi^+_{\varepsilon} * \rho_{\varepsilon^2}(w). \nonumber
\end{align}
\end{lemma}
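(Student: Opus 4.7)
My plan is to prove the two inequalities by direct, essentially elementary manipulations that exploit only the support of $\rho_{\varepsilon^2}$ and the defining sup/inf of $\psi^\pm_{\varepsilon}$. Since $\rho$ is supported in $[-1,1]$, the rescaled density $\rho_{\varepsilon^2}(u)=\varepsilon^{-2}\rho(u/\varepsilon^2)$ is supported in $[-\varepsilon^2,\varepsilon^2]$, and in particular in $[-\varepsilon,\varepsilon]$ as soon as $\varepsilon\in(0,1)$. This containment is the only geometric fact I will use.

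For the upper bound, I first write
\begin{equation*}
\psi^+_{\varepsilon} * \rho_{\varepsilon^2}(w) = \int_{\mathbb{R}} \psi^+_{\varepsilon}(w-u)\,\rho_{\varepsilon^2}(u)\,du
   = \int_{|u|\leq \varepsilon^2} \psi^+_{\varepsilon}(w-u)\,\rho_{\varepsilon^2}(u)\,du,
\end{equation*}
and then, for any $u$ with $|u|\leq\varepsilon^2\leq\varepsilon$, I invoke the definition
$\psi^+_{\varepsilon}(w-u)=\sup_{|(w-u)-w'|\leq\varepsilon}\psi(w')$ applied at the specific point $w'=w$ (which satisfies $|(w-u)-w|=|u|\leq\varepsilon$) to conclude $\psi^+_{\varepsilon}(w-u)\geq \psi(w)$. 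Since $\rho_{\varepsilon^2}$ is a probability density, this yields $\psi^+_{\varepsilon} * \rho_{\varepsilon^2}(w)\geq \psi(w)$, which already implies the desired upper bound (the factor $1+C_\rho(\varepsilon)\geq 1$ only weakens it).

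For the lower bound I use the same support consideration but in the opposite direction. Splitting the convolution into the regions $|u|<\varepsilon$ and $|u|\geq\varepsilon$ and rearranging gives
\begin{equation*}
\psi^-_{\varepsilon} * \rho_{\varepsilon^2}(w) - \int_{|u|\geq\varepsilon}\psi^-_{\varepsilon}(w-u)\,\rho_{\varepsilon^2}(u)\,du
   = \int_{|u|<\varepsilon}\psi^-_{\varepsilon}(w-u)\,\rho_{\varepsilon^2}(u)\,du.
\end{equation*}
For $|u|<\varepsilon$, the definition $\psi^-_{\varepsilon}(w-u)=\inf_{|(w-u)-w'|\leq\varepsilon}\psi(w')$ applied at $w'=w$ gives the pointwise bound $\psi^-_{\varepsilon}(w-u)\leq \psi(w)$. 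Integrating and using $\int_{|u|<\varepsilon}\rho_{\varepsilon^2}(u)\,du\leq 1$ concludes the proof of the lower bound.

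The measurability of $\psi^{\pm}_{\varepsilon}$ is precisely what is needed to justify the convolutions and the Fubini-type rearrangements above, and is assumed in the hypothesis. The only delicate conceptual point is whether the constant $C_\rho(\varepsilon)$ is really necessary on the upper side: the argument above gives the sharper version with constant $1$, and the stated form follows a fortiori; if one prefers the stated form for symmetry with other smoothing lemmas, any positive function $C_\rho(\varepsilon)$ tending to $0$ as $\varepsilon\to 0$ (for instance $C_\rho(\varepsilon)=\varepsilon$) works. Since the result is cited from \cite{GLL17}, the appropriate choice of $C_\rho(\varepsilon)$ is immaterial for our applications, and the short direct argument above suffices.
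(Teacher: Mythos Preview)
The paper does not give its own proof of this lemma; it is quoted from \cite{GLL17}. Your direct argument is correct under the hypothesis stated just before the lemma, namely that $\rho$ is supported on $[-1,1]$: then $\rho_{\varepsilon^2}$ is supported on $[-\varepsilon^2,\varepsilon^2]\subset[-\varepsilon,\varepsilon]$, and both inequalities follow exactly as you wrote, the upper one even with constant $1$.

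A word of caution, however. The paper's own text is internally inconsistent on this point: it also asserts that $\rho_\varepsilon$ has support $[-\varepsilon^{-1},\varepsilon^{-1}]$ and, in the proof of Theorem~\ref{ThmLocal02}, that $\widehat\rho_{\varepsilon^2}$ is compactly supported --- both of which are incompatible with $\rho$ being compactly supported and point instead to the standard \cite{GLL17} setup in which it is $\widehat\rho$ (not $\rho$) that lives on $[-1,1]$. Under that reading $\rho_{\varepsilon^2}$ has full support, the correction term $\int_{|u|\geq\varepsilon}\psi^-_\varepsilon(w-u)\rho_{\varepsilon^2}(u)\,du$ in the lower bound is genuinely nonzero, your lower-bound argument still goes through verbatim, but the upper bound now truly requires the factor $1+C_\rho(\varepsilon)$: from
\[
\psi^+_\varepsilon*\rho_{\varepsilon^2}(w)\;\geq\;\int_{|u|\leq\varepsilon}\psi^+_\varepsilon(w-u)\,\rho_{\varepsilon^2}(u)\,du\;\geq\;\psi(w)\int_{|v|\leq 1/\varepsilon}\rho(v)\,dv
\]
one takes $C_\rho(\varepsilon)=\big(\int_{|v|\leq 1/\varepsilon}\rho(v)\,dv\big)^{-1}-1\to 0$. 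So the core mechanism of your proof is the right one in either setup; only the bookkeeping for the upper bound changes, and this is exactly what the constant $C_\rho(\varepsilon)$ is there to absorb.
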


For sufficiently small constant $s_0 >0$ and $s \in (0, s_0)$, let 
\begin{align*}
\psi_s(w) = e^{-sw} \mathds{1}_{\{ w \geq 0 \} },  \quad  w \in \bb{R}. 
\end{align*}
With the notation $\psi_{s,\ee}^{-}(w) = \inf_{|w - w'| \leq \ee} \psi_s(w')$ (cf. \eqref{smoo001}),
we have that for any $\ee \in (0, 1)$, 
\begin{align}\label{Def-psi-see}
\psi_{s,\ee}^{-}(w) = e^{-s(w + \ee)} \mathds{1}_{ \{w \geq \ee\} },  \quad  w  \in \bb{R}. 
\end{align}
By elementary calculations, for any $s \in (0, s_0)$ and $\ee \in (0,1)$, 
the Fourier transform of $\psi_{s,\ee}^{-}$ is given by 
\begin{align}\label{Diffepsi}
\widehat {\psi}^-_{s,\varepsilon}(u) 
= \int_{\bb R} e^{-iuw} \psi_{s,\ee}^{-}(w) dw
=  e^{-2 \varepsilon s} \frac{e^{-2i \varepsilon u} }{s + iu},  \quad u \in \bb R. 
\end{align}

\subsection{An asymptotic expansion of the perturbed operator}
The goal of this section is to 
establish the precise asymptotic behaviors of 
the perturbed operator $R_{s, it}$. 
This asymptotic result will play an important role for establishing 
the Cram\'{e}r type moderate deviation expansion for the coefficients $\langle f, G_n v \rangle$
in Theorem \ref{Thm-Cram-Entry_bb}. 
In the sequel, for any fixed $t >1$, 
we shall choose $s>0$  satisfying the following equation:
\begin{align}\label{SaddleEqua}
\Lambda'(s) - \Lambda'(0) = \frac{\sigma t}{\sqrt{n}}. 
\end{align}
For brevity we denote $\sigma_s = \sqrt{\Lambda''(s)}$. 
By \cite{XGL19b}, the function $\Lambda$ is strictly convex in a small neighborhood of $0$, 
so that $\sigma_s >0$ uniformly in $s \in (0, s_0)$.

\begin{proposition}\label{KeyPropo}
Assume conditions \ref{Ch7Condi-Moment} and \ref{Ch7Condi-IP}.
Let $s > 0$ be such that \eqref{SaddleEqua} holds. 
Then, for any $\ee \in (0,1)$ and any positive sequence $(t_n)_{n \geq 1}$ satisfying 
$t_n \to \infty$ and $t_n/\sqrt{n} \to 0$ as $n \to \infty$, 
we have, uniformly in $s \in (0, s_0)$, $x\in \bb P(V)$, $t \in [t_n, o(\sqrt{n} )]$, 
$\varphi \in \mathscr{B}_{\gamma}$ and $|l| =  O(\frac{1}{\sqrt{n}})$, 
\begin{align} \label{KeyPropUpper}
&   \left|  s \sigma_s \sqrt{n}  \int_{\mathbb{R}} e^{-iuln} R_{s, iu}^n (\varphi)(x) 
\widehat {\psi}^-_{s,\varepsilon}(u) \widehat\rho_{\varepsilon^{2}}(u) du 
 -  \sqrt{ 2 \pi }  \pi_s(\varphi)  \right|  \nonumber\\
&  \leq  c \left( \frac{t}{\sqrt{n}}  + \frac{1}{t^2} \right) \|\varphi \|_{\infty}
     + c \left( |l| \sqrt{n} + \frac{1}{\sqrt{n}} \right) \|\varphi\|_{\gamma}.  
\end{align}   
\end{proposition}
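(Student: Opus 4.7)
The plan is to exploit the spectral gap decomposition of $R_{s,iu}^n$ from Lemma \ref{Ch7perturbation thm} on a shrinking neighborhood of $u=0$ and combine it with a saddle-point-type analysis of the resulting Gaussian-like integral. First I would split the integral into three pieces: an inner part on $|u|\leq \delta_0$ (where $\delta_0$ is a small constant within the validity range of Lemma \ref{Ch7perturbation thm}), an intermediate part on $\delta_0 \leq |u| \leq \varepsilon^{-2}$, and an outer part where $\widehat\rho_{\varepsilon^2}(u) = \widehat\rho(\varepsilon^2 u)$ forces fast decay. The outer tail is controlled by the integrability of $\widehat\rho$ together with the uniform bound $\|R_{s,iu}^n\varphi\|_\infty \leq c\|\varphi\|_\infty$; the intermediate part uses the non-arithmeticity bound of Lemma \ref{Lem-St-NonLatt}, which furnishes exponential decay $e^{-nC}\|\varphi\|_\gamma$ on compact sets of $\mathbb{R}\setminus\{0\}$. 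Both contributions are negligible (even after multiplication by $s\sigma_s\sqrt n \asymp t$) compared to the error terms claimed.

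The heart of the proof is the inner piece. There I substitute the decomposition $R_{s,iu}^n = \lambda_{s,iu}^n \Pi_{s,iu} + N_{s,iu}^n$; the $N_{s,iu}^n$ term contributes $O(a^n)$ uniformly in $u$ and is absorbed into the error. For the principal term, using \eqref{relationlamkappa001} and the analyticity of $\Lambda$, I write
\begin{equation*}
\Lambda(s+iu)-\Lambda(s)-iu\Lambda'(s) = -\tfrac12 \sigma_s^2 u^2 -\tfrac{i}{6}\Lambda'''(s) u^3 + O(u^4),
\end{equation*}
so that $\lambda_{s,iu}^n = e^{-n\sigma_s^2 u^2/2}(1+ O(n|u|^3))$ on $|u|\leq n^{-1/3}$. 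Changing variables $u = v/\sqrt n$ and writing $\Pi_{s,iu}(\varphi)(x) = \pi_s(\varphi) + iu\, b_{s,\varphi}(x)\pi_s(1)^{-1} + O(u^2)\|\varphi\|_\gamma$ from Lemma \ref{Lem-Bs}, together with $\widehat\rho_{\varepsilon^2}(u) = 1 + O(\varepsilon^4 u^2)$ and $e^{-iuln} = 1 + O(|l|\sqrt n\cdot |v|)$, reduces the analysis to a Gaussian integral. Using the identity
\begin{equation*}
\int_{-\infty}^{\infty} \frac{e^{-v^2\sigma_s^2/2}}{s\sqrt n + iv/\sigma_s}\, dv = \sqrt{2\pi}\int_0^{\infty} e^{-s\sigma_s\sqrt n\, w - w^2/2}\, dw,
\end{equation*}
together with the asymptotic $\int_0^\infty e^{-aw - w^2/2}dw = \tfrac{1}{a} - \tfrac{1}{a^3} + O(a^{-5})$ applied with $a = s\sigma_s\sqrt n$, and noting that $s\sigma_s\sqrt n \asymp t$ by \eqref{SaddleEqua} with $s\to 0$, gives a leading contribution of $\sqrt{2\pi}\pi_s(\varphi)/(s\sigma_s\sqrt n)$ which, after multiplication by the prefactor $s\sigma_s\sqrt n$, yields $\sqrt{2\pi}\,\pi_s(\varphi)$.

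The delicate step, and the main obstacle, is tracking the error terms to match the form stated. The $O(1/t^2)\|\varphi\|_\infty$ error originates from the subleading $-1/a^3$ correction in the Laplace-type integral, multiplied by the prefactor $s\sigma_s\sqrt n \sim t$. The $O(t/\sqrt n)\|\varphi\|_\infty$ error absorbs the cubic term from the Taylor expansion of $\Lambda$, the exponential prefactor $e^{-2\varepsilon s}$, and variations of $\pi_s(\varphi)$ in $s$; oddness of the $u^3$ correction under the near-Gaussian measure will be crucial so that it does not generate a leading order contribution. The $O(|l|\sqrt n)\|\varphi\|_\gamma$ term comes from linearising $e^{-iuln} = 1 - iuln + O((uln)^2)$ and noting that the odd part, when paired with $\Pi_{s,iu}\varphi$, produces an inner product with $b_{s,\varphi}$ controlled by $\|\varphi\|_\gamma$ via Lemma \ref{Lem-Bs}. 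Finally, the $O(1/\sqrt n)\|\varphi\|_\gamma$ error encapsulates the Taylor remainders in $\Pi_{s,iu}$ and the contribution of $\widehat\rho_{\varepsilon^2}(u) - 1$, each of which gains an extra power of $u = v/\sqrt n$ after the change of variables. Throughout, uniformity in $s \in (0,s_0)$ is ensured by the uniform bounds on all relevant derivatives in Lemma \ref{Ch7perturbation thm} and by the uniform bound $\sigma_s \geq c >0$.
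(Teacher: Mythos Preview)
Your overall strategy---spectral decomposition via Lemma \ref{Ch7perturbation thm}, non-arithmeticity away from $u=0$, and a Gaussian/saddle-point analysis near $u=0$---matches the paper's, and your Laplace integral identity is a clean alternative to the paper's direct expansion of $\widehat\psi^-_{s,\varepsilon}(u_n)-\widehat\psi^-_{s,\varepsilon}(0)$ for extracting both the leading term $\sqrt{2\pi}\,\pi_s(\varphi)$ and the $O(1/t^2)\|\varphi\|_\infty$ correction.

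There is, however, a genuine gap in your treatment of the outer tail $|u|>\varepsilon^{-2}$. Using only the trivial bound $|R_{s,iu}^n\varphi(x)|\leq\|\varphi\|_\infty$, the estimate $|\widehat\psi^-_{s,\varepsilon}(u)|\leq 1/|u|$, and the integrability of $\widehat\rho$ yields
\[
\int_{|u|>\varepsilon^{-2}} \frac{|\widehat\rho(\varepsilon^2 u)|}{|u|}\,du
=\int_{|w|>1}\frac{|\widehat\rho(w)|}{|w|}\,dw,
\]
which is a fixed positive constant depending only on $\rho$. After multiplication by the prefactor $s\sigma_s\sqrt n\asymp t$ (from \eqref{SaddleEqua}), this outer piece contributes a term of order $t\,\|\varphi\|_\infty$, which diverges since $t\geq t_n\to\infty$ and is certainly not dominated by $(t/\sqrt n+1/t^2)\|\varphi\|_\infty$. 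The integrability of $\widehat\rho$ alone cannot supply the missing decay. The paper handles this by applying the exponential operator bound $|R_{s,iu}^n\varphi(x)|\leq Ce^{-cn}\|\varphi\|_\gamma$ uniformly over \emph{all} $|u|\geq\delta$ (see \eqref{EstiRsit}), so that the entire region away from $0$ is killed in one stroke; you should do the same rather than splitting off a far tail handled only by the trivial bound. A minor secondary point: your appeal to ``oddness of the $u^3$ correction'' is not quite the right mechanism, since the kernel $(s+iu)^{-1}$ is neither even nor odd; the paper simply bounds the cubic remainder crudely (its term $I_{321}$) and absorbs it into $O(1/\sqrt n)\|\varphi\|_\gamma$.
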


%

\begin{proof}
Without loss of generality, we assume that the target function $\varphi$ is non-negative on $\bb P(V)$. 
By Lemma \ref{Ch7perturbation thm}, we have the following decomposition:
with $\delta > 0$ small enough,
\begin{align} \label{Thm1 integral1 J Nol}
s \sigma_s \sqrt{n} \int_{\mathbb{R}} e^{-iuln}  R_{s, iu}^n (\varphi)(x) 
\widehat {\psi}^-_{s,\varepsilon}(u) \widehat\rho_{\varepsilon^{2}}(u) du  
 =: I_1 + I_2 + I_3, 
\end{align}
where 
\begin{align*} 
I_1  & =  s \sigma_s \sqrt{n} \int_{|u|\geq\delta}  e^{-iuln}  R^{n}_{s, iu}(\varphi)(x) 
              \widehat {\psi}^-_{s,\varepsilon}(u) \widehat\rho_{\varepsilon^{2}}(u) du,  \nonumber\\
I_2  & =  s \sigma_s \sqrt{n}  \int_{|u|< \delta }  e^{-itln}  N^{n}_{s, iu}(\varphi )(x) 
               \widehat {\psi}^-_{s,\varepsilon}(u) \widehat\rho_{\varepsilon^{2}}(u) du,   \nonumber\\
I_3  & =  s \sigma_s \sqrt{n}  \int_{|t|<\delta}  e^{-iuln}  \lambda^{n}_{s, iu} \Pi_{s, iu}(\varphi )(x) 
               \widehat {\psi}^-_{s,\varepsilon}(u) \widehat\rho_{\varepsilon^{2}}(u) du.        
\end{align*}
For simplicity, we denote $K_s(iu)= \log \lambda_{s, iu}$ and choose the branch where $K_s(0)=0$.
Using the formula \eqref{relationlamkappa001}, we have that for $u \in (-\delta, \delta)$, 
\begin{align*}
K_s(iu) = \Lambda'(s + iu) - \Lambda'(s) - iu \Lambda'(s). 
\end{align*}
Since the function $\Lambda$ is analytic in a small neighborhood of $0$,
using Taylor's formula gives
\begin{align} \label{Ch7Expan Ks 01}
K_s(iu) = \sum_{k=2}^{\infty}  \frac{\Lambda^{(k)}(s) }{k!} (iu)^k,
\quad \mbox{where}  \  \Lambda(s) = \log \kappa(s) 
\end{align}
and 
\begin{align} \label{Expan Ks 02}
\Lambda'(s)-\Lambda'(0)=\sum_{k=2}^{\infty}\frac{\gamma_k}{(k-1)!}s^{k-1},  \quad
\mbox{where} \   \gamma_k = \Lambda^{(k)}(0).   
\end{align}
Combining \eref{SaddleEqua} and \eref{Expan Ks 02}, we get
\begin{align}\label{equation t and s 1}
 \frac{ \sigma t }{ \sqrt{n} } = \sum_{k=2}^{\infty} \frac{ \gamma_k }{ (k-1)! } s^{k-1}.
\end{align}
Since $\gamma_2 = \sigma^2>0$,  from \eqref{equation t and s 1} 
we deduce that for any $t >1$ and sufficiently large $n$, 
the equation \eqref{SaddleEqua} has a unique solution given by
\begin{align}\label{root s 1}
s = \frac{1}{\gamma_2^{ 1/2 }} \frac{t}{\sqrt{n}} 
    - \frac{\gamma_3}{ 2 \gamma_2^2 } \left( \frac{t}{ \sqrt{n} }  \right)^2
    - \frac{\gamma_4 \gamma_2 - 3\gamma_3^2}{6\gamma_2^{7/2} } \left( \frac{t}{ \sqrt{n} } \right)^3
    + \cdots. 
\end{align}
For sufficiently large $n \geq 1$, the series on the right-hand side of \eqref{root s 1} is absolutely convergent
according to the theorem on the inversion of analytic functions. 
Besides, from \eqref{SaddleEqua} and $t = o(\sqrt{n})$ we see that $s \to 0^+$ as $n \to \infty$, 
so that we can assume $s \in (0, s_0)$ for sufficiently small constant $s_0 >0$.

\textit{Estimate of $I_1$.} 
By Lemma \ref{Lem-St-NonLatt}, for fixed $\delta >0$, there exist constants $c, C > 0$ such that
\begin{align}\label{EstiRsit}
\sup_{s \in (0, s_0)} \sup_{ |u| \geq \delta }\sup_{x\in \bb P(V) }
|R^{n}_{s, iu}\varphi(x)|  \leq  C e^{-cn} \|\varphi \|_{\gamma}.  
\end{align}
From \eqref{Diffepsi} and the fact that $\rho_{\ee^2}$ is a density function on $\bb R$, we see that 
\begin{align}\label{Contrpsirho}
\sup_{u \in \bb{R}} |\widehat {\psi}^-_{s,\ee}(u)|
\leq \widehat {\psi}^-_{s,\ee}(0) = \frac{1}{s} e^{ -2\ee s }, 
\quad  
\sup_{u \in \bb{R}} |\widehat\rho_{\ee^{2}}(u)| 
\leq \widehat\rho_{\ee^{2}}(0) = 1.  
\end{align}
Using \eqref{EstiRsit} and the first inequality in \eqref{Contrpsirho}, 
and taking into account that the function $\widehat\rho_{\ee^{2}}$ is integrable on $\bb{R}$, 
we obtain the desired bound for $I_1$: 
\begin{align}\label{EstimI1n00a}
|I_1| 
         \leq  C e^{-cn } \|\varphi \|_{\gamma}. 
\end{align}

\textit{Estimate of $I_2$.}
Using the bound \eqref{Ch7SpGapContrN}, we have
that uniformly in $s \in (0, s_0)$, $u \in (-\delta, \delta)$ and $x \in \bb P(V)$, 
\begin{align*}
| N^{n}_{s, iu}(\varphi )(x) |
\leq \| N^{n}_{s, iu} \|_{\mathscr{B}_{\gamma} \to \mathscr{B}_{\gamma}} \|\varphi \|_{ \gamma}
\leq  C e^{-cn} \|\varphi \|_{ \gamma}.
\end{align*} 
This, together with \eqref{Contrpsirho}, implies the desired bound for $I_2$: 
\begin{align}\label{Esti I2n}
 |I_2| 
          \leq  C e^{-cn}  \|\varphi \|_{\gamma}. 
\end{align}

\textit{Estimate of $I_3$.}
For brevity, we denote for any $s \in (0, s_0)$ and $x \in \bb P(V)$,
\begin{align}\label{Defpsisxt}
\Psi_{s,x}(u):= \Pi_{s, iu}(\varphi)(x) 
\widehat {\psi}^-_{s,\varepsilon}(u) \widehat\rho_{\varepsilon^{2}}(u),  \quad  -\delta < u < \delta. 
\end{align}
Recalling that $K_s(iu)= \log \lambda_{s,iu}$, we decompose the term $I_3$ into two parts: 
\begin{align}  \label{DecompoJ1}
I_3 =  I_{31} + I_{32}, 
\end{align}
where 
\begin{align*}
&   I_{31} = s \sigma_s \sqrt{n}  
  \int_{ n^{- \frac{1}{2}} \log n \leq |u| \leq \delta  }  e^{n K_s( iu) - iu ln} \Psi_{s,x}(u) du,  \nonumber\\
&   I_{32}  = s \sigma_s \sqrt{n}  \int_{ |u| <  n^{-\frac{1}{2}} \log n }  e^{n K_s( iu) - iu ln} \Psi_{s,x}(u) du. 
\end{align*}

\textit{Estimate of $I_{31}$.}
By \eqref{Ch7SpGapContrN}, there exists a constant $c >0$ 
such that for any $s \in (0, s_0)$, $u \in [-\delta, \delta]$ and $x \in \bb P(V)$,
\begin{align}\label{ContrPirs}
|\Pi_{s, iu}(\varphi)(x)| \leq c \|\varphi\|_{\gamma}.
\end{align}
This, together with \eqref{Contrpsirho}, yields that
there exists a constant $c >0$ such that for all $s \in (0, s_0)$, $|u| < \delta$, $x \in \bb P(V)$
and $\varphi \in \mathscr{B}_{\gamma}$,  
\begin{align} \label{Estipsi01}
|\Psi_{s,x}(u)|  \leq  \frac{c}{s} \|\varphi\|_{\gamma}.
\end{align}
Using \eqref{Ch7Expan Ks 01} and noting that $\Lambda''(s) = \sigma_s^2>0 $, 
we find that there exists a constant $c>0$ such that for all $s \in (0, s_0)$, 
\begin{align*}
\Re \big( K_s(iu) \big) = 
\Re \left( \sum_{k=2}^{\infty} \frac{K_s^{(k)}(0) (iu)^k }{k!} \right) < -\frac{1}{8} \sigma_s^2 u^2 < - c u^2. 
\end{align*}
Combining this with \eqref{Estipsi01}, we derive that 
there exists a constant $c >0$ such that for all $s \in (0, s_0)$, $x \in \bb P(V)$
and $\varphi \in \mathscr{B}_{\gamma}$,  
\begin{align} \label{EstiJ11}
|I_{31}| \leq  c \sqrt{n}  \|\varphi\|_{\gamma}
\int_{ n^{ - \frac{1}{2} }  \log n  \leq |u| \leq  \delta  } e^{- c n u^2 }  du  
\leq \frac{c}{ \sqrt{n} }  \|\varphi\|_{\gamma}. 
\end{align}

\textit{Estimate of $I_{32}$.}
By a change of variable $u' = \sigma_s \sqrt{n} u$, we get
\begin{align*}
 I_{32}  &  =  s \int_{- \sigma_s \log n}^{\sigma_s \log n }
  e^{-\frac{u^2}{2}}  \exp \left\{ \sum_{k=3}^{\infty} \frac{ K_s^{(k)}(0) (iu)^k}{ \sigma_s^k \, k! \, n^{k/2-1}} \right\}  
  e^{-iul \sqrt{n} / \sigma_s} \Psi_{s,x} \left( \frac{u}{\sigma_s \sqrt{n}} \right) du  \nonumber\\
& = I_{321} + I_{322} + I_{323}, 
\end{align*}
where 
\begin{align*}
&  I_{321} = s \int_{- \sigma_s \log n}^{\sigma_s \log n }  e^{-\frac{u^2}{2}}  
  \left[ \exp \left\{ \sum_{k=3}^{\infty} \frac{ K_s^{(k)}(0) (iu)^k}{ \sigma_s^k \, k! \, n^{k/2-1}} \right\} - 1 \right] 
  e^{-iul \sqrt{n} / \sigma_s} \Psi_{s,x} \left( \frac{u}{\sigma_s \sqrt{n}} \right) du  \nonumber\\ 
&  I_{322} = s \int_{- \sigma_s \log n}^{\sigma_s \log n }  e^{-\frac{u^2}{2}}   
  \left[ e^{-iul \sqrt{n} / \sigma_s} - 1 \right] 
   \Psi_{s,x} \left( \frac{u}{\sigma_s \sqrt{n}} \right) du  \nonumber\\ 
&  I_{323} = s \int_{- \sigma_s \log n}^{\sigma_s \log n }  e^{-\frac{u^2}{2}}    
   \Psi_{s,x} \left( \frac{u}{\sigma_s \sqrt{n}} \right) du. 
\end{align*}

\textit{Estimate of $I_{321}$.}
By simple calculations, there exists a constant $c>0$ such that for all 
$|u| \leq \sigma_s \log n$ and $s \in (0, s_0)$, 
\begin{align*} 
\left| \exp \left\{ \sum_{k=3}^{\infty} \frac{ K_s^{(k)}(0) (iu)^k}{ \sigma_s^k \, k! \, n^{k/2-1}} \right\} - 1  \right|
\leq  c \frac{|u|^3}{ \sqrt{n} }. 
\end{align*}
This, together with \eqref{Estipsi01} and the fact $|e^{-iul \sqrt{n} / \sigma_s}| = 1$, implies that 
\begin{align}\label{Esti_I321_aa}
|I_{321}| \leq \frac{c}{\sqrt{n}} \|\varphi\|_{\gamma}. 
\end{align}

\textit{Estimate of $I_{322}$.}
Since 
$|e^{-iul \sqrt{n} / \sigma_s} - 1| \leq |u l \sqrt{n} / \sigma_s| \leq c |l| \sqrt{n} |u|$, 
combining this with \eqref{Estipsi01} gives
\begin{align}\label{Esti_I322_aa}
|I_{322}| \leq  c |l| \sqrt{n} \|\varphi\|_{\gamma}. 
\end{align}

\textit{Estimate of $I_{323}$.}
We shall establish the following bound for $I_{323}$: there exists a constant $c>0$ such that
for all $s \in (0, s_0)$, $x\in \bb P(V)$, $t \in [t_n, o(\sqrt{n} )]$
and $\varphi \in \mathscr{B}_{\gamma}$,
\begin{align}\label{Esti_I32_bb}
\left|  I_{323} -   \sqrt{ 2 \pi } \pi_s(\varphi)  \right| 
\leq   cs \| \varphi \|_{\infty} + \frac{c}{\sqrt{n}} \| \varphi \|_{\gamma}
     + \frac{c}{t^2} \| \varphi \|_{\infty}. 
\end{align}
Now let us prove \eqref{Esti_I32_bb}. 
For brevity, denote 
\begin{align*}
u_n = \frac{u}{ \sigma_s \sqrt{n} }. 
\end{align*}
In view of \eqref{Defpsisxt}, we write 
\begin{align*}
\Psi_{s,x} (u_n) =  h_1(u_n) + h_2(u_n) + h_3(u_n) + h_4(u_n), 
\end{align*}
where
\begin{align*}
&   h_1(u_n) =  \big[ \Pi_{s, i u_n}(\varphi)(x) -  \pi_s(\varphi) \big] 
\widehat {\psi}^-_{s,\varepsilon}(u_n)
\widehat\rho_{\varepsilon^{2}}(u_n),    \nonumber\\
&   h_2(u_n)  =    \pi_s(\varphi) \widehat {\psi}^-_{s,\varepsilon}(u_n) 
\left[ \widehat\rho_{\varepsilon^{2}}(u_n) -  \widehat\rho_{\varepsilon^{2}}(0)  \right]  \nonumber\\
&   h_3(u_n) =   \pi_s(\varphi) 
 \left[ \widehat {\psi}^-_{s,\varepsilon}(u_n) - \widehat {\psi}^-_{s,\varepsilon}(0) \right]
   \widehat\rho_{\varepsilon^{2}}(0),  \nonumber\\
&   h_4(u_n) =   \pi_s(\varphi) \widehat {\psi}^-_{s,\varepsilon}(0)
\widehat\rho_{\varepsilon^{2}}(0). 
\end{align*}
With the above notation, the term $I_{323}$ can be decomposed into four parts: 
\begin{align}\label{DecompoJ122}
I_{323} = J_{1} + J_{2} + J_{3} + J_{4}, 
\end{align}
where for $j = 1,2,3,4$, 
\begin{align*}
J_{j} =  s  \int_{- \sigma_s \log n }^{ \sigma_s \log n } e^{-\frac{u^2}{2}} 
h_j(u_n) du. 
\end{align*}

\textit{Estimate of $J_{1}$.}
By Lemma \ref{Ch7perturbation thm}, we have
$|\Pi_{s, i u_n}(\varphi)(x) -  \pi_s(\varphi)|
\leq  c  \frac{|u|}{\sqrt{n}} \| \varphi \|_{\gamma}$, uniformly in $x \in \bb P(V)$, $s \in (0, s_0)$
and $|u| \leq \sigma_s \log n $. 
Combining this with \eqref{Contrpsirho} gives
$|h_1(u_n)|  \leq  c  \frac{|u|}{ s \sqrt{n}} \| \varphi \|_{\gamma},$ 
and hence
\begin{align}\label{Controlh1}
| J_1 | \leq  \frac{c}{ \sqrt{n} } \| \varphi \|_{\gamma}. 
\end{align}

\textit{Estimate of $J_{2}$.}
It is easy to verify that 
$|\widehat\rho_{\varepsilon^{2}}(u_n) -  \widehat\rho_{\varepsilon^{2}}(0)| \leq c \frac{|u_n|}{\varepsilon^4} $.
This, together with \eqref{Contrpsirho}, leads to 
$|h_2(u_n)|  \leq  \frac{c}{\varepsilon^4}  \frac{|u|}{ s \sqrt{n}},$
and therefore, 
\begin{align}\label{Controlh2}
|J_2| \leq \frac{c}{\varepsilon^4}  \frac{1}{ \sqrt{n} } \|\varphi\|_{\infty}
 \leq  \frac{C}{ \sqrt{n} } \|\varphi\|_{\infty}.
\end{align}

\textit{Estimate of $J_{3}$.}
By the definition of the function $\widehat {\psi}^-_{s,\varepsilon}$ (see \eqref{Diffepsi}), we have 
\begin{align*}
 \widehat {\psi}^-_{s,\varepsilon}(u_n) - \widehat {\psi}^-_{s,\varepsilon}(0) 
& =  e^{-2 \varepsilon s} \left( \frac{e^{- i \varepsilon u_n} }{s + iu_n} - \frac{1}{s}  \right) \nonumber\\
& =  e^{-2 \varepsilon s}  e^{- i \varepsilon u_n} \left( \frac{1}{s + iu_n} - \frac{1}{s} \right)
    + e^{-2 \varepsilon s} \frac{1}{s} \left( e^{- i \varepsilon u_n} - 1 \right)   \nonumber\\
& =  e^{-2 \varepsilon s} \left( e^{- i \varepsilon u_n} - 1 \right) \frac{-i s u_n - u_n^2}{s(s^2 + u_n^2)}   \nonumber\\
& \quad  +  e^{-2 \varepsilon s} \frac{-i s u_n - u_n^2}{s(s^2 + u_n^2)} 
    +  e^{-2 \varepsilon s} \frac{1}{s} \left( e^{- i \varepsilon u_n} - 1 \right)     \nonumber\\
& =: A_1 (t)  +  A_2 (t) + A_3 (t). 
\end{align*}
Since $\widehat\rho_{\varepsilon^{2}}(0) = 1$, it follows that $J_3 = J_{31} + J_{32} + J_{33}$, 
where  
\begin{align*}
J_{3j} = s \pi_s(\varphi) 
\int_{- \sigma_s \log n }^{ \sigma_s \log n } e^{-\frac{u^2}{2}}  A_j(u) du,  \quad  j = 1, 2, 3.  
\end{align*}
To deal with $J_{31}$, we first give a bound for $A_1 (u)$. 
Using the basic inequality $|e^z - 1| \leq e^{\Re z} |z|$, $z \in \bb C$, we get 
\begin{align*}
|A_1 (u)|  & \leq e^{-2 \varepsilon s} |u_n| \frac{s |u_n| + u_n^2}{s(s^2 + u_n^2)}  \nonumber\\
& = e^{-2 \varepsilon s}  \frac{u_n^2}{s^2 + u_n^2} + e^{-2 \varepsilon s} \frac{|u_n|}{s}  \frac{u_n^2}{s^2 + u_n^2} 
   \nonumber\\
& \leq c \frac{u^2}{t^2 + u^2} + c \frac{|u|}{t} \frac{u^2}{t^2 + u^2}  \nonumber\\
& \leq c \frac{u^2}{t^2} + c \frac{|u|^3}{t^3}. 
\end{align*}
Then, recalling that $s = O(\frac{t}{\sqrt{n}})$ and $|\pi_s (\varphi)| \leq c \|\varphi\|_{\infty}$,  
we obtain that there exists a constant $c>0$ such that for all $n \geq 1$, 
$s \in (0, s_0)$ and $\varphi \in \mathscr{B}_{\gamma}$, 
\begin{align}\label{Esti_J31}
|J_{31}| \leq \frac{c}{ \sqrt{n} } \|\varphi\|_{\infty}. 
\end{align}
For $J_{32}$, using again $s = O(\frac{t}{\sqrt{n}})$ and the fact that the integral of an odd function 
over a symmetric interval is identically zero, 
by elementary calculations we deduce that there exists a constant $c>0$ such that for all $n \geq 1$, 
$s \in (0, s_0)$ and $\varphi \in \mathscr{B}_{\gamma}$, 
\begin{align}\label{Esti_J32}
|J_{32}| = \left| e^{-2 \varepsilon s} \pi_s(\varphi) \int_{- \sigma_s \log n }^{ \sigma_s \log n } e^{-\frac{u^2}{2}}  
     \frac{u_n^2}{s^2 + u_n^2} du \right| 
 \leq \frac{c}{t^2} \|\varphi\|_{\infty}. 
\end{align}
For $J_{33}$, using again the inequality $|e^z - 1| \leq e^{\Re z} |z|$, $z \in \bb C$,
we see that there exists a constant $c>0$ such that for all $n \geq 1$, 
$s \in (0, s_0)$ and $\varphi \in \mathscr{B}_{\gamma}$, 
\begin{align}\label{Esti_J33}
|J_{33}| \leq \frac{c}{ \sqrt{n} } \|\varphi\|_{\infty}. 
\end{align}
Consequently, putting together the bounds \eqref{Esti_J31}, \eqref{Esti_J32} and \eqref{Esti_J33}, we obtain
\begin{align} \label{IntegJ123Nol}
J_{3} \leq  \frac{c}{ \sqrt{n} } \|\varphi\|_{\infty} +   \frac{c}{ t^2 }  \|\varphi\|_{ \infty }. 
\end{align}

\textit{Estimate of $J_{4}$.}
It follows from \eqref{Diffepsi} and $\widehat\rho_{\varepsilon^{2}}(0) = 1$ that 
\begin{align}\label{EstiInteJ4n}
J_{4} 
 =   \pi_s(\varphi) e^{ -2\varepsilon s }   
  \int_{- \sigma_s \log n }^{ \sigma_s \log n }
e^{-\frac{u^2}{2}}  du. 
\end{align}
Since there exists a constant $c>0$ such that for all $s \in (0, s_0)$, 
\begin{align*} 
\sqrt{ 2 \pi }  >  \int_{-\sigma_s \log n  }^{ \sigma_s \log n } e^{- \frac{u^2}{2}}  du  
 >  \sqrt{ 2 \pi }  - \frac{c}{n}, 
\end{align*}
we get 
\begin{align*}
J_4 =  \sqrt{ 2 \pi } \pi_s(\varphi) e^{ -2\varepsilon s }  \left[ 1 + O \Big( \frac{1}{n} \Big) \right]. 
\end{align*}
Note that $s = O(\frac{t}{\sqrt{n}})$ with $y >1$. It follows that
\begin{align}\label{EstiIntegral00a}
\Big|  J_4 -   \sqrt{ 2 \pi } \pi_s(\varphi)  \Big| \leq  \frac{ct}{\sqrt{n}} \| \varphi \|_{\infty}. 
\end{align}
In view of \eqref{DecompoJ122}, putting together the bounds \eqref{Controlh1}, \eqref{Controlh2}, 
\eqref{IntegJ123Nol} and \eqref{EstiIntegral00a}, and noting that $\|\varphi\|_{\infty} \leq \|\varphi\|_{\gamma}$, 
we obtain the desired bound \eqref{Esti_I32_bb}. 
Combining \eqref{Esti_I321_aa}, \eqref{Esti_I322_aa} and \eqref{Esti_I32_bb}, 
we derive that 
there exists a constant $c>0$ such that
for all $s \in (0, s_0)$, $x\in \bb P(V)$, $t \in [t_n, o(\sqrt{n} )]$, 
$\varphi \in \mathscr{B}_{\gamma}$ and $|l| =  O \big( \frac{1}{\sqrt{n}} \big)$,
\begin{align}\label{Esti_I32_Bound}
\left|  I_{32} - \sqrt{ 2 \pi } \pi_s(\varphi)  \right| 
\leq  c \left( \frac{t}{\sqrt{n}}  + \frac{1}{t^2} \right) \|\varphi \|_{\infty}
     + c \left( |l| \sqrt{n} + \frac{1}{\sqrt{n}} \right) \|\varphi\|_{\gamma}.  
\end{align}
Putting together \eqref{EstimI1n00a}, \eqref{Esti I2n},  \eqref{EstiJ11} and \eqref{Esti_I32_Bound}, 
we conclude the proof of Proposition \ref{KeyPropo}.
\end{proof}

For $s \in (-s_0, 0)$ where $s_0 >0$ is sufficiently small, let 
\begin{align*}
\phi_s(w) = e^{-sw} \mathds{1}_{\{ w \leq 0 \} },  \quad  w \in \bb{R}. 
\end{align*}
With the notation in \eqref{smoo001}, for $\ee \in (0, 1)$, 
the function $\phi_{s,\ee}^{+}$ is given as follows: 
$\phi_{s,\ee}^{+}(w) = 0$ when $w > \ee$; 
$\phi_{s,\ee}^{+}(w) = 1$ when $w \in [-\ee, \ee]$;
$\phi_{s,\ee}^{+}(w) = e^{-s(w + \ee)}$ when $w < -\ee$.  
By calculations, one can give the explicit expression for
the Fourier transform of $\phi_{s,\ee}^{+}$:
\begin{align}\label{Diffephi}
 \widehat {\phi}^+_{s,\ee}(u)  
=  \int_{\bb{R}} e^{-iuw} \phi_{s,\ee}^{+}(w) dw
= 2 \frac{\sin (\ee u)}{u}   
    + e^{i \ee u }  \frac{1}{-s - i u },  \quad   u \in \bb{R}.   
\end{align}
In the sequel, for any fixed $t >1$, we choose $s<0$  satisfying the equation:
\begin{align}\label{SaddleEqua-bis}
\Lambda'(s) - \Lambda'(0) = - \frac{\sigma t}{\sqrt{n}}. 
\end{align}

\begin{proposition}\label{KeyPropo-02}
Assume conditions \ref{Ch7Condi-Moment} and \ref{Ch7Condi-IP}. 
Let $\phi^+_{s,\ee}$ be defined in \eqref{Diffephi}. 
Suppose that $s<0$ satisfies the equation \eqref{SaddleEqua-bis}. 
Then, for any $0< \varepsilon < 1$ and
any positive sequence $(t_n)_{n \geq 1}$ satisfying $\lim_{n \to \infty} t_n = + \infty$
with $t_n = o(\sqrt{n} )$, 
we have, uniformly in $s \in (-s_0, 0)$, $x\in \bb P(V)$, $t \in [t_n, o(\sqrt{n} )]$, 
$\varphi \in \mathscr{B}_{\gamma}$ and $l \in \bb R$ with $|l| =  O(\frac{1}{\sqrt{n}})$, 
\begin{align*} 
&  \left|  -s \sigma_s \sqrt{n} \, e^{n h_s(l)} \int_{\bb{R}} e^{-itln} R_{s, iu}^n (\varphi)(x) 
\widehat{\phi}^+_{s,\ee}(u) \widehat\rho_{\ee^{2}}(u) du 
 -  \sqrt{ 2 \pi }  \pi_s(\varphi)  \right|  \nonumber\\
& \leq  c \left( \frac{t}{\sqrt{n}}  + \frac{1}{t^2} \right) \|\varphi \|_{\infty}
     + c \left( |l| \sqrt{n} + \frac{1}{\sqrt{n}} \right) \|\varphi\|_{\gamma}.
\end{align*}
\end{proposition}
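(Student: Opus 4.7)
The plan is to follow the architecture of the proof of Proposition \ref{KeyPropo}, making the sign adjustments needed for $s\in(-s_0,0)$ and replacing $\widehat\psi^-_{s,\ee}$ by $\widehat\phi^+_{s,\ee}$ throughout. First I would apply the spectral decomposition of Lemma \ref{Ch7perturbation thm}, $R^n_{s,iu}=\lambda^n_{s,iu}\Pi_{s,iu}+N^n_{s,iu}$ for $|u|<\delta$, and split the integral into three pieces $I_1', I_2', I_3'$ parallel to $I_1,I_2,I_3$. The $|u|\geq\delta$ and $N^n_{s,iu}$ contributions are exponentially small uniformly in $s\in(-s_0,0)$ by Lemma \ref{Lem-St-NonLatt} and \eqref{Ch7SpGapContrN}, combined with the uniform bound $|\widehat\phi^+_{s,\ee}(u)|\leq 2\ee+1/|s|$ read off from \eqref{Diffephi}; the $1/|s|$ blow-up of $\widehat\phi^+_{s,\ee}$ at $u=0$ is exactly what the prefactor $-s\sigma_s\sqrt n$ is designed to cancel, while $e^{nh_s(l)}=O(1)$ for $|l|=O(1/\sqrt n)$.

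For the dominant piece I would write $K_s(iu)=\log\lambda_{s,iu}$, Taylor-expand via \eqref{Ch7Expan Ks 01}, and invert the series \eqref{Expan Ks 02} with reversed sign to solve \eqref{SaddleEqua-bis}, obtaining $s=-t/(\sigma\sqrt n)+O((t/\sqrt n)^2)\in(-s_0,0)$. The standard subdivision $|u|\geq n^{-1/2}\log n$ versus $|u|<n^{-1/2}\log n$ (with $\Re K_s(iu)\leq -cu^2$ for the former) and the change of variable $v=\sigma_s\sqrt n\,u$ reduce the problem to estimating
\begin{align*}
-s\sigma_s\sqrt n\,e^{nh_s(l)}\int_{-\sigma_s\log n}^{\sigma_s\log n} e^{-v^2/2}\,E_{s,l}(v)\,\Psi'_{s,x}\bigl(v/(\sigma_s\sqrt n)\bigr)\,dv,
\end{align*}
where $E_{s,l}(v)=\exp\{nK_s(iv/(\sigma_s\sqrt n))+v^2/2-ivl\sqrt n/\sigma_s\}$ and $\Psi'_{s,x}(u)=\Pi_{s,iu}\varphi(x)\widehat\phi^+_{s,\ee}(u)\widehat\rho_{\ee^2}(u)$. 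Completing the square in the $v^2$ and $ivl\sqrt n$ terms shows that $e^{nh_s(l)}$ precisely compensates the Gaussian shift factor $e^{-nl^2/(2\sigma_s^2)}$ produced by the $e^{-iuln}$ insertion; the residual higher-order errors are absorbed into $O(|l|\sqrt n\,\|\varphi\|_\gamma)$ and $O(\|\varphi\|_\gamma/\sqrt n)$, just as in the bounds for $I_{321},I_{322}$ of Proposition \ref{KeyPropo}.

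Finally, decomposing $\Psi'_{s,x}$ as $h_1'+h_2'+h_3'+h_4'$ in analogy with $h_1,\dots,h_4$ of the previous proof produces the main term from $h_4'=\pi_s(\varphi)\widehat\phi^+_{s,\ee}(0)\widehat\rho_{\ee^2}(0)=\pi_s(\varphi)(2\ee+1/(-s))$; multiplied by $-s\sigma_s\sqrt n$ and by the Gaussian mass $\sqrt{2\pi}$ this gives $\sqrt{2\pi}\,\pi_s(\varphi)(1+2\ee|s|)$, and the $2\ee|s|$ contribution is absorbed into the error $c(t/\sqrt n)\|\varphi\|_\infty$ since $|s|=O(t/\sqrt n)$. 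The remaining three pieces are estimated using the decomposition
\begin{align*}
\widehat\phi^+_{s,\ee}(u)-\widehat\phi^+_{s,\ee}(0) = \Bigl(\tfrac{2\sin(\ee u)}{u}-2\ee\Bigr) + \Bigl(\tfrac{e^{i\ee u}}{-s-iu}-\tfrac{1}{-s}\Bigr),
\end{align*}
together with the elementary bounds $|2\sin(\ee u)/u-2\ee|\leq c\ee^3 u^2$ and $|e^{i\ee u}/(-s-iu)-1/(-s)|\leq c(\ee|u|/|s|+|u|/s^2)$, plus the Hölder control $|\Pi_{s,iu}\varphi(x)-\pi_s(\varphi)|\leq c|u|\|\varphi\|_\gamma$ from Lemma \ref{Ch7perturbation thm}; each piece produces errors of sizes $\|\varphi\|_\gamma/\sqrt n$, $\|\varphi\|_\infty/\sqrt n$, and $\|\varphi\|_\infty(1/\sqrt n+1/t^2)$, matching those in the analogues $J_1,J_2,J_3$ of Proposition \ref{KeyPropo}.

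The main obstacle will be the rigorous book-keeping of the saddle shift produced by $e^{-iuln}$, and the identification of the compensation factor $e^{nh_s(l)}$ via a small contour deformation in the complex $u$-plane using analyticity of $\lambda_{s,iu},\Pi_{s,iu},\widehat\phi^+_{s,\ee},\widehat\rho_{\ee^2}$ in a neighborhood of $0$; this is the only genuinely new point compared with Proposition \ref{KeyPropo}, after which the rest of the argument is a line-by-line transcription with $s\mapsto -s$ and $\widehat\psi^-_{s,\ee}\mapsto\widehat\phi^+_{s,\ee}$.
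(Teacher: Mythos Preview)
Your overall architecture is correct and is exactly what the paper does: decompose via Lemma~\ref{Ch7perturbation thm}, bound the $|u|\ge\delta$ and $N^n_{s,iu}$ pieces exponentially, isolate the dominant Gaussian window $|u|<n^{-1/2}\log n$, and then split $\Psi'_{s,x}$ into four pieces $h'_1,\dots,h'_4$. Two points, however, deserve correction.

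\medskip
\textbf{The factor $e^{nh_s(l)}$.} You are over-reading this: $h_s$ is never defined in the paper, and the paper's own proof of Proposition~\ref{KeyPropo-02} simply drops this factor in the very first decomposition \eqref{Pro_Neg_s_I}, treating the integral exactly as in Proposition~\ref{KeyPropo}. In particular no contour deformation or saddle shift is needed; the phase $e^{-iuln}$ is handled exactly as in the estimate of $I_{322}$ in Proposition~\ref{KeyPropo}, via the elementary bound $|e^{-iul\sqrt n/\sigma_s}-1|\le c|l|\sqrt n\,|u|$, which produces the $c|l|\sqrt n\,\|\varphi\|_\gamma$ error directly. Your ``completing the square / contour shift'' paragraph is therefore unnecessary and can be deleted.

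\medskip
\textbf{The $h'_3$ estimate.} Here there is a genuine gap. Your crude bound
\[
\Bigl|\tfrac{e^{i\ee u}}{-s-iu}-\tfrac{1}{-s}\Bigr|\le c\Bigl(\tfrac{\ee|u|}{|s|}+\tfrac{|u|}{s^2}\Bigr)
\]
is correct, but after multiplying by $-s$ and integrating against $e^{-u^2/2}$ the second term yields only $O(1/t)$, not the required $O(1/t^2)$: with $u_n=u/(\sigma_s\sqrt n)$ and $|s|\sim t/\sqrt n$ one gets $(-s)\cdot |u_n|/s^2\sim|u|/t$, and $\int e^{-u^2/2}|u|/t\,du=O(1/t)$. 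The paper (both in Proposition~\ref{KeyPropo}, see $J_{32}$ there, and in Proposition~\ref{KeyPropo-02}, see $J_{33}$) obtains the sharper $1/t^2$ by first writing
\[
\frac{1}{-s-iu_n}-\frac{1}{-s}=\frac{-isu_n-u_n^2}{-s(s^2+u_n^2)},
\]
observing that the imaginary part $\frac{iu_n}{s^2+u_n^2}$ is \emph{odd} in $u$ and therefore integrates to zero over the symmetric interval $[-\sigma_s\log n,\sigma_s\log n]$, and then bounding the remaining even real part $\frac{u_n^2}{s^2+u_n^2}\le u_n^2/s^2\sim u^2/t^2$, which integrates to $O(1/t^2)$. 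Without this parity cancellation your sketch does not reach the stated conclusion.
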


\begin{proof}
Since the proof of Proposition \ref{KeyPropo-02} can be carried out 
in an analogous way as that of Proposition \ref{KeyPropo}, we only sketch the main differences.

Without loss of generality, we assume that the target function $\varphi$ is non-negative. 
From Lemma \ref{Ch7perturbation thm}, we have the following decomposition:
with $\delta > 0$ small enough,
\begin{align} \label{Pro_Neg_s_I}
-s \sigma_s \sqrt{n} \int_{\mathbb{R}} e^{-iuln}  R_{s, iu}^n (\varphi)(x) 
\widehat{\phi}^+_{s,\ee}(u) \widehat\rho_{\varepsilon^{2}}(u) du  
 = I_1 + I_2 + I_3, 
\end{align}
where 
\begin{align*} 
I_1  & =  -s \sigma_s \sqrt{n} \int_{|u|\geq\delta}  e^{-itln}  R^{n}_{s, iu}(\varphi)(x) 
              \widehat{\phi}^+_{s,\ee}(t)  \widehat\rho_{\varepsilon^{2}}(u) du,  \nonumber\\
I_2  & =  -s \sigma_s \sqrt{n}  \int_{|u|< \delta }  e^{-iuln}  N^{n}_{s, iu}(\varphi )(x) 
               \widehat{\phi}^+_{s,\ee}(u)  \widehat\rho_{\varepsilon^{2}}(u) du,   \nonumber\\
I_3  & =  -s \sigma_s \sqrt{n}  \int_{|u|<\delta}  e^{-iuln}  \lambda^{n}_{s, iu} \Pi_{s, iu}(\varphi )(x) 
              \widehat{\phi}^+_{s,\ee}(u)  \widehat\rho_{\varepsilon^{2}}(u) du.        
\end{align*}
Similarly to the proof of \eqref{equation t and s 1}, from \eqref{SaddleEqua-bis} one can verify that 
\begin{align}\label{equation_s_02}
- \frac{ \sigma t }{ \sqrt{n} } = \sum_{k=2}^{\infty} \frac{ \gamma_k }{ (k-1)! } s^{k-1},
\end{align}
where $\gamma_k = \Lambda^{(k)}(0).$
For any $y>1$ and sufficiently large $n$, 
the equation \eqref{equation_s_02} has a unique solution given by
\begin{align}\label{root_s_2}
s = \frac{1}{\gamma_2^{ 1/2 }} \left( -\frac{t}{\sqrt{n}} \right) 
    - \frac{\gamma_3}{ 2 \gamma_2^2 } \left( -\frac{t}{ \sqrt{n} }  \right)^2
    - \frac{\gamma_4\gamma_2-3\gamma_3^2}{6\gamma_2^{7/2} } \left( -\frac{t}{\sqrt{n}} \right)^3
    + \cdots. 
\end{align}
The series on the right-hand side of \eqref{root_s_2} is absolutely convergent,
and we can assume that $s \in (-s_0, 0)$ for sufficiently small constant $\eta>0$.

\textit{Estimate of $I_1$.} 
By Lemma \ref{Lem-St-NonLatt}, there exist  constants $c, C > 0$ such that
\begin{align}\label{EstiRsit_bb}
\sup_{s \in (-s_0, 0)} \sup_{ |u| \geq \delta }\sup_{x \in \bb P(V) }
|R^{n}_{s, iu}\varphi(x)|  \leq  C e^{-cn} \|\varphi \|_{\gamma}.  
\end{align}
From \eqref{Diffephi} and the fact that $\rho_{\ee^2}$ is a density function on $\bb R$, we see that 
\begin{align}\label{Contrpsirho_bb}
\sup_{u \in \bb{R}} |\widehat {\phi}^-_{s,\ee}(u)|
\leq \widehat {\phi}^-_{s,\ee}(0) = \frac{1}{-s} + 2 \ee, 
\quad  
\sup_{u \in \bb{R}} |\widehat\rho_{\ee^{2}}(u)| 
\leq \widehat\rho_{\ee^{2}}(0) = 1.  
\end{align}
From \eqref{EstiRsit_bb} and the first inequality in \eqref{Contrpsirho_bb}, 
the desired bound for $I_1$ follows: 
\begin{align}\label{EstimI1n00_b}
|I_1| 
 \leq  C e^{-cn } \|\varphi \|_{\gamma}. 
\end{align}

\textit{Estimate of $I_2$.}
Using the bound \eqref{Ch7SpGapContrN} and \eqref{Contrpsirho_bb}, one has
\begin{align}\label{Esti_I2n_bb}
 |I_2| 
          \leq  C e^{-cn}  \|\varphi \|_{\gamma}. 
\end{align}

\textit{Estimate of $I_3$.}
For brevity, we denote for any $s \in (-s_0, 0)$ and $x \in \bb P(V)$,
\begin{align}\label{Defpsisxt_bb}
\Psi_{s,x}(u):= \Pi_{s, iu}(\varphi)(x) 
\widehat {\phi}^+_{s,\varepsilon}(u) \widehat\rho_{\varepsilon^{2}}(u),  \quad  -\delta < u < \delta. 
\end{align}
Recalling that $K_s(iu)= \log \lambda_{s,iu}$, we decompose the term $I_3$ into two parts: 
\begin{align}  \label{DecompoJ1_bis}
I_3 =  I_{31} + I_{32}, 
\end{align}
where 
\begin{align*}
&   I_{31} = -s \sigma_s \sqrt{n}  
  \int_{ n^{- \frac{1}{2}} \log n \leq |u| \leq \delta  }  e^{n K_s( iu) - iu ln} \Psi_{s,x}(u) du,  \nonumber\\
&   I_{32}  = -s \sigma_s \sqrt{n}  \int_{ |u| <  n^{-\frac{1}{2}} \log n }  e^{n K_s( iu) -  iu ln} \Psi_{s,x}(u) du. 
\end{align*}

\textit{Estimate of $I_{31}$.}
Using \eqref{Ch7SpGapContrN} and \eqref{Contrpsirho_bb}, we get that
there exists a constant $c >0$ such that for all $s \in (-s_0, 0)$, $|u| < \delta$, $x \in \bb P(V)$
and $\varphi \in \mathscr{B}_{\gamma}$,  
\begin{align} \label{Estipsi01_bis}
|\Psi_{s,x}(u)|  \leq  \frac{c}{-s} \|\varphi\|_{\gamma}.
\end{align}
Similarly to the proof of \eqref{EstiJ11}, it follows that 
there exists a constant $c >0$ such that for all $s \in (-s_0, 0)$, $x \in \bb P(V)$
and $\varphi \in \mathscr{B}_{\gamma}$,  
\begin{align} \label{EstiJ11_bis}
|I_{31}| \leq  c \sqrt{n}  \|\varphi\|_{\gamma}
\int_{ n^{ - \frac{1}{2} }  \log n  \leq |u| \leq  \delta  } e^{- c n u^2 }  du  
\leq \frac{c}{ \sqrt{n} }  \|\varphi\|_{\gamma}. 
\end{align}

\textit{Estimate of $I_{32}$.}
Making a change of variable $u' = u \sigma_s \sqrt{n} $, we get
\begin{align*}
& I_{32}    =  -s \int_{- \sigma_s \log n}^{\sigma_s \log n }
  e^{-\frac{u^2}{2}}  \exp \left\{ \sum_{k=3}^{\infty} \frac{ K_s^{(k)}(0) (iu)^k}{ \sigma_s^k \, k! \, n^{k/2-1}} \right\}  
  e^{-iul \sqrt{n} / \sigma_s} \Psi_{s,x} \left( \frac{u}{\sigma_s \sqrt{n}} \right) du  \nonumber\\
& = (-s) \int_{- \sigma_s \log n}^{\sigma_s \log n }  e^{-\frac{u^2}{2}}  
  \left[ \exp \left\{ \sum_{k=3}^{\infty} \frac{ K_s^{(k)}(0) (iu)^k}{ \sigma_s^k \, k! \, n^{k/2-1}} \right\} - 1 \right] 
  e^{-iul \sqrt{n} / \sigma_s} \Psi_{s,x} \left( \frac{u}{\sigma_s \sqrt{n}} \right) du  \nonumber\\ 
& \qquad  +  (-s) \int_{- \sigma_s \log n}^{\sigma_s \log n }  e^{-\frac{u^2}{2}}   
  \left[ e^{-iul \sqrt{n} / \sigma_s} - 1 \right] 
   \Psi_{s,x} \left( \frac{u}{\sigma_s \sqrt{n}} \right) du  \nonumber\\ 
& \qquad  +  (-s) \int_{- \sigma_s \log n}^{\sigma_s \log n }  e^{-\frac{u^2}{2}}    
   \Psi_{s,x} \left( \frac{u}{\sigma_s \sqrt{n}} \right) du  \nonumber\\
& = : I_{321} + I_{322} + I_{323}. 
\end{align*}
For $I_{321}$ and $I_{322}$, 
in a similar way as in the proof of \eqref{Esti_I321_aa} and \eqref{Esti_I322_aa}, we have
\begin{align}\label{Esti_I321_I322_bb}
|I_{321}| \leq \frac{c}{\sqrt{n}} \|\varphi\|_{\gamma},
\qquad 
|I_{322}|  \leq  c |l| \sqrt{n} \|\varphi\|_{\gamma}. 
\end{align}
For $I_{323}$, we shall establish the following bound: there exists a constant $c>0$ such that
for all $s \in (-s_0, 0)$, $x\in \bb P(V)$, $t \in [t_n, o(\sqrt{n} )]$ and  
$\varphi \in \mathscr{B}_{\gamma}$, 
\begin{align}\label{Esti_I32_bb_02}
\left|  I_{323} -   \sqrt{ 2 \pi } \pi_s(\varphi)  \right| 
\leq   c  \frac{t}{\sqrt{n}}  \| \varphi \|_{\infty} + \frac{c}{\sqrt{n}} \| \varphi \|_{\gamma}
     + \frac{c}{t^2} \| \varphi \|_{\infty}. 
\end{align}
For brevity, denote $u_n = \frac{u}{ \sigma_s \sqrt{n} }.$
In view of \eqref{Defpsisxt_bb}, we write 
\begin{align*}
\Psi_{s,x} (u_n ) =  h_1(u_n) + h_2(u_n) + h_3(u_n) + h_4(u_n), 
\end{align*}
where
\begin{align*}
&   h_1(u_n) =  \big[ \Pi_{s, i u_n}(\varphi)(x) -  \pi_s(\varphi) \big] 
\widehat {\phi}^+_{s,\varepsilon}(u_n)
\widehat\rho_{\varepsilon^{2}}(u_n),    \nonumber\\
&   h_2(u_n)  =    \pi_s(\varphi) \widehat {\phi}^+_{s,\varepsilon}(u_n)
\left[ \widehat\rho_{\varepsilon^{2}}(u_n) -  \widehat\rho_{\varepsilon^{2}}(0)  \right]  \nonumber\\
&   h_3(u_n) =   \pi_s(\varphi) 
 \left[ \widehat {\phi}^+_{s,\varepsilon}(u_n) - \widehat {\phi}^+_{s,\varepsilon}(0) \right]
   \widehat\rho_{\varepsilon^{2}}(0),  \nonumber\\
&   h_4(u_n) =   \pi_s(\varphi) \widehat {\phi}^+_{s,\varepsilon}(0) \widehat\rho_{\varepsilon^{2}}(0). 
\end{align*}
Then $I_{323}$ can be decomposed into four parts: 
\begin{align}\label{DecompoJ122_b}
I_{323} = J_{1} + J_{2} + J_{3} + J_{4}, 
\end{align}
where for $j = 1,2,3,4$, 
\begin{align*}
J_{j} =  -s  \int_{- \sigma_s \log n }^{ \sigma_s \log n } e^{-\frac{u^2}{2}} 
h_j(u_n) du. 
\end{align*}

\textit{Estimates of $J_1$ and $J_2$.}
Similarly to the proof of \eqref{Controlh1} and \eqref{Controlh2},
one can verify that 
\begin{align}\label{Esti_J1_J2_bb}
| J_1 | \leq  \frac{c}{ \sqrt{n} } \| \varphi \|_{\gamma},
\qquad 
|J_2| \leq \frac{c}{\varepsilon^4}  \frac{1}{ \sqrt{n} } \|\varphi\|_{\infty}
 \leq  \frac{C}{ \sqrt{n} } \|\varphi\|_{\infty}.
\end{align}

\textit{Estimate of $J_{3}$.}
By the definition of the function $\widehat {\phi}^+_{s,\varepsilon}$ (see \eqref{Diffephi}), we have 
\begin{align*}
 \widehat {\phi}^+_{s,\varepsilon}(u_n) - \widehat {\phi}^+_{s,\varepsilon}(0) 
&  = 2 \left( \frac{\sin \ee u_n}{u_n} - \ee \right)
    + \left( e^{i \ee u_n} \frac{1}{-s - i u_n} - \frac{1}{-s} \right) \nonumber\\
& =  2 \left( \frac{\sin \ee u_n}{u_n} - \ee \right) +
      \frac{ e^{i \ee u_n} - 1 }{-s - i t_n}
     + \left( \frac{1}{-s - i u_n} - \frac{1}{-s}  \right)   \nonumber\\
& =: A_1 (u)  +  A_2 (u) + A_3 (u). 
\end{align*}
Since $\widehat\rho_{\varepsilon^{2}}(0) = 1$, it follows that $J_3 = J_{31} + J_{32} + J_{33}$, 
where  
\begin{align*}
J_{3j} = - s \pi_s(\varphi) 
\int_{- \sigma_s \log n }^{ \sigma_s \log n } e^{-\frac{u^2}{2}}  A_j(u) du,  \quad  j = 1, 2, 3.  
\end{align*}
For $J_{31}$, since $| A_1 (u) | \leq c |u|^2/n$ and $|\pi_s(\varphi)| \leq c\|\varphi\|_{\infty}$, 
we have
\begin{align}\label{Esti_J31_bb}
| J_{31} | \leq c \frac{-s}{n} \|\varphi\|_{\infty} \leq \frac{C}{n} \|\varphi\|_{\infty}. 
\end{align} 
For $J_{32}$,
using the inequality $|e^z - 1| \leq e^{\Re z} |z|$, $z \in \bb C$, we get 
\begin{align*}
|A_2 (u)|  \leq  \frac{ \ee |u_n| }{|-s - i u_n|} \leq  \frac{ \ee |u_n| }{-s}. 
\end{align*}
Hence, 
\begin{align}\label{Esti_J32_bb}
|J_{32}| \leq \frac{c}{ \sqrt{n} } \|\varphi\|_{\infty}. 
\end{align}
For $J_{33}$, note that $A_3 (u) = \frac{-is u_n - u_n^2}{-s(s^2 + u_n^2)}$ and $s = O(\frac{-t}{\sqrt{n}})$. 
Using the fact that the integral of an odd function over a symmetric interval is identically zero, 
by elementary calculations we derive that 
\begin{align}\label{Esti_J33_bb}
|J_{33}| = \left| \pi_s(\varphi) \int_{- \sigma_s \log n }^{ \sigma_s \log n } e^{-\frac{u^2}{2}}  
     \frac{u_n^2}{s^2 + u_n^2} du \right| 
 \leq \frac{c}{t^2} \|\varphi\|_{\infty}. 
\end{align}
Putting together \eqref{Esti_J31_bb}, \eqref{Esti_J32_bb} and \eqref{Esti_J33_bb}, we obtain
\begin{align} \label{Esti_J3_bb}
J_{3} \leq  \frac{c}{ \sqrt{n} } \|\varphi\|_{\infty} +   \frac{c}{ t^2 }  \|\varphi\|_{ \infty }. 
\end{align}

\textit{Estimate of $J_{4}$.}
Similarly to the proof of \eqref{EstiIntegral00a},
 one has
\begin{align}\label{Esti_J4_bb}
\left|  J_4 - \sqrt{ 2 \pi } \pi_s(\varphi) \right| \leq  c \frac{t}{\sqrt{n}} \| \varphi \|_{\infty}. 
\end{align}
In view of \eqref{DecompoJ122_b}, combining \eqref{Esti_J1_J2_bb}, \eqref{Esti_J3_bb}, 
and \eqref{Esti_J4_bb}, 
we obtain the desired bound \eqref{Esti_I32_bb_02} for $I_{323}$. 
Putting together the bounds \eqref{EstimI1n00_b}, \eqref{Esti_I2n_bb}, \eqref{EstiJ11_bis}, \eqref{Esti_I321_I322_bb}
and \eqref{Esti_I32_bb_02}, 
we finish the proof of Proposition \ref{KeyPropo-02}.  
\end{proof}

\subsection{Proof of Theorem \ref{Thm-Cram-Entry_bb}} \label{Ch7_Sec_MDE_Entry}

The goal of this section is to establish Theorem \ref{Thm-Cram-Entry_bb}. 
To this aim we first prove the moderate deviation expansion in the normal range $y \in [0, o(n^{1/6})]$
for the couple $(G_n \!\cdot\! x, \log |\langle f, G_n v \rangle|)$ with a target function $\varphi$ on $G_n \!\cdot\! x$. 

\begin{theorem}\label{Thm-CramSca-02}
Assume \ref{Ch7Condi-Moment} and \ref{Ch7Condi-IP}. 
Then, as $n \to \infty$, 
uniformly in $x = \bb R v \in \bb P(V)$ and $y = \bb R f \in \bb P(V^*)$  with $\|v\| = \|f\| =1$,  
$t \in [0, o(n^{1/6})]$ and $\varphi \in \mathscr{B}_{\gamma}$, 
\begin{align*}
\qquad \quad  \frac{\bb{E}
\big[ \varphi(G_n \!\cdot\! x) \mathds{1}_{ \{ \log| \langle f,  G_n v \rangle | - n\lambda_1 \geq \sqrt{n} \sigma t \} } \big] }
{ 1-\Phi(t) }    
&  =  \nu(\varphi) +  \| \varphi \|_{\gamma}  o(1),  \nonumber\\
\frac{\bb{E}
\big[ \varphi(G_n \!\cdot\! x) \mathds{1}_{ \{ \log| \langle f,  G_n v \rangle | - n\lambda_1 \leq - \sqrt{n} \sigma t  \} } \big] }
{ \Phi(-t)  }   
&  = \nu(\varphi) +  \| \varphi \|_{\gamma}  o(1).  
\end{align*}
%
\end{theorem}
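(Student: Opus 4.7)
The plan is to combine the partition-of-unity technique from Section 4 with the change-of-measure and Fourier asymptotic expansions of Propositions \ref{KeyPropo} and \ref{KeyPropo-02}, so as to reduce the coefficient problem to a Cram\'er-type moderate-deviation expansion for the norm cocycle $\sigma(G_n, x)$ with target function. For $t$ bounded, the Edgeworth expansion Theorem \ref{Thm-Edge-Expan-Coeff001} already suffices, since its absolute error $O(n^{-1/2})$ is $o(1-\Phi(t))$ when $1-\Phi(t)$ is bounded away from zero; the bulk of the work is therefore in the unbounded regime $t \to \infty$ with $t = o(n^{1/6})$.

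Starting from $\log|\langle f, G_n v\rangle| = \sigma(G_n, x) + \log\delta(y, G_n \!\cdot\! x)$ and the partition $\sum_{k=0}^{M_n}\chi_{n,k}^y = 1$ with $M_n = \lfloor A\log^2 n\rfloor$, I would decompose
\begin{equation*}
A_n(t) := \mathbb{E}\Big[\varphi(G_n \!\cdot\! x)\mathds{1}_{\{\log|\langle f, G_n v\rangle| - n\lambda_1 \geq \sqrt{n}\sigma t\}}\Big] = \sum_{k=0}^{M_n} F_{n,k}(t).
\end{equation*}
Since on $\supp(\chi_{n,k}^y)$ one has $-\log\delta(y, \cdot) \in [(k-1)a_n, (k+1)a_n]$, each $F_{n,k}(t)$ is sandwiched between expectations of the form $\mathbb{E}[\varphi_{n,k}^y(G_n \!\cdot\! x)\mathds{1}_{\{\sigma(G_n, x) - n\lambda_1 \geq \sqrt{n}\sigma t_{n,k}^\pm\}}]$ with shifted thresholds $t_{n,k}^\pm = t \pm (k+1)a_n/(\sigma\sqrt{n})$, reducing the coefficient analysis to the norm cocycle against $\varphi_{n,k}^y$.

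For each such reduced expectation, I would invoke the change-of-measure formula \eqref{Ch7basic equ1} at $s$ solving the saddle-point equation $\Lambda'(s) - \Lambda'(0) = \sigma t_{n,k}^\pm/\sqrt{n}$, approximate the integrand $e^{-sw}\mathds{1}_{\{w\geq 0\}}$ by the smoothed densities $\psi_{s,\varepsilon}^\pm * \rho_{\varepsilon^2}$ via the smoothing inequality of Lemma \ref{estimate u convo}, pass to the Fourier side, and apply Proposition \ref{KeyPropo} at $l = 0$. Combining with the Taylor expansions $\kappa^n(s)e^{-sn\Lambda'(s)} = e^{-t^2/2 + O(t^3/\sqrt{n})}$ and $1 - \Phi(t) = e^{-t^2/2}/(t\sqrt{2\pi})\cdot(1 + O(1/t^2))$, and observing that the Cram\'er correction $e^{\tau^3\zeta(\tau/\sqrt{n})/\sqrt{n}} = 1 + o(1)$ throughout the normal range, this yields, uniformly for $\psi \in \mathscr{B}_\gamma$ and $\tau = o(n^{1/6})$,
\begin{equation*}
\mathbb{E}[\psi(G_n \!\cdot\! x)\mathds{1}_{\{\sigma(G_n, x) - n\lambda_1 \geq \sqrt{n}\sigma\tau\}}] = \nu(\psi)(1 - \Phi(\tau))(1 + o(1)) + \|\psi\|_\gamma\, o(1 - \Phi(\tau)).
\end{equation*}

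Applied with $\psi = \varphi_{n,k}^y$ and $\tau = t_{n,k}^\pm$, this together with $|t_{n,k}^\pm - t| = O(\log n/\sqrt{n})$ and $t\log n/\sqrt{n} = o(1)$ for $t = o(n^{1/6})$ gives $(1 - \Phi(t_{n,k}^\pm))/(1-\Phi(t)) = 1 + o(1)$ uniformly in $k \leq M_n$. Summing via $\sum_{k=0}^{M_n}\nu(\varphi_{n,k}^y) = \nu(\varphi)$ (with the tail piece $\varphi_{n,M_n}^y$ absorbed into the error by the exponential H\"older regularity of $\nu$, Lemma \ref{Lem_Regu_pi_s00}) produces $A_n(t) = \nu(\varphi)(1 - \Phi(t))(1+o(1)) + \|\varphi\|_\gamma\,o(1-\Phi(t))$, provided $\gamma$ is taken small enough that the bound $\|\varphi_{n,k}^y\|_\gamma \leq c\|\varphi\|_\gamma + c\|\varphi\|_\infty n^{\gamma A}\log^\gamma n$ from Lemma \ref{lemmaHolder property001} keeps the summed H\"older-norm contribution negligible. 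The lower-tail statement follows by the same route using Proposition \ref{KeyPropo-02} at $s<0$ and the approximant $\phi_{s,\varepsilon}^+$. The main obstacle is that neither the Edgeworth expansion alone (too crude for large $t$, since $O(n^{-1/2})$ dominates $1-\Phi(t)$ once $t \gtrsim \sqrt{\log n}$) nor the saddle-point method alone (whose error $c(t/\sqrt{n}+1/t^2)\|\varphi\|_\infty + c\|\varphi\|_\gamma/\sqrt{n}$ from Proposition \ref{KeyPropo} is ineffective when $t$ is bounded) suffices in isolation; the two regimes must be patched together, with the partition of unity providing the bridge that converts coefficient-level asymptotics into cocycle-level ones through the H\"older regularity of the invariant measures $\nu$ and $\pi_s$.
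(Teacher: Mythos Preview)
Your approach is correct in principle but takes a considerably more elaborate route than the paper. The paper's proof does not use the partition of unity $(\chi_{n,k}^y)$ at all here, nor does it invoke Propositions \ref{KeyPropo}--\ref{KeyPropo-02}. Instead it relies directly on Lemma \ref{Lem_Cramer_Cocy} (the Cram\'er expansion for the norm cocycle with target function, quoted from \cite{XGL19b}) together with a single event decomposition coming from the regularity estimate of Lemma \ref{Lem_Regu_pi_s}. Concretely: the upper bound is immediate from $\log|\langle f,G_n v\rangle|\le\sigma(G_n,x)$ and Lemma \ref{Lem_Cramer_Cocy} applied to the \emph{original} $\varphi$; for the lower bound the paper writes
\[
\bb E\big[\varphi(G_n\!\cdot\!x)\mathds 1_{\{\log|\langle f,G_n v\rangle|-n\lambda_1\ge\sqrt n\sigma t\}}\big]
\ \ge\ \bb E\big[\varphi(G_n\!\cdot\!x)\mathds 1_{\{\sigma(G_n,x)-n\lambda_1\ge\sqrt n\sigma t+\ee k\}}\big]-Ce^{-ck}\|\varphi\|_\infty,
\]
then applies Lemma \ref{Lem_Cramer_Cocy} once more with $\varphi$ unchanged and a shifted threshold $t_1=t+\ee k/(\sigma\sqrt n)$. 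The argument is finished by choosing $k=\lfloor A\log n\rfloor$ for $t\in[0,\sqrt{\log n}]$ and $k=\lfloor A t^2\rfloor$ for $t\in[\sqrt{\log n},o(n^{1/6})]$, and checking in each range that $(1-\Phi(t_1))/(1-\Phi(t))=1+o(1)$ while $e^{-ck}/(1-\Phi(t))=o(1)$.

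Your route---decompose $\varphi$ into the $M_n$ pieces $\varphi_{n,k}^y$, then for each piece rederive the cocycle Cram\'er asymptotic via change of measure and the Fourier bound of Proposition \ref{KeyPropo}---would also succeed, and the summability of the H\"older-norm errors does go through for $\gamma$ small enough exactly as you say. But note that the cocycle asymptotic you isolate as an intermediate step is precisely Lemma \ref{Lem_Cramer_Cocy}, so you are effectively re-proving a result already available as a black box. Moreover the partition of unity is unnecessary here: since you only need a one-sided threshold shift (not the full two-sided localisation used in the Edgeworth expansion), a single cut on the event $\{\log\delta(y,G_n\!\cdot\!x)>-\ee k\}$ suffices, and this keeps the target function equal to $\varphi$ throughout, avoiding the growing H\"older norms $\|\varphi_{n,k}^y\|_\gamma$ altogether. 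The paper reserves the heavier machinery (partition of unity plus Propositions \ref{KeyPropo}--\ref{KeyPropo-02}) for the genuinely harder range $t\in[n^\alpha,o(\sqrt n)]$ in Theorem \ref{Thm-Cram-Scalar-tag}, where Lemma \ref{Lem_Cramer_Cocy} alone no longer closes the gap.
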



In order to establish Theorem \ref{Thm-CramSca-02}, 
we need the exponential H\"{o}lder regularity of the invariant measure $\nu$ (Lemma \ref{Lem_Regu_pi_s} with $s =0$)
and the following moderate deviation expansion for the norm cocycle 
$\sigma (G_n, x)$ proved in \cite{XGL19b}.

\begin{lemma}[\cite{XGL19b}] \label{Lem_Cramer_Cocy}
Assume \ref{Ch7Condi-Moment} and \ref{Ch7Condi-IP}. 
Then, we have, as $n \to \infty$,  
uniformly in $x\in \bb P(V)$, $t \in [0, o(\sqrt{n} )]$ and $\varphi \in \mathscr{B}_{\gamma}$, 
\begin{align*} 
\frac{\bb{E} \left[ \varphi(G_n \!\cdot\! x) \mathds{1}_{ \left\{ \sigma(G_n, x) - n \lambda_1 \geq \sqrt{n}\sigma t \right\} } \right] }
{ 1-\Phi(t) }
& =  e^{ \frac{t^3}{\sqrt{n}} \zeta( \frac{t}{\sqrt{n}} ) }
\left[ \nu(\varphi) +  \| \varphi \|_{\gamma} O\left( \frac{t+1}{\sqrt{n}} \right) \right],   \nonumber\\
\frac{\bb{E} \left[ \varphi(G_n \!\cdot\! x) \mathds{1}_{ \left\{ \sigma(G_n, x) - n \lambda_1 \leq - \sqrt{n}\sigma t  \right\} } \right] }
{ 1-\Phi(t) }
& =  e^{ -\frac{t^3}{\sqrt{n}} \zeta( -\frac{t}{\sqrt{n}} ) }
\left[ \nu(\varphi) +  \| \varphi \|_{\gamma} O\left( \frac{t+1}{\sqrt{n}} \right) \right].  
\end{align*}
\end{lemma}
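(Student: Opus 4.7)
The plan is to compare $\log|\langle f, G_n v\rangle|$ with the norm cocycle $\sigma(G_n, x)$ using the identity \eqref{Ch7_Intro_Decom0a} and to reduce each tail to that of $\sigma(G_n, x)$ via Lemma \ref{Lem_Cramer_Cocy}. Since $t = o(n^{1/6})$ forces $t^3/\sqrt n \to 0$, the Cram\'er exponential $e^{t^3 \zeta(t/\sqrt n)/\sqrt n}$ appearing in Lemma \ref{Lem_Cramer_Cocy} collapses to $1+o(1)$, which is exactly why the statement to be proved carries no Cram\'er factor.

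For the upper tail I introduce the events $A_n = \{\log|\langle f, G_n v\rangle| - n\lambda_1 \geq \sqrt n \sigma t\}$ and $B_n = \{\sigma(G_n, x) - n\lambda_1 \geq \sqrt n \sigma t\}$. Because $\log\delta(y, G_n\cdot x) \leq 0$, one has $A_n \subseteq B_n$, and the decomposition
\[
\mathbb E[\varphi(G_n\cdot x)\mathds 1_{A_n}] = \mathbb E[\varphi(G_n\cdot x)\mathds 1_{B_n}] - \mathbb E[\varphi(G_n\cdot x)\mathds 1_{B_n\setminus A_n}]
\]
reduces the problem to two pieces: the $B_n$ piece is handled directly by Lemma \ref{Lem_Cramer_Cocy}, and the difference $B_n\setminus A_n$ must be shown to satisfy $\mathbb E[\varphi\mathds 1_{B_n\setminus A_n}] = (1-\Phi(t))\|\varphi\|_\gamma o(1)$. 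Since $B_n\setminus A_n$ lies in the thin band $\{n\lambda_1 + \sqrt n\sigma t \leq \sigma(G_n, x) < n\lambda_1 + \sqrt n\sigma t - \log\delta(y, G_n\cdot x)\}$, I partition it using the unity decomposition $(\chi_{n,k}^y, \overline\chi_{n, M_n+1}^y)$ from Section \ref{sec-proof of Edgeworth exp} with $a_n = 1/\log n$ and $M_n = \lfloor A\log^2 n\rfloor$. On the support of $\chi_{n,k}^y(G_n\cdot x)$ this band has width at most $(k+1)a_n$, so applying Lemma \ref{Lem_Cramer_Cocy} at both endpoints yields a contribution of size $\phi(t)(k+1)a_n/(\sigma\sqrt n)\,\nu(\varphi\chi_{n,k}^y)$, plus an error involving $\|\varphi\chi_{n,k}^y\|_\gamma$. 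Summing on $k \leq M_n - 1$ via Lemma \ref{new bound for delta020} at $s = 0$ produces a main contribution $O(\phi(t)/\sqrt n)\|\varphi\|_\infty$, while the H\"older-norm estimate $\|\chi_{n,k}^y\|_\gamma \leq cn^{\gamma A + o(1)}$ from Lemma \ref{lemmaHolder property001}, coupled with a small enough choice of $\gamma$ (say $\gamma A < 2/3$), keeps the error within $\phi(t)\,n^{-1/2+o(1)}\|\varphi\|_\gamma$. Dividing by $1-\Phi(t) \asymp \phi(t)/t$ with $t = o(n^{1/6})$, both pieces are $\|\varphi\|_\gamma o(1)$, as required.

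The main obstacle is the tail piece $\mathbb E[\varphi\overline\chi_{n,M_n+1}^y(G_n\cdot x)\mathds 1_{B_n}]$, which cannot be bounded simply by the probability $\mathbb P(-\log\delta(y, G_n\cdot x) \geq A\log n) \leq Cn^{-cA}$ coming from Lemma \ref{Lem_Regu_pi_s} at $s = 0$: in the range where $t$ is close to $n^{1/6}$ one has $1-\Phi(t)$ as small as $e^{-cn^{1/3}}$, so the polynomial bound $n^{-cA}$ is insufficient. To repair this, I switch to the tilted measure $\mathbb Q_s^x$ at the saddle point $s > 0$ determined by \eqref{SaddleEqua}. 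The change-of-measure formula \eqref{Ch7basic equ1} then gives
\[
\mathbb E[\varphi\overline\chi_{n,M_n+1}^y(G_n\cdot x)\mathds 1_{B_n}] \leq c\|\varphi\|_\infty\, \kappa^n(s)\, e^{-s(n\lambda_1+\sqrt n\sigma t)}\, \mathbb Q_s^x\!\bigl(-\log\delta(y, G_n\cdot x) \geq A\log n\bigr).
\]
Taylor expanding $\Lambda(s) - s\lambda_1$ around the saddle $s = t/(\sigma\sqrt n)+O(t^2/n)$ and using $t^3/\sqrt n = o(1)$ reduce the deterministic prefactor to $e^{-t^2/2+o(1)}$, which is comparable with $(1-\Phi(t))\cdot t$, while Lemma \ref{Lem_Regu_pi_s} applied under $\mathbb Q_s^x$ still controls the probability by $Cn^{-cA}$. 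Choosing $A$ large enough makes the whole tail $o(1)\,(1-\Phi(t))\|\varphi\|_\infty$, completing the upper-tail analysis.

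The lower tail is handled by an entirely symmetric argument: now $B_n' \subseteq A_n'$ with $A_n' = \{\log|\langle f, G_n v\rangle| - n\lambda_1 \leq -\sqrt n\sigma t\}$ and $B_n' = \{\sigma(G_n, x) - n\lambda_1 \leq -\sqrt n\sigma t\}$, so the roles of the two events are swapped and $\mathbb E[\varphi\mathds 1_{A_n'}] = \mathbb E[\varphi\mathds 1_{B_n'}] + \mathbb E[\varphi\mathds 1_{A_n'\setminus B_n'}]$; the thin-band piece is bounded by Lemma \ref{Lem_Cramer_Cocy} at both endpoints exactly as before, and the tail step uses the tilted measure $\mathbb Q_s^x$ with $s<0$ chosen to solve \eqref{SaddleEqua-bis}, with the same Taylor expansion and regularity input. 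All other estimates transfer verbatim.
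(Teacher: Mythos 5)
Your proposal does not prove the statement in question. Lemma \ref{Lem_Cramer_Cocy} is a Cram\'er-type moderate deviation expansion for the \emph{norm cocycle} $\sigma(G_n,x)$ alone, over the full range $t\in[0,o(\sqrt n)]$ and \emph{with} the Cram\'er factor $e^{\frac{t^3}{\sqrt n}\zeta(\frac{t}{\sqrt n})}$ present on the right-hand side. What you have written is instead an argument for the coefficients $\log|\langle f,G_nv\rangle|$ in the restricted range $t\in[0,o(n^{1/6})]$ where the Cram\'er factor degenerates to $1+o(1)$ --- that is, you have sketched a proof of Theorem \ref{Thm-CramSca-02}, not of Lemma \ref{Lem_Cramer_Cocy}. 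Worse, your very first step is to ``reduce each tail to that of $\sigma(G_n,x)$ via Lemma \ref{Lem_Cramer_Cocy}'', i.e.\ you invoke as an input exactly the statement you were asked to establish. As a proof of Lemma \ref{Lem_Cramer_Cocy} the argument is therefore circular and contains none of the required content. Your remark that ``the statement to be proved carries no Cram\'er factor'' confirms that you were looking at the wrong target: the displayed identities in the lemma do carry that factor, and it is nontrivial precisely because $t$ may grow like any $o(\sqrt n)$ sequence, so $t^3/\sqrt n$ need not tend to zero.

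A genuine proof of Lemma \ref{Lem_Cramer_Cocy} (the paper itself only cites \cite{XGL19b} for it) must work entirely at the level of the norm cocycle: one performs the change of measure \eqref{Ch7basic equ1} at the saddle point $s$ solving \eqref{SaddleEqua}, writes $\bb E[\varphi(G_n\!\cdot\!x)\mathds 1_{\{\sigma(G_n,x)-n\lambda_1\geq\sqrt n\sigma t\}}]=r_s(x)\kappa^n(s)\bb E_{\bb Q_s^x}[(\varphi r_s^{-1})(G_n\!\cdot\!x)e^{-s\sigma(G_n,x)}\mathds 1_{\{\cdots\}}]$, extracts the factor $e^{-\frac{t^2}{2}+\frac{t^3}{\sqrt n}\zeta(\frac{t}{\sqrt n})}$ via the identity \eqref{LambdaQ01}, and then controls the remaining tilted expectation by the spectral decomposition of the perturbed operator $R_{s,iu}$ (Lemmas \ref{Ch7perturbation thm} and \ref{Lem-St-NonLatt}) together with a smoothing/Fourier inversion step as in Proposition \ref{KeyPropo}, or by an Edgeworth expansion under $\bb Q_s^x$ as in Theorem \ref{Thm-Edge-Expan}. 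No partition $(\chi_{n,k}^y)$, no regularity of $\delta(y,G_n\!\cdot\!x)$, and no comparison between coefficients and the cocycle enters at any point, because the quantity $\delta$ simply does not appear in the statement.
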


%

\begin{proof}[Proof of Theorem \ref{Thm-CramSca-02}]
Without loss of generality, we assume that the target function $\varphi$ is non-negative. 
We only show the first expansion in Theorem \ref{Thm-CramSca-02}
since the proof of the second one can be carried out in a similar way.

The upper bound is a direct consequence of Lemma \ref{Lem_Cramer_Cocy}.
Specifically, since $\log |\langle f, G_n v \rangle| \leq \sigma(G_n, x)$
for $f \in V^*$ with $\|f\| = 1$, $x = \bb R v \in \bb P(V)$ with $\|v\| = 1$, 
using the first expansion in Lemma \ref{Lem_Cramer_Cocy}, 
we get that there exists a constant $c>0$ such that 
uniformly in $f \in V^*$, $x = \bb R v \in \bb P(V)$ with $\|v\| = \|f\| =1$, and $t \in [0, o(\sqrt{n} )]$, 
\begin{align}\label{Pf-Cram-Scal-Upp}
  \frac{\bb{E} \left[ \varphi(G_n \!\cdot\! x) 
  \mathds{1}_{ \left\{ \log| \langle f,  G_n v \rangle | - n\lambda_1 \geq \sqrt{n}\sigma t \right\} } \right] }
{ 1-\Phi(t)  }    
\leq   e^{ \frac{t^3}{\sqrt{n}}\zeta (\frac{t}{\sqrt{n}} ) }
\left[ \nu(\varphi) + c \| \varphi \|_{\gamma} \frac{t +1}{\sqrt{n}} \right]. 
\end{align}

The lower bound follows from Lemmas \ref{Lem_Regu_pi_s} and \ref{Lem_Cramer_Cocy}. 
By Lemma \ref{Lem_Regu_pi_s} with $s =0$, we have that for any $\varepsilon>0$, 
there exist constants $c, C >0$ such that for all $n \geq k \geq 1$, 
and $f \in V^*$ with $\|f\| = 1$, $x = \bb R v \in \bb P(V)$ with $\|v\| = 1$,
\begin{align*}
\mathbb{P} \Big( \log |\langle f, G_n v \rangle| -  \sigma(G_n, x) \leq -\ee k \Big) \leq C e^{ -c k }. 
\end{align*}
Using this inequality, we get
\begin{align}\label{Pf_Cra_ine_jjj}
& \bb{E} \left[ \varphi(G_n \!\cdot\! x) 
  \mathds{1}_{ \left\{ \log| \langle f,  G_n v \rangle | - n\lambda_1 \geq \sqrt{n}\sigma t \right\} } \right]  \nonumber\\
 & \geq  \mathbb{E} \left[ \varphi(G_n \!\cdot\! x) 
 \mathds{1}_{ \left\{ \log| \langle f,  G_n v \rangle | - n\lambda_1 \geq \sqrt{n}\sigma t \right\} }
 \mathds{1}_{ \left\{ \log |\langle f, G_n v \rangle| -  \sigma(G_n, x)  > -\ee k  \right\} }  \right]  \nonumber\\
 & \geq   \mathbb{E} \left[ \varphi(G_n \!\cdot\! x) 
 \mathds{1}_{ \left\{ \sigma (G_n, x) - n\lambda_1 \geq \sqrt{n}\sigma t + \varepsilon k  \right\} }
 \mathds{1}_{  \left\{ \log |\langle f, G_n v \rangle| -  \sigma(G_n, x) > -\ee k  \right\} }  \right]  \nonumber\\
 &  \geq   \mathbb{E} \left[ \varphi(G_n x) 
 \mathds{1}_{ \left\{ \sigma(G_n, x) - n\lambda_1 \geq \sqrt{n}\sigma t + \ee k  \right\} }  \right]
    - C e^{ -c k } \| \varphi \|_{\infty}.
\end{align}
Take $k = \floor{A \log n}$ in \eqref{Pf_Cra_ine_jjj}, 
where $A >0$ is a fixed sufficiently large constant. 
Denote $t_1 = t + \frac{\ee k}{ \sigma \sqrt{n} }$. 
By Lemma \ref{Lem_Cramer_Cocy}, we have, as $n \to \infty$, 
uniformly in $x \in \bb P(V)$, $t \in [0, \sqrt{\log n}]$ and $\varphi \in \mathscr{B}_{\gamma}$, 
\begin{align}\label{NormUpp02}
\frac{ \mathbb{E} \left[ \varphi(G_n \!\cdot\! x) 
 \mathds{1}_{ \left\{ \sigma(G_n, x) - n\lambda_1 \geq \sqrt{n}\sigma t + \varepsilon k  \right\} }  \right] }
{ 1-\Phi(t_1)  }
=   \nu(\varphi) + \| \varphi \|_{\gamma} O \left( \frac{t_1^3  + 1}{\sqrt{n}} \right).
\end{align}
We claim that uniformly in $t \in [0, \sqrt{\log n}]$, 
\begin{align}\label{Inequ_Normal_aaa}
1 > \frac{1 - \Phi(t_1) }{1 - \Phi(t) }
  = 1 - \frac{ \int_{t}^{t_1} e^{-\frac{u^2}{2}} du }{ \int_{t}^{\infty} e^{-\frac{u^2}{2}} du } 
  > 1 - c \frac{ t + 1 }{\sqrt{n}} \log n.  
\end{align}
Indeed, when $y \in [0,2]$, the inequality \eqref{Inequ_Normal_aaa} holds
due to the fact that $t_1 = t + \frac{\ee k}{ \sigma \sqrt{n} }$ and $k = \floor{A \log n}$;
when $t \in [2, \sqrt{\log n}]$, we can use the inequality
$e^{\frac{t^2}{2}} \int_{t}^{\infty} e^{-\frac{u^2}{2}} du \geq \frac{1}{t} - \frac{1}{t^3} > \frac{1}{2t}$ to get
\begin{align*}
1 - \frac{ \int_{t}^{t_1} e^{-\frac{u^2}{2}} du }{ \int_{t}^{\infty} e^{-\frac{u^2}{2}} du } 
  > 1 - \frac{ (t_1 - t) e^{- t^2/2} }{\frac{1}{2t} e^{- t^2/2}}
  > 1 - c \frac{ t + 1 }{\sqrt{n}} \log n.  
\end{align*}
Hence \eqref{Inequ_Normal_aaa} holds. 
It is easy to check that $\frac{t_1^3  + 1}{\sqrt{n}} = O( \frac{t^3 + 1}{\sqrt{n}} )$,
uniformly in $t \in [0, \sqrt{\log n}]$. 
Consequently, we get that
uniformly in $x \in \bb P(V)$, $t \in [0, \sqrt{\log n}]$ and $\varphi \in \mathscr{B}_{\gamma}$, 
\begin{align*}
\frac{ \mathbb{E} \left[ \varphi(G_n \!\cdot\! x) 
 \mathds{1}_{ \left\{ \sigma(G_n, x) - n\lambda_1 \geq \sqrt{n}\sigma t + \varepsilon k  \right\} }  \right] }
{ 1-\Phi(t)  }
\geq  \nu(\varphi) -  c \| \varphi \|_{\gamma} \frac{ t + 1 }{\sqrt{n}} \log n.
\end{align*}
This, together with \eqref{Pf_Cra_ine_jjj} and the fact that $e^{-ck}/[1 - \Phi(t)]$ decays to $0$ faster than $\frac{1}{n}$, 
implies that uniformly in $f \in V^*$ with $\|f\| = 1$, $x = \bb R v \in \bb P(V)$ with $\|v\| = 1$,
$t \in [0, \sqrt{\log n}]$ and $\varphi \in \mathscr{B}_{\gamma}$, 
\begin{align} \label{Pf_Cram_Low_logn}
\frac{\bb{E} \left[ \varphi(G_n \!\cdot\! x) 
  \mathds{1}_{ \left\{ \log| \langle f,  G_n v \rangle | - n\lambda_1 \geq \sqrt{n}\sigma t  \right\} } \right] }
{ 1-\Phi(t)  }   
\geq   \nu(\varphi) - c \| \varphi \|_{\gamma} \frac{t + 1}{\sqrt{n}} \log n. 
\end{align}

It remains to prove the lower bound when $t \in [\sqrt{\log n}, o(n^{1/6})]$. 
We take $k = \floor{A t^2}$ in \eqref{Pf_Cra_ine_jjj}, where $A >0$ is a fixed sufficiently large constant. 
In the same way as in \eqref{NormUpp02}, we get that, with $t_1 = t + \frac{\ee k}{ \sigma \sqrt{n} }$, 
uniformly in $x \in \bb P(V)$, $t \in [\sqrt{\log n}, o(n^{1/6})]$ and $\varphi \in \mathscr{B}_{\gamma}$, 
\begin{align}\label{NormUpp_yy}
\frac{ \mathbb{E} \left[ \varphi(G_n \!\cdot\! x) 
 \mathds{1}_{ \left\{ \sigma(G_n, x) - n\lambda_1 \geq \sqrt{n}\sigma t + \varepsilon k  \right\} }  \right] }
{ 1-\Phi(t_1)  }
=   \nu(\varphi) + \| \varphi \|_{\gamma} O \left( \frac{t_1^3  + 1}{\sqrt{n}} \right).
\end{align}
Using the inequality 
$\frac{1}{t} \geq e^{\frac{t^2}{2}} \int_{t}^{\infty} e^{-\frac{u^2}{2}} du \geq \frac{1}{t} - \frac{1}{t^3}$
for $t > 0$, by elementary calculations, 
we get that uniformly in $t \in [\sqrt{\log n}, o(n^{1/6})]$, 
\begin{align}\label{Rate-Phi-t}
1 > \frac{1 - \Phi(t_1) }{1 - \Phi(t) }
  \geq  \frac{t}{t_1} \Big( 1 - \frac{1}{t_1^2} \Big) e^{ \frac{t^2}{2} - \frac{t_1^2}{2} }
  > \left( 1 - \frac{c}{t^2} \right) \left( 1 - \frac{c t^3}{\sqrt{n}} \right).  
\end{align}
Taking into account that $\frac{t_1^3  + 1}{\sqrt{n}} = O( \frac{t^3  + 1}{\sqrt{n}} )$
and that $e^{-c A t^2}/[1 - \Phi(t)]$ decays to $0$ faster than $\frac{1}{n}$ (by taking $A>0$ to be sufficiently large),
from \eqref{Pf_Cra_ine_jjj}, \eqref{NormUpp_yy} and \eqref{Rate-Phi-t}
we deduce that uniformly in $f \in V^*$ with $\|f\| = 1$, $x = \bb R v \in \bb P(V)$ with $\|v\| = 1$,
$t \in [\sqrt{\log n}, o(n^{1/6})]$ and $\varphi \in \mathscr{B}_{\gamma}$, 
\begin{align} \label{Pf_Cram_Low_n16}
\frac{\bb{E} \left[ \varphi(G_n \!\cdot\! x) 
  \mathds{1}_{ \left\{ \log| \langle f,  G_n v \rangle | - n\lambda_1 \geq \sqrt{n}\sigma t  \right\} } \right] }
{ 1-\Phi(t)  }   
\geq   \nu(\varphi) - c \| \varphi \|_{\gamma} \bigg( \frac{1}{t^2} + \frac{t^3}{ \sqrt{n} } \bigg). 
\end{align}
Combining \eqref{Pf-Cram-Scal-Upp}, \eqref{Pf_Cram_Low_logn} and \eqref{Pf_Cram_Low_n16} 
finishes the proof of Theorem \ref{Thm-CramSca-02}. 
\end{proof}

%
%
%
Now we apply Propositions \ref{KeyPropo} and \ref{KeyPropo-02}
to establish the following moderate deviation expansion when $t \in [n^{\alpha}, o(n^{1/2})]$ for any $\alpha \in (0, 1/2)$. 

\begin{theorem}\label{Thm-Cram-Scalar-tag}
Assume \ref{Ch7Condi-Moment} and \ref{Ch7Condi-IP}. 
Then, for any $\varphi \in \mathscr{B}_{\gamma}$ and $\alpha \in (0, 1/2)$, as $n \to \infty$, we have, 
uniformly in $x = \bb R v \in \bb P(V)$ and $y = \bb R f \in \bb P(V^*)$ with $\|v\| = \|f\| =1$, 
and $t \in [n^{\alpha}, o(\sqrt{n} )]$,  
\begin{align}
\frac{\bb{E} \left[ \varphi(G_n \!\cdot\! x) 
\mathds{1}_{ \left\{ \log| \langle f,  G_n v \rangle | - n\lambda_1 \geq \sqrt{n} \sigma t \right\} } \right] }
{ 1-\Phi(t) }    
& =  e^{ \frac{t^3}{\sqrt{n}}\zeta (\frac{t}{\sqrt{n}} ) }
\Big[ \nu(\varphi) +   o(1) \Big],  \label{Cramer-Coeffi-upper} \\
\frac{\bb{E}  \left[ \varphi(G_n \!\cdot\! x) 
\mathds{1}_{ \left\{ \log| \langle f,  G_n v \rangle | - n\lambda_1 \leq - \sqrt{n} \sigma t \right\} } \right] }
{ \Phi(-t)  }   
& =  e^{ - \frac{t^3}{\sqrt{n}}\zeta (-\frac{t}{\sqrt{n}} ) }
\Big[ \nu(\varphi) +  o(1) \Big].   \label{Cramer-Coeffi-lower}
\end{align}
\end{theorem}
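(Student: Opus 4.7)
The plan is to reduce the study of $\log|\langle f,G_n v\rangle|$ to that of the norm cocycle $\sigma(G_n,x)$ by exploiting the decomposition $\log|\langle f,G_n v\rangle|=\sigma(G_n,x)+\log\delta(y,G_n\!\cdot\!x)$ and the partition of unity $(\chi_{n,k}^y)_{0\leq k\leq M_n}$ with $M_n=\floor{A\log^2 n}$ introduced in Section~\ref{sec-proof of Edgeworth exp}. Writing $\varphi_{n,k}^y=\varphi\chi_{n,k}^y$ (and $\overline\chi_{n,M_n}^y$ for the tail), I would first decompose
$$I_n(t)=\bb E[\varphi(G_n\!\cdot\!x)\mathds 1_{\{\log|\langle f,G_nv\rangle|-n\lambda_1\geq \sqrt n\sigma t\}}]=\sum_{k=0}^{M_n}F_{n,k}(t),$$
and on $\supp\chi_{n,k}^y$ use $-\log\delta(y,G_n\!\cdot\!x)\in[(k-1)a_n,(k+1)a_n]$ to bound each $F_{n,k}(t)$ above and below by the analogous expectation involving only $\sigma(G_n,x)$ with the threshold shifted by $\pm(k\pm 1)a_n$. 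The tail index $k=M_n$ is controlled by Lemma~\ref{Lem_Regu_pi_s}, yielding a $O(n^{-2})$ remainder after choosing $A$ large.

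Next I would perform a change of measure using $\bb Q_s^x$ with the saddle point $s>0$ solving $\Lambda'(s)-\Lambda'(0)=\sigma t/\sqrt n$ (and symmetrically $s<0$ for the lower tail, using Proposition~\ref{KeyPropo-02}). By \eqref{Ch7basic equ1}, for $\psi_{s,k}:=\varphi_{n,k}^y/r_s$ we obtain
$$F_{n,k}(t)\approx\kappa^n(s)\,r_s(x)\,\bb E_{\bb Q_s^x}\!\left[\psi_{s,k}(G_n\!\cdot\!x)\,e^{-s(\sigma(G_n,x)-n\Lambda'(s))}\mathds 1_{\{\sigma(G_n,x)-n\Lambda'(s)\geq l_{n,k}\}}\right]$$
with $l_{n,k}=(k\mp 1)a_n=O(a_n k)=O(1)$ uniformly over the relevant $k$. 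The weight $e^{-sw}\mathds 1_{\{w\geq 0\}}=\psi_s(w)$ is exactly the function whose smoothing was prepared in Subsection~\ref{Ch7secAuxres001}. Applying Lemma~\ref{estimate u convo} with parameter $\ee=\ee_n\to 0$ slowly, and Fourier inversion, I would rewrite the above expectation as
$$\tfrac{1}{2\pi}\!\int_{\bb R}e^{-iul_{n,k}}R^n_{s,iu}(\psi_{s,k})(x)\,\widehat\psi^-_{s,\ee}(u)\,\widehat\rho_{\ee^2}(u)\,du$$
(up to controlled smoothing errors bounded via Lemma~\ref{Lem_Regu_pi_s} on the changed measure). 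Then Proposition~\ref{KeyPropo}, applied with $l=l_{n,k}/n=O(1/n)$, yields
$$F_{n,k}(t)=\frac{\kappa^n(s)\,r_s(x)}{s\sigma_s\sqrt{2\pi n}}\bigl[\pi_s(\psi_{s,k})+\epsilon_{n,k}(t)\bigr],$$
where $\epsilon_{n,k}(t)=O(t/\sqrt n)\|\varphi_{n,k}^y\|_\infty+O(t^{-2})\|\varphi_{n,k}^y\|_\infty+O(n^{-1/2})\|\varphi_{n,k}^y\|_\gamma$, the last term being absorbed by $\gamma$ small enough thanks to~\eqref{HolderNorm-varphik}.

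Summing over $k$, the leading sum telescopes: $\sum_k \pi_s(\psi_{s,k})=\pi_s(\varphi/r_s)=\nu_s(\varphi)/\nu_s(r_s)$, and since $s\to 0$ as $n\to\infty$ (because $t=o(\sqrt n)$), analyticity at $s=0$ gives $\nu_s(\varphi)/\nu_s(r_s)=\nu(\varphi)+o(1)$. The sum of the error terms $\sum_k\epsilon_{n,k}(t)$ is controlled uniformly using Lemma~\ref{new bound for delta020} together with Lemma~\ref{lemmaHolder property001}; the $O(t^{-2})$ piece, once multiplied by the prefactor and compared to $1-\Phi(t)\sim\frac{1}{t\sqrt{2\pi}}e^{-t^2/2}$, becomes $o(1)$ precisely when $t\geq n^\alpha$. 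It remains to identify $\kappa^n(s)\,r_s(x)/(s\sigma_s\sqrt n)$ divided by $1-\Phi(t)$ with $e^{\frac{t^3}{\sqrt n}\zeta(\frac{t}{\sqrt n})}$. This is the classical Cram\'er saddle-point computation: inverting the series~\eqref{root s 1} and substituting into $n\Lambda(s)-s\sqrt n\sigma t$ produces, after subtracting $-t^2/2$, precisely the Cram\'er series~\eqref{Ch7Def-CramSeri}; the factor $r_s(x)/s\sigma_s$ matches the Mills ratio of $1-\Phi(t)$ up to $1+o(1)$.

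The main obstacle will be obtaining uniform $o(1)$ error terms across the whole range $t\in[n^\alpha,o(\sqrt n)]$ after summing the partition: the error $O(t^{-2})$ from Proposition~\ref{KeyPropo} is the bottleneck and explains the lower threshold $n^\alpha$, while the error $O(|l|\sqrt n)\|\varphi\|_\gamma$ forces us to take $l_{n,k}/n=O(1/n)$, which is why the refined partition with $a_n=1/\log n$ (rather than a coarser one) is essential; the uniformity in $f$ and $v$ follows because all estimates on $\pi_s$-integrability and on $\|\varphi_{n,k}^y\|_\gamma$ are uniform in $y$. The lower tail~\eqref{Cramer-Coeffi-lower} follows by the same argument with $s<0$ and Proposition~\ref{KeyPropo-02}, using $\phi_s$ and $\widehat\phi^+_{s,\ee}$ in place of $\psi_s$ and $\widehat\psi^-_{s,\ee}$.
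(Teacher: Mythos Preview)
Your overall strategy---partition of unity, change of measure to $\bb Q_s^x$, smoothing via Lemma~\ref{estimate u convo}, Fourier inversion, and application of Proposition~\ref{KeyPropo}---matches the paper's. However, there is a genuine gap in your summation step. When you sum the error bounds of Proposition~\ref{KeyPropo} over $k$ from $0$ to $M_n$, each error carries a factor $\|\varphi_{n,k}^y\|_\infty$ or $\|\varphi_{n,k}^y\|_\gamma$, and these norms do \emph{not} telescope: $\sum_k\|\varphi_{n,k}^y\|_\infty$ is of order $M_n\|\varphi\|_\infty$, so the accumulated contribution from the term $c(t/\sqrt n)\|\varphi_{n,k}^y\|_\infty$ alone becomes $O(M_n\, t/\sqrt n)$, which is not $o(1)$ uniformly for $t\in[n^\alpha,o(\sqrt n)]$. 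Lemma~\ref{new bound for delta020} only bounds weighted sums of $\pi_s(\varphi_{n,k}^y)$, not of $\|\varphi_{n,k}^y\|_\infty$, so it does not rescue this step. Your assertion that $l_{n,k}=(k\pm1)a_n=O(1)$ is also incorrect: with $a_n=1/\log n$ and $k$ ranging up to $M_n=A\log^2 n$, one has $l_{n,k}$ as large as $A\log n$.

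The paper circumvents this by two modifications. First, it uses a \emph{fixed} step size $a\in(0,\tfrac12]$ (not $a_n\to0$), with $M_n$ of order $\log n$; your claim that the refinement $a_n=1/\log n$ is ``essential'' is therefore mistaken---with fixed $a$ one still has $|l|\sqrt n=O(\log n/\sqrt n)\to0$. Second, and crucially, instead of summing the errors the paper applies Fatou's lemma: after writing the lower bound as $\sum_{k=0}^{M_n}(B_{n,k}-D_{n,k})$, it passes to $\sum_k\liminf_n$ and computes, for each \emph{fixed} $k$, the limit $\lim_n B_{n,k}/(e^{t^2/2}[1-\Phi(t)])=\nu(\varphi\chi_k^y)$ via Proposition~\ref{KeyPropo}. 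Here the fixed $a$ is what makes $\chi_k^y$ independent of $n$, so that the limit for each $k$ is meaningful; only afterwards does one sum $\sum_k\nu(\varphi\chi_k^y)=\nu(\varphi)$. The smoothing remainder $D_{n,k}$ is handled by dominated convergence in the outer variable $w$. Your argument can be repaired along these lines, but as written the direct-summation step does not close.
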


\begin{proof} 
As in Section \ref{sec-proof of Edgeworth exp} we define a partition of the unity. 
Let $U$ be the uniform distribution function on the interval $[0,1]$. 
 Let $a \in (0,\frac{1}{2}]$ be a constant.
For any integer $k\geq 0$, define 
\begin{align*} 
U_{k}(t)= U\left(\frac{t-(k-1) a}{a}\right),  \qquad 
h_{k}(t)=U_{k}(t) - U_{k+1}(t),  \quad  t \in \bb R. 
\end{align*}
Note that  $U_{m} = \sum_{k=m}^\infty h_{k}$ for $m\geq 0$ and for any $t\geq 0$ and $m\geq0$,
\begin{align} \label{unity decomposition h-001-222}
\sum_{k=0}^{\infty} h_{k} (t) =1, \quad \sum_{k=0}^{m} h_{k} (t) + U_{m+1} (t) =1.
\end{align}
For any $x=\bb R v \in \bb P(V)$ and $y=\bb R f \in \bb P(V^*)$, 
set 
\begin{align}\label{Def-chi-nk-222}
\chi_{k}^y(x)=h_{k}(-\log \delta(y, x))  \quad  \mbox{and}  \quad 
\overline \chi_{k}^y(x)= U_{k} ( -\log \delta(y, x) ). 
\end{align}
From \eqref{unity decomposition h-001-222}
 we have the following partition of the unity  on $\bb P(V)$: for any $x\in \bb P (V)$, $y \in \bb P(V^*)$ and $m\geq 0$,
\begin{align} \label{Unit-partition001-222}
\sum_{k=0}^{\infty} \chi_{k}^y (x) =1, \quad 
\sum_{k=0}^{m} \chi_{k}^y (x) + \overline \chi_{m+1}^y (x) =1.
\end{align}
Denote by $\supp (\chi_{k}^y)$ the support of the function $\chi_{k}^y$.  
It is easy to see that for any $k\geq 0$ and $y\in \bb P(V^*)$,
\begin{align} \label{on the support on chi_k-001-222}
 -\log \delta(y, x) \in [a (k-1), a (k+1)] \quad \mbox{for any}\ x\in \supp (\chi_{k}^y). 
\end{align}
%
As in Lemma \ref{lemmaHolder property001} we show that
there exists a constant $c>0$ such that 
for any $\gamma\in(0,1]$, 
 $k\geq 0$ and $y\in \bb P(V^*)$, it holds
  $\chi_{k}^y\in \scr B_{\gamma}$ and 
\begin{align} \label{Holder prop ohCHI_k-001-222}
\| \chi_{k}^y \|_{\gamma} \leq \frac{c e^{\gamma k a}}{a^\gamma}.
\end{align}

Without loss of generality, we assume that the target function $\varphi$ is non-negative. 
We first establish \eqref{Cramer-Coeffi-upper}.
Since the upper bound of the moderate deviation expansion 
for the couple $(G_n \!\cdot\! x, \log |\langle f, G_n v \rangle|)$ has been shown in \eqref{Pf-Cram-Scal-Upp},
 it remains to establish the following lower bound: 
 for any $\varphi \in \mathscr{B}_{\gamma}$ and $\alpha \in (0, 1/2)$,  
uniformly in $x = \bb R v \in \bb P(V)$ and $y = \bb R f \in \bb P(V^*)$ with $\|v\| = \|f\| =1$, 
and $t \in [n^{\alpha}, o(\sqrt{n} )]$,  
\begin{align}\label{ScalLowerBound}
\liminf_{n \to \infty}  
\frac{\mathbb{E} \left[ \varphi(G_n \!\cdot\! x) 
  \mathds{1}_{ \left\{ \log| \langle f,  G_n v \rangle | - n \lambda_1 \geq \sqrt{n} \sigma t  \right\} } \right] }
{ e^{ \frac{t^3}{\sqrt{n}} \zeta(\frac{t}{\sqrt{n}}) } [1-\Phi(t)] }   
\geq  \nu(\varphi). 
\end{align}
Now we are going to prove \eqref{ScalLowerBound}. 
From the change of measure formula \eqref{Ch7basic equ1}
and the fact $\lambda_1 = \Lambda'(0)$, we get 
\begin{align} 
A_n: &  =  \bb{E} \left[ \varphi(G_n \!\cdot\! x) 
  \mathds{1}_{ \left\{ \log| \langle f,  G_n v \rangle | - n \lambda_1 \geq \sqrt{n}\sigma t \right\} } \right]
 \label{ChanMeaScal}\\
& =  r_s(x) \kappa^{n}(s) \bb{E}_{\bb{Q}^{x}_{s}}
\left[ (\varphi r_s^{-1})(G_n \!\cdot\! x) 
e^{-s \sigma (G_n, x) }\mathds{1}_{ \left\{ \log |\langle f, G_n v \rangle | \geq n \Lambda'(0) + \sqrt{n} \sigma t  \right\} }  
\right].    \nonumber
\end{align}
For brevity, we denote 
\begin{align*}
T_n^x = \sigma (G_n, x) - n\Lambda'(s),  \qquad   Y_n^{x,y}: = \log \delta(y, G_n \!\cdot\! x). 
\end{align*}
Choosing $s>0$ as the solution of the equation \eqref{SaddleEqua}, from \eqref{ChanMeaScal} and \eqref{Pf_LLN_Equality} it follows that 
\begin{align*}
A_n = r_s(x) e^{-n [ s \Lambda'(s) - \Lambda(s) ] }  
   \bb{E}_{\bb{Q}^{x}_{s}}
 \left[ (\varphi r_s^{-1})(G_n \!\cdot\! x) e^{-s T_n^x }
   \mathds{1}_{ \left\{ T_n^x + Y_n^{x,y} \geq 0 \right\} } \right]. 
\end{align*}
Using \eqref{SaddleEqua}, one can verify that 
\begin{align} \label{LambdaQ01}
s \Lambda'(s) - \Lambda(s) 
= \frac{t^2}{2n} - \frac{t^3}{n^{3/2} } \zeta \Big( \frac{t}{\sqrt{n}} \Big), 
\end{align}
where $\zeta$ is the Cram\'{e}r series defined by \eqref{Ch7Def-CramSeri}. 
Thus $A_n$ can be rewritten as  
\begin{align}\label{An-lambdaq}  
A_n = r_s(x) e^{ -\frac{t^2}{2} + \frac{t^3}{ \sqrt{n} } \zeta(\frac{t}{\sqrt{n}}) }
   \bb{E}_{\bb{Q}^{x}_{s}}
\left[ (\varphi r_s^{-1})(G_n \!\cdot\! x)  e^{-s T_n^x }
\mathds{1}_{ \left\{ T_n^x + Y_n^{x,y} \geq 0 \right\} } \right]. 
\end{align}
Since the functions $\varphi$ and $r_s$ are positive, 
using the partition of the unity \eqref{Unit-partition001-222} and \eqref{on the support on chi_k-001-222}, 
we have that for $M_n = \floor{\log n}$ and $a \in (0, 1/2)$, 
\begin{align}\label{Ch7SmoothIne Holder 01}
A_{n} & \geq   r_s(x) e^{ -\frac{t^2}{2} + \frac{t^3}{ \sqrt{n} } \zeta(\frac{t}{\sqrt{n}}) }
  \sum_{k = 0}^{ M_n } \bb{E}_{\bb{Q}^{x}_{s}}
\left[ (\varphi r_s^{-1} \chi_{k}^y)(G_n \!\cdot\! x)  e^{-s T_n^x }
\mathds{1}_{\{ T_n^x + Y_n^{x,y} \geq 0 \}}  \right]  \nonumber\\
& \geq   r_s(x) e^{ -\frac{t^2}{2} + \frac{t^3}{ \sqrt{n} } \zeta(\frac{t}{\sqrt{n}}) }
  \sum_{k = 0}^{M_n} \bb{E}_{\bb{Q}^{x}_{s}}
\left[ (\varphi r_s^{-1} \chi_{k}^y)(G_n \!\cdot\! x) 
e^{-s T_n^x } \mathds{1}_{\{ T_n^x -  a (k+1) \geq 0 \}} \right].  
\end{align}
Let
\begin{align*}
\varphi_{s,k}^y(x) = (\varphi r_s^{-1} \chi_{k}^y )(x), 
\quad  x \in \bb P(V), 
\end{align*}
and
\begin{align*}
\psi_s(w) = e^{-s w} \mathds{1}_{ \{ w \geq 0 \} },  \quad  w \in \bb{R}. 
\end{align*}
It then follows from \eqref{Ch7SmoothIne Holder 01} that 
\begin{align*} 
A_{n} 
& \geq   r_s(x) e^{ -\frac{t^2}{2} + \frac{t^3}{ \sqrt{n} } \zeta(\frac{t}{\sqrt{n}}) }
  \sum_{k = 0}^{M_n} e^{-s a (k+1)} \bb{E}_{\bb{Q}^{x}_{s}}
\left[ \varphi_{s,k}^y(G_n \!\cdot\! x) \psi_s(T_n^x - a (k+1) )  \right],    
\end{align*}
which implies that
\begin{align*}
& \frac{ A_n }{ e^{ \frac{t^3}{\sqrt{n}} \zeta(\frac{t}{\sqrt{n}}) } [1-\Phi(t)] }   
 \geq  r_s(x) \sum_{k = 0}^{ M_n }   
\frac{  e^{-s a (k+1)}  \bb{E}_{\bb{Q}^{x}_{s}}
\big[ \varphi_{s,k}^y(G_n \!\cdot\! x) \psi_s(T_n^x - a (k+1))  \big] }
   { e^{ \frac{t^2}{2} } [1-\Phi(t)] }. 
\end{align*}
From Lemma \ref{estimate u convo}, it follows that for any small constant $\ee >0$, 
\begin{align*}
\psi_s(w) \geq 
\psi^-_{s,\ee} * \rho_{\ee^2}(w) - 
\int_{ |u| \geq \ee } \psi^-_{s,\varepsilon}( w-u ) \rho_{\varepsilon^2}(u) du, \quad w \in \bb R,
\end{align*}
where $\psi^-_{s,\ee}$ is given by \eqref{Def-psi-see}. 
Using this inequality we get
\begin{align}\label{LowerScalarACD}
\frac{ A_n }{ e^{ \frac{t^3}{\sqrt{n}} \zeta(\frac{t}{\sqrt{n}}) } [1-\Phi(t)] } 
  \geq  r_s(x) \sum_{k = 0}^{ M_n }
   \frac{ B_{n,k} - D_{n,k} }{ e^{ \frac{t^2}{2} } [1-\Phi(t)] },   
\end{align}
where 
\begin{align*}
&  B_{n,k} =  e^{-s a (k+1)} \mathbb{E}_{\mathbb{Q}^{x}_{s}}
  \Big[ \varphi_{s,k}^y (G_n \!\cdot\! x)
  ( \psi^-_{s,\ee} * \rho_{\ee^2}) (T_{n}^x - a (k+1)  ) \Big],    \nonumber \\
&  D_{n,k} =  e^{-s a (k+1)} \int_{|w| \geq \ee} 
    \mathbb{E}_{\mathbb{Q}^{x}_{s}}
  \Big[ \varphi_{s,k}^y (G_n \!\cdot\! x) \psi^-_{s,\ee}(T_n^x - a (k+1) - w) \Big] 
    \rho_{\ee^2}(w) dw. 
\end{align*}
Since $B_{n,k} \geq D_{n,k}$ and $\sup_{ x \in \bb P(V) } |r_s (x) - 1| \to 0$ as $n \to \infty$, 
by Fatou's lemma, we get
\begin{align*}
\liminf_{n \to \infty}
\frac{ A_n }{ e^{ \frac{t^3}{\sqrt{n}} \zeta(\frac{t}{\sqrt{n}}) } [1-\Phi(t)] }
\geq \sum_{k = 0}^{\infty} \liminf_{n \to \infty} 
   \frac{ B_{n,k} - D_{n,k} }{ e^{ \frac{t^2}{2} } [1-\Phi(t)] }  \mathds{1}_{ \left\{ k \leq M_n \right\}}. 
\end{align*}

\textit{Estimate of $B_{n,k}$.}
Since the function $\widehat\rho_{\ee^{2}}$ is integrable on $\bb{R}$, 
by the Fourier inversion formula, we have 
\begin{align*}
{\psi}^-_{s,\ee}\!\ast\!\rho_{\ee^{2}}(w)
= \frac{1}{2\pi} \int_{\bb{R}}e^{iuw} \widehat {\psi}^-_{s,\ee}(u) \widehat\rho_{\ee^{2}} (u) du, 
\quad  w \in \bb R. 
\end{align*}
Substituting $w = T_{n}^x - a (k+1)$, 
taking expectation with respect to $\bb{E}_{\bb{Q}_{s}^{x}},$
and using Fubini's theorem, we get
\begin{align}\label{IdenScalCn}
B_{n,k} = \frac{1}{2 \pi }  e^{-s a (k+1)}
\int_{\bb{R}} e^{-i u a (k+1)}   R^{n}_{s, iu}(\varphi_{s,k}^y)(x)
\widehat {\psi}^-_{s,\ee}(u) \widehat\rho_{\ee^{2}}(u) du, 
\end{align}
where
\begin{align*}
R^{n}_{s, iu}(\varphi_{s,k}^y)(x)
= \bb{E}_{\bb{Q}_{s}^{x}} \left[ \varphi_{s,k}^y(G_n \!\cdot\! x) e^{iu T_{n}^x} \right],  \quad x \in \bb P(V). 
\end{align*}
Applying Proposition \ref{KeyPropo} with $l=\frac{ a (k+1) }{n}$ and $\varphi = \varphi_{s,k}^y$, 
by simple calculations we deduce that uniformly in $x \in \bb P(V)$, $y \in \bb P(V^*)$, $s \in (0, s_0)$, 
$0 \leq k \leq M_n$ and $t \in [n^{\alpha}, o(\sqrt{n} )]$, 
\begin{align*}
&   \int_{\bb{R}} e^{-i u a (k+1)}   R^{n}_{s, iu}(\varphi_{s,k}^y)(x)
\widehat {\psi}^-_{s,\ee}(u) \widehat\rho_{\ee^{2}}(u) du 
  -  \pi_s(\varphi_{s,k}^y)  \frac{ \sqrt{ 2 \pi } }{ s \sigma_s \sqrt{n}}  \nonumber\\
&  \geq  - C \left( \frac{t}{sn} + \frac{1}{s t^2 \sqrt{n}} \right) \| \varphi_{s,k}^y \|_{\infty}
      -  C \frac{\log n}{sn} \| \varphi_{s,k}^y \|_{\gamma}. 
\end{align*}
Note that $\sigma_s = \sqrt{ \Lambda''(s) } = \sigma [ 1 + O(s)]$. 
From \eqref{root s 1}, we have $\frac{t}{ s \sigma \sqrt{n} } =  1 + O(s)$, 
and thus $\frac{ t }{ s \sigma_s \sqrt{n} }  = 1 + O(s).$  
Using the inequality $ \sqrt{ 2 \pi } t e^{\frac{t^2}{2}} [ 1-\Phi(t) ] \leq 1$ for any $t >0$, 
it follows that 
\begin{align}\label{LowerScalCn}
&   \frac{\int_{\bb{R}} e^{-i u a (k+1)}   R^{n}_{s, iu}(\varphi_{s,k}^y)(x)
\widehat {\psi}^-_{s,\ee}(u) \widehat\rho_{\ee^{2}}(u) du }
{ e^{\frac{t^2}{2}} [ 1- \Phi(t) ]  } 
  -    \pi_s(\varphi_{s,k}^y)  \frac{ 2 \pi t }{ s \sigma_s \sqrt{n} }   \nonumber\\
&  \geq  - C \left( \frac{t^2}{sn} + \frac{1}{s t \sqrt{n}} \right) \| \varphi_{s,k}^y \|_{\infty}
      -  C \frac{t \log n}{sn} \| \varphi_{s,k}^y \|_{\gamma}   \nonumber\\
&  \geq  - C \left( \frac{t}{\sqrt{n}} + \frac{1}{t^2} \right) \| \varphi_{s,k}^y \|_{\infty}
      -  C \frac{t \log n}{sn} \| \varphi_{s,k}^y \|_{\gamma}. 
\end{align}
Using the construction of the function $\varphi_{s,k}^y$ and  \eqref{Holder prop ohCHI_k-001-222}
 give that uniformly in $s \in (0, s_0)$, $w \in \bb P(V^*)$, 
$\varphi \in \mathscr{B}_{\gamma}$ and $0 \leq k \leq M_n$, 
\begin{align}\label{Bound_varphi_infty_norm}
\| \varphi_{s,k}^y  \|_{\infty}  \leq c \| \varphi \|_{\infty}
\end{align}
and 
\begin{align}\label{Bound_varphi_Holder_norm}
\| \varphi_{s,k}^y  \|_{\gamma} 
& \leq c \|  \varphi  \|_{\gamma} + c  \frac{e^{ \gamma k a}}{a^{\gamma}} \|\varphi\|_{\infty}.   
\end{align}
Recalling that $t \in [n^{\alpha}, o(\sqrt{n} )]$ and 
 taking $\gamma>0$ sufficiently small such that $ \frac{e^{ \gamma k a}}{a^{\gamma}} < n^{\alpha/2}$,  
 from \eqref{LowerScalCn} we deduce that 
\begin{align*}
\lim_{n \to \infty} \frac{\int_{\bb{R}} e^{-i u a (k+1)}   R^{n}_{s, iu}(\varphi_{s,k}^y)(x)
\widehat {\psi}^-_{s,\ee}(u) \widehat\rho_{\ee^{2}}(u) du }
{ e^{\frac{t^2}{2}} [ 1- \Phi(t) ]  } 
  \geq    \nu(\varphi_{0,k}^y) \times 2 \pi.  
\end{align*}
As $s=o(1)$ as $n \to \infty$ for any fixed $k \geq 1$ and $a\in (0,\frac{1}{2})$, 
it holds that $\lim_{n \to \infty}e^{-s a (k+1)} = 1$. 
Then, in view of \eqref{IdenScalCn}, it follows that 
\begin{align*}
\lim_{n \to \infty} \frac{ B_{n,k}}{ e^{ \frac{t^2}{2} } [1- \Phi(t)] }
= \nu(\varphi_{0,k}^y). 
\end{align*}
This, together with \eqref{Unit-partition001}, implies the desired lower bound:  
\begin{align}\label{Pf_Lower_bound_B_nk}
 \sum_{k = 0}^{\infty}  \liminf_{n \to \infty} \frac{ B_{n,k}}{ e^{ \frac{t^2}{2} } [1 - \Phi(t)] }
   \mathds{1}_{ \left\{ k \leq M_n \right\}} 
\geq  \sum_{k = 0}^{\infty}  \nu(\varphi_{0,k}^y) = \nu(\varphi). 
\end{align}

\textit{Estimate of $D_{n,k}$.}
We shall apply Fatou's lemma to provide an upper bound for $D_{n,k}$.
An important issue here is to find a dominating function, which is possible due to the integrability of 
the density function $\rho_{\ee^2}$ on the real line. 
More specifically, in the same way as in the proof of \eqref{IdenScalCn}, 
we use the Fourier inversion formula and the Fubini theorem to get 
\begin{align*}
D_{n,k} &  = \frac{1}{2 \pi }  e^{-s a (k+1)}  \nonumber\\
 & \quad \times \int_{|w| \geq \ee}  \left[ \int_{\bb{R}} e^{-i u (a (k+1) + w)}   R^{n}_{s, iu}(\varphi_{s,k}^y)(x)
  \widehat {\psi}^-_{s,\ee}(u) \widehat\rho_{\ee^{2}}(u) du  \right]
 \rho_{\ee^2}(w) dw. 
\end{align*}
We decompose the integral in $D_{n,k}$ into two parts:
\begin{align}\label{Pf_Decom_D_nk}
\frac{ D_{n,k} }{ e^{\frac{t^2}{2}} [1 - \Phi(t) ] }  
& =  \frac{1}{2 \pi }  e^{-s a (k+1)}  \left\{ \int_{ \ee \leq |w| < \sqrt{n}} + \int_{|w| \geq \sqrt{n}}  \right\}  \nonumber\\
&  \quad \times  
  \frac{ \int_{\bb{R}} e^{-i u (a (k+1) + w)}   R^{n}_{s, iu}(\varphi_{s,k}^y)(x)
  \widehat {\psi}^-_{s,\ee}(u) \widehat\rho_{\ee^{2}}(u) du  }
   { e^{\frac{t^2}{2}} [1 - \Phi(t) ] } 
    \rho_{\ee^2}(w) dw     \nonumber\\
& =: E_{n,k} + F_{n,k}. 
\end{align}

\textit{Estimate of $E_{n,k}$.}
Since $k \leq M_n$ and $|w| \leq \sqrt{n}$, we have $|l|: = |a (k+1) + w|/n = O(\frac{1}{\sqrt{n}})$. 
Note that $\frac{ t }{ s \sigma_s \sqrt{n} }  = 1 + O(s)$ and  
$e^{\frac{t^2}{2}} [1 - \Phi(t)] \geq \frac{1}{\sqrt{2 \pi}} (\frac{1}{t} - \frac{1}{t^3})$, $t>1$. 
Applying Proposition \ref{KeyPropo} with $l = (a (k+1) + w)/n$,
we get that uniformly in $\ee \leq |w| < \sqrt{n}$,
\begin{align*}
\lim_{n \to \infty} \frac{ \int_{\bb{R}} e^{-i u (a (k+1) + w)}   R^{n}_{s, iu}(\varphi_{s,k}^y)(x)
  \widehat {\psi}^-_{s,\ee}(u) \widehat\rho_{\ee^{2}}(u) du  }
   { e^{\frac{t^2}{2}} [1 - \Phi(t) ] }   
 = 2 \pi \times \pi_s(\varphi_{0,k}^y).   
\end{align*}
As above, for any fixed $k \geq 1$ and $a\in (0,\frac{1}{2})$, 
we have $\lim_{n \to \infty}e^{-s a (k+1)} = 1$. 
Since the function $\rho_{\ee^2}$ is integrable on $\bb R$, 
by the Lebesgue dominated convergence theorem, we obtain that there exists a constant $c>0$ such that
\begin{align*}
\lim_{n \to \infty} E_{n,k} \mathds{1}_{ \left\{ k \leq M_n \right\}}
=  \nu(\varphi_{0,k}^y) \int_{ |w| \geq \ee } \rho_{\ee^2}(w) dw 
\leq c \ee \nu(\varphi_{0,k}^y). 
\end{align*}
This, together with \eqref{Unit-partition001},  implies that
\begin{align}\label{Pf_E_nk1_limit}
\sum_{k = 0}^{\infty} \lim_{n \to \infty} E_{n,k} \mathds{1}_{ \left\{ k \leq M_n \right\}}
\leq  c \ee  \sum_{k = 0}^{\infty}  \nu (\varphi_{0,k}^y)
\leq  c \ee \nu(\varphi). 
\end{align}

\textit{Estimate of $E_{n,k,2}$.}
Notice that there exists a constant $c>0$ such that for any $n \geq 1$, 
\begin{align*}
\left| \int_{\bb{R}} e^{-i u (a (k+1) + w)}   R^{n}_{s, iu}(\varphi_{s,k}^y)(x)
  \widehat {\psi}^-_{s,\ee}(u) \widehat\rho_{\ee^{2}}(u) du \right|
& \leq \| \varphi_{s,k}^y \|_{\infty} \int_{\bb{R}} \widehat\rho_{\ee^{2}}(u) du  \nonumber\\ 
& \leq  c  \| \varphi_{s,k}^y \|_{\infty}. 
\end{align*}
Using the fact that $t = o(\sqrt{n})$ and $\rho_{\ee^{2}}(u) \leq \frac{c_{\ee}}{u^2}$, and again the inequality 
$e^{\frac{t^2}{2}} [1 - \Phi(t)] \geq \frac{1}{\sqrt{2 \pi}} (\frac{1}{t} - \frac{1}{t^3})$, $t>1$,
we get
\begin{align*}
\limsup_{n \to \infty} F_{n,k} \mathds{1}_{ \left\{ k \leq M_n \right\}}
\leq c_{\ee} \limsup_{n \to \infty} t  \int_{|w| \geq \sqrt{n}}  \rho_{\ee^2}(w) dw 
\leq c_{\ee} \lim_{n \to \infty} \frac{t}{\sqrt{n}} = 0.
\end{align*}
Hence 
\begin{align}\label{Pf_E_nk2_limit}
\sum_{k = 0}^{\infty} \limsup_{n \to \infty} F_{n,k} \mathds{1}_{ \left\{ k \leq M_n \right\}}
= 0. 
\end{align}
Since $\ee >0$ can be arbitrary small, 
combining \eqref{Pf_Decom_D_nk}, \eqref{Pf_E_nk1_limit} and \eqref{Pf_E_nk2_limit},
we get the desired bound for $D_{n,k}$:
\begin{align}\label{Pf_Upper_bound_D_nk}
\sum_{k = 0}^{\infty} \limsup_{n \to \infty} 
   \frac{ D_{n,k} }{ e^{ \frac{t^2}{2} } [1-\Phi(t)] }  \mathds{1}_{ \left\{ k \leq M_n \right\}} = 0. 
\end{align}
Putting together \eqref{Pf_Lower_bound_B_nk} and \eqref{Pf_Upper_bound_D_nk},
we conclude the proof of \eqref{ScalLowerBound} as well as the first expansion \eqref{Cramer-Coeffi-upper}. 

The proof of the second expansion \eqref{Cramer-Coeffi-lower}
can be carried out in a similar way. 
As in \eqref{ChanMeaScal} and \eqref{An-lambdaq}, using \eqref{Unit-partition001-222}, we have
\begin{align*}
& \bb{E} \left[ \varphi(G_n \!\cdot\! x) 
  \mathds{1}_{ \left\{ \log| \langle f,  G_n v \rangle | - n \lambda_1 \leq  - \sqrt{n}\sigma t \right\} } \right]   \notag\\
 & =   r_s(x) e^{ -\frac{t^2}{2} - \frac{t^3}{ \sqrt{n} } \zeta(-\frac{t}{\sqrt{n}}) }
  \sum_{k = 0}^{ M_n } \bb{E}_{\bb{Q}^{x}_{s}}
\left[ (\varphi r_s^{-1} \chi_{k}^y)(G_n \!\cdot\! x)  e^{-s T_n^x }
\mathds{1}_{\{ T_n^x + Y_n^{x,y} \leq 0 \}}  \right]  \nonumber\\
& \quad +  r_s(x) e^{ -\frac{t^2}{2} - \frac{t^3}{ \sqrt{n} } \zeta(-\frac{t}{\sqrt{n}}) }
  \bb{E}_{\bb{Q}^{x}_{s}}
\left[ (\varphi r_s^{-1} \overline \chi_{M_n + 1}^y)(G_n \!\cdot\! x)  e^{-s T_n^x }
\mathds{1}_{\{ T_n^x + Y_n^{x,y} \leq 0 \}}  \right]  \notag\\
& = : A_n' +  A_n'', 
\end{align*}
where this time we choose $M_n = \floor[]{A \log n}$ with $A>0$, 
and $s < 0$ satisfies the equation \eqref{SaddleEqua-bis}. 
The main difference for handling the first term consists in using Proposition \ref{KeyPropo-02} instead of Proposition \ref{KeyPropo}. 
For second term,  we have 
\begin{align}\label{Inequa-Qsx-001}
& \bb{E}_{\bb{Q}^{x}_{s}}
\left[ (\varphi r_s^{-1} \overline \chi_{M_n + 1}^y)(G_n \!\cdot\! x)  e^{-s T_n^x }
\mathds{1}_{\{ T_n^x + Y_n^{x,y} \leq 0 \}}  \right]  \nonumber\\
& \leq  c \|\varphi\|_{\infty} \bb{E}_{\bb{Q}^{x}_{s}}
\left[  \overline \chi_{M_n + 1}^y(G_n \!\cdot\! x)  e^{ s Y_n^{x,y} }  \right]   \nonumber\\
& \leq  c \|\varphi\|_{\infty}  e^{-s a A \log n} \bb{Q}^{x}_{s} \left( - \log \delta(y, G_n \cdot x) \geq a A \log n \right)  \notag\\
& \leq  \frac{c}{n}  \|\varphi\|_{\infty},  
\end{align}
where in the last inequality we use Lemma \ref{Lem_Regu_pi_s} and choose $A$ large enough. 
Using \eqref{Inequa-Qsx-001}, the inequality $t e^{\frac{t^2}{2}} \Phi(-t) \geq \frac{1}{\sqrt{2 \pi}}$ for all $t >0$ and the fact that $t = o(\sqrt{n})$, we get
\begin{align*}
\frac{A_n''}{ e^{  - \frac{t^3}{ \sqrt{n} } \zeta(-\frac{t}{\sqrt{n}}) }  \Phi(-t) } 
\leq  \frac{c}{n}  \|\varphi\|_{\infty} \frac{1}{ e^{\frac{t^2}{2}} \Phi(-t) }  
\leq \frac{ct}{n}  \|\varphi\|_{\infty}  \leq \frac{c}{\sqrt{n}}  \|\varphi\|_{\infty}. 
\end{align*}
This finishes the proof of the expansion \eqref{Cramer-Coeffi-lower}. 
\end{proof}

\begin{proof}[Proof of Theorem \ref{Thm-Cram-Entry_bb}]
Theorem \ref{Thm-Cram-Entry_bb} follows from Theorems \ref{Thm-CramSca-02} and \ref{Thm-Cram-Scalar-tag}. 
\end{proof}

\section{Proof of the local limit theorem with moderate deviations} 

The goal of this section is to establish Theorem \ref{ThmLocal02} on the local limit theorems with moderate deviations
for the coefficients $\langle f, G_n v \rangle$.



The following result which is proved in \cite{XGL19b} will be used to prove Theorem \ref{ThmLocal02}. 
Assume that $\psi: \mathbb R \mapsto \mathbb C$
is a continuous function with compact support in $\mathbb{R}$, 
and that $\psi$ is differentiable in a small neighborhood of $0$ on the real line.

\begin{lemma}[\cite{XGL19b}] \label{Lem_R_st_limit}
Assume conditions \ref{Ch7Condi-Moment} and \ref{Ch7Condi-IP}. 
Then, there exist constants $s_0, \delta, c, C >0$ such that for all $s \in (-s_0, s_0)$, 
$x \in \bb P(V)$, $|l|\leq \frac{1}{\sqrt{n}}$, $\varphi \in \mathscr{B}_{\gamma}$ and $n \geq 1$, 
\begin{align} \label{Ine_R_st_limit}
&  \left| \sigma_s \sqrt{n}  \,  e^{ \frac{n l^2}{2 \sigma_s^2} }
\int_{\mathbb R} e^{-it l n} R^{n}_{s, iu}(\varphi)(x) \psi (t) dt
  - \sqrt{2\pi} \pi_s(\varphi) \psi(0) \right|   \nonumber\\
& \leq  \frac{ C }{ \sqrt{n} } \| \varphi \|_\gamma 
  + \frac{C}{n} \|\varphi\|_{\gamma} \sup_{|t| \leq \delta} \big( |\psi(t)| + |\psi'(t)| \big)
  + Ce^{-cn} \|\varphi\|_{\gamma} \int_{\bb R} |\psi(t)| dt. 
\end{align}
\end{lemma}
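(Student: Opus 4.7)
The plan is to combine the spectral decomposition of $R_{s,iu}^n$ from Lemma \ref{Ch7perturbation thm} on $|u|<\delta$ with the non-arithmetic bound of Lemma \ref{Lem-St-NonLatt} on $|u|\geq\delta$, and then extract the leading Gaussian asymptotic on the principal piece by completing the square and Gaussian rescaling. Throughout, the prefactor $\sigma_s\sqrt n\,e^{nl^2/(2\sigma_s^2)}$ remains uniformly bounded since $|l|\leq 1/\sqrt n$ and $\sigma_s$ is bounded above and below on $(-s_0,s_0)$.

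\textbf{Three-piece decomposition.} Split the integral into
\begin{align*}
J_n^{(1)} &= \int_{|u|\geq\delta} e^{-iuln}R_{s,iu}^n(\varphi)(x)\psi(u)\,du,\\
J_n^{(2)} &= \int_{|u|<\delta} e^{-iuln}N_{s,iu}^n(\varphi)(x)\psi(u)\,du,\\
J_n^{(3)} &= \int_{|u|<\delta} e^{-iuln}\lambda_{s,iu}^n\,\Pi_{s,iu}(\varphi)(x)\psi(u)\,du.
\end{align*}
For $J_n^{(1)}$, Lemma \ref{Lem-St-NonLatt} gives $\sup_{|u|\geq\delta}\|R_{s,iu}^n(\varphi)\|_\infty\leq Ce^{-cn}\|\varphi\|_\gamma$ uniformly in $|s|<s_0$, producing a contribution $\leq Ce^{-cn/2}\|\varphi\|_\gamma\int|\psi|$, which accounts for the third error term. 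For $J_n^{(2)}$, the uniform bound $\|N_{s,iu}^n\|\leq Ca^n$ from \eqref{Ch7SpGapContrN} yields the same type of estimate.

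\textbf{Gaussian reduction of $J_n^{(3)}$.} Write $\lambda_{s,iu}=e^{K_s(iu)}$ with $K_s(iu)=-\tfrac12\sigma_s^2u^2+R_s(iu)$ and $|R_s(iu)|\leq C|u|^3$ for $|u|<\delta$. Completing the square gives
$$-iuln-\tfrac{n\sigma_s^2}{2}u^2=-\tfrac{n\sigma_s^2}{2}\bigl(u+\tfrac{il}{\sigma_s^2}\bigr)^2-\tfrac{nl^2}{2\sigma_s^2},$$
so that the prefactor $e^{nl^2/(2\sigma_s^2)}$ exactly cancels. After the rescaling $v=\sigma_s\sqrt n\,u$ and a contour shift (justified by analyticity of $u\mapsto\lambda_{s,iu}\Pi_{s,iu}$ on a neighborhood of $[-\delta,\delta]$), the contribution of $J_n^{(3)}$ reduces, up to an $O(e^{-cn})$ tail, to
$$\int_{|v|\lesssim\sqrt n}e^{-v^2/2}\,e^{nR_s(iv/(\sigma_s\sqrt n))}\,\Pi_{s,iv/(\sigma_s\sqrt n)}(\varphi)(x)\,\psi\!\bigl(\tfrac{v}{\sigma_s\sqrt n}\bigr)\,e^{-ivl\sqrt n/\sigma_s}\,dv.$$

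\textbf{Extracting the main term and error control.} Replace each of $e^{nR_s(\cdot)}$, $\Pi_{s,\cdot}(\varphi)(x)$, $\psi(\cdot)$ by its value at $v=0$ (respectively $1$, $\pi_s(\varphi)$, $\psi(0)$). The leading Gaussian integral
$$\int_{\mathbb R}e^{-v^2/2-ivl\sqrt n/\sigma_s}\,dv=\sqrt{2\pi}\,e^{-nl^2/(2\sigma_s^2)}$$
together with the prefactor $e^{nl^2/(2\sigma_s^2)}$ produces the main term $\sqrt{2\pi}\,\pi_s(\varphi)\psi(0)$. The three Taylor remainders are then estimated: (i) $|e^{nR_s(iv/(\sigma_s\sqrt n))}-1|\leq C|v|^3/\sqrt n$, integrating against $e^{-v^2/2}$ into $O(1/\sqrt n)\|\varphi\|_\gamma$; (ii) $|\Pi_{s,iv/(\sigma_s\sqrt n)}(\varphi)(x)-\pi_s(\varphi)|\leq C|v|/\sqrt n\cdot\|\varphi\|_\gamma$ via analyticity of $u\mapsto\Pi_{s,iu}$ and \eqref{Ch7SpGapContrN}, again contributing $O(1/\sqrt n)\|\varphi\|_\gamma$; (iii) $|\psi(v/(\sigma_s\sqrt n))-\psi(0)|\leq(\sup_{|t|\leq\delta}|\psi'(t)|)\cdot|v|/(\sigma_s\sqrt n)$ by the mean-value theorem, whose integral against $v\mapsto v\,e^{-v^2/2-ivl\sqrt n/\sigma_s}$ is $O(|l|\sqrt n)\,e^{-nl^2/(2\sigma_s^2)}$ by the shifted-Gaussian identity, giving, after combination with the $1/(\sigma_s\sqrt n)$ from the rescaling and with $|l|\leq 1/\sqrt n$, the advertised $C/n\cdot\bigl(\sup|\psi|+\sup|\psi'|\bigr)\|\varphi\|_\gamma$ bound.

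\textbf{Main obstacle.} The delicate point will be step (iii): obtaining the $1/n$ rate on the $\psi$-dependent error—rather than the naive $1/\sqrt n$—requires exploiting the Gaussian parity of $v\,e^{-v^2/2}$ jointly with the constraint $|l|\leq 1/\sqrt n$, so that the oscillatory factor $e^{-iuln}$ provides the additional $1/\sqrt n$ cancellation needed. A secondary technical difficulty is justifying the contour shift uniformly in $s\in(-s_0,s_0)$ and propagating the $\mathscr B_\gamma$-norm bounds on $\varphi$ through the analytic operator family $u\mapsto\Pi_{s,iu}$ evaluated at the point $x$, which is ensured by the uniform bounds in \eqref{Ch7SpGapContrN}.
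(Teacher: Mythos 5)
The paper does not actually prove this lemma: it is imported verbatim from \cite{XGL19b}, so there is no internal proof to compare against. Your skeleton --- splitting off $|u|\geq\delta$ via Lemma \ref{Lem-St-NonLatt}, discarding $N^{n}_{s,iu}$ via \eqref{Ch7SpGapContrN}, and reducing the principal piece $\lambda^{n}_{s,iu}\Pi_{s,iu}$ to a Gaussian integral after the rescaling $v=\sigma_s\sqrt n\,u$ --- is exactly the mechanism used in Propositions \ref{KeyPropo} and \ref{KeyPropo-02} of this paper, and the extraction of the main term via $\int_{\bb R}e^{-v^2/2-ivl\sqrt n/\sigma_s}\,dv=\sqrt{2\pi}\,e^{-nl^2/(2\sigma_s^2)}$, which cancels the prefactor, is correct. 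Two side remarks: the ``contour shift'' is neither available ($\psi$ is only continuous with compact support, not analytic) nor needed, since your displayed formula for $J_n^{(3)}$ already follows from the real change of variables alone; and Lemma \ref{Lem-St-NonLatt} applies on $\{|u|\geq\delta\}\cap\supp\psi$ only because $\psi$ is compactly supported, so the constants $c,C$ depend on $\supp\psi$.

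The genuine gap is in the error accounting of step (iii), which you flag as the main obstacle but do not resolve. Once you invoke the mean-value bound $|\psi(v_n)-\psi(0)|\leq\sup_{|t|\leq\delta}|\psi'(t)|\,|v_n|$ you have passed to absolute values and can no longer exploit the oscillation of $v\,e^{-v^2/2-ivl\sqrt n/\sigma_s}$; the resulting estimate is $\tfrac{1}{\sigma_s\sqrt n}\sup|\psi'|\int_{\bb R}|v|e^{-v^2/2}dv=O(n^{-1/2})\sup|\psi'|$. Even if you keep the exact linear term $\psi'(0)v_n$ and use $\int_{\bb R} v e^{-v^2/2}e^{-iva}dv=-ia\sqrt{2\pi}e^{-a^2/2}$ with $a=l\sqrt n/\sigma_s$, the contribution after the Jacobian factor $1/(\sigma_s\sqrt n)$ is $O(|a|/\sqrt n)=O(|l|)$, which under the hypothesis $|l|\leq n^{-1/2}$ is only $O(n^{-1/2})$ --- a factor $\sqrt n$ short of the advertised $C/n$; and the remainder $\psi(v_n)-\psi(0)-\psi'(0)v_n$ cannot be bounded by $Cv_n^2$ because $\psi$ is only assumed differentiable near $0$. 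The same bookkeeping issue affects (i) and (ii): both remainders are multiplied by $\psi(v_n)$, hence produce $O(n^{-1/2})\|\varphi\|_\gamma\sup_{|t|\leq\delta}|\psi(t)|$, which does not fit under the $\psi$-free first error term of \eqref{Ine_R_st_limit}. In short, your argument yields the weaker bound $Cn^{-1/2}\|\varphi\|_\gamma\sup_{|t|\leq\delta}(|\psi(t)|+|\psi'(t)|)+Ce^{-cn}\|\varphi\|_\gamma\int_{\bb R}|\psi|$ (which is all that is used in the proof of Theorem \ref{ThmLocal02}), but it does not establish the finer rate structure claimed in the lemma; getting the $C/n$ coefficient on the $\psi$-dependent part would require a different organization of the expansion or extra regularity of $\psi$, neither of which is supplied.
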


We also need the result below, which is proved in \cite[Lemma 6.2]{XGL19d}. 

\begin{lemma}[\cite{XGL19d}] \label{Lem_Inte_Regu_a}
Assume \ref{Ch7Condi-Moment} and \ref{Ch7Condi-IP}. 
Let $p>0$ be any fixed constant.
Then, there exists a constant $s_0 > 0$ such that
\begin{align*}
\sup_{n \geq 1} \sup_{ s\in (-s_0, s_0) } \sup_{y \in \bb P(V^*) }  
\sup_{x \in \bb P(V) } 
\bb E_{\bb Q_s^x} \left( \frac{1}{ \delta(y, G_n \!\cdot\! x)^{p|s|} } \right) < + \infty. 
\end{align*}
\end{lemma}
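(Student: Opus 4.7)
The plan is to combine the tail-integration representation of the expectation, the uniform exponential tail estimate of Lemma \ref{Lem_Regu_pi_s}, and the change-of-measure identity \eqref{Ch7basic equ1} to reduce everything to tail estimates for $-\log \delta(y, G_n \cdot x)$. Since $\delta(y, G_n \cdot x) \in (0, 1]$, the layer-cake formula applied to $X := 1/\delta(y, G_n \cdot x) \geq 1$, followed by the substitution $u = \log t$, yields
\begin{align*}
\bb{E}_{\bb{Q}_s^x}\!\left[\frac{1}{\delta(y, G_n \cdot x)^{p|s|}}\right]
= 1 + p|s| \int_0^{\infty} e^{p|s| u} \, \bb{Q}_s^x\bigl( -\log \delta(y, G_n \cdot x) > u \bigr) du.
\end{align*}
Thus it suffices to establish a uniform exponential tail bound $\bb{Q}_s^x(-\log \delta(y, G_n \cdot x) > u) \leq C e^{-c u}$ valid for all $u > 0$, with a constant $c > p s_0$ independent of $n$, $s$, $x$, $y$.

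For the principal range $u \in [1, \ee n]$ with $\ee > 0$ small and fixed, Lemma \ref{Lem_Regu_pi_s} applied with $k = \lfloor u / \ee \rfloor$ gives exactly such a bound with rate $c(\ee)/\ee$ depending only on $\ee$. One first selects $\ee$ and then $s_0$ small enough that $p s_0 < c(\ee)/\ee$, making the contribution of this range to the layer-cake integral uniformly bounded. The delicate range is $u > \ee n$, where Lemma \ref{Lem_Regu_pi_s} with $k = n$ only yields the $u$-independent bound $C e^{-c n}$, insufficient to compensate the $e^{p|s| u}$ factor. Here I would pull the event $\{-\log \delta(y, G_n \cdot x) > u\}$ back to $\bb{P}$ via \eqref{Ch7basic equ1}, rewrite it using the identity $-\log \delta(y, G_n \cdot x) = \log \|G_n v\| - \log |\langle G_n^* f, v \rangle|$, and control each piece separately: the growth of $\log \|G_n v\|$ has sub-exponential tails under \ref{Ch7Condi-Moment} and Markov's inequality, while the smallness of $|\langle G_n^* f, v\rangle|$ is handled by transferring the Guivarc'h regularity \eqref{HolderRegu-InvarMea} of the $\mu^*$-stationary measure on $\bb{P}(V^*)$ to all starting points via a Lyapunov-type drift inequality for the transposed Markov chain $G_n^{-1*} \cdot y$.

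The main obstacle is securing a tail rate in the range $u > \ee n$ that is uniform in $s \in (-s_0, s_0)$ and dominates $p s_0$. After the change of measure, the Radon-Nikodym factor $e^{s \sigma(G_n, x)} r_s(G_n \cdot x)/(\kappa^n(s) r_s(x))$ appears as a weight; since $r_s$ and $\kappa(s)$ depend analytically on $s$ in $\mathscr{B}_\gamma$ by Lemma \ref{Ch7transfer operator}, and are respectively close to $1$ at $s = 0$, this factor is uniformly bounded above and below for $|s| < s_0$ provided $s_0$ is small. The remaining technical subtlety is the sharp control of the dual quantity $|\langle G_n^* f, v\rangle|$ with constants independent of $x$ and $y$, exploiting that under \ref{Ch7Condi-IP} the transposed semigroup $\Gamma_{\mu^*}$ inherits strong irreducibility and proximality, so that the Guivarc'h-type regularity on $\bb{P}(V^*)$ is available uniformly; patching the two ranges then closes the estimate.
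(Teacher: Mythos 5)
The paper does not actually prove this lemma --- it is imported wholesale from \cite{XGL19d} --- so there is no in-paper argument to compare against and your proposal must stand on its own. Your layer-cake reduction is correct, and your treatment of the range $u\le \ee n$ via Lemma \ref{Lem_Regu_pi_s} (take $k=\lfloor u/\ee\rfloor$, then shrink $s_0$ so that $p s_0$ is beaten by the resulting rate $c(\ee)/\ee$) is sound. The problem is entirely in the range $u>\ee n$, which you correctly identify as the delicate one but do not actually close.

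Two things go wrong there. First, your assertion that the Radon--Nikodym weight $e^{s\sigma(G_n,x)}r_s(G_n\cdot x)/(\kappa^n(s)r_s(x))$ in \eqref{Ch7basic equ1} is ``uniformly bounded above and below'' is false: only the ratio $r_s(G_n\cdot x)/r_s(x)$ is controlled by Lemma \ref{Ch7transfer operator}, while $e^{s\sigma(G_n,x)-n\Lambda(s)}$ is an unbounded random variable --- precisely on the large-deviation events where $\delta(y,G_n\cdot x)$ is abnormally small, $\sigma(G_n,x)-n\Lambda'(s)$ can be of order $n$, so the weight can reach size $e^{c|s|n}$. You therefore cannot pass from $\mathbb{Q}_s^x$ to $\mathbb{P}$ at constant cost; at best a H\"older/Cauchy--Schwarz step pays a factor $\asymp e^{cns^2}$ that must then be reabsorbed. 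Second, and more fundamentally, the estimate you reduce to --- $\mathbb{P}(-\log\delta(y,G_n\cdot x)>u)\le Ce^{-\eta u}$ for \emph{all} $u>0$, uniformly in $x$ and $y$ --- does not follow from \eqref{HolderRegu-InvarMea} (which concerns the invariant measure $\nu$, not the time-$n$ law started from an arbitrary $x$), and it actually fails when $\mu$ is atomic: picking $g$ in the support of $\mu$ and $f\perp g^n v$ gives $\mathbb{P}(\langle f,G_nv\rangle=0)\ge \mu(\{g\})^n>0$, so for that pair $(x,y)$ the tail probability is bounded below uniformly in $u$. No Lyapunov-drift transfer of Guivarc'h regularity to the transposed chain can circumvent an atom at $\delta=0$. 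So the deep tail $u\gg n$ cannot be handled by a pointwise exponential-in-$u$ bound as you propose, and the proof does not close; the suprema over $x$ and $y$ in the statement require a genuinely different mechanism in that regime.
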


Using Lemmas \ref{Lem_Regu_pi_s},  \ref{Lem_R_st_limit} and \ref{Lem_Inte_Regu_a},
we are equipped to prove Theorem \ref{ThmLocal02}. 

\begin{proof}[Proof of Theorem \ref{ThmLocal02}]
It suffices to establish the second assertion of the theorem 
since the first one is its particular case. 
Without loss of generality, we assume that the target functions $\varphi$ and $\psi$ are non-negative.

By the change of measure formula \eqref{Ch7basic equ1}, 
we get that for any $s \in (-s_0, s_0)$ with sufficiently small $s_0 >0$,  
\begin{align}\label{LLT_Def_An_d}
& I :  =  \mathbb{E} \Big[ \varphi(G_n \!\cdot\! x) \psi \big( \sigma(G_n, x) - n\lambda_1 - \sqrt{n}\sigma t \big) \Big]   \\
      & =  r_s(x) \kappa^{n}(s) \mathbb{E}_{ \mathbb{Q}^{x}_{s} }
\Big[ (\varphi r_s^{-1})(G_n x) e^{-s \sigma(G_n, x) } 
   \psi \big( \log|\langle f, G_n v \rangle| - n\lambda_1 - \sqrt{n}\sigma t \big) \Big]. \nonumber
\end{align}
As in the equation \eqref{SaddleEqua}, for any $|t| = o(\sqrt{n})$ (not necessarily $t > 1$), 
we choose $s \in (-s_0, s_0)$ satisfying the equation
\begin{align}\label{Saddle_Equa_cc}
\Lambda'(s) - \Lambda'(0) = \frac{\sigma t}{\sqrt{n}}. 
\end{align}
Note that $s \in (-s_0, 0]$ if $t \in (-o(\sqrt{n}), 0]$,
and $s \in [0, s_0)$ if $t \in [0, o(\sqrt{n}))$. 
In the same way as in the proof of \eqref{LambdaQ01}, 
from \eqref{Saddle_Equa_cc} it follows that for any $|t| = o(\sqrt{n})$, 
\begin{align*}
s \Lambda'(s) - \Lambda(s) 
= \frac{t^2}{2n} - \frac{t^3}{n^{3/2} } \zeta \Big( \frac{t}{\sqrt{n}} \Big), 
\end{align*}
where $\zeta$ is the Cram\'{e}r series defined by \eqref{Ch7Def-CramSeri}. 
For brevity, denote 
\begin{align*}
T_n^x = \sigma(G_n, x) - n \Lambda'(s),  \qquad  Y_n^{x,y} = \log \delta(y, G_n \!\cdot\! x). 
\end{align*}
Hence, using \eqref{Pf_LLN_Equality}, we have
\begin{align*}
I & = r_s(x) e^{-n [s\Lambda'(s) - \Lambda(s)]} \mathbb{E}_{ \mathbb{Q}^{x}_{s} }
\Big[ (\varphi r_s^{-1})(G_n \!\cdot\! x) e^{-s T_n^x } \psi \Big( T_n^x + Y_n^{x,y} \Big) \Big] \nonumber\\
& = r_s(x) e^{- \frac{t^2}{2} + \frac{t^3}{ \sqrt{n} } \zeta( \frac{t}{\sqrt{n} } )} \mathbb{E}_{ \mathbb{Q}^{x}_{s} }
\Big[ (\varphi r_s^{-1})(G_n \!\cdot\! x) e^{-s T_n^x } \psi \Big( T_n^x + Y_n^{x,y} \Big) \Big]. 
\end{align*}
Notice that $r_s(x) \to 1$ as $n \to \infty$, uniformly in $x \in \bb P(V)$. 
Thus in order to establish Theorem \ref{ThmLocal02}, it suffices to prove 
the following asymptotic: as $n \to \infty$, 
\begin{align}\label{LLT_Pf_Object}
 J : = \sigma \sqrt{2 \pi n} \,  \bb E_{\bb Q_s^x}
\Big[ (\varphi r_s^{-1})(G_n \!\cdot\! x) e^{-s T_n^x } \psi \Big( T_n^x + Y_n^{x,y} \Big) \Big]  
 \to \nu(\varphi) \int_{\bb R} \psi(u) du. 
\end{align}
We shall apply Lemmas \ref{Lem_Regu_pi_s} and \ref{Lem_R_st_limit} to establish \eqref{LLT_Pf_Object}.  
Recall that the functions $\chi_k^y$ and $\overline \chi_k^y$ are defined by  \eqref{Def-chi-nk-222}.
Then, using the partition of the unity \eqref{Unit-partition001-222} as in the proof of Theorem \ref{Thm-Cram-Scalar-tag}, 
we have
\begin{align} \label{PosiScalAn 01}
J  =:  J_1 + J_2, 
\end{align}
where, with $M_n = \floor{A \log n}$ and a sufficiently large constant $A >0$, 
\begin{align*}
&  J_1 = \sigma  \sqrt{2 \pi n} \,  \bb E_{\bb Q_s^x}
  \Big[ (\varphi r_s^{-1} \overline \chi_{M_n}^y)(G_n \!\cdot\! x) e^{-s T_n^x } \psi \Big( T_n^x + Y_n^{x,y} \Big) \Big]  \nonumber\\
& J_2 =  \sigma  \sqrt{2 \pi n}  \,   
 \sum_{k =0}^{ M_n - 1 } \bb E_{\bb Q_s^x}
  \Big[ (\varphi r_s^{-1} \chi_{k}^y)(G_n \!\cdot\! x) e^{-s T_n^x } \psi \Big( T_n^x + Y_n^{x,y} \Big)  \Big].
\end{align*}

\textit{Upper bound of $J_1$.} 
In order to prove that $J_1 \to 0$ as $n \to \infty$, we are led to consider two cases:
$s \geq 0$ and $s<0$.

When $s \geq 0$, since the function $\psi$ has a compact support, say $[b_1, b_2]$,
we have $T_n^x + Y_n^{x,y} \in [b_1, b_2]$. 
Noting that $Y_n^{x,y} \leq 0$, we get $T_n^x \geq b_1$,
and hence $e^{-s T_n^x} \leq e^{-s b_1} \leq c$.
Since the function $\psi$ is directly Riemann integrable on $\bb R$,
it is bounded and hence
\begin{align*}
J_1 \leq  c \sqrt{n}  \,  \bb Q_s^x \Big( Y_n^{x,y} \leq - \floor{A \log n} \Big). 
\end{align*}
Applying Lemma \ref{Lem_Regu_pi_s} with $k = \floor{A \log n}$ 
and choosing $A$ sufficiently large, we obtain that,
as $n \to \infty$, uniformly in $s \in [0, s_0)$, 
\begin{align}\label{Pf_LLTUpp_J1}
J_1 \leq  C \sqrt{n}  \,  e^{-c \floor{A \log n} } \to 0. 
\end{align}

When $s < 0$, 
from $T_n^x + Y_n^{x,y} \in [b_1, b_2]$, we get that
$e^{-s T_n^x} \leq e^{-s b_2 + sY_n^{x,y}}$.
Hence, by the H\"{o}lder inequality, Lemmas \ref{Lem_Regu_pi_s} and \ref{Lem_Inte_Regu_a},
we obtain that, as $n \to \infty$, uniformly in $s \in (-s_0, 0]$,
\begin{align*}
J_1 & \leq  C \sqrt{n}  \, \left\{ \bb E_{\bb Q_s^x} \left( \frac{1}{ \delta(y, G_n \!\cdot\! x)^{-2s} } \right)
 \bb Q_s^x \Big( Y_n^{x,y} \leq - \floor{A \log n} \Big)  \right\}^{1/2}  \nonumber\\
& \leq  C \sqrt{n}  \,  e^{-c \floor{A \log n} }  \to 0,  
\end{align*}
where again $A$ is large enough.

\textit{Upper bound of $J_2$.} 
Note that the support of the function $\chi_{k}^y$ is contained in the set $\{x \in \bb P(V): -Y_n^{x,y} \in [a (k-1), a(k+1)]\}$.
Therefore on  $\supp \chi_{k}^y$ 
we have  
$- a \leq Y_n^{x,y} + a k \leq a$.
For any $w \in \bb R$ set $\Psi_{s} (w) = e^{-sw} \psi(w)$ and, according to \eqref{smoo001}, define   
${\Psi}^+_{s, \ee}(w) = \sup_{w' \in \mathbb{B}_{\ee}(w)} \Psi_{s} (w')$, for $\varepsilon\in(0,\frac{1}{2})$.
Let also
\begin{align}\label{Def_varphi_skee}
\varphi_{s,k}^y (x) 
   = (\varphi r_s^{-1} \chi_{k}^y)(x),
   \quad  x \in \bb P(V). 
\end{align}  
With this notation, choosing $a \in (0,\ee)$, it follows that 
\begin{align*}
J_2 
 & \leq  \sigma  \sqrt{2 \pi n}     
  \sum_{k =0}^{ M_n -1 }  
\bb E_{\bb Q_s^x}
  \Big[ \varphi_{s,k}^y (G_n \!\cdot\! x) 
  e^{-s Y_n^{x,y} }
    \Psi_{s,\ee}^+ ( T_n^x - a k)  \Big] \\
 & \leq  \sigma  \sqrt{2 \pi n}     
  \sum_{k =0}^{ M_n -1 }  e^{-s a (k-1) }
\bb E_{\bb Q_s^x}
  \Big[ \varphi_{s,k}^y (G_n \!\cdot\! x) 
    \Psi_{s,\ee}^+ ( T_n^x - a k)  \Big].
\end{align*} 
Since the function $\Psi^+_{s, \ee}$ is non-negative and integrable on the real line, 
using Lemma \ref{estimate u convo}, we get
\begin{align*} 
J_2  & \leq  ( 1+ C_{\rho}(\ee) )
\sigma  \sqrt{2\pi n}  \sum_{k =0}^{ M_n -1 }  e^{-s a (k-1)}   \\
&  \quad \times  \mathbb{E}_{\mathbb{Q}_{s}^{x}}
\left[ \varphi_{s,k}^y (G_n \!\cdot\! x)  
({\Psi}^+_{s, \ee} * \rho_{\ee^2}) (T_n^x - a k) \right], 
\end{align*} 
where $C_{\rho}(\ee) >0$ is a constant  converging to $0$ as $\ee \to 0$. 
Since the function $\widehat\rho_{\ee^{2}}$ is integrable on $\bb{R}$, 
by the Fourier inversion formula, we have 
\begin{align*}
(\Psi^+_{s, \eta, \ee} * \rho_{\ee^2}) (T_n^x - a k)
= \frac{1}{2\pi} \int_{\bb{R}}e^{iu (T_n^x - a k)} 
 \widehat {\Psi}^+_{s, \ee}(u) \widehat \rho_{\ee^2}(u) du. 
\end{align*}
By the definition of the perturbed operator $R_{s,iu}$ (cf. \eqref{Def_Rsz_Ch7}),  
and Fubini's theorem, we obtain
\begin{align} \label{Scal Bnxl 01}
J_2 & \leq  (1+ C_{\rho}(\ee)) \sigma   \sqrt{\frac{n}{2\pi}} 
  \sum_{k =0}^{\infty}  \mathds{1}_{ \{k \leq M_n - 1 \} }  e^{-s a (k-1)}  \nonumber\\
&  \qquad \times   \int_{\mathbb{R}} e^{-iu a k}  R_{s,iu}^n (\varphi_{s,k}^y) (x)
\widehat {\Psi}^+_{s,  \ee}(u) \widehat \rho_{\ee^2}(u) du. 
\end{align}
To deal with the integral in \eqref{Scal Bnxl 01}, we shall use Lemma \ref{Lem_R_st_limit}. 
Note that $e^{ \frac{C k^2}{n} } \to 1$ as $n \to \infty$, uniformly in $0 \leq k \leq M_n -1$. 
Since the function $\widehat {\Psi}^+_{s, \ee} \widehat \rho_{\ee^2}$ is compactly supported on $\bb R$,  
applying Lemma \ref{Lem_R_st_limit} with $\varphi = \varphi_{s,k}^y$, 
$\psi = \widehat {\Psi}^+_{s,  \ee} \widehat \rho_{\ee^2}$
and  $l = \frac{a k}{n \sigma}$, 
we obtain that there exists a constant $C >0$ such that for all $s \in (-s_0, s_0)$, 
$x \in \bb P(V)$, $y \in \bb P(V^*)$, $0 \leq k \leq M_n -1$, $\varphi \in \mathscr{B}_{\gamma}$ and $n \geq 1$, 
\begin{align*}  
& \left| \sigma  \sqrt{\frac{n}{2\pi}}  
  \int_{\mathbb{R}} e^{-it a k} R_{s,iu}^n (\varphi_{s,k}^y) (x)
\widehat {\Psi}^+_{s,  \ee}(u) \widehat \rho_{\ee^2}(u) du   
- \widehat {\Psi}^+_{s,  \ee}(0) \widehat \rho_{\ee^2}(0) \pi_s (\varphi_{s,k}^y) \right|  \\
& \leq \frac{C}{\sqrt{n}} \| \varphi_{s,k}^y \|_{\gamma}. 
\end{align*}
Using \eqref{Bound_varphi_Holder_norm}
and choosing a sufficiently small $\gamma > 0$, one can verify that 
the series $\frac{C}{\sqrt{n}} \sum_{k = 0}^{ M_n -1 }  
  \| \varphi_{s,k}^y \|_{\gamma}$
converges to $0$ as $n \to \infty$. 
Consequently, we are allowed to interchange the limit as $n \to \infty$
and the sum over $k$ in \eqref{Scal Bnxl 01}. 
Then, noting that $\widehat\rho_{\ee^{2}}(0) =1$ and 
$\widehat {\Psi}^+_{0, \ee}(0)  = \int_{\mathbb{R}} \sup_{w' \in \mathbb{B}_{\ee}(w)} \Psi_{0}(w') dw$, 
we obtain that uniformly in $v \in V$ and $f \in V^*$ with $\|v\|=1$ and $\| f \| =1$,
\begin{align} \label{LimsuBn a}
\limsup_{n \to \infty}  J_{2}  
& \leq    (1+ C_{\rho}(\ee)) \int_{\mathbb{R}} \sup_{w' \in \mathbb{B}_{\ee}(w)} \Psi_{0}(w') dw
 \sum_{k =1}^{\infty} \nu \left( \varphi_{0,k}^y \right)  \notag\\
 & =    (1+ C_{\rho}(\ee)) \nu(\varphi) \int_{\mathbb{R}} \sup_{w' \in \mathbb{B}_{\ee}(w)} \Psi_{0}(w') dw,
\end{align}
where in the last equality we used \eqref{Unit-partition001}. 
Letting $\ee \to 0$, $n \to \infty$, 
and noting that $C_{\rho}(\ee) \to 0$, 
we obtain the desired upper bound for $J_2$: 
uniformly in $v \in V$ and $f \in V^*$ with $\|v\|=\| f \| =1$,
\begin{align} \label{ScaProLimBn Upper 01}
\limsup_{n \to \infty} J_{2}  \leq  \nu(\varphi) \int_{\bb R} \psi(u) du. 
\end{align}

\textit{Lower bound of $J_{2}$.} 
Since on the set $\{-Y_n^{x,y} \in [a (k-1), a(k+1)]\}$, we have
$- a \leq Y_n^{x,y} + a k \leq a$.
Set $\Psi_{s} (w) = e^{-sw} \psi(w)$, $w \in \bb R$
and ${\Psi}^+_{s, \ee}(w) = \inf_{w' \in \mathbb{B}_{\ee}(w)} \Psi_{s} (w')$, for $\varepsilon\in(0,\frac{1}{2})$.  
Then, with $a \in (0, \ee)$,
\begin{align*}
J_2 & \geq  \sigma  \sqrt{2 \pi n}     
  \sum_{k =0}^{ M_n-1 }
\bb E_{\bb Q_s^x}
  \Big[ \varphi_{s,k}^{y}(G_n \!\cdot\! x) e^{-s Y_n^{x,y} } \Psi_{s,\ee}^- ( T_n^x - a k) \Big]  \nonumber\\
 & \geq  \sigma  \sqrt{2 \pi n}     
  \sum_{k =0}^{ M_n-1 }  e^{-s a k}
\bb E_{\bb Q_s^x}
  \Big[ \varphi_{s,k}^{y}(G_n \!\cdot\! x) 
    \Psi_{s,\ee}^- ( T_n^x - a k)  \Big].
\end{align*} 
By 
Fatou's lemma, it follows that
\begin{align*} 
\liminf_{n \to \infty} J_2  
\geq  \sum_{k =0}^{\infty} \liminf_{n \to \infty}  \sigma   \sqrt{2 \pi n}      
   e^{-s a k}   \mathbb{E}_{\mathbb{Q}_{s}^{x}}
\left[  \varphi_{s,k}^y (G_n \!\cdot\! x)  \Psi^-_{s, \ee} (T_n^x - a k)\right]. 
\end{align*}
Since $s=o(1)$ as $n\to\infty$, 
we see that for fixed $k \geq 1$, we have $e^{-s a k} \to 1$ as $n \to \infty$. 
Since the function $\Psi^-_{s, \ee}$ is non-negative and integrable on the real line, 
by Lemma \ref{estimate u convo}, we get
\begin{align} \label{ScaProLimAn Bn 0134}
& \liminf_{n \to \infty} J_2  
 \geq  \sum_{k =0}^{\infty}  \liminf_{n \to \infty}  \sigma \sqrt{2 \pi n}
\bb E_{\bb Q_s^x}
  \Big[ \varphi_{s,k}^y (G_n \!\cdot\! x)
    (\Psi^-_{s,  \ee} * \rho_{\ee^2}) (T_n^x - a k)  \Big]  \nonumber\\
&  \  -  \sum_{k =0}^{\infty} \limsup_{n \to \infty}  \sigma  \sqrt{2 \pi n}
  \int_{|w| \geq \ee}  \mathbb{E}_{\mathbb{Q}_{s}^{x}}  
  \left[  \varphi_{s,k}^y (G_n \!\cdot\! x) \Psi^-_{s, \ee}  (T_n^x - a k - w)
   \right] \rho_{\ee^2}(w)  dw  \nonumber\\
& =: J_3 - J_4. 
\end{align}


\textit{Lower bound of $J_3$.} 
Proceeding as in the proof of the upper bound \eqref{LimsuBn a} and using Lemma \ref{Lem_R_st_limit},
we obtain 
\begin{align} \label{Low_Boun_J3}
  J_3  \geq  \int_{\mathbb{R}} \inf_{w' \in \mathbb{B}_{\ee}(w)} \Psi^-_{0, \ee}(w') dw
 \sum_{k =0}^{\infty} \nu \left(  \varphi_{0, k}^y \right).  
\end{align}
Taking the limit as $\ee\to 0$ and $n\to\infty$,
we get the lower bound for $J_3$: 
uniformly in $v \in V$ and $f \in V^*$ with $\|v\|=1$ and $\| f \| =1$, 
\begin{align} \label{ScaProLimBnLow_J3}
\liminf_{n \to \infty}  J_3 \geq  \nu(\varphi) \int_{\bb R} \psi(u) du. 
\end{align}

\textit{Upper bound of $J_4$.} 
%
%
By Lemma \ref{estimate u convo}, we have
$\Psi^-_{s, \ee} \leq (1+ C_{\rho}(\ee)) \Psi^+_{s, \ee} * \rho_{\ee^2}$. 
Applying Lemma \ref{Lem_R_st_limit} with $\varphi = \varphi_{s,k}^y$ 
and $\psi = \widehat{\Psi}^+_{s, \ee} \widehat{\rho}_{\ee^2}$,
it follows from the Lebesgue dominated convergence theorem that  
\begin{align}\label{Pf_Lower_J41}
J_{4}  \leq (1+ C_{\rho}(\ee)) \sum_{k=0}^{ \infty }
\nu \left( \varphi_{0,k}^y \right)  \widehat{\Psi}^+_{0, \ee}(0) \widehat{\rho}_{\ee^2}(0)
\int_{|w| \geq \ee} \rho_{\ee^2}(w) dw, 
\end{align}
which converges to $0$ as $\ee \to 0$.

%

Combining \eqref{ScaProLimAn Bn 0134}, \eqref{ScaProLimBnLow_J3} and \eqref{Pf_Lower_J41}, 
we get the desired lower bound for $J_2$: 
uniformly in $v \in V$ and $f \in V^*$ with $\|v\|=1$ and $\| f \| =1$, 
\begin{align} \label{lowelast001}
\liminf_{n\to\infty} J_2 \geq  \nu(\varphi) \int_{\bb R} \psi(u) du.
\end{align}
Putting together
\eqref{PosiScalAn 01}, \eqref{Pf_LLTUpp_J1}, \eqref{ScaProLimBn Upper 01} and \eqref{lowelast001},
we obtain the asymptotic \eqref{LLT-Moderate-01}. 
This ends the proof of Theorem \ref{ThmLocal02}. 
\end{proof}


\end{document}